\newcommand{\defi}[1]{\textit{#1}}
\newcommand{\bone}{\mathbf{1}}
\newcommand{\lpp}{(\!(}
\newcommand{\rpp}{)\!)}
\renewcommand{\top}{\mathrm{top}}
\newcommand{\fin}{\mathrm{fin}}
\newcommand{\MOD}{\mathrm{MOD}}
\newcommand{\stacks}[1]{\cite[\href{http://stacks.math.columbia.edu/tag/#1}{Tag~#1}]{stacks}}
\title{Symmetric ideals of the infinite polynomial ring}
\author{Rohit Nagpal}
\address{Department of Mathematics, University of Michigan, Ann Arbor, MI}
\email{\href{mailto:rohitna@umich.edu}{rohitna@umich.edu}}
\urladdr{\url{http://www-personal.umich.edu/~rohitna/}}
\author{Andrew Snowden}
\address{Department of Mathematics, University of Michigan, Ann Arbor, MI}
\email{\href{mailto:asnowden@umich.edu}{asnowden@umich.edu}}
\urladdr{\url{http://www-personal.umich.edu/~asnowden/}}
\thanks{RN was partially supported by NSF DMS-1638352. AS was supported by NSF DMS-1453893.}
\date{\today}
\begin{document}

\begin{abstract}
Let $R=\bC[\xi_1,\xi_2,\ldots]$ be the infinite variable polynomial ring, equipped with the natural action of the infinite symmetric group $\fS$. We classify the $\fS$-primes of $R$, determine the containments among these ideals, and describe the equivariant spectrum of $R$. We emphasize that $\fS$-prime ideals need not be radical, which is a primary source of difficulty. Our results yield a classification of $\fS$-ideals of $R$ up to copotency. Our work is motivated by the interest and applications of $\fS$-ideals seen in recent years.
\end{abstract}

\maketitle

\tableofcontents

\section{Introduction}

Cohen \cite{cohen, cohen2} showed that ideals in the infinite variable polynomial ring $R=\bC[\xi_1, \xi_2, \ldots]$ that are stable under the infinite symmetric group $\fS$ satisfy the ascending chain condition; in other words, $R$ is $\fS$-noetherian. There has been a surge of interest surrounding this theorem in recent years: it has found applications in algebraic statistics \cite{AschenbrennerHillar, kfactor, HillarSullivant}, algebraic geometry \cite{draismaeggermont, draismakuttler}, and the theory of configuration spaces \cite{ramos}; additionally, a number of authors have worked to understand aspects of $\fS$-ideals in more detail \cite{def, GunturkunNagel, KLS, LNNR, LNNR2, raicu, NR, NR2}. In this paper, we undertake a systematic study of $\fS$-ideals of $R$. We give a complete description of the equivariant spectrum of $R$, which yields a classification of all $\fS$-ideals up to copotency. This builds on the work of our previous paper \cite{svar} where we classified the radical $\fS$-ideals of $R$.

\subsection{The main problem}

We now formulate the main problem under consideration in this paper, and provide some motivation for it. For this discussion, it will be useful to introduce a piece of terminology: we say that two ideals $\fa$ and $\fb$ in a commutative ring are \defi{copotent} if there exist positive integers $n$ and $m$ such that $\fa^n \subset \fb^m \subset \fa$.

Let $T$ be a noetherian commutative ring. Typically, one cannot hope to give a complete classification of ideals in $T$. However, one does have the following picture:
\begin{enumerate}
\item Every ideal of $T$ is copotent with its radical; in fact, each copotency class contains a unique radical ideal.
\item If $\fa$ is a radical ideal of $T$ then there exist prime ideals $\fp_1, \ldots, \fp_n$ of $T$ such that $\fa=\fp_1 \cap \cdots \cap \fp_n$. Moreover, this expression is unique up to permutation if we assume that there are no containments among the $\fp_i$'s.
\item The set of prime ideals of $T$ forms a reasonable geometric object $\Spec(T)$.
\end{enumerate}
Thus, with a sufficient understanding of $\Spec(T)$, one can hope to answer many questions about ideals of $T$, or even $T$-modules.\footnote{We note that understanding ideals up to copotency is sufficient for many applications. For example, if $M$ is a finitely generated $T$-module then (a) and (b) show that $T$ admits a finite filtration whose quotients have prime annihilator; this is a common tool used to study modules.}

The goal of this paper, broadly speaking, is to understand the $\fS$-ideals of $R$. Thus we would like a picture similar to the above. Unfortunately, the most direct analog does not hold: indeed, the following example shows that (a) fails.

\begin{example} \label{ex:not-copotent}
Let $\fa=\langle \xi_i^2 \rangle_{i \ge 1}$ and $\fb=\langle \xi_i \rangle_{i \ge 1}$. These are $\fS$-ideals of $R$, and one easily sees that $\fb$ is the radical of $\fa$. However, $\fa$ and $\fb$ are not copotent. Indeed, $\fb^n$ contains the squarefree monomial $\xi_1 \cdots \xi_n$, and so no power of $\fb$ is contained in $\fa$.
\end{example}

Cohen's theorem suggests that $\fS$-ideals of $R$ might be well-behaved. However, the above example seems to indicate that they suffer from pathological behavior. This apparent conflict is resolved as follows. First, there is a full-fledged equivariant analog of commutative algebra. And second, the proper interpretation of Cohen's theorem is that $\fS$-ideals should be well-behaved from the equivariant perspective, not necessarily the ordinary one. To explain what this means in more detail, we introduce the two most important concepts from equivariant commutative algebra:

\begin{definition}
A non-unital $\fS$-ideal $\fp$ of $R$ is \defi{$\fS$-prime} if $\fa \fb \subset \fp$ implies $\fa \subset \fp$ or $\fb \subset \fp$ for $\fS$-ideals $\fa$ and $\fb$.
\end{definition}

\begin{definition}
Let $\fa$ be an $\fS$-ideal of $R$. The \defi{$\fS$-radical} of $\fa$, denoted $\rad_{\fS}(\fa)$, is the sum of all $\fS$-ideals $\fb$ of $R$ such that $\fb^n \subset \fa$ for some $n$. We say that $\fa$ is \defi{$\fS$-radical} if $\fa=\rad_{\fS}(\fa)$.
\end{definition}

\begin{example}
The ideal $\fa=\langle \xi_i^2 \rangle_{i \ge 1}$ from Example~\ref{ex:not-copotent} is $\fS$-prime. This is not difficult to prove directly (see Proposition~\ref{prop:powers-prime}). It is also $\fS$-radical, as is every $\fS$-prime. This example is important, as it shows that $\fS$-prime ideals need not be radical.
\end{example}

We said that Cohen's theorem suggests $\fS$-ideals should be well-behaved from the equivariant point of view. A concrete instance of this principle is the following set of statements, which are the equivariant analogs of (a)--(c):
\begin{enumerate}[label=(\alph*')]
\item Every $\fS$-ideal of $R$ is copotent with its $\fS$-radical, and each copotency class of $\fS$-ideals contains a unique $\fS$-radical ideal.
\item An $\fS$-radical ideal of $R$ is a finite intersection of $\fS$-prime ideals, with the usual uniqueness.
\item The $\fS$-prime ideals of $R$ naturally form a topological space $\Spec_{\fS}(R)$, called the \defi{$\fS$-spectrum} of $R$.
\end{enumerate}
These statements, along with many other basic facts about equivariant commutative algebra, are established in \S \ref{s:eqca}. We thus see that knowing $\Spec_{\fS}(R)$ allows us to classify $\fS$-ideals up to copotency. We are thus led to our main problem:

\begin{problem} \label{mainproblem}
Describe the $\fS$-spectrum of $R$.
\end{problem}

Our two main theorems, Theorems~\ref{mainthm} and~\ref{mainthm2}, provide a complete solution to this problem.

\begin{remark}
The main theorem of \cite{svar} classifies $\fS$-primes that are radical (in the usual sense). These  turn out to constitute a rather small piece of $\Spec_{\fS}(R)$, and so this theorem does not come close to solving Problem~\ref{mainproblem}. The results from \cite{svar} will play an important role in this paper though.
\end{remark}

\subsection{Review of the radical case}

Before describing the results of this paper, we recall some ideas and results from \cite{svar}. Let $\fX=\Spec(R)$ be infinite affine space. We say that a $\bC$-point $x=(x_i)_{i \ge 1}$ of $\fX$ is \defi{finitary} if the coordinates of $x$ assume only finitely many values, i.e., the set $\{x_i \mid i \ge 1\}$ is finite. Given a finite partition $\cU=\{U_1, \ldots, U_r\}$ of the set $[\infty]$ of positive integers, we let $\fX_{\cU}$ be  the subset of $\fX$ consisting of points $x$ such that $x_i=x_j$ whenever $i$ and $j$ belong to the same part of $\cU$. The space $\fX_{\cU}$ is isomorphic to affine space $\bA^r$, and the union $\bigcup \fX_{\cU}$ is exactly the finitary locus. It is not difficult to show that any non-zero $\fS$-ideal of $R$ contains a discriminant (see Proposition~\ref{prop:disc}). From this it follows that any proper $\fS$-stable Zariski closed subset of $\fX$ is contained in the finitary locus. For this reason, finitary points, partitions of $[\infty]$, and discriminants are all major players in our discussion.

Let $\cU=\{U_1,\ldots,U_r\}$ be as above, and put $\lambda_i=\# U_i$; reordering if necessary, we assume that $\lambda_i \ge \lambda_{i+1}$. Thus $\lambda$ is what we call a \defi{partition of $\infty$}, and we say that $\cU$ has \defi{type} $\lambda$. Let $Z$ be an irreducible Zariski closed subset of $\bU^r$, the open subset of $\bA^r$ where all coordinates are distinct. Let $\fX_{[\cU]}(Z)$ be the image of $Z$ under the isomorphism $\bA^r \cong \fX_{\cU}$, let $\fX_{[\lambda]}(Z)$ be the union of the $\fS$-translates of $\fX_{[\cU]}(Z)$, and let $\fX_{\lambda}(Z)$ be the Zariski closure of $\fX_{[\lambda]}(Z)$. Let $\fP(\lambda; Z)$ be the $\fS$-ideal of $R$ consisting of all functions that vanish on $\fX_{\lambda}(Z)$. The main theorem of \cite{svar} states that the ideals $\fP(\lambda; Z)$ are exactly the non-zero radical $\fS$-primes of $R$. (The parametrization is nearly unique: $\fP(\lambda;Z)=\fP(\lambda';Z')$ if and only if $\lambda=\lambda'$ and $Z'=\sigma Z$ for some $\sigma \in \Aut(\lambda) \subset \fS_r$.) When $Z=\bU^r$, we often drop $Z$ from the notation, e.g., we write $\fX_{\lambda}$ in place of $\fX_{\lambda}(Z)$.

The above concepts are central to the remainder of the paper. We give a number of examples to illuminate their nature. The non-trivial assertions below can be deduced from results in \cite{svar}, but are not essential to this paper.


\begin{example} \label{ex:rad}
In what follows, $x=(x_i)$ denotes a $\bC$-point of $\fX$.
\begin{enumerate}
\item Let $\lambda=(\infty)$ and $Z=\{0\} \subset \bA^1$. Then $\fX_{[\lambda]}(Z)=\fX_{\lambda}(Z)$ is just the origin, and so $\fP(\lambda; Z)=\langle \xi_i \rangle_{i \ge 1}$.
\item Let $\lambda=(\infty)$ and $Z=\bA^1$. Then $\fX_{[\lambda]}=\fX_{\lambda}$ consists of those points $x$ having all coordinates equal, i.e., $x_i=x_j$ for all $i$ and $j$. Thus $\fP(\lambda)$ is generated by the elements $\xi_i-\xi_j$ for $i,j \ge 1$.
\item Let $\lambda=(\infty,\infty)$ and $Z=\bU^2$. Then $\fX_{[\lambda]}$ consists of all points $x$ such that the coordinates of $x$ assume exactly two values, and each value is attained infinitely often. The closure $\fX_{\lambda}$ consists of those points $x$ whose coordinates take at most two values. It is not difficult to see that $\fP(\lambda)$ is the radical of the ideal generated by the $\fS$-orbit of the discriminant $\Delta_3=(\xi_1-\xi_2)(\xi_1-\xi_3)(\xi_2-\xi_3)$; in fact, this ideal is already radical by \cite[Theorem~2.4]{lovasz}.
\item Let $\lambda=(\infty,n)$ and $Z=\bU^2$. Then $\fX_{[\lambda]}$ consists of all points $x$ such that the coordinates of $x$ assume exactly two values, one of which occurs exactly $n$ times. The closure $\fX_{\lambda}$ consists of those points $x$ whose coordinates take at most two values, one of which is attained at most $n$ times. The ideal $\fP(\lambda)$ is the radical of the ideal generated by the $\fS$-orbits of $\Delta_3$ and $\prod_{i=1}^{n+1}(\xi_{2i-1}-\xi_{2i})$.
\item Let $\lambda=(\infty,\infty)$ and let $Z \subset \bU^2$ be the intersection of the unit circle in $\bA^2$ with $\bU^2$. Then $\fX_{[\lambda]}(Z)$ consists of those points $x$ in $\fX_{[\lambda]}$ such that $a^2+b^2=1$, where $\{x_i\}_{i \ge 1}=\{a,b\}$. The set $\fX_{\lambda}(Z)$ consists of those points $x$ for which there exist $a,b \in \bC$ with $a^2+b^2=1$ such that $x_i \in \{a,b\}$ for all $i$. The ideal $\fP(\lambda; Z)$ is the radical of the ideal generated by the $\fS$-orbits of $\Delta_3$ and $(\xi_1-\xi_2)(\xi_1^2+\xi_2^2-1)$. \qedhere
\end{enumerate}
\end{example}

\subsection{The ideals $\fP(\lambda,e;Z)$} \label{ss:intro-P}

Let $\lambda=(\lambda_1, \ldots, \lambda_r)$ be an $\infty$-partition and let $Z$ be an irreducible closed subset of $\bU^r$, as in the previous section. Additionally, let $e=(e_1, \ldots, e_r)$ be a tuple of positive integers; we refer to $e$ as a \defi{weight vector} and the $e_i$'s as \defi{weights}. We assume that $e$ is \defi{reduced}, meaning that $e_i=1$ whenever $\lambda_i$ is finite. We now define an important $\fS$-ideal $\fP(\lambda,e;Z)$ associated to this data. It consists of those elements $f \in R$ that satisfy the following condition: given a partition $\cU$ of $[\infty]$ of type $\lambda$ and a point $x \in \fX_{[\cU]}(Z)$, we have
\begin{displaymath}
(\partial_{i_1}^{k_1} \cdots \partial_{i_r}^{k_r} f)(x) = 0
\end{displaymath}
whenever $i_1, \ldots, i_r$ are distinct indices and $0 \le k_j < e_{\pi(i_j)}$, where $\pi(i) \in [r]$ is such that $i \in U_{\pi(i)}$ and $\partial_i$ denotes partial derivative with respect to $\xi_i$. Once again, we often omit $Z$ from the notation when $Z=\bU^r$.

The ideals $\fP(\lambda, e; Z)$ are a focal point of this paper. We define them in somewhat greater generality, and in a different manner, in \S \ref{ss:P}. There we also prove the fundamental results that $\fP(\lambda,e; Z)$ is $\fS$-prime and has radical $\fP(\lambda;Z)$. We give two simple examples of these ideals here; see \S \ref{ss:intro-gen} for some more complicated examples.

\begin{example} \label{ex:P-inf-n-zero}
Let $\lambda=(\infty)$, $e=(n)$, and $Z=\{0\}$. Then $\fP(\lambda, e; Z)$ consists of all functions $f$ such that $(\partial_{i_1}^{k_1} \cdots \partial_{i_r}^{k_r} f)(0)=0$ whenever $i_1, \ldots, i_r$ are distinct indices and $0 \le k_j < n$. It follows that $\fP(\lambda, e; Z)=\langle \xi_i^n \rangle_{i \ge 1}$. It is easy to verifiy directly that this ideal is $\fS$-prime (see Proposition~\ref{prop:powers-prime}). It is also not difficult to show that these are the only $\fS$-primes with radical $\langle \xi_i \rangle_{i \ge 1}$ (see Lemma~\ref{lem:class-3-1}).
\end{example}

\begin{example} \label{ex:P-inf-n}
Let $\lambda=(\infty)$, $e=(n)$, and $Z=\bA^1$. Then $\fP(\lambda, e)$ consists of all functions $f$ such that $(\partial_{i_1}^{k_1} \cdots \partial_{i_r}^{k_r} f)(x)=0$ whenever $i_1, \ldots, i_r$ are distinct indices, $0 \le k_j < n$ for all $j$, and $x$ satisfies $x_i=x_j$ for all $i$ and $j$. One easily sees that $\fP(\lambda, e)$ contains $(\xi_i-\xi_j)^{2n-1}$ for all $i$ and $j$. In fact, these elements generate: see Theorem~\ref{thm:contract}.
%
\end{example}

\subsection{The classification theorem}

The following is one of the two primary theorems of this paper:

\begin{theorem}[Theorem~\ref{thm:class}] \label{mainthm}
Every non-zero $\fS$-prime of $R$ has the form $\fP(\lambda,e;Z)$.
\end{theorem}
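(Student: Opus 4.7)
My plan is to recover the data $(\lambda, Z)$ from the classical radical $\sqrt{\fp}$ via the main theorem of \cite{svar}, extract the weight vector $e$ from the infinitesimal behavior of $\fp$ along $\fX_\lambda(Z)$, and then verify $\fp = \fP(\lambda, e; Z)$.

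First, I observe that $\sqrt{\fp}$ is itself an $\fS$-prime: if $\fa\fb \subset \sqrt{\fp}$ then $(\fa\fb)^n = \fa^n\fb^n \subset \fp$ for some $n$, and $\fS$-primality of $\fp$ forces $\fa \subset \sqrt{\fp}$ or $\fb \subset \sqrt{\fp}$. Since $\sqrt{\fp}$ is radical and nonzero, the classification of radical $\fS$-primes in \cite{svar} produces an $\infty$-partition $\lambda = (\lambda_1, \ldots, \lambda_r)$ and an irreducible closed $Z \subset \bU^r$ (unique up to $\Aut(\lambda)$) with $\sqrt{\fp} = \fP(\lambda; Z)$.

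Next, I extract the weight vector $e$ from $\fp$. Set $e_i = 1$ for each finite $\lambda_i$. For each infinite $\lambda_i$, fix a partition $\cU$ of type $\lambda$ and a generic point $x \in \fX_{[\cU]}(Z)$, and define $e_i$ to be the largest integer $n$ for which the pointwise condition in \S\ref{ss:intro-P}, with the $i$-th weight equal to $n$ and the other weights held at their corresponding values, holds for every $f \in \fp$. A cleaner formulation: the image of $\fp$ in the local ring at $x$ (after modding out by the local equations of $\fX_\lambda(Z)$) is controlled by the tuple $e$, and the $e_i$ record this local data. Finiteness of $e_i$ follows because some power of $\xi_a - \xi_b$ (with $a, b \in U_i$) must lie in $\fp$, since $\xi_a - \xi_b \in \sqrt{\fp}$; and $\fS$-equivariance makes $e_i$ independent of the choices of $\cU$, $x$, and the representative $a \in U_i$.

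The containment $\fp \subset \fP(\lambda, e; Z)$ is built into the construction. The hard direction is the reverse containment $\fP(\lambda, e; Z) \subset \fp$. My plan is the following: $\fp + \fP(\lambda, e; Z)$ is an $\fS$-ideal with classical radical $\fP(\lambda; Z)$, and any element $f \in \fP(\lambda, e; Z) \setminus \fp$ would produce, at a generic point of $\fX_{[\cU]}(Z)$, an infinitesimal relation inconsistent with the maximality in the definition of $e$. Making this rigorous appears to require an induction on $(r, \sum e_i)$ together with a careful local analysis at a generic point of $\fX_\lambda(Z)$, and invokes Cohen's $\fS$-noetherianity to reduce to checking finitely many generators of $\fP(\lambda, e; Z)$; the $\fS$-primality of both $\fp$ and $\fP(\lambda, e; Z)$ (the latter proved in \S\ref{ss:P}) is essential for propagating single-generator inclusions into the full ideal. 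This local-to-global passage is, I expect, the real technical heart of the proof.
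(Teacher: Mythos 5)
Your setup is sound and matches the paper's announced strategy: pass to $\rad(\fp)$, which is $\fS$-prime (Proposition~\ref{prop:rad-prime}), invoke \cite{svar} to obtain $(\lambda;Z)$, and then try to recover $\fp$ from its infinitesimal behavior along $\fX_{[\cU]}(Z)$. But the proposal stops exactly where the proof begins. The reverse containment $\fP(\lambda,e;Z)\subset\fp$ is not a consequence of the ``maximality'' in your definition of $e$: maximality only tells you that $\fp\not\subset\fP(\lambda,e';Z)$ for larger $e'$, and it is perfectly consistent with that for $\fp$ to be \emph{strictly smaller} than $\fP(\lambda,e;Z)$ while inducing the same jet conditions at a generic point. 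What is actually needed is the statement that an $\fS$-prime of type $\lambda$ is completely determined by its extension to the localization $\tilde{R}_{\cU}$, i.e.\ $\fp=(\fp^e)^{\fS c}$. This is Proposition~\ref{prop:corr-2}, and its proof requires the support theorem \cite[Theorem~9.6]{svar}: one forms $M=(\fp^e)^{\fS c}/\fp$, shows $\supp(M)$ misses every point of type $\lambda$, and uses $\fS$-primality of $\fp$ together with $\ann(M)\cdot(\fp^e)^{\fS c}\subset\fp$ to force $M=0$. Nothing in your sketch supplies a substitute for this input, and the suggestion that Cohen's theorem plus ``propagating single-generator inclusions'' via $\fS$-primality would close the gap does not work: $\fS$-primality of $\fP(\lambda,e;Z)$ gives no mechanism for showing it is \emph{contained in} $\fp$.

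Two further points. First, even after reducing to $\tilde{R}_{\cU}$, one still needs the contraction computation of Theorem~\ref{thm:contract} (that the kernel of $\tilde{R}_{\cU}\to\tilde{S}_{\cU}/\tilde{J}_{\cU,n}$ is $\tilde{I}_{\cU,2n-1}$ in characteristic $0$), which makes the passage to $\epsilon$-coordinates a nilpotent extension and hence a bijection on equivariant primes (Proposition~\ref{prop:nilp2}); and one needs the elementary but nontrivial classification of $\fY_{\cU}$-primes of $\tilde{S}_{\cU}$ supported on $\tilde{J}_{\cU}$ (Lemma~\ref{lem:class-3-1}), which is where the weight vector $e$ actually emerges. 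Second, your finiteness argument for $e_i$ is incorrect as stated: for $a,b$ in the same part $U_i$, the element $\xi_a-\xi_b$ does \emph{not} lie in $\rad(\fp)=\fP(\lambda;Z)$ when $\lambda$ has more than one part, since it fails to vanish on $\fS$-translates of $\fX_{[\cU]}(Z)$ that separate $a$ and $b$. Finiteness of $e_i$ is true but must be extracted after localizing at $\tilde{R}_{\cU}$, where the cross-part differences become units.
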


We briefly summarize the main ideas of the proof. Let $\fp$ be a non-zero $\fS$-prime of $R$. Then $\rad(\fp)$ is a radical $\fS$-prime (see Proposition~\ref{prop:rad-prime}). By the main theorem of \cite{svar}, it follows that $\rad(\fp)$ has the form $\fP(\lambda; Z)$ for some $\infty$-partition $\lambda=(\lambda_1, \ldots, \lambda_r)$ and some closed subset $Z$ of $\bU^r$. Let $\cU$ be a partition of $[\infty]$ of type $\lambda$. Recall that $\fX_{[\lambda]}(Z)$ is the union of the $\fS$-translates of $\fX_{[\cU]}(Z)$, and that $\fX_{\lambda}(Z)=V(\fP(\lambda;Z))$ is the Zariski closure of $\fX_{[\lambda]}(Z)$. The closed subscheme $V(\fp)$ is an infinitesimal thickening of $\fX_{\lambda}(Z)$. The core geometric idea in the proof of Theorem~\ref{mainthm} is that $V(\fp)$ should be completely determined by its behavior in the formal neighborhood of $\fX_{[\lambda]}(Z)$; in fact, since $V(\fp)$ is $\fS$-equivariant and $\fX_{[\lambda]}(Z)$ is the \emph{disjoint} union of the translates of $\fX_{[\cU]}(Z)$, it should be determined by its behavior in the formal neighborhood of $\fX_{[\cU]}(Z)$. This suggests that we should be able to recover $\fp$ after localizing and completing along $\fX_{[\cU]}(Z)$. We show that this is indeed the case: see Theorems~\ref{thm:class1} and~\ref{thm:class2} for precise statements. Given these results, it is an elementary matter to complete the proof of Theorem~\ref{thm:class}. See \S \ref{s:class-overview} for more details.

While the above proof strategy is certainly plausible, there are a number of non-trivial results needed to implement it, including: \cite[Theorem~9.6]{svar}, which provides information about the support of $\fS$-equivariant $R$-modules; Theorem~\ref{thm:contract}, which computes certain ideal contractions; and a number of abstract results on equivariant commutative algebra proved in \S \ref{s:eqca}.

\begin{remark}
For $n \ge 1$, let $\fa_n$ be the ideal of $R$ generated by $(\xi_i-\xi_j)^n$ for all $i,j \ge 1$. Let $\lambda=(\infty)$ and $Z=\bA^1$. We have seen in Example~\ref{ex:P-inf-n} that $\fP(\lambda,(n);Z)=\fa_{2n-1}$. Theorem~\ref{mainthm} implies that the $\fP(\lambda,e;Z)$ are the only $\fS$-primes with radical $\fP(\lambda;Z)$. It therefore follows that $\fa_{2n}$ is \emph{not} $\fS$-prime.

It is not difficult to see directly that $\fa_2$ is not $\fS$-prime. Indeed, let $f=\xi_1-\xi_2$. We claim that $f \cdot \sigma(f) \in \fa_2$ for all $\sigma \in \fS$. The claim is clear for $\sigma=\id$. If $\sigma$ is $(2\;3)$ or $(1\;3)(2\;4)$ then it follows from the identities
\begin{align*}
2(\xi_1-\xi_2)(\xi_1-\xi_3) &= (\xi_1-\xi_2)^2+(\xi_1-\xi_3)^2 - (\xi_2-\xi_3)^2 \\
2(\xi_1-\xi_2)(\xi_3-\xi_4) &= (\xi_1-\xi_4)^2+(\xi_2-\xi_3)^2-(\xi_1-\xi_3)^2-(\xi_2-\xi_4)^2
\end{align*}
All other cases are similar, and so the claim follows. Since $f \not\in \fa_2$, as $\fa_2$ is generated by homogeneous polynomials of degree two and $f$ is linear, it follows that $\fa_2$ is not $\fS$-prime.

For $n>1$, we do not have a direct proof of the non-$\fS$-primality of $\fa_{2n}$, though one can verify this by computer for small values of $n$. After completing this work, K.~Jun \cite{jun} succeeded in finding a direct (but rather involved) argument. A good conceptual explanation for why even and odd powers of $\xi_i-\xi_j$ behave so differently is still lacking.
\end{remark}

%
%

\subsection{The containment theorem}

Theorem~\ref{mainthm} raises a natural question: how does one see the poset structure on the set of $\fS$-primes in terms of the $(\lambda,e;Z)$ parametrization? We give a complete solution in the following theorem, which is the second primary theorem of this paper.

Before stating the theorem, we must introduce some notation. For a finite set $\cI$, we let $\bA^{\cI}$ be the affine space with coordinates indexed by $\cI$. For a function $\phi \colon \cI \to \cE$ there is an induced morphism $\phi^* \colon \bA^{\cE} \to \bA^{\cI}$; if $\phi$ is injective then $\phi^*$ is a projection map, while if $\phi$ is surjective then $\phi^*$ is a multi-diagonal map (and thus a closed immersion).

\begin{theorem} \label{mainthm2}
Let $\fp=\fP(\lambda,e;Z)$ and $\fq=\fP(\mu,d;Y)$ be $\fS$-primes, where $\lambda=(\lambda_1, \ldots, \lambda_r)$ and $\mu=(\mu_1, \ldots, \mu_s)$. Then $\fp \subset \fq$ if and only if there exists a subset $\cE \subset \{1, \ldots, r\}$ and a function $\phi \colon \cE \to \{1, \ldots, s\}$ such that the following conditions hold:
\begin{itemize}
\item For $1 \le \beta \le s$ we have $\mu_{\beta} \le \sum_{\alpha \in \cE_{\beta}} \lambda_{\alpha}$, where $\cE_{\beta}=\{\alpha \in \cE \mid \phi(\alpha)=\beta\}$. (Note that this implies that $\cE_{\beta} \ne \emptyset$, and so $\phi \vert_{\cE} \colon \cE \to \{1, \ldots, s\}$ is surjective.)
\item For each $1 \le \beta \le s$ with $\mu_{\beta}=\infty$, we have $d_{\beta} \le \sum_{\alpha \in \cE_{\beta}, \lambda_{\alpha}=\infty} e_{\alpha}$.
\item Letting $i \colon \bA^s \to \bA^{\cE}$ be the multi-diagonal map $\phi^*$ and $p \colon \bA^r \to \bA^{\cE}$ the projection map associated to the inclusion $\cE \subset \{1, \ldots, s\}$, we have $Y \subset i^{-1}(\ol{p(Z)})$.
\end{itemize}
\end{theorem}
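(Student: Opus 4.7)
My plan is to prove the two implications separately, reducing both to the analysis of local structure near a generic $\mu$-configuration; the radical case from \cite{svar} supplies the combinatorial data, and the local-structure theorems (Theorems~\ref{thm:class1} and~\ref{thm:class2}) pin down the weight condition.

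For sufficiency, suppose $\cE$ and $\phi$ satisfy conditions (i)--(iii), and take $f \in \fP(\lambda, e; Z)$. To show $f \in \fP(\mu, d; Y)$, I would pick a partition $\cU' = \{U'_1, \ldots, U'_s\}$ of type $\mu$ and a point $y \in \fX_{[\cU']}(Y)$, and verify the defining derivative conditions at $y$. Using (i), I construct a compatible partition $\cU = \{U_1, \ldots, U_r\}$ of type $\lambda$ realizing the degeneration encoded by $(\cE, \phi)$, and use (iii) to produce, via specialization along a one-parameter family (or equivalently by passage to the generic point of $Y$), a family $\tilde z_t \in \fX_{[\cU]}(Z)$ specializing to $y$. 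The required mixed derivative of $f$ at $y$ becomes the limit of mixed derivatives of $f$ at the $\tilde z_t$, with the number and orders of the derivatives at each $\tilde z_t$ controlled by condition (ii), $d_\beta \le \sum_{\alpha \in \cE_\beta, \lambda_\alpha = \infty} e_\alpha$: the total derivative order in the $\beta$-direction at $y$ distributes across the $\alpha$-directions at $\tilde z_t$ for $\alpha \in \cE_\beta$, while keeping each individual order strictly below the corresponding $e_\alpha$. Each such derivative at $\tilde z_t$ vanishes by $f \in \fP(\lambda, e; Z)$, forcing vanishing at $y$ in the limit.

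For necessity, assume $\fp \subset \fq$. Taking radicals gives $\fP(\lambda; Z) \subset \fP(\mu; Y)$; the classification of radical $\fS$-primes from \cite{svar}, together with its containment theorem, provides $\cE$ and $\phi$ satisfying (i) and (iii). Condition (ii) is the delicate part: I would argue by contradiction, assuming $d_\beta > \sum_{\alpha \in \cE_\beta, \lambda_\alpha = \infty} e_\alpha$ for some $\beta$ with $\mu_\beta = \infty$, and exhibit $f \in \fp \setminus \fq$. The candidate is a product $f = \prod_\alpha g_\alpha$ over $\alpha \in \cE_\beta$ with $\lambda_\alpha = \infty$, where $g_\alpha$ vanishes to order $e_\alpha$ along the $\alpha$-direction within the $\lambda$-stratum (concretely, built from $e_\alpha$-th powers of linear forms separating $U_\alpha$ from the other parts, combined with factors cutting out $Z$). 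Direct derivative computation shows $f \in \fp$. On the other hand, evaluating a mixed partial of $f$ at a generic $y \in \fX_{[\cV']}(Y)$ with one index per $\alpha$ (all lying in $V'_\beta$) yields a nonzero value with total derivative order equal to $\sum e_\alpha < d_\beta$, violating the $(\mu, d; Y)$-vanishing condition and giving $f \notin \fq$.

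The most delicate step is the explicit construction of $f$ in the necessity direction: the product must simultaneously satisfy the full $(\lambda, e; Z)$-vanishing condition at all points of $\fX_{[\cU]}(Z)$ (not just near $y$) while retaining a detectable failure of $(\mu, d; Y)$-vanishing at $y$. A more conceptual alternative would invoke the local-structure theorems~\ref{thm:class1} and~\ref{thm:class2} directly: these identify $\fp$ and $\fq$ with specific ideals in the completed local ring at a generic $\mu$-configuration, so that the containment $\fp \subset \fq$ transparently forces condition (ii) as an inequality of local multiplicities, without the need for an explicit $f$.
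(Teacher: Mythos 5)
There is a genuine gap, and it sits exactly at the hardest point of the theorem. Your sufficiency argument asserts that the mixed derivative of $f$ at $y$ of order $<d_\beta$ in a variable $\xi_i$ with $i\in U'_\beta$ is a limit of derivatives at $\tilde z_t$ whose orders "distribute across the $\alpha$-directions" and stay below each $e_\alpha$. But the defining conditions are derivatives in individual variables, not in "directions": after refining the partition, the index $i$ lies in a single part $V_\alpha$, and membership in $\fP(\lambda,e;Z)$ only controls $\partial_i^k f$ for $k<e_\alpha$, whereas $\fq$ requires control up to $k<d_\beta$ with $d_\beta$ possibly much larger than every individual $e_\alpha$. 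Recovering the higher-order vanishing at $y$ from the lower-order vanishing along the family is precisely the statement that weights add under collision, which the paper flags as one of its deepest results and which is \emph{false} for stationary collisions; no limiting argument of the kind you sketch can see the difference. The paper instead argues by contradiction: an element of $\fp\setminus\fq$ is skew-averaged against discriminants into the normal form $(1+g)\prod\Delta_{I_{\beta,\gamma}}$ with $\# I_{\beta,\gamma}=d_\beta$, and is then evaluated at a $T$-point over a valuation ring limiting onto $y$; the contradiction comes from the combinatorial fact that a discriminant in $d_\beta$ nilpotents survives when they are partitioned into blocks of sizes $\le e_\alpha$ with $\sum e_\alpha\ge d_\beta$. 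Your sketch contains none of this mechanism, and it also leans on the derivative description of the $\fP$-ideals, which the paper only validates in characteristic~$0$.

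The necessity direction is closer in spirit to the paper (exhibit $h\in\fp\setminus\fq$), but it is also incomplete. The theorem requires a \emph{single} pair $(\cE,\phi)$ satisfying (i), (ii), and (iii) simultaneously; obtaining (i) and (iii) from the radical containment and then treating (ii) separately does not produce such a pair, since the obstruction may be condition (ii) for some pairs and condition (iii) for others. The paper handles this by negating the whole disjunction: for a point $y\in Y$ outside $\Theta(Z)$ it builds $h=h_1h_2h_3$, where $h_1$ (disjoint discriminants on sets of size $d_\beta$) and $h_2$ detect failures of the numerical conditions, and $h_3$ is a product over \emph{all} good pairs of functions vanishing on $\ol{p(Z)}$ but not at $i(y)$, detecting failures of (iii). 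Your proposed $f=\prod_\alpha g_\alpha$ is not verified to lie in $\fp$ nor to miss $\fq$, and the suggested "conceptual alternative" via Theorems~\ref{thm:class1} and~\ref{thm:class2} does not work as stated: those theorems localize along a partition of type $\lambda$ for $\fp$ and of type $\mu$ for $\fq$, so the two ideals do not sit in a common completed local ring; the paper's actual reduction (Proposition~\ref{prop:ext}) expresses the extension of $\fp$ to $\tilde R_{\cU}$ as a nontrivial intersection of ideals $\fP^{\phi}_{\cU}(\kappa,e;Z)$ and still requires the full punctual-case argument afterwards.
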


The criterion given in Theorem~\ref{mainthm2} is admittedly somewhat complicated looking. However, we emphasize that it boils down to testing finitely many containments between finite dimensional varieties. In particular, it is completely algorithmic to determine if the containment $\fp \subset \fq$ holds. In fact, the criterion has a geometric interpretation that is more intuitive, as we discuss in \S \ref{ss:intro-spec} below.

The proof of Theorem~\ref{mainthm2} is long and difficult. We briefly describe some of what goes into it. Let (C) denote the condition that there exists $(\cE, \phi)$ satisfying the requirements in the theorem. Thus Theorem~\ref{mainthm2} states that $\fp \subset \fq$ if and only if condition (C) holds. To show that $\fp \subset \fq$ implies (C), we prove the contrapositive: assuming (C) does not hold, we explicitly write down an element of $\fp$ not belonging to $\fq$. Now assume (C) holds. First consider the case where $\mu=(\infty)$, $d=(n)$, and $Y=\{0\}$; thus $\fq=\langle \xi_i^n \rangle_{i \ge 1}$ (Example~\ref{ex:P-inf-n-zero}). Suppose $\fp \not \subset \fq$. Then there exists an element $f \in \fp$ that does not belong to $\fq$, meaning there is some monomial appearing in $f$ in which all exponents are $<n$. Using this, we build an element of $\fp$ that has a specific form, and show that this contradicts (C); see \S \ref{sss:simple} for the details in this case. To handle the general case, we essentially reduce to the special case just considered; see \S \ref{ss:contain-statement} for a more detailed summary.

\begin{example}
Take $\lambda=(\infty,\infty)$ and let $Z$ be an irreducible curve in $\bU^2$. Let $\fp$ be the radical $\fS$-prime $\fP(\lambda;Z)$, and let $\fq_n$ be the $\fS$-prime $\fP((\infty),(n);\{0\})$ from Example~\ref{ex:P-inf-n-zero}. Then, according to Theorem~\ref{mainthm2}:
\begin{itemize}
\item The containment $\fp \subset \fq_1$ holds.
\item We have $\fp \subset \fq_2$ if and only if the Zariski closure of $Z$ in $\bA^2$ contains the origin.
\item The containment $\fp \subset \fq_n$ never holds for $n \ge 3$. \qedhere
\end{itemize}
\end{example}

\begin{remark}
Theorems~\ref{mainthm} and~\ref{mainthm2} yield a classification of $\fS$-ideals of $R$ up to copotency. Indeed, each copotency class contains a unique $\fS$-radical ideal, and $\fS$-radical ideals correspond to finite collections of $\fS$-having no containments. Such collections can be classified using the two theorems.
\end{remark}

\subsection{Generators} \label{ss:intro-gen}

Suppose that $\fp$ is a prime ideal of the ring $\bC[x_1, \ldots, x_n]$ corresponding to an irreducible subvariety $X=V(\fp)$ of $\bA^n$. It can often be a subtle matter to find generators for $\fp$. Typically, it is much easier to find equations defining $X$ set-theoretically, as this is a more geometric problem. In terms of algebra, this amounts to finding polynomials $f_1, \ldots, f_r$ such that $\fp$ is the radical of the ideal generated by $f_1, \ldots, f_r$.

Now suppose that $\fp$ is an $\fS$-prime of $R$. Once again, it can be quite difficult to find generators for $\fp$. The more tractable (and, in a sense, geometric) problem, similar to the previous paragraph, is to find polynomials $f_1, \ldots, f_r \in R$ such that $\fp$ is the $\fS$-radical of the $\fS$-ideal generated by $f_1, \ldots, f_r$.

We solve this problem for all $\fS$-primes of $R$. That is, given data $(\lambda, e; Z)$, we explicitly define polynomials $f_1, \ldots, f_r$ of $R$ such that $\fP(\lambda,e;Z)$ is the $\fS$-radical of the $\fS$-ideal generated by the $f_i$'s. The construction is somewhat complicated, so we do not describe it here: see \S \ref{s:gen} for details. The key step in proving that the construction works is to show that if $\fq$ is an $\fS$-prime containing the $f_i$'s then $\fq$ contains $\fP(\lambda,e;Z)$. We accomplish this by appealing to Theorems~\ref{mainthm} and~\ref{mainthm2}.

\begin{example} \label{ex:gen1}
Take $\lambda=(\infty,\infty)$ and $Z=\bU^2$. Recall from Example~\ref{ex:rad}(c) that $\fP(\lambda)$ is generated by the $\fS$-orbit of the discriminant $\Delta_3=(\xi_1-\xi_2)(\xi_1-\xi_3)(\xi_2-\xi_3)$. Let $e=(2,2)$. We show that $\fP(\lambda,e)$ is the $\fS$-radical of the ideal generated by the orbits of
\begin{displaymath}
\Delta_5, \qquad \Delta_3 \cdot \prod_{i=1}^3 (\xi_1-\xi_4)^3, \qquad \Delta_3^3.
\end{displaymath}
Here $\Delta_5$ is the discriminant on $\xi_1, \ldots, \xi_5$. See Example~\ref{ex:2-2} for details.
\end{example}

\begin{example}
Let $\lambda=(\infty,\infty)$ and let $Z$ be the the intersection of the unit circle in $\bA^2$ with $\bU^2$. We described the radical $\fS$-prime $\fP(\lambda;Z)$ in Example~\ref{ex:rad}(e). Now let $e=(2,2)$. We show that $\fP(\lambda,e;Z)$ is the $\fS$-radical of the $\fS$-ideal generated by the three generators from Example~\ref{ex:gen1} together with the two additional elements
\begin{displaymath}
(\xi_1-\xi_2)^3 \cdot (\xi_1^2+\xi_2^2-1)^4, \qquad
\Delta_3 \cdot \prod_{1 \le i<j \le 3} (\xi_i^2+\xi_j^2-1)^{4}.
\end{displaymath}
This example is discussed in detail in \S \ref{ss:circle}.
\end{example}

\subsection{The equivariant spectrum} \label{ss:intro-spec}

The \defi{equivariant spectrum} of $R$, denoted $\Spec_{\fS}(R)$, is the set of $\fS$-prime ideals of $R$ equipped with a version of the Zariski topology (see \S \ref{ss:Gspec}). Our two main theorems describe this space: the classification theorem (Theorem~\ref{mainthm}) describes its point-set, while the containment theorem (Theorem~\ref{mainthm2}) gives (or leads to) a description of the topology. We now explain how this space, and our theorems, can be pictured more geometrically.

Consider an $\fS$-prime $\fp=\fP(\lambda,e;Z)$ where $\lambda=(\lambda_1,\ldots,\lambda_r)$ and $Z$ is a closed subvariety of $\bU^r$. First suppose that $Z=\{x\}$ is a single point. We picture $x$ as a collection of $r$ distinct points $x_1, \ldots, x_r$ in $\bA^1$, i.e., a configuration. We regard $x_i$ as having multiplicity $\lambda_i$ and weight $e_i$. In this way, we represent $\fp$ geometrically as a configuration in $\bA^1$ equipped with multiplicity and weight data. We note that a point of $\bU^r$ corresponds to a labeled configuration, but since $\fp$ only depends on $(\lambda,e;Z)$ up to isomorphism, $\fp$ corresponds to an unlabeled configuration. When $Z$ is positive dimensional, we picture $\fp$ as the generic point of a family of such configurations. We thus see that $\Spec_{\fS}(R)$ can be pictured as the space of unordered configurations in $\bA^1$ (in a scheme-theoretic sense) with multiplicity and weight data, together with an additional generic point (corresponding to the zero ideal of $R$). This is essentially a reformulation of Theorem~\ref{mainthm}.

We now describe the topology on $\Spec_{\fS}(R)$ from the configuration point of view. There are really two important properties:
\begin{itemize}
\item Let $x$ be a configuration with multiplicity and weight data. Let $y$ be obtained from $x$ by deleting points, reducing multiplicities, and/or reducing weights. Then $y$ is a specialization of $x$ in $\Spec_{\fS}(R)$, i.e., $y$ belongs to the closure of $\{x\}$.
\item Let $x(t)$ be a 1-parameter family of configurations, defined for $t \ne 0$, with multiplicity and weight data. For simplicity, we choose a labeling of the points $x(t)=(x_1(t), \ldots, x_r(t))$. Suppose that each $x_i(t)$ converges as $t \to 0$. Let $y_1, \ldots, y_s \in \bA^1$ be the distinct points obtained as such limits. Define the multiplicity of $y_i$ to be the sum of the multiplicities of the $x_i$'s that converge to $y_i$. If the multiplicitiy of $y_i$ is infinite, define its weight to be the sum of the weights of the $x_i$'s having infinite multiplicity that converge to $y_i$; otherwise, give $y_i$ weight~1. Then $x(t)$ converges to $y$ in $\Spec_{\fS}(R)$ as $t \to 0$.
\end{itemize}
The above two properties are essentially equivalent to Theorem~\ref{mainthm2}; see Remark~\ref{rmk:theta} for more details. These properties do not completely describe the topology on $\Spec_{\fS}(R)$, but do capture the most interesting aspects of it.

The above discussion has been somewhat imprecise by design. A precise and rigorous discussion along these lines is carried out in \S \ref{s:spec}.

\begin{remark}
Here is a direct description of how to go from the configuration point of view to $\fS$-primes. Let $x=(x_1, \ldots, x_r)$ be a configuration with multiplicity data $\lambda$ and weight data $e$ corresponding to the $\fS$-prime $\fp$. Define a $T$-point $\tilde{x}$ of $\Spec(R)=(\bA^1)^{\infty}$ by letting $\lambda_i$ coordinates be a jet of order $e_i$ in $\bA^1$ at $x_i$; here $T$ is a tensor product of algebras of the form $K[\epsilon]/\langle \epsilon^n \rangle$, where $K$ is a field of definition of $x$. Then $V(\fp)$ is the scheme-theoretic closure of $\fS \tilde{x}$.
\end{remark}

\begin{remark}
In the above discussion, we see that weights and multiplicities both add when points collide. For multiplicities, this is trivial; for weights, it is one of the deepest results of this paper. Here is a simple example that demonstrates why weights are more difficult to handle. Let $x=(a,b)$ be a point of $\bA^2$ with multiplicity data $(\infty,\infty)$ and weight data $(e,e)$, and define $\tilde{x}$ as in the previous remark. If $a \ne b$ then $x$ belongs to $\bU^2$, and the closure of the $\fS$-orbit of $\tilde{x}$ is scheme defined by the $\fS$-prime $\fP((\infty,\infty),(e,e); \{x\})$. If $a=b$ then $\tilde{x}$ coincides with $\tilde{y}$, where $y \in \bA^1$ is the point $a$ with multiplicity $\infty$ and weight $e$; the closure of the $\fS$-orbit of $\tilde{x}$ is the scheme defined by the $\fS$-prime $\fP((\infty),(e); \{a\})$. This shows that in a ``stationary collision'' weights do not add. Multiplicities still do add, however. Thus the behavior of weights is much more subtle.
\end{remark}

\subsection{Additional comments}

We are currently working on two projects related to this paper:
\begin{itemize}
\item In \cite{smod} we build on the results of this paper and study $\fS$-equivariant modules. We establish a number of results, including a version of primary decomposition and a description of the Grothendieck group.
\item We are also working on generalizing our results to the situation where $R$ is replaced by $A^{\otimes \infty}$, where $A$ is a finitely generated $\bC$-algebra; the ring $R$ considered in this paper is obtained by taking $A=\bC[\xi]$.
\end{itemize}
We mention some other work related to this paper:
\begin{itemize}
\item Equivariant commutative algebra with respect to the general linear group, and other algebraic (super)groups, is studied in the papers \cite{ganapathy, tcaspec, eqprimes}.
\item The locus $\fX_{\lambda}$ is a subspace arrangment, i.e., a union of linear subspaces. Ideals of subspace arrangements are studied in \cite{bps, loera, lovasz, schenck}. Some of the results in these works are relevant to the problem of determing generators for the radical $\fS$-primes $\fP(\lambda)$. It would be interesting if any of this work could be adapted to the study of the $\fS$-ideals $\fP(\lambda,e)$, which are closely related.
\item The results of \cite{raicu} give the resolutions of monomial $\fS$-ideals of $R$. It would be interesting to extend these results to other $\fS$-ideals, especially the $\fS$-primes.
\end{itemize}
We also mention an interesting problem stemming from this paper:
\begin{itemize}
\item Give an algorithm that takes as input $f_1, \ldots, f_r \in R$ and returns a description of the $\fS$-radical of the $\fS$-ideal they generate as an intersection of $\fS$-primes, specified by our parametrization.
\end{itemize}

\subsection{Outline}

In \S \ref{s:eqca}, we develop equivariant commutative algebra in the abstract. In \S \ref{s:ring}, we introduce the infinite variable polynomial ring and the $\fS$-prime ideals $\fP(\lambda,e;Z)$, and prove a few elementary results about them. In \S \ref{s:contract}, we prove the contraction result needed in the proof of the classification theorem. The heart of the paper is \S \ref{s:class} and \S \ref{s:contain}, where we prove the classification theorem (Theorem~\ref{mainthm}) and the containment theorem (Theorem~\ref{mainthm2}), respectively. In \S \ref{s:gen}, we determine generators for $\fS$-primes (up to $\fS$-radical), and finally, in \S \ref{s:spec}, we describe the equivariant spectrum.

\subsection{Notation and conventions}

All rings in this paper are commutative with unit. The following is the most important notation:
\begin{description}[align=right,labelwidth=2.5cm,leftmargin=!]
\item[ ${[\infty]}$ ] the set of positive integers
\item[ $\fS$ ] the infinite symmetric group
\item[ $\fY_{\cU}$ ] the Young subgroup of $\fS$ corresponding to a partition $\cU$ of $[\infty]$
\item[ $A$ ] the coefficient ring
\item[ $R$ ] the polynomial ring $A[\xi_i]_{i \ge 1}$ (except in \S \ref{s:eqca})
\item[ $\bA^{\cI}$ ] affine space with coordinates indexed by the finite set $\cI$
\item[ $\bU^{\cI}$ ] the open subspace of $\bA^{\cI}$ where all coordinates are distinct
\item[ $\fP(\lambda,e;Z)$ ] the $\fS$-ideal of $R$ defined in \S \ref{ss:P}
\end{description}
Some additional important notation is introduced in \S \ref{ss:setup}.

\section{Equivariant commutative algebra} \label{s:eqca}

In this section we develop what we need from equivariant commutative algebra. We work in the setting of smooth representations of what we call ``admissible'' topological groups. This is far from the most general setup: one can work with larger classes of representations, or, even more generally, in the setting of a tensor category; see \cite[\S 2]{eqprimes} for more on this perspective. Some of the results we prove here are also contained in \cite[\S 2]{eqprimes} or \cite[\S 2]{tcaspec}, but many are new.

\subsection{Smooth representations}

We begin by introducing the kind of groups and actions we will work with.

\begin{definition}
An \defi{admissible group} is a Hausdorff topological group $G$ satisfying the following two conditions:
\begin{enumerate}
\item The open subgroups of $G$ form a neighborhood basis of the identity element.
\item $G$ is Roelcke precompact: if $U$ and $V$ are open subgroups of $G$ then the double coset space $U \backslash G /V$ is finite. \qedhere
\end{enumerate}
\end{definition}

\begin{definition}
An action of an admissible group $G$ on a set $X$ is \defi{smooth} if the stabilizer of each element of $X$ is an open subgroup of $G$.
\end{definition}

\begin{example}
We give some examples of admissible groups.
\begin{enumerate}
\item Any finite group (with the discrete topology).
\item The infinite symmetric group $\fS=\bigcup_{n \ge 1} \fS_n$, equipped with the coarsest topology for which the action on $[\infty]$ is smooth. Precisley, let $\fS_{>n}$ be the subgroup fixing each of $1, \ldots, n$. Then these subgroups form a neighborhood basis of the identity. This is the main example of interest in this paper.
\item Any finite product of copies of $\fS$, or any subgroup containing such a product as a finite index subgroup (with a similar topology as above). In particular, the Young subgroups of $\fS$ and their normalizers are admissible. These will also be important examples in this paper.
\item The group $\GL_{\infty}(\bF)=\bigcup_{n \ge 1} \GL_n(\bF)$, where $\bF$ is a finite field, equipped with the coarsest topology for which the action on $\bF^{\infty}=\bigcup_{n \ge 1} \bF^n$ is smooth. \qedhere
\end{enumerate}
\end{example}

We fix an admissible group $G$ for the remainder of \S \ref{s:eqca}. We say that a $G$-set is finitely generated if it has finitely many orbits. We have the following important consequence of Roelcke precompactness:

\begin{proposition} \label{prop:prodfg}
If $X$ and $Y$ are finitely generated smooth $G$-sets then so is $X \times Y$.
\end{proposition}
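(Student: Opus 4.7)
The plan is to reduce immediately to the case of transitive $G$-sets, and then identify the orbits of the product with a double coset space, which is finite by Roelcke precompactness.

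First I would note that smoothness is trivial: the stabilizer of $(x,y)\in X\times Y$ is $G_{(x,y)} = G_x \cap G_y$, which is the intersection of two open subgroups and hence open. So the only content is finite generation of the orbit space.

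Next I would decompose $X = \bigsqcup_{i=1}^{m} G x_i$ and $Y = \bigsqcup_{j=1}^{n} G y_j$ as finite disjoint unions of orbits, using that both are finitely generated. Then
\begin{displaymath}
X \times Y = \bigsqcup_{i,j} (Gx_i) \times (Gy_j),
\end{displaymath}
and each factor is $G$-stable under the diagonal action. It therefore suffices to show that a product $Gx \times Gy$ of two transitive smooth $G$-sets has only finitely many $G$-orbits.

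For this step, I would identify $Gx \cong G/U$ and $Gy \cong G/V$ as $G$-sets, where $U = G_x$ and $V = G_y$ are open subgroups of $G$. The standard computation of orbits on a product of coset spaces gives a bijection
\begin{displaymath}
G \backslash (G/U \times G/V) \;\xrightarrow{\;\sim\;}\; U \backslash G / V, \qquad G\cdot(gU, hV) \mapsto U g^{-1} h V,
\end{displaymath}
which is well-defined and injective by a direct check. By condition (b) in the definition of an admissible group (Roelcke precompactness), the double coset space $U \backslash G / V$ is finite, and we are done.

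There is no real obstacle here: the only potentially delicate point is verifying that the map to double cosets is well-defined and a bijection, but this is a standard and purely formal computation. The substantive input is Roelcke precompactness, which is built into the definition of admissibility precisely to make statements like this work.
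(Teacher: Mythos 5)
Your proposal is correct and follows the paper's own argument exactly: reduce to the case where $X$ and $Y$ are transitive, identify $G \backslash (G/U \times G/V)$ with $U \backslash G / V$, and invoke Roelcke precompactness. The extra details you supply (openness of the stabilizer of a pair, the explicit bijection to double cosets) are the routine verifications the paper leaves to the reader.
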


\begin{proof}
It is clear that $X \times Y$ is smooth. To prove finite generation, it suffices to treat the case where $X$ and $Y$ are transitive; thus $X=G/U$ and $Y=G/V$ for open subgroups $U$ and $V$. We have $G \backslash (X \times Y) \cong U \backslash G / V$, which is finite by assumption. Thus the result follows.
\end{proof}

Let $\MOD_G$ be the category of all left $\bZ[G]$-modules and $\Mod_G$ the full subcategory spanned by the smooth modules (i.e., $\bZ[G]$-modules for which the action of $G$ is smooth). One easily sees the following:
\begin{itemize}
\item Any subquotient of a smooth $\bZ[G]$-module is smooth. In particular, $\Mod_G$ is an abelian subcategory of $\MOD_G$.
\item Any direct sum of smooth $\bZ[G]$-modules is smooth. In particular, $\Mod_G$ is closed in $\MOD_G$ under arbitrary colimits, and filtered colimits are exact.
\item The direct sum of all smooth cyclic $\bZ[G]$-modules (taken up to isomorphism) is a generator for $\Mod_G$. Thus $\Mod_G$ is a Grothendieck abelian category.
\item If $V$ and $W$ are smooth $\bZ[G]$-modules then so is $V \otimes W$. Thus $\Mod_G$ carries a natural tensor structure.
\end{itemize}
We also have the following property that will be important to our discussion:

\begin{proposition} \label{prop:tenfg}
If $V$ and $W$ are finitely generated smooth $\bZ[G]$-modules then so is $V \otimes W$.
\end{proposition}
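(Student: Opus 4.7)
The plan is to reduce the claim to a direct application of Proposition~\ref{prop:prodfg} via free generators. Since $V$ is finitely generated and smooth, I can choose generators $v_1,\ldots,v_m$ with open stabilizers $U_i = \Stab_G(v_i)$, which gives a surjection $\bigoplus_{i=1}^m \bZ[G/U_i] \twoheadrightarrow V$ of smooth $\bZ[G]$-modules. Analogously, I get a surjection $\bigoplus_{j=1}^n \bZ[G/V_j] \twoheadrightarrow W$. Tensoring (the tensor product of surjections is a surjection, and $-\otimes -$ on $\Mod_G$ commutes with direct sums in each variable), I obtain a surjection
\[
\bigoplus_{i,j} \bZ[G/U_i] \otimes \bZ[G/V_j] \twoheadrightarrow V \otimes W.
\]
Since a quotient of a finitely generated module is finitely generated, it suffices to show that each $\bZ[G/U]\otimes \bZ[G/V]$ is finitely generated over $\bZ[G]$ for $U,V$ open subgroups of $G$.

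Next I would identify $\bZ[G/U]\otimes_{\bZ}\bZ[G/V]$ as the free abelian group on the $G$-set $G/U\times G/V$, with the diagonal $G$-action. Under this identification, finite generation as a $\bZ[G]$-module is equivalent to the $G$-set $G/U\times G/V$ having finitely many orbits: indeed, choosing one representative $(g_\alpha U, h_\alpha V)$ from each orbit gives a finite generating set, and conversely any finite generating set of the free module must meet each orbit.

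The key input is now Proposition~\ref{prop:prodfg}: since $G/U$ and $G/V$ are finitely generated smooth $G$-sets, their product is as well. Hence $G/U\times G/V$ has finitely many $G$-orbits, so $\bZ[G/U]\otimes\bZ[G/V]$ is a finitely generated $\bZ[G]$-module. Combining with the surjection above yields finite generation of $V\otimes W$. I should also briefly note smoothness of $V\otimes W$ (though this was already recorded in the bullet list preceding the statement): a pure tensor $v\otimes w$ is fixed by $\Stab_G(v)\cap\Stab_G(w)$, which is open as a finite intersection of open subgroups, and a general element of $V\otimes W$ is a finite sum of pure tensors whose stabilizer is the intersection of the relevant open stabilizers.

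There is no real obstacle here; the only substantive ingredient is Roelcke precompactness, which is already packaged in Proposition~\ref{prop:prodfg}. The mild care point is making sure the reduction via surjections is valid in $\Mod_G$, but this is immediate from right-exactness of $\otimes$ and its compatibility with direct sums, both of which follow from the standard construction of the tensor product as a quotient of the free module on pairs.
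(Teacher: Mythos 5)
Your proof is correct and rests on the same key input as the paper's: Proposition~\ref{prop:prodfg} applied to a product of finitely generated smooth $G$-sets. The paper phrases the reduction via generating $G$-subsets $X\subset V$ and $Y\subset W$ rather than via surjections from permutation modules $\bZ[G/U]$, but this is only a cosmetic difference.
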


\begin{proof}
Since $V$ is finitely generated as an $\bZ[G]$-module, it is generated as an $\bZ$-module by the orbits of finitely many elements; in other words, there is a finitely generated $G$-subset $X \subset V$ that generates $V$ as an $\bZ$-module. Similarly, there is a finitely generated $G$-subset $Y \subset W$ that generates $W$ as an $\bZ$-module. The set $Z=\{x \otimes y \mid x \in X, y \in Y\}$ thus generates $V \otimes W$ as an $\bZ$-module. Since $Z$ is a quotient of $X \times Y$, it is finitely generated as a $G$-set (Proposition~\ref{prop:prodfg}), and so the result follows.
\end{proof}

\begin{example}
Let $V_r$ be the free $\bZ$-module with basis $e_{i_1,\ldots,i_r}$ where $i_1,\ldots,i_r$ are distinct elements of $[\infty]$. Then $V_r$ is a smooth $\bZ[\fS]$-module, and every smooth $\bZ[\fS]$-module is a quotient of a direct sum of $V_r$'s. One easily sees that $V_r \otimes V_s$ decomposes as a finite direct sum of $V_k$'s with $k \le r+s$. This gives another proof of Proposition~\ref{prop:tenfg} in the case $G=\fS$.
\end{example}

\begin{remark}
We refer the reader to \cite{rovinsky,rovinsky2,rovinsky3}, and especially \cite[\S 2]{rovinsky}, for further discussion of admissible groups and their representations.
\end{remark}

\subsection{Equivariant rings, ideals, and modules}

A \defi{$G$-ring} is a commutative algebra object in the tensor category $\Mod_G$; in other words, it is a commutative ring $R$ equipped with a smooth action of $G$ by ring automorphisms. Fix a $G$-ring $R$. A \defi{$G$-ideal} of $R$ is a $G$-stable ideal of $R$. An \defi{$(R,G)$-module} is a module object for $R$ in $\Mod_G$; that is, it is a $G$-equivariant $R$-module $M$ such that the action of $G$ on $M$ is smooth.

When working in the equivariant setting, $G$-submodules will often take the role of elements. We introduce a notation to make this analogy more prominent: for a $G$-module $M$, we let $[M]$ denote the set of all $G$-submodules of $M$, and $[M]^{\rf}$ the set of all finitely generated $G$-submodules of $M$. Suppose $M$ is an $(R,G)$-module. Given $X \in [R]$ and $Y \in [M]$, we let $XY \in M$ be the set of all sums $\sum_{i=1}^n x_i y_i$ with $x_i \in X$ and $y_i \in Y$. We note that if $X \in [R]^{\rf}$ and $Y \in [M]^{\rf}$ then $XY \in [M]^{\rf}$ by Proposition~\ref{prop:tenfg}. We let $\bone_R$ be the $\bZ$-submodule of $R$ generated by~1. This satisfies $\bone_R Y=Y$ for any $Y \in [M]$.

\subsection{Primes}

Fix a $G$-ring $R$. We now introduce the most important definition for this paper:

\begin{definition}
A $G$-ideal $\fp$ of $R$ is \defi{$G$-prime} if  $\fp \ne R$ and $XY \in [\fp]$ implies $X \in [\fp]$ or $Y \in [\fp]$ for $X,Y \in [R]$. We say that $R$ is \defi{$G$-integral} or a \defi{$G$-domain} if the zero ideal is $G$-prime.
\end{definition}

If $\fp$ is a $G$-stable prime ideal then $\fp$ is $G$-prime: indeed, if $XY \in [\fp]$ and $X \not\in [\fp]$ then there exists $x \in X \setminus \fp$ and so for any $y \in Y$ we have $xy \in \fp$, whence $y \in \fp$; thus $Y \in [\fp]$. The converse to this is not true: in fact, a $G$-prime need not be radical; see Proposition~\ref{prop:powers-prime}. There are several other characterizations of $G$-primes (we leave the proof to the reader):

\begin{proposition}
Let $\fp$ be a $G$-ideal of $R$. The following are equivalent:
\begin{enumerate}
\item $\fp$ is $G$-prime.
\item $XY \in [\fp]$ implies $X \in [\fp]$ or $Y \in [\fp]$ for all $X,Y \in [R]^{\rf}$.
\item $\fa \fb \subset \fp$ implies $\fa \subset \fp$ or $\fb \subset \fp$ for $G$-ideals $\fa,\fb \subset R$.
\item Given $x,y \in R$ such that $x \cdot \sigma y \in \fp$ for all $\sigma \in G$ we have $x \in \fp$ or $y \in \fp$.
\end{enumerate}
\end{proposition}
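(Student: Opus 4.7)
The plan is to split the four-way equivalence into the three independent pairs (a) $\Leftrightarrow$ (b), (a) $\Leftrightarrow$ (c), and (a) $\Leftrightarrow$ (d), and handle each by the same device: pass between an arbitrary $G$-submodule $X \subset R$ and either the cyclic $G$-submodule $\bZ[G]x$ or the $G$-ideal $RX$, applied to single elements $x \in X$.

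For (a) $\Leftrightarrow$ (b), the forward direction is immediate. For (b) $\Rightarrow$ (a), suppose $X, Y \in [R]$ satisfy $XY \subset \fp$ and pick $x \in X \setminus \fp$. For each $y \in Y$, the cyclic submodules $\bZ[G]x, \bZ[G]y \in [R]^{\rf}$ satisfy $(\bZ[G]x)(\bZ[G]y) \subset XY \subset \fp$, and since $x \notin \fp$, hypothesis (b) forces $y \in \fp$; hence $Y \subset \fp$.

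For (a) $\Leftrightarrow$ (c), a $G$-ideal is in particular a $G$-submodule, and its ideal-theoretic product agrees with the product of $G$-submodules as defined in the text, so (a) $\Rightarrow$ (c) is automatic. Conversely, given $X, Y \in [R]$ with $XY \subset \fp$, pass to the $G$-ideals $\fa = RX$ and $\fb = RY$; a short computation gives $\fa \fb \subset R \cdot (XY) \subset \fp$ (using that $\fp$ is an $R$-module), so (c) applied to $\fa, \fb$ yields $RX \subset \fp$ or $RY \subset \fp$, whence $X \subset \fp$ or $Y \subset \fp$.

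For (a) $\Leftrightarrow$ (d), assume (a) and suppose $x \cdot \sigma y \in \fp$ for all $\sigma \in G$; setting $X = \bZ[G]x$ and $Y = \bZ[G]y$, every generator $\tau x \cdot \sigma y$ of $XY$ equals $\tau(x \cdot \tau^{-1}\sigma y) \in \fp$ by $G$-stability, so $XY \subset \fp$ and (a) gives the conclusion. Conversely, assuming (d) and given $X, Y \in [R]$ with $XY \subset \fp$ and some $y \in Y \setminus \fp$, one has $x \cdot \sigma y \in XY \subset \fp$ for every $x \in X$ and $\sigma \in G$, so (d) forces $x \in \fp$, giving $X \subset \fp$. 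No step here is genuinely hard; the only subtlety worth flagging is the bookkeeping distinction between $XY$ as the $\bZ$-submodule spanned by products and the enveloping $R$-submodule, but these collapse once one works modulo $\fp$ (since $\fp$ is itself an $R$-module), which is precisely what makes (c) $\Rightarrow$ (a) go through.
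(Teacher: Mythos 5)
Your proof is correct; the paper leaves this proposition to the reader, and your argument—passing between arbitrary $G$-submodules, the cyclic submodules $\bZ[G]x$, and the generated $G$-ideals $RX$, using that these all coincide modulo the ideal $\fp$—is exactly the intended standard one.
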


We now establish some elementary results about $G$-primes; for the most part, these are analogs of standard facts about prime ideals.

\begin{proposition} \label{prop:min-prime}
Every $G$-prime of $R$ contains a minimal $G$-prime.
\end{proposition}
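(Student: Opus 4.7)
The plan is a routine Zorn's lemma argument, directly mimicking the classical proof that every prime ideal contains a minimal prime. Let $\fp$ be a $G$-prime of $R$, and let $S$ be the collection of all $G$-primes of $R$ contained in $\fp$, partially ordered by reverse inclusion. The set $S$ is nonempty since $\fp \in S$, and a minimal element of $S$ in the reverse inclusion order is exactly a minimal $G$-prime of $R$ contained in $\fp$. By Zorn's lemma, it suffices to check that every chain in $S$ has an upper bound in $S$, i.e., that if $\{\fp_i\}_{i \in I}$ is a totally ordered (by inclusion) family of $G$-primes of $R$ contained in $\fp$, then $\fq := \bigcap_{i \in I} \fp_i$ is again a $G$-prime contained in $\fp$.

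Clearly $\fq$ is a $G$-ideal contained in $\fp$, and $\fq \ne R$ since $\fq \subset \fp \ne R$. The crux of the argument is thus to verify that $\fq$ satisfies the $G$-prime condition, using (for instance) criterion (c) of the preceding proposition: if $\fa$ and $\fb$ are $G$-ideals of $R$ with $\fa \fb \subset \fq$, I need to show $\fa \subset \fq$ or $\fb \subset \fq$. Suppose for contradiction that neither containment holds. Then there exist indices $i, j \in I$ with $\fa \not\subset \fp_i$ and $\fb \not\subset \fp_j$. Since the chain is totally ordered, one of $\fp_i, \fp_j$ is contained in the other; call the smaller one $\fp_k$. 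Because $\fp_k$ is contained in both $\fp_i$ and $\fp_j$, the failures $\fa \not\subset \fp_i$ and $\fb \not\subset \fp_j$ force $\fa \not\subset \fp_k$ and $\fb \not\subset \fp_k$. But $\fa \fb \subset \fq \subset \fp_k$ and $\fp_k$ is $G$-prime, giving the desired contradiction.

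There is no real obstacle here: the only subtlety worth highlighting is that totally ordered chains of $G$-primes behave exactly like chains of ordinary primes with respect to this kind of argument, because we may always pass to the smaller one of any two primes in the chain; no use of elements of $R$ is needed, only the $G$-ideal-theoretic criterion for $G$-primeness. Given this lemma on chain intersections, Zorn's lemma immediately produces a minimal $G$-prime of $R$ contained in $\fp$, completing the proof.
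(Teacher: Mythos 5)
Your proof is correct and is essentially the same as the paper's: a Zorn's lemma argument on the set of $G$-primes contained in $\fp$, reduced to checking that the intersection of a descending chain of $G$-primes is $G$-prime (a step the paper declares "easily seen" and you verify in full, correctly, via the $G$-ideal criterion).
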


\begin{proof}
Let $\fp$ be a $G$-prime of $R$ and let $\Sigma$ be the set of all $G$-primes of $R$ contained in $\fp$. Of course, $\Sigma$ is non-empty since $\fp \in \Sigma$. If $\{\fq_i\}_{i \in I}$ is a descending chain in $\Sigma$ then $\bigcap_{i \in I} \fq_i$ is easily seen to be a $G$-prime, and clearly belongs to $\Sigma$. Thus the result follows by Zorn's lemma.
\end{proof}

A \defi{multiplicative $G$-system} in $R$ is a subset $S$ of $[R]^{\rf}$ that contains $\bone_R$ and is closed under products (i.e., $X,Y \in S$ implies $XY \in S$). We say that a $G$-submodule $Y \subset R$ is \defi{disjoint} from $S$ if $[Y]$ and $S$ are disjoint subsets of $[R]$, i.e., there is no subrepresentation of $Y$ that belongs to $S$.

\begin{proposition} \label{prop:max-prime}
Let $S$ be a multiplicative $G$-system in $R$ such that $0 \not\in S$. Let $\Sigma$ be the set of all $G$-ideals of $R$ that are disjoint from $S$. Then
\begin{enumerate}
\item $\Sigma$ is non-empty.
\item Every element of $\Sigma$ is contained in a maximal element.
\item Every maximal element of $\Sigma$ is $G$-prime.
\end{enumerate}
In particular, there exists a $G$-prime of $R$ that is disjoint from $S$.
\end{proposition}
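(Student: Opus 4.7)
The plan is to handle the three claims in order, with (a) essentially immediate, (b) a routine Zorn application once one notices that elements of $S$ are finitely generated, and (c) using the standard "enlarge past two witnesses" trick together with multiplicative closure.

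For (a), I would take $\fp = (0)$. Its only $G$-submodule is $0$ itself, and $0 \notin S$ by hypothesis, so $(0) \in \Sigma$ and $\Sigma$ is non-empty.

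For (b), I would order $\Sigma$ by inclusion and apply Zorn's lemma. Given a chain $\{\fa_i\}_{i \in I}$ in $\Sigma$, set $\fa = \bigcup_i \fa_i$; this is a $G$-ideal. If some $X \in S$ were a $G$-submodule of $\fa$, then since $X \in [R]^{\rf}$ is finitely generated as a $\bZ[G]$-module, its finitely many $\bZ[G]$-generators would all lie in a common $\fa_{i_0}$ (by the chain condition), giving $X \subset \fa_{i_0}$ and contradicting $\fa_{i_0} \in \Sigma$. Hence $\fa \in \Sigma$ is an upper bound and Zorn produces a maximal element above any specified element of $\Sigma$.

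For (c), the main substantive part, let $\fp$ be maximal in $\Sigma$. Note $\fp \ne R$, since $\bone_R \in S$ forces $\bone_R \not\subset \fp$, i.e.\ $1 \notin \fp$. Suppose toward contradiction that $\fp$ is not $G$-prime. By the equivalent condition (b) of the preceding proposition, there exist $X, Y \in [R]^{\rf}$ with $X, Y \notin [\fp]$ but $XY \subset \fp$. Form the $G$-ideals $\fa = \fp + RX$ and $\fb = \fp + RY$; both strictly contain $\fp$, so by maximality neither lies in $\Sigma$, meaning there exist $A \in [\fa] \cap S$ and $B \in [\fb] \cap S$. Multiplicative closure gives $AB \in S$, and expanding one has
$$AB \;\subset\; \fa\fb \;=\; (\fp + RX)(\fp + RY) \;\subset\; \fp + R\cdot XY \;\subset\; \fp,$$
the last inclusion because $XY \subset \fp$. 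Thus $AB$ is a sub-$G$-module of $\fp$ belonging to $S$, contradicting $\fp \in \Sigma$. So $\fp$ is $G$-prime. The final sentence of the proposition then follows by combining (a), (b), and (c).

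I do not expect serious obstacles here; the two points to keep track of are that finite generation of members of $S$ is what makes the union step in (b) preserve disjointness, and that the ideal-theoretic identity $\fa\fb \subset \fp + R\cdot XY$ lets $XY \subset \fp$ propagate to $AB \subset \fp$ in (c). Both are straightforward, so the proof should be short.
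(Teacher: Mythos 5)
Your proposal is correct and follows essentially the same route as the paper: $0\in\Sigma$ for (a), Zorn plus finite generation of members of $S$ for (b), and for (c) enlarging $\fp$ by the two witnesses of non-primality, extracting elements of $S$ from each enlargement, and using multiplicative closure to land $AB$ inside $\fp$. The only cosmetic difference is that you start from finitely generated submodules $X,Y$ and form the ideals $\fp+RX$, $\fp+RY$, whereas the paper works directly with $G$-ideals $\fa,\fb$ witnessing non-primality; the argument is the same either way.
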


\begin{proof}
Since $0 \not\in S$ we have $0 \in \Sigma$, and so $\Sigma$ is non-empty. Suppose that $\{\fa_i\}_{i \in I}$ is an ascending chain in $\Sigma$, and let $\fa=\bigcup_{i \in I} \fa_i$. Then $\fa$ is a $G$-ideal of $R$, and belongs to $\Sigma$: indeed, if we had $X \subset \fa$ for some $X \in S$ then we would have $X \subset \fa_i$ for some $i$ since $X$ is finitely generated, a contradiction. It follows from Zorn's lemma that every element of $\Sigma$ is contained in a maximal element.

Now let $\fp$ be a maximal element of $\Sigma$. Suppose that $\fp$ is not prime. Let $\fa,\fb$ be $G$-ideals such that $\fa \fb \subset \fp$ but $\fa,\fb \not\subset \fp$. Then $\fp+\fa$ and $\fp+\fb$ are strictly larger than $\fp$, and thus do not belong to $\Sigma$. Thus we have $X \subset \fp+\fa$ and $Y \subset \fp+\fb$ for some $X,Y, \in \Sigma$. It follows that $XY \subset (\fp+\fa)(\fp+\fb) \subset \fp$, which is a contradiction since $XY \in S$. Thus $\fp$ is prime.
\end{proof}

A $G$-ideal is \defi{maximal} if it is a proper ideal and maximal among proper $G$-ideals.

\begin{proposition} \label{prop:max-ideal}
Let $R$ be a $G$-ring. Then every proper $G$-ideal is contained in a maximal $G$-ideal, and every maximal $G$-ideal is $G$-prime.
\end{proposition}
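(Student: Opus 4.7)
The plan is to deduce both assertions at once by applying Proposition~\ref{prop:max-prime} to a carefully chosen multiplicative $G$-system. The natural choice is $S = \{\bone_R\}$, the singleton consisting of the $\bZ$-submodule generated by $1$. I first need to confirm that $S$ is a multiplicative $G$-system in the sense defined above: since $G$ acts by ring automorphisms it fixes $1$, so $\bone_R$ is a finitely generated $G$-submodule of $R$; and the product $\bone_R \cdot \bone_R$, consisting of finite sums $\sum n_i m_i$ with $n_i, m_i \in \bZ$, is again $\bone_R$. Assuming $R$ is nonzero (otherwise the statement is vacuous), we also have $0 \notin S$.

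Next I would identify the set $\Sigma$ of $G$-ideals disjoint from $S$ with the set of \emph{proper} $G$-ideals of $R$. Indeed, a $G$-ideal $\fa$ fails to be disjoint from $S$ precisely when $\bone_R \in [\fa]$, i.e.\ when $1 \in \fa$, i.e.\ when $\fa = R$. With this identification in hand, parts (b) and (c) of Proposition~\ref{prop:max-prime} immediately yield both claims: every proper $G$-ideal belongs to $\Sigma$ and hence is contained in a maximal element of $\Sigma$, which is exactly a maximal $G$-ideal; and every such maximal element is $G$-prime.

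There is essentially no obstacle here once one has Proposition~\ref{prop:max-prime}; the only subtlety is the small verification that $\bone_R$ is finitely generated as a $G$-module (which uses only that $G$ acts by ring maps) and that it is closed under its own product. Everything else is a direct translation of the classical argument that maximal ideals are prime, packaged through the multiplicative $G$-system formalism.
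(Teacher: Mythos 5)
Your proof is correct and is exactly the paper's argument: apply Proposition~\ref{prop:max-prime} with $S=\{\bone_R\}$, noting the statement is vacuous when $R=0$. The extra verifications you spell out (that $\bone_R$ is a finitely generated smooth $G$-submodule closed under its own product, and that disjointness from $S$ is the same as properness) are the right ones and are left implicit in the paper.
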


\begin{proof}
If $R=0$ the proposition is vacuous. Otherwise it follows from Proposition~\ref{prop:max-prime} with $S=\{\bone_R\}$.
\end{proof}

\begin{proposition} \label{prop:primes-exist}
Let $R$ be a non-zero $G$-ring. Then $R$ contains a maximal $G$-ideal and a minimal $G$-prime.
\end{proposition}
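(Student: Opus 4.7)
The plan is to derive this as a direct corollary of the two propositions immediately preceding it. Since $R$ is non-zero, the zero ideal is a proper $G$-ideal, so Proposition~\ref{prop:max-ideal} applies: the zero ideal is contained in some maximal $G$-ideal $\fm$, and that same proposition asserts $\fm$ is $G$-prime. This establishes existence of a maximal $G$-ideal.

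Having produced a $G$-prime $\fm$, I would then invoke Proposition~\ref{prop:min-prime}, which says every $G$-prime contains a minimal $G$-prime. Applying this to $\fm$ yields a minimal $G$-prime of $R$, finishing the proof.

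There is no real obstacle here; the statement is essentially a packaging of Propositions~\ref{prop:min-prime} and~\ref{prop:max-ideal}, and the only thing one must notice is that non-zeroness of $R$ is what guarantees the zero ideal is a \emph{proper} $G$-ideal, so that Proposition~\ref{prop:max-ideal} produces something rather than being vacuous.
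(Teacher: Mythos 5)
Your proof is correct and follows exactly the same route as the paper: non-zeroness makes the zero ideal proper, Proposition~\ref{prop:max-ideal} gives a maximal $G$-ideal which is $G$-prime, and Proposition~\ref{prop:min-prime} then supplies a minimal $G$-prime beneath it.
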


\begin{proof}
Since the zero ideal is proper, it is contained in a maximal $G$-ideal $\fp$ by Proposition~\ref{prop:max-ideal}, which is $G$-prime. Thus $\fp$ contains a minimal $G$-prime by Proposition~\ref{prop:min-prime}.
\end{proof}

\begin{proposition}
Let $\fp$ be a $G$-prime of $R$. Let $S(\fp)$ be the set of all $X \in [R]^{\rf}$ such that $X \not\subset \fp$. Then $S(\fp)$ is a multiplicative $G$-system. Moreover, if $Y$ is a $G$-submodule of $R$ then $Y$ is disjoint from $S(\fp)$ if and only if $Y \subset \fp$.
\end{proposition}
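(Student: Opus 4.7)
The plan is to verify the two defining properties of a multiplicative $G$-system for $S(\fp)$, and then prove the disjointness characterization by exploiting that any smooth representation is the union of its finitely generated $G$-subrepresentations.

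First I would check that $S(\fp)$ is a multiplicative $G$-system. To see that $\bone_R \in S(\fp)$, observe that $\bone_R \subset \fp$ would force $1 \in \fp$, hence $\fp = R$, contradicting that $\fp$ is $G$-prime. For closure under products, take $X, Y \in S(\fp)$, i.e., $X, Y \in [R]^{\rf}$ with $X \not\subset \fp$ and $Y \not\subset \fp$. By Proposition~\ref{prop:tenfg}, the product $XY$ lies in $[R]^{\rf}$, and the contrapositive of the definition of $G$-prime gives $XY \not\subset \fp$. Hence $XY \in S(\fp)$.

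For the second claim, the forward implication is immediate: if $Y \subset \fp$ then every $G$-submodule of $Y$ is contained in $\fp$, so none of them can lie in $S(\fp)$, since every element of $S(\fp)$ fails to be contained in $\fp$. For the reverse implication, suppose $Y$ is disjoint from $S(\fp)$. Then no finitely generated $G$-submodule $X \subset Y$ lies in $S(\fp)$, which, unwinding the definition of $S(\fp)$, means that every finitely generated $G$-submodule of $Y$ is contained in $\fp$. Now given any $y \in Y$, the cyclic $G$-submodule $\bZ[G] \cdot y \subset Y$ is finitely generated, hence contained in $\fp$, so $y \in \fp$. Thus $Y \subset \fp$.

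There is no real obstacle here; the statement is essentially a bookkeeping exercise that repackages $G$-primality into the language of multiplicative $G$-systems, and the only non-trivial ingredient is that the product of two finitely generated $G$-submodules is again finitely generated, which is exactly Proposition~\ref{prop:tenfg}. The point of the proposition is to set up the bridge to Proposition~\ref{prop:max-prime} so that saturated-style arguments can be run on $G$-primes.
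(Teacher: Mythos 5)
Your proof is correct and follows essentially the same route as the paper's: closure under products is the contrapositive of $G$-primality (using Proposition~\ref{prop:tenfg} to keep the product finitely generated), and the disjointness characterization reduces to the fact that $Y$ is the union of its finitely generated (indeed cyclic) $G$-submodules. The only addition is your explicit check that $\bone_R \in S(\fp)$, which the paper leaves implicit; no further comment is needed.
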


\begin{proof}
If $X,Y \in S(\fp)$ then $X,Y \not\in [\fp]$ and so $XY \not\in \fp$; thus $XY \in S(\fp)$. This shows that $S(\fp)$ is a multiplicative set. Now let $Y$ be a $G$-submodule of $R$. The following are equivalent:
\begin{itemize}
\item $Y$ is disjoint from $S(\fp)$.
\item There is no $X \in [Y]^{\rf}$ that belongs to $S(\fp)$; in other words, for every finitely generated $G$-submodule $X$ of $Y$ we have $X \subset \fp$.
\item We have $Y \subset \fp$.
\end{itemize}
The first two statements are equivalent by definition, while the second and third are clearly equivalent. The result follows.
\end{proof}

We say that $X \in [R]$ is a \defi{$G$-zerodivisor} if there exists $Y \in [R]$ non-zero such that $XY=0$; otherwise we say that $X$ is a \defi{$G$-non-zerodivisor}. We say that $x \in R$ is a $G$-(non)-zerodivisor if the $G$-submodule it generates is. Note that 0 is a $G$-zerodivisor (assuming $R \ne 0$).

\begin{proposition} \label{prop:zd}
Let $\fp$ be a minimal $G$-prime. Then every element of $\fp$ is a $G$-zerodivisor.
\end{proposition}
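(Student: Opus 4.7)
The plan is to mimic the classical proof that every element of a minimal prime is a zerodivisor, replacing elements by $G$-submodules throughout. Let $x \in \fp$ and let $X \in [R]^{\rf}$ be the $G$-submodule generated by $x$; my goal is to produce a non-zero $Z \in [R]$ with $XZ = 0$, which by definition will show that $x$ is a $G$-zerodivisor.

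The key construction will be the multiplicative $G$-system
$$S = \{X^n Y : n \ge 0,\ Y \in [R]^{\rf},\ Y \not\subset \fp\},$$
with the convention $X^0 = \bone_R$. Verifying that $S$ is a multiplicative $G$-system is formal: it contains $\bone_R$ (via $n=0$, $Y=\bone_R$, which is admissible since $\fp$ is proper), it lies in $[R]^{\rf}$ by Proposition~\ref{prop:tenfg}, and it is closed under products because the family $\{Y \in [R]^{\rf} : Y \not\subset \fp\}$ is itself multiplicative — this is exactly the content of the preceding proposition applied to $\fp$.

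The heart of the argument is a dichotomy on whether $0 \in S$. If $0 \notin S$, Proposition~\ref{prop:max-prime} produces a $G$-prime $\fq$ disjoint from $S$. Inspecting the $n=0$ elements forces every finitely generated $G$-submodule of $\fq$ to lie in $\fp$, and since $\fq$ is the union of its finitely generated $G$-submodules, $\fq \subset \fp$. On the other hand $X \in S$ (take $n=1$, $Y=\bone_R$), so disjointness gives $X \not\subset \fq$, whereas by hypothesis $X \subset \fp$. Hence $\fq \subsetneq \fp$, contradicting the minimality of $\fp$. Therefore $0 \in S$.

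It remains to extract a $G$-zerodivisor relation. Choose $n \ge 0$ minimal such that $X^n Y = 0$ for some $Y \in [R]^{\rf}$ with $Y \not\subset \fp$, and fix such a $Y$. Since $Y \ne 0$ we must have $n \ge 1$, and then $Z := X^{n-1} Y$ is non-zero by the minimality of $n$ applied to the same $Y$; since $XZ = X^n Y = 0$, the submodule $X$ (and hence the element $x$) is a $G$-zerodivisor. The only non-routine step is the design of $S$ — choosing the right marriage of powers of $X$ with submodules avoiding $\fp$ — after which everything reduces to Proposition~\ref{prop:max-prime} and a one-line minimal-$n$ extraction, so I do not anticipate any substantive technical obstacle.
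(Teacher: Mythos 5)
Your proof is correct, but it takes a genuinely different route from the paper's. You fix a single element $x \in \fp$ with generated submodule $X$ and build the multiplicative $G$-system $\{X^nY : Y \in [R]^{\rf},\ Y \not\subset \fp\}$ — the equivariant analogue of the classical ``localize away from both $x$ and $\fp$'' argument — then argue that $0$ must lie in this system (else Proposition~\ref{prop:max-prime} yields a $G$-prime $\fq \subsetneq \fp$, violating minimality) and extract an annihilator of $X$ via a minimal-power argument. The paper instead treats all elements of $\fp$ at once: it takes $S = S_1 S_2$ where $S_1 = S(\fp)$ and $S_2$ is the set of finitely generated $G$-non-zerodivisor submodules, observes $0 \notin S$, and obtains a $G$-prime $\fq$ disjoint from $S$; disjointness from $S_1$ plus minimality forces $\fq = \fp$, while disjointness from $S_2$ says every element of $\fq$ is a $G$-zerodivisor. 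The paper's version avoids the dichotomy and the minimal-$n$ extraction, at the cost of checking that $S_2$ is multiplicatively closed; yours is more explicit, actually exhibiting the non-zero annihilating submodule $X^{n-1}Y$, and requires only the already-established fact that $S(\fp)$ is a multiplicative $G$-system. All the steps you use (associativity and finite generation of products of submodules via Proposition~\ref{prop:tenfg}, the characterization of disjointness, $\bone_R \not\subset \fp$) are available, so there is no gap.
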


\begin{proof}
Let $S_1=S(\fp)$, let $S_2$ be the set of all $X \in [R]^{\rf}$ such that $X$ is a $G$-non-zerodivisor, and let $S=S_1S_2$, i.e., $S$ consists of all elements of $[R]^{\rf}$ of the form $XY$ with $X \in S_1$ and $Y \in S_2$. We have already seen that $S_1$ is a multiplicative $G$-system, and it is easily seen that $S_2$ is one as well; thus $S$ is too. Since every element of $S_2$ is a $G$-non-zerodivisor and every element of $S_1$ is non-zero, it follows that $0 \not\in S$. By Proposition~\ref{prop:max-prime}, we see that there is a $G$-prime $\fq$ of $R$ that is disjoint from $S$. In particular, $\fq$ is disjoint from both $S_1$ and $S_2$. Since $\fq$ is disjoint from $S_2$, every element of $\fq$ is a $G$-zerodivisor; indeed, if $\fq$ contained a $G$-non-zerodivisor $x$ then the $G$-submodule generated by $x$ would be contained in $S_2$, contradicting the disjointness of $\fq$ from $S_2$. Since $\fq$ is disjoint from $S_1$, we have $\fq \subset \fp$; hence, by the minimality of $\fp$ we have $\fq=\fp$. The result follows.
\end{proof}

\begin{proposition} \label{prop:avoid}
Let $\fp_1, \ldots, \fp_n$ be $G$-primes and let $\fa$ be a $G$-ideal such that $\fa \subset \bigcup_{i=1}^n \fp_i$. Then $\fa \subset \fp_i$ for some $i$.
\end{proposition}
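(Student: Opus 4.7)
My plan is to mirror the classical prime avoidance proof, replacing ordinary primeness with the elementwise characterization~(d) of $G$-primality from the previous proposition. I will induct on $n$. The case $n=1$ is trivial, so suppose $n \ge 2$ and the result holds for fewer primes. If $\fa \subset \bigcup_{j \ne i} \fp_j$ for some $i$, the inductive hypothesis applied to the remaining $n-1$ primes suffices; otherwise, for each $i$ I may pick $a_i \in \fa \setminus \bigcup_{j \ne i} \fp_j$, and the containment $\fa \subset \bigcup_j \fp_j$ then forces $a_i \in \fp_i$.

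The crux is to construct a single element $z \in \fa$ avoiding every $\fp_i$. Classically one takes $z = a_1 + a_2 \cdots a_n$, the key point being that $a_2 \cdots a_n \notin \fp_1$ because $\fp_1$ is prime at the element level. A $G$-prime need not be prime in this sense, so I substitute the contrapositive of~(d): if $x, y \in R \setminus \fp$ for a $G$-prime $\fp$, then $x \cdot \sigma y \notin \fp$ for some $\sigma \in G$. Iterating this for $\fp_1$ and the elements $a_2, \ldots, a_n$, all of which lie outside $\fp_1$, produces $\tau_3, \ldots, \tau_n \in G$ with
\[
b := a_2 \cdot (\tau_3 a_3)(\tau_4 a_4) \cdots (\tau_n a_n) \notin \fp_1.
\]
Setting $z = a_1 + b$ and noting that each $\tau_i a_i$ lies in the $G$-ideal $\fa$, we get $b \in \fa$ and hence $z \in \fa$.

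It remains to verify $z \notin \fp_i$ for every $i$. For $i=1$ this holds since $a_1 \in \fp_1$ while $b \notin \fp_1$. For $i \ge 2$ we have $a_1 \notin \fp_i$ by the choice of $a_1$, whereas $b \in \fp_i$ because the factor $a_2$ (if $i = 2$) or $\tau_i a_i$ (if $i \ge 3$) lies in the $G$-stable ideal $\fp_i$. This contradicts $\fa \subset \bigcup_i \fp_i$. The only non-mechanical ingredient is the iterated application of~(d); everything else follows the standard template, so I do not anticipate any substantial obstacle.
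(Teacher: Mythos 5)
Your proof is correct and follows essentially the same approach as the paper: the classical prime-avoidance template, with the product of the remaining generators twisted by group elements so that iterated $G$-primality applies. The only cosmetic difference is that you select the $\tau_i$ one at a time via the contrapositive of the elementwise characterization, whereas the paper considers the sum for arbitrary $\sigma_i$ and invokes $G$-primality at the end to show some choice works.
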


\begin{proof}
Assume $\fa$ is not contained in any of the $\fp_i$'s. If $\fa$ were contained in a union of $n-1$ of the $\fp_i$'s then, by induction, it would be contained in one of the $\fp_i$'s, so this is also not the case. For each $i$, pick $x_i \in \fa$ that is not contained in $\bigcup_{j \ne i} \fp_i$; necessarily, $x_i \in \fp_i$. Now consider $\sigma_1(x_1) \cdots \sigma_{n-1}(x_{n-1})+x_n$ for $\sigma_i\in G$. This is obviously an element of $\fa$. It does not belong to any of $\fp_1, \ldots, \fp_{n-1}$: indeed, the first term does, so if the sum did then $x_n$ would belong to one of the $\fp_1, \ldots, \fp_{n-1}$, which it does not. It also does not belong to $\fp_n$ for some choice of $\sigma_1, \ldots, \sigma_{n-1}$: indeed for all choices of $\sigma_1, \ldots, \sigma_{n-1}$, the second term does belong to $\fp_n$, so if the sum did then the first would; but then we would have $x_i \in \fp_n$ for some $i$ (by $G$-primality), a contradiction. We have thus produced an element of $\fa$ that is not contained in any $\fp_i$, which is a contradiction. Hence $\fa$ is contained in some $\fp_i$.
\end{proof}

\begin{proposition} \label{prop:rad-prime}
Let $\fp$ be a $G$-prime of $R$. Then $\rad(\fp)$ is also a $G$-prime of $R$.
\end{proposition}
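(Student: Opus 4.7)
My plan is to verify that $\rad(\fp)$ is $G$-prime by checking criterion (d) of the equivalent characterizations of $G$-primeness established above. Preliminary observations are routine: $\rad(\fp)$ is an ideal in the classical sense; it is $G$-stable because $x^n \in \fp$ implies $\sigma(x)^n = \sigma(x^n) \in \fp$; and it is proper, since $\fp$ being proper forces $1 \notin \rad(\fp)$. So I will assume $x, y \in R$ satisfy $x \cdot \sigma(y) \in \rad(\fp)$ for every $\sigma \in G$, and aim to deduce $x \in \rad(\fp)$ or $y \in \rad(\fp)$.

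For each $\sigma \in G$ pick $N_\sigma \ge 1$ with $(x\sigma(y))^{N_\sigma} \in \fp$. The whole argument reduces to producing a single exponent $N$ that works for every $\sigma$ simultaneously: granted this, one has $x^N \cdot \sigma(y^N) = (x\sigma(y))^N \in \fp$ for all $\sigma \in G$, and then criterion (d) applied to $\fp$ itself (with the pair $x^N, y^N$) yields $x^N \in \fp$ or $y^N \in \fp$, i.e., $x \in \rad(\fp)$ or $y \in \rad(\fp)$, as required.

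The main obstacle---producing the uniform $N$---is exactly where admissibility of $G$ is used. Let $H$ and $K$ denote the stabilizers of $x$ and $y$ in $G$; these are open subgroups because the action of $G$ on $R$ is smooth. For any $h \in H$ and $k \in K$, the computation
\[
x \cdot (h\sigma k)(y) = x \cdot h\sigma(y) = h(x)\cdot h(\sigma(y)) = h(x\sigma(y))
\]
combined with the $G$-stability of $\fp$ shows that $(x \cdot (h\sigma k)(y))^N$ lies in $\fp$ exactly when $(x \sigma(y))^N$ does. Hence $N_\sigma$ can be taken to depend only on the double coset $H\sigma K$. By Roelcke precompactness, $H \backslash G / K$ is finite, so only finitely many distinct values of $N_\sigma$ occur, and taking $N$ to be their maximum finishes the proof. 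Thus the essential content of the proposition is that the admissibility axioms are precisely strong enough to upgrade the pointwise control afforded by classical radicals into the uniform control required to invoke $G$-primeness of $\fp$.
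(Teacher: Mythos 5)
Your proof is correct and follows essentially the same route as the paper's: both arguments show that the minimal exponent $n(\sigma)$ with $(x\cdot\sigma y)^{n(\sigma)}\in\fp$ is constant on double cosets of the stabilizers of $x$ and $y$, invoke Roelcke precompactness to bound it uniformly, and then apply $G$-primality of $\fp$ to the pair $x^N, y^N$. No gaps.
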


\begin{proof}
Suppose $x \cdot \sigma y \in \rad(\fp)$ for all $\sigma \in G$. Let $n(\sigma)$ be the minimal positive integer such that $(x \cdot \sigma y)^{n(\sigma)} \in \fp$, let $U \subset G$ be the stabilizer of $x$, and let $V \subset G$ be the stabilizer of $y$. Then $n(\alpha \sigma \beta)=n(\sigma)$ for $\alpha \in U$ and $\beta \in V$. It follows that $n$ factors through the space $U \backslash G /V$ of double cosets. Since this set is finite (by Roelcke precompactness), it follows that the function $n$ is bounded. Thus there exists one integer $n$ such that $x^n \cdot \sigma y^n \in \fp$ for all $\sigma \in G$. Since $\fp$ is $G$-prime, it follows that $x^n \in \fp$ or $y^n \in \fp$, and so $x \in \rad(\fp)$ or $y \in \rad(\fp)$, which completes the proof.
\end{proof}

\begin{proposition} \label{prop:prime-contract}
Let $\phi \colon R \to S$ be a homomorphism of $G$-rings and let $\fp$ be a $G$-prime of $S$. Then the contraction $\fp^c$ is a $G$-prime of $R$.
\end{proposition}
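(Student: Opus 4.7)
The plan is to verify directly that $\fp^c = \phi^{-1}(\fp)$ satisfies the definition of $G$-prime, using nothing beyond the $G$-equivariance of $\phi$ and the compatibility of images with the multiplication $XY$ on $G$-submodules.

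First I would check that $\fp^c$ is a proper $G$-ideal. It is an ideal by general nonsense, it is $G$-stable because $\phi$ is $G$-equivariant and $\fp$ is $G$-stable, and it is proper because $\phi(1_R) = 1_S \notin \fp$ (since by definition a $G$-prime is required to satisfy $\fp \ne S$).

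Next I would verify the primality condition. Suppose $X, Y \in [R]$ with $XY \subset \fp^c$. Since $\phi$ is $G$-equivariant, the images $\phi(X)$ and $\phi(Y)$ lie in $[S]$. The key observation is that $\phi(X) \cdot \phi(Y) = \phi(XY)$ as $G$-submodules of $S$: the inclusion $\supset$ follows from $\phi(\sum x_i y_i) = \sum \phi(x_i)\phi(y_i)$, and the inclusion $\subset$ holds because every generator $\phi(x)\phi(y)$ of $\phi(X)\phi(Y)$ equals $\phi(xy) \in \phi(XY)$. From $XY \subset \fp^c$ we get $\phi(X)\phi(Y) = \phi(XY) \subset \fp$. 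Applying $G$-primality of $\fp$ yields $\phi(X) \subset \fp$ or $\phi(Y) \subset \fp$, which is to say $X \subset \fp^c$ or $Y \subset \fp^c$.

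There is no real obstacle here; the whole content is the compatibility $\phi(XY) = \phi(X)\phi(Y)$, which is immediate from the elementwise description of $XY$ as finite sums $\sum x_i y_i$ given in \S 2.2. The proof is roughly two lines once this is noted.
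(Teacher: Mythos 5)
Your argument is correct and is essentially identical to the paper's proof: both reduce the claim to the identity $\phi(XY)=\phi(X)\phi(Y)$ for $G$-submodules and then invoke the $G$-primality of $\fp$. The extra checks you include (that $\fp^c$ is a proper $G$-ideal) are fine and the paper leaves them implicit.
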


\begin{proof}
Suppose $X,Y \in [R]$ and $XY \in [\fp^c]$. Then $\phi(X)\phi(Y) = \phi(XY) \in [\fp]$. Since $\fp$ is $G$-prime, it follows that $\phi(X) \in [\fp]$ or $\phi(Y) \in \fp$, and so $X \in [\fp^c]$ or $Y \in [\fp^c]$. This completes the proof.
\end{proof}

\begin{proposition} \label{prop:prime-loc}
Let $A$ be a subring of the invariant ring $R^G$ and let $S$ be a multiplicative subset of $A$. Then extension and contraction induce mutually inverse bijections
\begin{displaymath}
\{ \text{$G$-primes $\fp$ of $R$ with $\fp \cap S=\emptyset$} \} \leftrightarrow \{ \text{$G$-primes of $S^{-1} R$} \}
\end{displaymath}
\end{proposition}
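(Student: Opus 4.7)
The plan is to follow the pattern of the classical bijection between primes of a ring and primes of its localization, carefully verifying at each step that the equivariant versions of the standard arguments go through because $S$ lies in $R^G$.

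First I would set up the equivariant localization. Because $S \subset A \subset R^G$, the multiplicative set $S$ is $G$-invariant, and so $S^{-1}R$ inherits a natural $G$-action via $\sigma(r/s) = \sigma(r)/s$. Smoothness of the action on $S^{-1}R$ follows from smoothness on $R$: the stabilizer of $r/s$ contains the open stabilizer of $r$. Thus the canonical map $\phi \colon R \to S^{-1}R$ is a homomorphism of $G$-rings. Standard non-equivariant localization theory gives that (i) every ideal $\fq$ of $S^{-1}R$ satisfies $\fq = (\fq^c)^e$, and (ii) the extension of a $G$-ideal is a $G$-ideal, as is the contraction. If $\fq$ is a $G$-prime of $S^{-1}R$ then $\fq^c$ is a $G$-prime of $R$ by Proposition~\ref{prop:prime-contract}, and $\fq^c \cap S = \emptyset$ since otherwise $\fq$ would contain a unit of $S^{-1}R$.

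Next I would show that for any $G$-prime $\fp$ of $R$ with $\fp \cap S = \emptyset$, one has $\fp^{ec} = \fp$. The inclusion $\fp \subset \fp^{ec}$ is automatic. For the reverse, suppose $r \in \fp^{ec}$, so $usr \in \fp$ for some $u,s \in S$. Let $X$ be the $G$-submodule of $R$ generated by $r$ and let $Y$ be the $\bZ$-submodule generated by $us$, which is $G$-stable because $us \in R^G$. Then $XY \subset \fp$, and since $us \in S$ is not in $\fp$ we have $Y \not\subset \fp$; the $G$-primality of $\fp$ then forces $X \subset \fp$, so $r \in \fp$.

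Finally I would show that $\fp^e$ is $G$-prime in $S^{-1}R$ whenever $\fp$ is a $G$-prime of $R$ disjoint from $S$. Suppose $X,Y \in [S^{-1}R]^{\rf}$ and $XY \subset \fp^e$. Using finite generation together with $S \subset R^G$, choose $s,t \in S$ such that $X' := sX$ and $Y' := tY$ lie in $R$; these are $G$-submodules of $R$ because $s,t$ are $G$-invariant. The inclusion $X'Y' = st(XY) \subset st \cdot \fp^e \subset \fp^e$ of $G$-submodules of $R$ means, by the previous paragraph applied elementwise, that $X'Y' \subset \fp^{ec} = \fp$. By $G$-primality of $\fp$ we get $X' \subset \fp$ or $Y' \subset \fp$; dividing by the unit $s$ or $t$ in $S^{-1}R$ yields $X \subset \fp^e$ or $Y \subset \fp^e$. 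Combined with the identities $\fp^{ec} = \fp$ and $\fq^{ce} = \fq$, this gives the desired bijection.

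The only nontrivial point is the use of $S \subset R^G$ in the previous two paragraphs: it lets us treat elements of $S$ as $G$-invariant scalars, so that multiplying a $G$-submodule by an element of $S$ again yields a $G$-submodule. Without this hypothesis the argument would fail, and indeed the correspondence itself could fail. No serious obstacle is expected beyond bookkeeping; the result really is a formal consequence of the general framework of \S\ref{s:eqca} together with the invariance of the denominators.
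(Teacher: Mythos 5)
Your proposal is correct and follows essentially the same route as the paper: clear denominators using the $G$-invariance of elements of $S$, establish $\fp^{ec}=\fp$ and the $G$-primality of $\fp^e$ via the $G$-primality of $\fp$ together with $\fp \cap S=\emptyset$, and combine with the standard identity $\fq=(\fq^c)^e$. The only cosmetic difference is that you phrase $G$-primality in terms of submodules $X,Y$ while the paper uses the equivalent elementwise criterion $x\cdot\sigma y\in\fp$.
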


\begin{proof}
Let $\fp$ be a $G$-prime of $R$ with $\fp \cap S=\emptyset$, and let $\fq$ be its extension to $S^{-1} R$. Suppose $x \cdot \sigma y \in \fq$ for all $\sigma \in G$, where $x,y \in K \otimes_A R$. Clearing denominators, we can assume $x,y \in R$ and that $sx(\sigma y) \in \fp$ for all $\sigma \in G$, where $s \in S$. Since $\fp$ is $G$-prime, we find $sx \in \fp$ or $y \in \fp$. In the former case, we have $x (\sigma s) \in \fp$ for all $\sigma$, since $s$ is invariant, and so $x \in \fp$ (since $s \not\in \fp$). We thus see that $\fq$ is $G$-prime.

It is clear that $\fp \subset \fq^c$. Suppose that $x \in \fq^c$. Then $sx \in \fp$ for some non-zero $s \in A$. As in the previous paragraph, we find that $x \in \fp$, and so $\fp=\fq^c$.

In the other direction, suppose we start with a $G$-prime $\fq$ of $S^{-1} R$. Then $\fq^c$ is a $G$-prime of $R$ by Proposition~\ref{prop:prime-contract}. It is clear that $\fq=(\fq^c)^e$ and $\fq^c \cap S=\emptyset$. Thus the result follows.
\end{proof}

Suppose $A \subset R^G$; we think of $A$ as the ``coefficient ring'' of $R$. Combining the previous two propositions, we can reduce the task of classifying $G$-primes of $R$ to the case where the coefficient ring is a field. Indeed, if $\fp$ is a $G$-prime of $R$ then $\fc=A \cap \fp$ is a prime of $A$ by Proposition~\ref{prop:prime-contract}. Thus $\fp$ corresponds to a $G$-prime of $R/\fc R$, which, in turn, corresponds to a prime of $\Frac(A/\fc) \otimes_A R$ by Proposition~\ref{prop:prime-loc}.

\subsection{Radicals}

We now examine the equivariant analog of radicals.

\begin{definition}
We make the following definitions:
\begin{enumerate}
\item The \defi{$G$-radical} of a $G$-ideal $\fa$, denoted $\rad_G(\fa)$, is the sum of all $G$-submodules $X \subset R$ for which $X^n \subset \fa$ for some $n$.
\item The \defi{$G$-(nil)radical} of $R$, denoted $\rad_G(R)$, is the $G$-radical of the zero ideal.
\item The $G$-ideal $\fa$ is said to be \defi{$G$-radical} if $\fa=\rad_G(\fa)$. \qedhere
\end{enumerate}
\end{definition}

One easily sees that $\rad_G(\fa)$ is a $G$-ideal of $R$. It is clear that any $G$-prime is $G$-radical. Since $G$-primes need not be radical, we thus see that radical and $G$-radical need not coincide.

\begin{proposition} \label{prop:fin-nilp}
If $X \in [\rad_G(\fa)]^{\rf}$ then $X^n \in [\fa]^{\rf}$ for some $n$.
\end{proposition}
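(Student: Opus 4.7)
The plan is to reduce to a finite subsum of the defining sum of $\rad_G(\fa)$ and then apply a pigeonhole-style argument.

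First, I would use finite generation of $X$ to obtain elements $x_1,\ldots,x_k \in \rad_G(\fa)$ whose $G$-orbits generate $X$ as a $\bZ$-module. By the definition of the $G$-radical, each $x_i$ lies in a sum of $G$-submodules $Y_\alpha \subset R$ with $Y_\alpha^{n_\alpha} \subset \fa$ for some positive integer $n_\alpha$. Since $x_i$ is a single element, it already lies in a \emph{finite} subsum. Hence finitely many such submodules $Y_1, \ldots, Y_m$, with associated exponents $n_1, \ldots, n_m$, suffice to contain all of $x_1, \ldots, x_k$. Setting $Y = Y_1 + \cdots + Y_m$, which is a $G$-submodule of $\rad_G(\fa)$, we obtain $X \subset Y$. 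It therefore suffices to show that $Y^N \subset \fa$ for some $N$.

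Now I would expand $Y^N$ as a sum of products $Y_{j_1} Y_{j_2} \cdots Y_{j_N}$ with each $j_\ell \in \{1, \ldots, m\}$. By commutativity of $R$, any such product equals $Y_1^{e_1} \cdots Y_m^{e_m}$ where $e_j$ records the number of times $Y_j$ appears and $e_1 + \cdots + e_m = N$. Choosing $N = n_1 + \cdots + n_m$, the pigeonhole principle forces $e_j \geq n_j$ for some $j$; then $Y_j^{e_j} \subset Y_j^{n_j} \subset \fa$, and therefore the whole product lies in $\fa$. Consequently $Y^N \subset \fa$, and in particular $X^N \subset \fa$.

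Finally, I would observe that $X^N$ is automatically finitely generated as a $G$-module: since $X \in [R]^{\rf}$, iterating Proposition~\ref{prop:tenfg} shows $X^{\otimes N}$ is finitely generated, and $X^N$ is a quotient thereof under the multiplication map. Thus $X^N \in [\fa]^{\rf}$.

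There is no real obstacle; the only substantive step is the reduction from the (potentially infinite) sum defining $\rad_G(\fa)$ to a finite subsum, which is enabled precisely by the finite generation hypothesis on $X$. The commutativity-plus-pigeonhole step is then routine.
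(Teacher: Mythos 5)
Your proof is correct and follows essentially the same route as the paper: reduce to a finite subsum $Y_1+\cdots+Y_m$ using finite generation of $X$, then apply pigeonhole to a product of $N$ factors (the paper takes $N=nk$ with a uniform exponent $n$, you take $N=n_1+\cdots+n_m$, which is the same idea). Your final remark that $X^N$ is finitely generated via Proposition~\ref{prop:tenfg} is a detail the paper leaves implicit, but it matches what is already noted in the surrounding text.
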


\begin{proof}
Let $S$ be the set of all $Y \in [R]$ such that $Y^n \subset \fa$ for some $n$. Thus $\rad_G(\fa)=\sum_{Y \in S} Y$. Since $X$ is finitely generated and contained in this sum, it follows that $X \subset Y_1+\cdots+Y_k$ for some $Y_1,\ldots,Y_k \in S$. If $n$ is such that $Y_i^n \subset \fa$ for each $i$ then we have $X^{nk} \subset \fa$, as required.
\end{proof}

The following proposition gives an elemental characterization of $G$-radical.

\begin{proposition}
Let $\fa$ be a $G$-ideal of $R$. Then $x \in \rad_G(\fa)$ if and only if there exists $n \ge 1$ such that $(\sigma_1 x) \cdots (\sigma_n x) \in \fa$ for all $\sigma_1, \ldots, \sigma_n \in G$.
\end{proposition}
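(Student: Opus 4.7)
The plan is to identify the condition with the statement ``the cyclic $G$-submodule generated by $x$ has its $n$-th power contained in $\fa$,'' and then invoke the definition of $G$-radical together with Proposition~\ref{prop:fin-nilp}. Let $X = \bZ[G] \cdot x$ denote the $G$-submodule of $R$ generated by $x$; this is finitely generated as a $G$-module (a single orbit generates it as a $\bZ$-module). As a $\bZ$-module, $X$ is spanned by the translates $\sigma x$ for $\sigma \in G$, so $X^n$ is spanned as a $\bZ$-module by products of the form $(\sigma_1 x) \cdots (\sigma_n x)$.

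For the forward direction, suppose $x \in \rad_G(\fa)$. Then $X \subset \rad_G(\fa)$, and since $X$ is finitely generated, Proposition~\ref{prop:fin-nilp} gives an integer $n$ with $X^n \subset \fa$. In particular every product $(\sigma_1 x) \cdots (\sigma_n x)$ lies in $\fa$, which is the desired condition.

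For the reverse direction, suppose such an $n$ exists. Then the spanning set of $X^n$ described above is contained in $\fa$, hence $X^n \subset \fa$. By definition of the $G$-radical, $X$ is then one of the submodules whose sum defines $\rad_G(\fa)$, so $X \subset \rad_G(\fa)$ and in particular $x \in \rad_G(\fa)$.

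There is no serious obstacle here: the statement is essentially a translation of the module-theoretic definition of $\rad_G$ into an elemental form, and the only non-formal input is Proposition~\ref{prop:fin-nilp}, which handles the fact that $\rad_G(\fa)$ is defined as a sum rather than as a single submodule. The proof should be only a few lines long.
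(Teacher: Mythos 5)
Your argument is correct and matches the paper's own proof essentially line for line: both pass to the $G$-submodule $X$ generated by $x$, observe that $X^n$ is spanned over $\bZ$ by the products $(\sigma_1 x)\cdots(\sigma_n x)$, and use Proposition~\ref{prop:fin-nilp} to convert $X \subset \rad_G(\fa)$ into $X^n \subset \fa$ for a single $n$. No differences worth noting.
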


\begin{proof}
Let $X$ be the $G$-submodule generated by $x$. Then $x \in \rad_G(\fa)$ if and only if $X \subset \rad_G(\fa)$ if and only if $X^n \subset \fa$ for some $n$ (we have appealed to Proposition~\ref{prop:fin-nilp} for the second equivalence). Finally, simply note that $X^n$ is generated, as a $\bZ$-module, by the products $(\sigma_1 x) \cdots (\sigma_n x)$ for $\sigma_1, \ldots, \sigma_n \in G$.
\end{proof}

\begin{proposition} \label{prop:rad-primes}
The $G$-radical of a $G$-ideal $\fa$ is the intersection of all $G$-primes containing $\fa$.
\end{proposition}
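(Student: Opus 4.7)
The plan is to prove the two inclusions $\rad_G(\fa) \subset \bigcap_{\fp} \fp$ and $\rad_G(\fa) \supset \bigcap_{\fp} \fp$ separately (where the intersection is over $G$-primes containing $\fa$), mirroring the classical proof but using the equivariant tools already developed. If $\fa=R$, both sides equal $R$ (the intersection being empty, since $G$-primes are proper by definition), so we may assume throughout that $\fa$ is proper.

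For the forward inclusion, let $x \in \rad_G(\fa)$ and let $X$ be the $G$-submodule of $R$ generated by $x$. Then $X$ is finitely generated (its $\bZ$-module generators form the single $G$-orbit $Gx$) and $X \subset \rad_G(\fa)$, so Proposition~\ref{prop:fin-nilp} produces an $n$ with $X^n \subset \fa$. If $\fp$ is any $G$-prime containing $\fa$, then $X \cdot X^{n-1} = X^n \subset \fp$, so by $G$-primality either $X \subset \fp$ or $X^{n-1} \subset \fp$; iterating this yields $X \subset \fp$, and hence $x \in \fp$.

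For the reverse inclusion I would argue the contrapositive: given $x \notin \rad_G(\fa)$, I produce a $G$-prime containing $\fa$ but not $x$. Let $X$ again be the finitely generated $G$-submodule generated by $x$. By the definition of $\rad_G$, the hypothesis forces $X^n \not\subset \fa$ for every $n \ge 1$; in particular each $X^n$ is non-zero, and by Proposition~\ref{prop:tenfg} each $X^n$ lies in $[R]^{\rf}$. Set
\[
S = \{\bone_R\} \cup \{X^n : n \ge 1\} \subset [R]^{\rf}.
\]
This is a multiplicative $G$-system (closure under products is immediate from $X^m \cdot X^n = X^{m+n}$ and $\bone_R \cdot Y = Y$) with $0 \notin S$. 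Moreover, $\fa$ is disjoint from $S$ in the sense of \S2.3: $\bone_R \not\subset \fa$ since $\fa$ is proper, and no $X^n$ is contained in $\fa$ by construction. Applying Proposition~\ref{prop:max-prime}, $\fa$ extends to a $G$-ideal $\fp$ that is maximal among those disjoint from $S$, and every such maximal element is $G$-prime. Since $X \in S$ and $\fp$ is disjoint from $S$, we have $X \not\subset \fp$, whence $x \notin \fp$; this furnishes the desired $G$-prime.

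I do not expect a serious obstacle: the substance has been packaged into Propositions~\ref{prop:fin-nilp}, \ref{prop:tenfg}, and~\ref{prop:max-prime}. The only points requiring care are checking that $S$ is an honest multiplicative $G$-system inside $[R]^{\rf}$ (which is where Roelcke precompactness enters, through Proposition~\ref{prop:tenfg}) and the inductive step in the forward direction that upgrades $X^n \subset \fp$ to $X \subset \fp$.
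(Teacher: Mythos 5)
Your proof is correct and follows essentially the same route as the paper: the easy inclusion via Proposition~\ref{prop:fin-nilp} plus iterated $G$-primality, and the hard inclusion via the multiplicative $G$-system $\{X^n\}$ together with Proposition~\ref{prop:max-prime}. The only cosmetic difference is that the paper first passes to $R/\fa$ to reduce to $\fa=0$, whereas you work with $\fa$ directly and use the "contained in a maximal element" clause of Proposition~\ref{prop:max-prime} to ensure the prime contains $\fa$.
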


\begin{proof}
Passing to $R/\fa$, it suffices to treat the case where $\fa=0$. Thus, letting $\fn$ be the intersection of all $G$-primes, we aim to show $\rad(R)=\fn$. If $X \in [\rad(R)]^{\rf}$ then $X^n=0$ for some $n$, and so $X$ is contained in every $G$-prime, and thus $X \subset \fn$. It follows that $\rad(R) \subset \fn$.

Now let $X \in [R]^{\rf}$ be non-nilpotent. Let $S$ be the multiplicative $G$-system $\{X^n \mid n \ge 0\}$, where $X^0=\bone_R$. Since $X$ is not nilpotent, $0 \not\in S$. Hence by Proposition~\ref{prop:max-prime}, there exists some $G$-prime $\fp$ that is disjoint from $S$; in particular, $X \not\subset \fp$, and so $X \not\subset \fn$. We thus see that if $X \in [\fn]^{\rf}$ then $X$ is nilpotent, i.e., $X \subset \rad(R)$. It follows that $\fn \subset \rad(R)$, which completes the proof.
\end{proof}

\begin{proposition} \label{prop:rad-contract}
Formation of $G$-radical commutes with contraction. That is, if $\phi \colon R \to S$ is a homomorphism of $G$-rings and $\fa$ is a $G$-ideal of $S$ then $\rad_G(\fa^c)=\rad_G(\fa)^c$. In particular, the contraction of a $G$-radical ideal is $G$-radical.
\end{proposition}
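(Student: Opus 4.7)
The plan is to reduce everything to the elemental characterization of $G$-radical just established in the preceding proposition, using nothing more than the fact that a ring homomorphism respects products and commutes with the $G$-action. Write $\fa^c = \phi^{-1}(\fa)$. I will show the two containments of
\begin{displaymath}
\rad_G(\fa^c) = \rad_G(\fa)^c
\end{displaymath}
separately, by chasing an element $x \in R$ through the characterization.

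For the containment $\rad_G(\fa^c) \subset \rad_G(\fa)^c$, suppose $x \in \rad_G(\fa^c)$. By the elemental characterization there exists $n \ge 1$ such that $(\sigma_1 x) \cdots (\sigma_n x) \in \fa^c$ for all $\sigma_1,\ldots,\sigma_n \in G$. Applying $\phi$ and using that $\phi$ is $G$-equivariant and multiplicative yields $(\sigma_1 \phi(x)) \cdots (\sigma_n \phi(x)) \in \fa$ for all $\sigma_i$, so $\phi(x) \in \rad_G(\fa)$, i.e., $x \in \rad_G(\fa)^c$. The reverse containment is the same argument run backward: if $\phi(x) \in \rad_G(\fa)$, choose $n$ so that $(\sigma_1 \phi(x)) \cdots (\sigma_n \phi(x)) \in \fa$ for all $\sigma_i$; then the preimage $(\sigma_1 x) \cdots (\sigma_n x)$ lies in $\fa^c$, showing $x \in \rad_G(\fa^c)$.

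The final assertion that the contraction of a $G$-radical ideal is again $G$-radical is immediate: if $\fa = \rad_G(\fa)$ then $\fa^c = \rad_G(\fa)^c = \rad_G(\fa^c)$.

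There is no real obstacle here; the only thing worth double-checking is that the elemental characterization in the preceding proposition applies to arbitrary $G$-ideals (which it does), so that we may use it both for $\fa \subset S$ and for $\fa^c \subset R$ without any finiteness hypothesis. An alternative route would be via Proposition~\ref{prop:rad-primes} together with Proposition~\ref{prop:prime-contract}, expressing $\rad_G(\fa)$ as an intersection of $G$-primes and contracting, but this would require matching up $G$-primes of $R$ containing $\fa^c$ with contractions of $G$-primes of $S$ containing $\fa$, which is less direct than the elemental argument above.
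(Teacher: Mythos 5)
Your proof is correct and is essentially the paper's argument in elementwise form: the paper chases a finitely generated $G$-submodule $X$ through Proposition~\ref{prop:fin-nilp} ($X \in [\rad_G(\fa)]^{\rf}$ iff $X^n \subset \fa$ for some $n$) and the equivariance of $\phi$, while you chase a single element $x$ through the elemental characterization, which is just the special case of that proposition for the $G$-submodule generated by $x$. Both routes rest on the same finiteness input, so there is nothing further to add.
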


\begin{proof}
Suppose $X \in [R]^{\rf}$. Then $X \in [\rad_G(\fa^c)]$ if and only if $X^n \in [\fa^c]$ for some $n$ (Proposition~\ref{prop:fin-nilp}), which is equivalent to $\phi(X)^n \in [\fa]$. Similarly, $X \in [\rad_G(\fa)^c]$ if and only if $\phi(X) \in [\rad_G(\fa)]$, which is equivalent to $\phi(X)^n \in [\fa]$ for some $n$ (Proposition~\ref{prop:fin-nilp}). The result follows.
\end{proof}

\subsection{The spectrum} \label{ss:Gspec}

Let $R$ be a $G$-ring. We define the \defi{$G$-spectrum} of $R$, denoted $\Spec_G(R)$, to be the set of all $G$-primes of $R$. Given a $G$-ideal $\fa$, we let $V_G(\fa) \subset \Spec_G(R)$ be the set of all $G$-primes containing $\fa$. We define the \defi{Zariski topology} on $\Spec_G(R)$ to be the topology in which the closed sets are the $V_G(\fa)$ (one verifies that this defines a topology just as in the ordinary case). By Proposition~\ref{prop:rad-primes}, we see that $V_G(\fa) \subset V_G(\fb)$ if and only if $\rad_G(\fb) \subset \rad_G(\fa)$. In particular, the closed subsets of $\Spec_G(R)$ are in bijective correspondence with the $G$-radical ideals of $R$, and the irreducible components of $\Spec_G(R)$ are in bijective correspondence with the minimal $G$-primes.

\subsection{Nilpotent extensions} \label{ss:nilp}

Let $A \subset B$ be commutative rings. We say that $B$ is a \defi{nilpotent extension} of $A$ if there is a nilpotent ideal $\fn \subset B$ such that $B=A+\fn$; by ``nilpotent'' here, we mean $\fn^k=0$ for some $k$. We now study how $G$-primes relate in nilpotent extenions; this will be used in the proof of Theorem~\ref{thm:class2} in \S \ref{ss:class2}. We begin with a result that does not involve group actions:

\begin{proposition} \label{prop:nilp}
Let $A \subset B$ be a nilpotent extension, and write $B=A+\fn$ with $\fn^k=0$.
\begin{enumerate}
\item If $\fa$ is an ideal of $A$ then $(\fa^{ec})^k \subset \fa \subset \fa^{ec}$.
\item If $\fb$ is an ideal of $B$ then $\fb^{2k-1} \subset \fb^{ce} \subset \fb$.
\end{enumerate}
\end{proposition}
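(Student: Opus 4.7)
The plan is to handle the two containments in each part independently, using the decomposition $B = A + \fn$ together with $\fn^k = 0$ at the ideal level (rather than on elements), which avoids the fact that writing $b = a + n$ with $a \in A$, $n \in \fn$ is not canonical.

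For part (a), the containment $\fa \subset \fa^{ec}$ is immediate from general nonsense. For $(\fa^{ec})^k \subset \fa$, I would first observe that $\fa^e = \fa B = \fa(A + \fn) = \fa + \fa\fn$, so every element of $\fa^{ec}$ has the form $\alpha + \mu$ with $\alpha \in \fa$ and $\mu \in \fa\fn \subset \fn$ (the second summand automatically in $A$ as well). Given $x_1, \ldots, x_k \in \fa^{ec}$ written as $x_i = \alpha_i + \mu_i$, I would expand $\prod x_i$ as a sum over subsets $T \subseteq [k]$. The summand with $T = [k]$ lies in $\fn^k = 0$, and every other summand has at least one factor $\alpha_i \in \fa$, hence lies in $\fa$.

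For part (b), the containment $\fb^{ce} \subset \fb$ is immediate. The heart of the argument is $\fb^{2k-1} \subset \fb^{ce}$. I would introduce the auxiliary ideal $\fc = A \cap (\fb + \fn)$ of $A$ and establish two ideal-level facts. \emph{First}, $\fb \subset \fc + \fn$: writing any $b \in \fb$ as $a + n$ with $a \in A$, $n \in \fn$, one has $a = b - n \in A \cap (\fb + \fn) = \fc$. \emph{Second}, $\fc^k \subset \fb \cap A$: taking $c_1, \ldots, c_k \in \fc$ and writing $c_i = \beta_i + \nu_i$ with $\beta_i \in \fb$ and $\nu_i \in \fn$, the expansion of $\prod c_i$ has its pure-$\nu$ term in $\fn^k = 0$ and all other terms in $\fb$; since $\prod c_i \in A$, it lies in $\fb \cap A$.

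With these two facts in hand, expand
\begin{displaymath}
\fb^{2k-1} \subset (\fc + \fn)^{2k-1} = \sum_{i+j=2k-1} \fc^i \fn^j.
\end{displaymath}
In each summand, $i + j = 2k-1$ forces $i \ge k$ or $j \ge k$; in the latter case $\fn^j = 0$, and in the former case $\fc^i \subset \fc^k \subset \fb \cap A$, so the summand lies in $(\fb \cap A) B = \fb^{ce}$. The main conceptual obstacle is precisely the lack of a canonical projection $B \to A$: one cannot split $b \in \fb$ into an ``$A$-part'' in $\fb \cap A$ plus an ``$\fn$-part'', and the auxiliary ideal $\fc$ is what repairs this defect by recording the shadow of $\fb$ in $A$ modulo $\fn$. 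The factor of $2$ in the exponent $2k-1$ reflects the need to simultaneously exhaust both $\fn$ and $\fc$.
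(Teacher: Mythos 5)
Your proof is correct and follows essentially the same route as the paper: in both parts you decompose elements via $B=A+\fn$, kill the pure-$\fn$ term using $\fn^k=0$, and in (b) combine the fact that $k$-fold products of the ``$A$-parts'' of elements of $\fb$ land in $\fb^c$ with a pigeonhole on the exponent $2k-1$. Your auxiliary ideal $\fc=A\cap(\fb+\fn)$ is just a clean ideal-level packaging of the paper's element-wise argument (the paper's $a_i=f_i-b_i$ are precisely elements of your $\fc$), so the two proofs are the same in substance.
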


\begin{proof}
(a) Let $f_1, \ldots, f_k \in \fa^{ec}$. We have $\fa^e=\fa B = \fa+\fa\fn \subset \fa+\fn$, and so we can write $f_i=a_i+b_i$ with $a_i \in \fa$ and $b_i \in \fn$. Since $b_i=f_i-a_i$, we have $b_i \in A$. Now consider the product $f_1 \cdots f_k = \prod_{i=1}^k (a_i+b_i)$. Expanding the product, the final term is $b_1 \cdots b_k$, which vanishes since $\fn^k=0$. The other terms have at least one $a_i$, and thus belong to $\fa$, since each $a_i$ belongs to $\fa$ and each $b_i$ belongs to $A$. Thus $f_1 \cdots f_k \in \fa$, and so $(\fa^{ec})^k \subset \fa$. The inclusion $\fa \subset \fa^{ec}$ is obvious.

(b) Let $f_1, \ldots, f_{2k-1} \in \fb$, and write $f_i=a_i+b_i$ with $a_i \in A$ and $b_i \in \fn$. Then $a_1 \cdots a_k = \prod_{i=1}^k (f_i-b_i)$. Expanding the product, the final term is $b_1 \cdots b_k$, which vanishes since $\fn^k=0$, while the other terms all have at least one $f$, and thus belong to $\fb$. We thus see that $a_1 \cdots a_k \in \fb^c$; of course, the same holds for any product of $k$ of the $a_i$'s. Now, we have $f_1 \cdots f_{2k-1} = \prod_{i=1}^{2k-1} (a_i+b_i)$. Expanding the product, each term either has at least $k$ $a$'s or at least $k$ $b$'s. In the first case, the term belongs to $\fb^{ce}$, while in the second, it vanishes. We thus see that $f_1 \cdots f_{2k-1} \in \fb^{ce}$, and so $\fb^{2k-1} \subset \fb^{ce}$. The inclusion $\fb^{ce} \subset \fb$ is obvious.
\end{proof}

The following is the main result we require on nilpotent extensions:

\begin{proposition} \label{prop:nilp2}
Let $A \subset B$ be a nilpotent extension of $G$-rings. (The nilpotent ideal is not required to be $G$-stable.) Then we have mutually inverse bijections
\begin{displaymath}
\xymatrix{
\{ \text{$G$-primes of $A$} \} \ar@<2pt>[r]^{\Phi} & \{ \text{$G$-primes of $B$} \} \ar@<2pt>[l]^{\Psi} }
\end{displaymath}
given by $\Phi(\fp)=\rad_G(\fp^e)$ and $\Psi(\fq)=\fq^c$.
\end{proposition}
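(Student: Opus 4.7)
The plan is to verify four things: that $\Phi$ and $\Psi$ both land in $G$-primes, and that they are mutually inverse. That $\Psi(\fq)=\fq^c$ is $G$-prime follows immediately from Proposition~\ref{prop:prime-contract}, so the real work concentrates on $\Phi$. The two composition identities will fall out cleanly from Proposition~\ref{prop:nilp}, while proving $\Phi(\fp)$ is $G$-prime is the main obstacle. Throughout, let $k$ be an integer with $\fn^k=0$.

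For the composition identities, I would first compute $\Psi(\Phi(\fp))=\rad_G(\fp^e)^c=\rad_G(\fp^{ec})$ using Proposition~\ref{prop:rad-contract}. Proposition~\ref{prop:nilp}(a) gives $\fp\subset\fp^{ec}$ and $(\fp^{ec})^k\subset\fp$; the first forces $\rad_G(\fp)\subset\rad_G(\fp^{ec})$, while the second shows every $X\in[\fp^{ec}]^{\rf}$ has some power in $\fp$ and so sits in $\rad_G(\fp)=\fp$ (the last equality holds because $G$-primes are $G$-radical). This yields $\rad_G(\fp^{ec})=\fp$. Symmetrically, $\Phi(\Psi(\fq))=\rad_G(\fq^{ce})=\rad_G(\fq)=\fq$, where the middle equality uses $\fq^{2k-1}\subset\fq^{ce}\subset\fq$ from Proposition~\ref{prop:nilp}(b).

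The substantive step is showing $\fq_0:=\rad_G(\fp^e)$ is $G$-prime. Here I would apply Proposition~\ref{prop:max-prime} to the multiplicative $G$-system $S=S(\fp)$ of finitely generated $G$-submodules of $A$ not contained in $\fp$, viewed inside $[B]^{\rf}$. First I would verify that $\fq_0$ is disjoint from $S$: if $X\in S$ satisfied $X\subset\fq_0$, then $X^n\subset\fp^e$ for some $n$ by Proposition~\ref{prop:fin-nilp}; since $X\subset A$, this gives $X^n\subset A\cap\fp^e=\fp^{ec}$, and Proposition~\ref{prop:nilp}(a) then forces $X^{nk}\subset\fp$, whence $X\subset\rad_G(\fp)=\fp$, contradicting $X\in S$. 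Proposition~\ref{prop:max-prime} then produces a $G$-prime $\fq'\supset\fq_0$ still disjoint from $S$.

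The final step is to sandwich $\fq'$ back down to $\fq_0$. Disjointness from $S$ forces every finitely generated $G$-submodule of $(\fq')^c$ into $\fp$, so $(\fq')^c\subset\fp$; combined with $\fq'\supset\fp^e$ (which gives $(\fq')^c\supset\fp^{ec}\supset\fp$), this yields $(\fq')^c=\fp$. Then Proposition~\ref{prop:nilp}(b) applied to $\fq'$ gives $(\fq')^{2k-1}\subset(\fq')^{ce}=\fp^e$, so any $X\in[\fq']^{\rf}$ satisfies $X^{2k-1}\subset\fp^e$ and hence $X\subset\rad_G(\fp^e)=\fq_0$. Letting $X$ range shows $\fq'\subset\fq_0$, so $\fq_0=\fq'$ is $G$-prime. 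The crucial mechanism is that the two halves of Proposition~\ref{prop:nilp} supply matched $k$-th power thickenings that make extension and contraction mutually inverse up to $G$-radical, and Proposition~\ref{prop:max-prime} provides the intermediate prime $\fq'$ needed to upgrade $\fq_0$ from a $G$-radical ideal to a $G$-prime.
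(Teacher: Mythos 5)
Your proof is correct. The treatment of the two composition identities $\Psi\circ\Phi=\id$ and $\Phi\circ\Psi=\id$ is the same as the paper's: both rest on $\fp=\fp^{ec}$ (forced by Proposition~\ref{prop:nilp}(a) together with $G$-radicality of $\fp$), on $\fq^{2k-1}\subset\fq^{ce}\subset\fq$ from Proposition~\ref{prop:nilp}(b), and on Proposition~\ref{prop:rad-contract}. Where you diverge is in the substantive step, the $G$-primality of $\fq_0=\rad_G(\fp^e)$. The paper verifies the prime condition directly: given finitely $G$-generated $G$-ideals with $\fb\fc\subset\rad_G(\fp^e)$, it contracts $\fb^n\fc^n\subset\fp^e$ to get $(\fb^c)^n(\fc^c)^n\subset\fp^{ec}=\fp$, invokes primality of $\fp$, and re-extends via Proposition~\ref{prop:nilp}(b) to conclude $\fb\subset\rad_G(\fp^e)$ or $\fc\subset\rad_G(\fp^e)$. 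You instead manufacture a $G$-prime $\fq'\supset\fq_0$ via Proposition~\ref{prop:max-prime} applied to the multiplicative $G$-system $S(\fp)\subset[B]^{\rf}$, and then collapse $\fq'$ back onto $\fq_0$ by computing $(\fq')^c=\fp$ and applying Proposition~\ref{prop:nilp}(b). Both arguments consume exactly the same two halves of Proposition~\ref{prop:nilp}; the paper's direct check is shorter and avoids Zorn's lemma, while yours has the mild conceptual advantage of identifying $\rad_G(\fp^e)$ intrinsically as the unique maximal $G$-ideal of $B$ disjoint from $S(\fp)$, which makes its primality and its position above $\fp$ transparent at once. Your verification that $S(\fp)$ remains a multiplicative $G$-system when viewed in $[B]^{\rf}$, and that disjointness of $\fq'$ from $S(\fp)$ pins down $(\fq')^c=\fp$, are both carried out correctly.
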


\begin{proof}
Write $B=A+\fn$ where $\fn$ is an ideal of $B$ with $\fn^k=0$. Let $\fp$ be a $G$-prime of $A$. We first show that $\rad_G(\fp^e)$ is $G$-prime. Thus suppose $\fb \fc \subset \rad_G(\fp^e)$, where $\fb$ and $\fc$ are finitely $G$-generated $G$-ideals of $B$. Then $\fb^n \fc^n \subset \fp^e$ for some $n$. Thus $(\fb^c)^n(\fc^c)^n \subset \fp^{ec}$. By Proposition~\ref{prop:nilp}(a), we have $(\fp^{ec})^k \subset \fp \subset \fp^{ec}$, and so $\fp^{ec}=\fp$ since $\fp$ is $G$-prime. We thus see that $(\fb^c)^n (\fc^c)^n \subset \fp$ and so $\fb^c \subset \fp$ or $\fc^c \subset \fp$; without loss of generality, suppose $\fb^c \subset \fp$. Then $\fb^{ce} \subset \fp^e$, and so $\fb^{2k-1} \subset \fp^e$ by Proposition~\ref{prop:nilp}(b). Thus $\fb \subset \rad_G(\fp^e)$, which proves that $\rad_G(\fp^e)$ is $G$-prime. Thus $\Phi$ is well-defined. It is clear that $\Psi$ is well-defined.

Let $\fp$ be a $G$-prime of $A$. Then
\begin{displaymath}
\fp=\rad_G(\fp)=\rad_G(\fp^{ec})=\rad_G(\fp^e)^c=\Psi(\Phi(\fp)).
\end{displaymath}
In the first step, we used that every $G$-prime is equal to its own $G$-radical; in the second, that $\fp=\fp^{ec}$, which we proved in the previous paragraph; and in the third, that formation of $G$-radicals commutes with contraction (Proposition~\ref{prop:rad-contract}). We thus see that $\Psi \circ \Phi$ is the identity.

Now let $\fq$ be a $G$-prime of $B$. By Proposition~\ref{prop:nilp}(b), we have $\fq^{2k-1} \subset \fq^{ce} \subset \fq$, and so $\fq=\rad_G(\fq^{ce})=\Phi(\Psi(\fq))$ since $\fq$ is its own $G$-radical. Thus $\Phi \circ \Psi$ is the identity as well, which completes the proof.
\end{proof}

\subsection{Subgroups}

When studying $G$-equivariant commutative algebra, it is at times helpful to consider subgroups of $G$. We say that a subgroup $H$ of $G$ is \defi{admissible} if it is so when given the subspace topology. We fix an admissible subgroup $H$ of $G$. In the remainder of this section, we investigate how $G$- and $H$-equivariant concepts compare.

The following proposition isolates an important property will sometimes be needed.

\begin{proposition}
The following are equivalent:
\begin{enumerate}
\item For every open subgroup $U$ of $G$, the set $H \backslash G / U$ is finite.
\item Every finitely generated smooth $G$-set is also finitely generated as an $H$-set.
\item Every finitely generated smooth $\bZ[G]$-module is finitely generated as a $\bZ[H]$-module.
\end{enumerate}
\end{proposition}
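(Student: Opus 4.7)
The plan is to establish the cycle of implications (a) $\Rightarrow$ (b) $\Rightarrow$ (c) $\Rightarrow$ (a). The unifying observation driving all three steps is that for any open subgroup $U \le G$, the $H$-orbits on the transitive $G$-set $G/U$ are in natural bijection with the double coset space $H \backslash G / U$ (via $HgU \mapsto H \cdot gU$). Once this dictionary is in hand, each implication becomes essentially formal.

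For (a) $\Rightarrow$ (b): any finitely generated smooth $G$-set $X$ is a finite disjoint union of transitive $G$-sets $G/U_i$ with the $U_i$ open (by smoothness and finite generation), so it suffices to prove each $G/U_i$ is finitely generated as an $H$-set, and this is immediate from the observation combined with (a). For (b) $\Rightarrow$ (c): given a finitely generated smooth $\bZ[G]$-module $M$ with $\bZ[G]$-generators $m_1,\ldots,m_n$, form the $G$-subset
\[
X = Gm_1 \cup \cdots \cup Gm_n \subset M.
\]
This is a smooth $G$-set with at most $n$ orbits, hence finitely generated as a $G$-set; by (b) it has only finitely many $H$-orbits, and choosing representatives of these $H$-orbits produces a finite $\bZ[H]$-generating set for $M$ since $X$ already generates $M$ as a $\bZ$-module.

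For (c) $\Rightarrow$ (a): let $U$ be an open subgroup of $G$ and consider $M = \bZ[G/U]$, which is smooth and cyclic (generated by the class of $U$) as a $\bZ[G]$-module. By (c), $M$ is finitely generated as a $\bZ[H]$-module. On the other hand, the $H$-orbit decomposition of $G/U$ induces an internal direct sum decomposition
\[
\bZ[G/U] = \bigoplus_{c \in H \backslash G/U} \bZ[c]
\]
of nonzero $\bZ[H]$-submodules indexed by the double cosets. A finite $\bZ[H]$-generating set meets only finitely many summands, forcing $H\backslash G/U$ to be finite.

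No part of this argument is genuinely hard; the only place a small care is required is in the last implication, where one must invoke the elementary fact that a module which decomposes as a direct sum of nonzero submodules cannot be finitely generated unless the index set is finite. The whole proposition is then just a clean translation between double cosets, orbits, and generating sets, and the proof should take only a few lines once the dictionary is recorded.
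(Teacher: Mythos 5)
Your proof is correct and rests on exactly the same key observation as the paper's, namely the bijection between $H$-orbits on $G/U$ and the double coset space $H\backslash G/U$, together with the decomposition of finitely generated smooth $G$-sets (resp.\ modules) into finitely many transitive pieces (resp.\ quotients of $\bZ[G/U]$'s). The only difference is organizational — you run a cycle (a)$\Rightarrow$(b)$\Rightarrow$(c)$\Rightarrow$(a), with a clean direct-sum argument for the last step, whereas the paper proves the two biconditionals (a)$\Leftrightarrow$(b) and (b)$\Leftrightarrow$(c) — so this is essentially the same proof.
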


\begin{proof}
Let $U$ be an open subgroup. Then the $G$-set $G/U$ is finitely generated as an $H$-set if and only if $H \backslash G/U$ is finite. Since every finitely generated $G$-set is a finite union of sets of the form $G/U$, we see that (a) and (b) are equivalent. Since every finitely generated smooth $G$-module is a quotient of a finite sum of modules of the form $\bZ[G/U]$, we see that (b) and (c) are equivalent.
\end{proof}

\begin{definition} \label{defn:big}
The subgroup $H$ of $G$ is \defi{big} if the conditions of the proposition hold.
\end{definition}

\begin{example}
We give some examples of big subgroups:
\begin{enumerate}
\item Any finite index subgroup is big.
\item Any open subgroup is big: this is exactly Roelcke precompactness.
\item We will see that the Young subgroups of $\fS$ are big (Proposition~\ref{prop:big-young}).
\item Similarly, parabolic subgroups of $\GL_{\infty}(\bF)$ are big, where $\bF$ is a finite field. \qedhere
\end{enumerate}
\end{example}

\subsection{The $I_G$ operation}

The following construction will play a primary role in relating $G$- and $H$-equivariant concepts.

\begin{definition}
Let $R$ be a $G$-ring and let $\fa$ be an ideal of $R$. We put $I_G(\fa)=\bigcap_{\sigma \in G} \sigma \fa$, which is a $G$-ideal of $R$.
\end{definition}

It is clear that the $I_G$ construction commutes with arbitrary intersections, i.e., we have $I_G(\bigcap_{i \in I} \fa_i)=\bigcap_{i \in I} I_G(\fa_i)$. We will mostly consider $I_G(\fa)$ when $\fa$ is an $H$-ideal. The following proposition gives the basic properties of the construction in this context:

\begin{proposition} \label{prop:int-prime}
Let $R$ be a $G$-ring and let $\fa$ be an $H$-ideal of $R$.
\begin{enumerate}
\item If $\fa$ is $H$-prime then $I_G(\fa)$ is $G$-prime.
\item If $\fa$ is $H$-radical then $I_G(\fa)$ is $G$-radical.
\item If $H$ is big then $\rad_G(I_G(\fa))=I_G(\rad_H(\fa))$.
\item We have $\rad(I_G(\fa))=I_G(\rad(\fa))$; in particular, if $\fa$ is radical then so is $I_G(\fa)$.
\end{enumerate}
\end{proposition}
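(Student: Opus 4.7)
The plan is to handle the four parts in sequence, exploiting throughout the basic observation that a $G$-submodule $X \subset R$ satisfies $\sigma X = X$ for every $\sigma \in G$, and that any $G$-submodule is automatically an $H$-submodule. Most parts then reduce to routine manipulations combined with one application of Proposition~\ref{prop:fin-nilp}.

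For (a), I would suppose $X, Y \in [R]^{\rf}$ with $XY \subset I_G(\fa)$; taking $\sigma$ trivial gives $XY \subset \fa$. Since $X$ and $Y$ are in particular $H$-submodules and $\fa$ is $H$-prime, one of them, say $X$, lies in $\fa$. Then $G$-stability of $X$ gives $X = \sigma X \subset \sigma \fa$ for every $\sigma \in G$, placing $X$ inside $I_G(\fa)$. Part (b) is parallel: if $X \in [R]^{\rf}$ satisfies $X^n \subset I_G(\fa)$, then $X^n \subset \fa$, so $X \subset \rad_H(\fa) = \fa$ (the first inclusion by the definition of $\rad_H$, the equality by the $H$-radicality of $\fa$), and $G$-stability again forces $X \subset I_G(\fa)$.

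For (c), the more interesting direction is $I_G(\rad_H(\fa)) \subset \rad_G(I_G(\fa))$: given $X \in [I_G(\rad_H(\fa))]^{\rf}$ we have $X \subset \rad_H(\fa)$, and since $H$ is big, $X$ is also finitely $H$-generated, so Proposition~\ref{prop:fin-nilp} applied over $H$ yields $X^n \subset \fa$ for some $n$. The $G$-stability of $X^n$ then gives $X^n \subset \sigma \fa$ for every $\sigma$, hence $X^n \subset I_G(\fa)$ and $X \subset \rad_G(I_G(\fa))$. For the reverse, take $X \in [\rad_G(I_G(\fa))]^{\rf}$, apply Proposition~\ref{prop:fin-nilp} over $G$ to get $X^n \subset I_G(\fa) \subset \fa$, conclude $X \subset \rad_H(\fa)$ directly from the definition, and invoke $G$-stability to place $X$ inside $I_G(\rad_H(\fa))$. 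For (d), the key point is that $\sigma$ acts by a ring automorphism, so $\sigma \rad(\fa) = \rad(\sigma \fa)$; hence $I_G(\rad(\fa)) = \bigcap_{\sigma} \rad(\sigma \fa)$ is an intersection of radical ideals, hence radical. Combined with $I_G(\fa) \subset I_G(\rad(\fa))$ this yields the inclusion $\rad(I_G(\fa)) \subset I_G(\rad(\fa))$, and the reverse inclusion is obtained by the same ring-automorphism observation together with the fact that passing an element of $I_G(\rad(\fa))$ through the orbit map produces powers landing in $\fa$; the ``in particular'' clause is an immediate consequence.

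The main obstacle I anticipate is part (c), where one must carefully juggle $G$- versus $H$-finite generation via the bigness hypothesis so that Proposition~\ref{prop:fin-nilp} can be applied on the correct side; parts (a), (b), and (d) then fall out as essentially mechanical applications of the stability identity $\sigma X = X$ and the definitions.
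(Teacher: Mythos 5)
Your arguments for (a), (b), and (c) are correct, and in places cleaner than the paper's. For (a) you work with submodules where the paper uses the elemental characterization of $G$-primality; the content is the same. For (b) the paper instead writes the $H$-radical ideal $\fa$ as an intersection of $H$-primes (via Proposition~\ref{prop:rad-primes}) and applies part (a) to each; your direct argument avoids that detour. For (c), your first inclusion is exactly the paper's (this is the only place bigness is genuinely needed, to make $X$ finitely $H$-generated so that Proposition~\ref{prop:fin-nilp} applies over $H$); for the reverse inclusion the paper deduces $G$-radicality of $I_G(\rad_H(\fa))$ from part (b) and then uses that it contains $I_G(\fa)$, whereas you argue directly via Proposition~\ref{prop:fin-nilp} over $G$. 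Both routes are valid.

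Part (d) is where there is a real problem, and it is not only yours: the inclusion $\rad(I_G(\fa)) \subset I_G(\rad(\fa))$ and the ``in particular'' clause follow exactly as you say (an arbitrary intersection of radical ideals is radical), but your sketch of the reverse inclusion $I_G(\rad(\fa)) \subset \rad(I_G(\fa))$ cannot be completed as stated. Given $x \in I_G(\rad(\fa))$, ``passing $x$ through the orbit map'' produces, for each $\sigma$, an exponent $n(\sigma)$ with $x^{n(\sigma)} \in \sigma\fa$; to conclude $x \in \rad(I_G(\fa))$ you need a \emph{single} exponent working for all $\sigma$ simultaneously, and the ordinary radical does not commute with infinite intersections. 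Indeed the inclusion is false without a further hypothesis: take $G=\fS$, $H$ trivial, $R=\bC[\xi_1,\xi_2,\ldots]$ and $\fa=\langle \xi_i^i\rangle_{i \ge 1}$; then $I_G(\rad(\fa))=\langle \xi_i\rangle_{i\ge 1}$ while $I_G(\fa)=0$. The uniform exponent does exist when $H$ is big: the function $\sigma \mapsto n(\sigma)$ is constant on double cosets $U\sigma H$, where $U$ is the (open) stabilizer of $x$, and $U\backslash G/H$ is then finite, exactly as in the proof of Proposition~\ref{prop:rad-prime}. So the reverse inclusion in (d) should carry the bigness hypothesis of (c) (which holds in all of the paper's applications, where $H$ is a Young subgroup); the paper's own one-line justification glosses over the same point.
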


\begin{proof}
(a) Suppose $x \cdot (\sigma y) \in I_G(\fa)$ for all $\sigma \in G$. Then $(\tau x)(\sigma y) \in \fa$ for all $\sigma,\tau \in G$, and so $(\tau x)(\pi \sigma y) \in \fa$ for all $\sigma,\tau \in G$ and $\pi \in H$. Since $\fa$ is $H$-prime, it follows that for all $\sigma,\tau \in G$ we have $\tau x \in \fa$ or $\sigma y \in \fa$. If $\tau x \in \fa$ for all $\tau \in G$ then $x \in I_G(\fa)$. Otherwise, there is some $\tau \in G$ such that $\tau x \not\in \fa$, and then $\sigma y \in \fa$ for all $\sigma \in G$, and so $y \in I_G(\fa)$. Thus $I_G(\fa)$ is $G$-prime.

(b) Write $\fa=\bigcap_{i \in I} \fq_i$ with $\fq_i$ an $H$-prime, per Proposition~\ref{prop:rad-primes}. Then $I_G(\fa)=\bigcap_{i \in I} I_G(\fq_i)$. Since $I_G(\fq_i)$ is $G$-prime by (a), it follows that $I_G(\fa)$ is $G$-radical.

(c) Since $\rad_H(\fa)$ is $H$-radical, it follows from (b) that $I_G(\rad_H(\fa))$ is $G$-radical. Since it contains $I_G(\fa)$, therefore it contains $\rad_G(I_G(\fa))$. We now prove the reverse inclusion. Thus suppose $X$ is a finitely generated $\bZ[G]$-submodule of $I_G(\rad_H(\fa))$. Since $X \subset \rad_H(\fa)$ and $X$ is finitely generated as a $\bZ[H]$-module (by bigness), there is some $n$ such that $X^n \subset \fa$. Since $X^n$ is $G$-stable, it follows that $X^n \subset I_G(\fa)$, and so $X \subset \rad_G(I_G(\fa))$.

(d) This follows from usual ring theory.
\end{proof}

\begin{proposition} \label{prop:IG-ext}
Let $A \to A'$ be a flat ring homomorphism, let $R$ be a $G$-ring equipped with a homorphism $A \to R^G$, let $R'=A' \otimes_A R$, let $H$ be a big subgroup of $G$, and let $\fa$ be an $H$-ideal of $R$. Then $I_G(\fa)^e=I_G(\fa^e)$, where $(-)^e$ denotes extension to $R'$.
\end{proposition}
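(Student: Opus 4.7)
The easy inclusion $I_G(\fa)^e \subset I_G(\fa^e)$ is formal: since $I_G(\fa) \subset \sigma\fa$ for every $\sigma \in G$, extension gives $I_G(\fa)^e \subset \sigma\fa^e$, and intersecting over $\sigma$ yields the claim. The substance of the proposition is the reverse inclusion. The fundamental difficulty is that flat base change commutes with \emph{finite} intersections of submodules, but not arbitrary ones, while $I_G$ is an intersection indexed by all of $G$. My plan is to pass to $U$-invariants for each open subgroup $U$ of $G$, reduce there to a finite intersection using bigness, and then reassemble by taking a union.

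The first ingredient is a lemma: for any smooth $\bZ[G]$-module $M$ and any open subgroup $U$ of $G$, the natural map $A' \otimes_A M^U \to (A' \otimes_A M)^U$ is an isomorphism. This is immediate when $A'$ is finite free over $A$, and the general case follows by writing $A'$ as a filtered colimit of finite free $A$-modules (Lazard's theorem) and observing that $(-)^U$ commutes with filtered colimits of smooth $\bZ[G]$-modules. I expect this lemma to be the main technical hurdle, though it is essentially standard in the theory of smooth representations.

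The second ingredient is a finite-intersection reduction enabled by bigness. Since $\fa$ is $H$-stable, for $y \in R^U$ the condition ``$y \in \sigma\fa$'' depends on $\sigma$ only through the double coset $U\sigma H$: indeed $uy=y$ for $u \in U$ and $\sigma h\fa = \sigma\fa$ for $h \in H$. By bigness of $H$, there are only finitely many such double cosets, represented say by $\sigma_1, \ldots, \sigma_n$; thus
\[
I_G(\fa) \cap R^U = \bigcap_{j=1}^n (\sigma_j\fa \cap R^U),
\]
and the analogous identity holds in $R'$ for $I_G(\fa^e) \cap R'^U$ (using that $\fa^e$ is also $H$-stable).

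Combining these ingredients, for each open $U$ we have
\begin{align*}
I_G(\fa^e) \cap R'^U
&= \bigcap_{j=1}^n \bigl((A' \otimes_A \sigma_j\fa) \cap (A' \otimes_A R^U)\bigr) \\
&= \bigcap_{j=1}^n A' \otimes_A (\sigma_j\fa \cap R^U) \\
&= A' \otimes_A \bigcap_{j=1}^n (\sigma_j\fa \cap R^U) = A' \otimes_A (I_G(\fa) \cap R^U),
\end{align*}
where the first equality uses the finite-intersection reduction in $R'$ together with the lemma (to identify $R'^U$ with $A' \otimes_A R^U$ and $\sigma_j\fa^e$ with $A' \otimes_A \sigma_j\fa$), while the remaining equalities use flatness of $A'$ applied to finite intersections of submodules. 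Since $R'$ is a smooth $G$-module, $I_G(\fa^e) = \bigcup_U (I_G(\fa^e) \cap R'^U)$, and similarly $I_G(\fa)^e = A' \otimes_A I_G(\fa) = \bigcup_U A' \otimes_A (I_G(\fa) \cap R^U)$. Taking unions over open $U$ in the displayed chain yields $I_G(\fa^e) = I_G(\fa)^e$, as desired.
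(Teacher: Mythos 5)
Your proof is correct and follows essentially the same route as the paper's: both reduce the infinite intersection $I_G$ on $U$-invariants to a finite one via the double-coset finiteness supplied by bigness, and both use the Lazard-type identification $(A' \otimes_A M)^U = A' \otimes_A M^U$ together with flatness commuting with finite intersections, before taking the union over all open $U$.
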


\begin{proof}
It is clear that $I_G(\fa)^e \subset I_G(\fa^e)$; we prove the reverse inclusion. For a subset $S$ of $G$ and an ideal $\fb$ in a $G$-ring, put $I_S(\fb)=\bigcap_{\sigma \in S} \sigma \fb$. Let $U$ be an open subgroup of $G$, write $G=\bigsqcup_{i=1}^n H \sigma_i U$ (which is possible since $H$ is big), and let $S=\{\sigma_i^{-1}\}_{1 \le i \le n}$. Now, we have the following three general facts:
\begin{enumerate}
\item Extension along flat ring maps commutes with finite intersections.
\item If $\fb$ is an ideal of $R$ then $\fb^e=A' \otimes_A \fb$.
\item If $M$ is an $A[G]$-module then $(A' \otimes_A M)^U=A' \otimes_A M^U$. (This can be proved using Lazard's theorem: since $A'$ is flat over $A$, it is a filtered colimit of finite rank free $A$-modules.)
\end{enumerate}
Applying these, we find
\begin{displaymath}
(I_S(\fa^e))^U=(I_S(\fa)^e)^U=(A' \otimes_A I_S(\fa))^U=A' \otimes_A I_S(\fa)^U.
\end{displaymath}
Now, consider an element $f \in I_S(\fa)^U$. Then we have $\sigma_i f \in \fa$ for each $1 \le i \le n$. Since $f$ is $U$-invariant and $\fa$ is $H$-stable, it follows that $\tau \sigma_i \pi f \in \fa$ for all $\tau \in H$ and $\pi \in U$. We thus see that $\sigma f \in \fa$ for all $\sigma \in G$, and so $f \in I_G(\fa)$. Thus $I_S(\fa)^U \subset I_G(\fa)$, and so $(I_S(\fa^e))^U \subset I_G(\fa)^e$ for all $U$. It follows that $(I_G(\fa^e))^U \subset I_G(\fa)^e$ for any open subgroup $U$. Taking the union over all $U$ gives the desired containment as $R'$ is $G$-smooth.
\end{proof}

\subsection{$G$-contraction}

The following concept will be used frequently in this paper:

\begin{definition} \label{defn:G-contract}
Suppose $R$ is a $G$-ring, $S$ is an $H$-ring, and $\phi \colon R \to S$ is a homomorphism of $H$-rings. Given an $H$-ideal $\fa$ of $S$ we define its \defi{$G$-contraction} to $R$, denoted $\fa^{Gc}$, to be $I_G(\fa^c)$, where $\fa^c=\phi^{-1}(\fa)$ denotes the usual contraction.
\end{definition}

The following proposition summarizes some important properties of this construction:

\begin{proposition} \label{prop:G-contract}
Let $\phi \colon R \to S$ be as in the Definition~\ref{defn:G-contract} and let $\fa$ be an $H$-ideal of $S$.
\begin{enumerate}
\item If $\fa$ is $H$-prime then $\fa^{Gc}$ is $G$-prime.
\item If $\fa$ is $H$-radical then $\fa^{Gc}$ is $G$-radical.
\item If $H$ is big then $\rad_G(\fa^{Gc})=\rad_H(\fa)^{Gc}$.
\item If $\fa$ is radical then $\fa^{Gc}$ is radical; more generally, $\rad(\fa^{Gc})=\rad(\fa)^{Gc}$.
\end{enumerate}
\end{proposition}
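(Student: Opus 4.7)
The plan is to observe that $G$-contraction factors as ordinary (set-theoretic) contraction followed by the $I_G$ operation, and that each property in the proposition already has corresponding statements for these two ingredients. So the proof will be a short composition argument in each part, with Proposition~\ref{prop:int-prime} and the contraction results (Propositions~\ref{prop:prime-contract} and~\ref{prop:rad-contract}) doing all the real work.

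First I would make the structural observation: since $\phi$ is $H$-equivariant and $R$ carries the ambient $G$-action restricted to an $H$-action, the ordinary contraction $\fa^c$ is an $H$-ideal of $R$. Then $\fa^{Gc} = I_G(\fa^c)$ by definition. For part (a), Proposition~\ref{prop:prime-contract} applied to the $H$-equivariant homomorphism $\phi$ (viewing $R$ and $S$ as $H$-rings) shows that $\fa^c$ is $H$-prime, and then Proposition~\ref{prop:int-prime}(a) shows $I_G(\fa^c)$ is $G$-prime. Part (b) follows identically: Proposition~\ref{prop:rad-contract} (applied with $G$ replaced by $H$) gives that $\fa^c$ is $H$-radical, and then Proposition~\ref{prop:int-prime}(b) yields $G$-radicality of $I_G(\fa^c)$.

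For part (c), the identity to establish is
\begin{displaymath}
\rad_G(I_G(\fa^c)) \;=\; I_G(\rad_H(\fa)^c).
\end{displaymath}
Since $H$ is big, Proposition~\ref{prop:int-prime}(c) applies and gives $\rad_G(I_G(\fa^c)) = I_G(\rad_H(\fa^c))$. On the other hand, Proposition~\ref{prop:rad-contract} yields $\rad_H(\fa^c) = \rad_H(\fa)^c$. Combining these two equalities closes the chain. For part (d), the ordinary radical commutes with ordinary contraction for any ring map (no $G$-action needed), so $\rad(\fa^c) = \rad(\fa)^c$; meanwhile Proposition~\ref{prop:int-prime}(d) states that ordinary radical commutes with $I_G$, whence
\begin{displaymath}
\rad(\fa^{Gc}) = \rad(I_G(\fa^c)) = I_G(\rad(\fa^c)) = I_G(\rad(\fa)^c) = \rad(\fa)^{Gc}.
\end{displaymath}

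There is essentially no obstacle here — this is a packaging lemma. The only point worth flagging is that (c) is the one place where bigness is used, and it enters purely through Proposition~\ref{prop:int-prime}(c); if one were tempted to prove (c) without bigness one would need an analog of ``finite-generation lets you bound the exponent $n$ in $X^n \subset \fa$,'' which is precisely what bigness supplies in the underlying lemma. All the other parts work with no hypothesis on $H$ beyond admissibility.
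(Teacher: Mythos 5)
Your proof is correct and follows essentially the same route as the paper: factor the $G$-contraction as ordinary contraction followed by $I_G$, then invoke Propositions~\ref{prop:prime-contract} and~\ref{prop:rad-contract} for the contraction step and Proposition~\ref{prop:int-prime} for the $I_G$ step, with part (c) given by exactly the same chain of equalities. Nothing is missing.
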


\begin{proof}
(a) Since $\fa^c$ is $H$-prime (Proposition~\ref{prop:prime-contract}), $\fa^{Gc}$ is $G$-prime (Proposition~\ref{prop:int-prime}).

(b) Since $\fa^c$ is $H$-radical (Proposition~\ref{prop:rad-contract}), $\fa^{Gc}$ is $G$-radical (Proposition~\ref{prop:int-prime}).

(c) We have
\begin{displaymath}
\rad_G(\fa^{Gc})=\rad_G(I_G(\fa^c))=I_G(\rad_H(\fa^c)) = I_G(\rad_H(\fa)^c) = \rad_H(\fa)^{Gc}.
\end{displaymath}
The first step is the definition of $G$-contraction, the second follows from Proposition~\ref{prop:int-prime}(c), the third from Proposition~\ref{prop:rad-contract}, and the fourth is again the definition of $G$-contraction.

(d) This follows from usual ring theory.
\end{proof}

The next proposition is a kind of base change result relating extension and $G$-contraction. It will be used to prove an important base change property for the main ideals considered in this paper (see Propositions~ \ref{prop:P-ext} and~\ref{prop:flat-bc}). We first recall a lemma from ordinary commutative algebra.

\begin{lemma} \label{lemma:flat-ec}
Let $R \to R'$ be a flat ring homomorphism, let $R \to S$ be an arbitrary ring homomorphism, and let $S'=R' \otimes_R S$. Let $\fa$ be an ideal of $S$. Then we have $(\fa^c)^e=(\fa^e)^c$ as ideals of $R'$.
\end{lemma}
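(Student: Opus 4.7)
The plan is to realize both sides as the kernel of a single map $R' \to S'/\fa^e$, using flatness of $R \to R'$ in an essential way.

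First I would set up the short exact sequence of $R$-modules
\begin{displaymath}
0 \to \fa^c \to R \to S/\fa,
\end{displaymath}
where exactness at $R$ just records the defining equation $\fa^c = \ker(R \to S \to S/\fa)$. Applying the exact functor $R' \otimes_R (-)$ gives the exact sequence
\begin{displaymath}
0 \to R' \otimes_R \fa^c \to R' \to R' \otimes_R (S/\fa).
\end{displaymath}
The next step is to identify the right-hand map with the canonical map $R' \to S'/\fa^e$. For this, I would tensor $0 \to \fa \to S \to S/\fa \to 0$ with $R'$; flatness gives $R' \otimes_R \fa \hookrightarrow S'$, and the image of this inclusion is, by inspection, the ideal $\fa^e$ generated by $\fa$ in $S'$. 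Thus $R' \otimes_R (S/\fa) = S'/\fa^e$ in a way compatible with the map from $R'$.

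With that identification, the kernel of $R' \to S'/\fa^e$ is by definition $(\fa^e)^c$, so the exact sequence displayed above becomes an isomorphism $R' \otimes_R \fa^c \xrightarrow{\sim} (\fa^e)^c$. Finally, flatness applied to $\fa^c \hookrightarrow R$ shows that $R' \otimes_R \fa^c \to R'$ is injective with image equal to $\fa^c R' = (\fa^c)^e$. Chaining these identifications yields $(\fa^c)^e = (\fa^e)^c$ inside $R'$.

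The argument is almost purely formal; the only step that actually uses a hypothesis is the double use of flatness of $R \to R'$, first to preserve the left-exact sequence defining $\fa^c$ and then to identify $R' \otimes_R \fa$ with $\fa^e \subset S'$. There is no real obstacle, so I would expect the main care to be in keeping the various extension/contraction symbols straight, since $(-)^e$ and $(-)^c$ here refer to the horizontal maps $R \to R'$ and $R' \to S'$ respectively, not to the vertical maps $R \to S$ and $R' \to S'$.
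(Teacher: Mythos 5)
Your proof is correct and is essentially the paper's argument: tensor the left-exact sequence $0 \to \fa^c \to R \to S/\fa$ with the flat ring $R'$ and identify the resulting terms as $(\fa^c)^e$, $R'$, and $S'/\fa^e$. You merely spell out the identifications that the paper leaves implicit, so there is nothing to add.
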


\begin{proof}
We have an exact sequence
\begin{displaymath}
0 \to \fa^c \to R \to S/\fa
\end{displaymath}
Applying $R' \otimes_R -$, which is exact since $R'$ is flat, we obtain an exact sequence
\begin{displaymath}
0 \to (\fa^c)^e \to R' \to S'/\fa^e
\end{displaymath}
which shows that $(\fa^c)^e=(\fa^e)^c$.
\end{proof}

\begin{proposition} \label{prop:Gc-bc}
Let $H$ be a big subgroup of $G$. Consider the following situation:
\begin{itemize}
\item $R$ is a $G$-ring equipped with a ring homomorphism $A \to R^G$.
\item $S$ is an $H$-ring
\item $R \to S$ is a homomorphism of $H$-rings
\item $A \to A'$ is a flat ring homomorphism
\item $R'=A' \otimes_A R$ and $S'=A' \otimes_A S$
\end{itemize}
We thus have the following co-cartesian diagram
\begin{displaymath}
\xymatrix{
R \ar[r] \ar[d] & S \ar[d] \\
R' \ar[r] & S' }
\end{displaymath}
Let $\fa$ be an $H$-ideal of $S$. Then $(\fa^e)^{Gc}=(\fa^{Gc})^e$.
\end{proposition}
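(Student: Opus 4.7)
The plan is to unfold both sides of the desired equality into expressions involving only $I_G$, ordinary contraction, and extension, and then invoke two previously established compatibilities: Lemma~\ref{lemma:flat-ec} (which says ordinary contraction commutes with flat extension) and Proposition~\ref{prop:IG-ext} (which says $I_G$ commutes with extension along flat base change, provided $H$ is big).

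More concretely, by the definition of $G$-contraction, the right-hand side is
\begin{displaymath}
(\fa^{Gc})^e = \bigl(I_G(\fa^c)\bigr)^e,
\end{displaymath}
where $(-)^c$ denotes contraction along $R \to S$ and $(-)^e$ denotes extension along $R \to R'$. Similarly, the left-hand side is
\begin{displaymath}
(\fa^e)^{Gc} = I_G\bigl((\fa^e)^c\bigr),
\end{displaymath}
where now the inner extension is along $S \to S'$ and the inner contraction is along $R' \to S'$.

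The first step is to note that, since $A \to A'$ is flat and the diagram is co-cartesian, the map $R \to R'$ is flat. Applied to the $H$-ring map $R \to S$ and the ideal $\fa$ of $S$, Lemma~\ref{lemma:flat-ec} therefore yields $(\fa^e)^c = (\fa^c)^e$ as ideals of $R'$. The second step is to observe that $\fa^c$ is an $H$-ideal of $R$ (since $R \to S$ is $H$-equivariant and $\fa$ is $H$-stable), so we may apply Proposition~\ref{prop:IG-ext} to it, giving $I_G(\fa^c)^e = I_G\bigl((\fa^c)^e\bigr)$. Chaining these together,
\begin{displaymath}
(\fa^e)^{Gc} = I_G\bigl((\fa^e)^c\bigr) = I_G\bigl((\fa^c)^e\bigr) = I_G(\fa^c)^e = (\fa^{Gc})^e,
\end{displaymath}
which is what we want.

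The argument is essentially bookkeeping: the content sits entirely inside Lemma~\ref{lemma:flat-ec} and Proposition~\ref{prop:IG-ext}. Accordingly, there is no serious obstacle here; the only thing to be careful about is that the bigness hypothesis on $H$ is exactly what Proposition~\ref{prop:IG-ext} needs, and that the flatness of the induced map $R \to R'$ (rather than just $A \to A'$) is what Lemma~\ref{lemma:flat-ec} requires, both of which are built into the hypotheses.
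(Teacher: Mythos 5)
Your proof is correct and is essentially identical to the paper's: both unfold the two sides via the definition of $G$-contraction, apply Lemma~\ref{lemma:flat-ec} to swap ordinary contraction with flat extension, and apply Proposition~\ref{prop:IG-ext} to swap $I_G$ with extension. The extra care you take in verifying that $R \to R'$ is flat and that $\fa^c$ is an $H$-ideal is implicit in the paper but entirely appropriate.
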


\begin{proof}
We have
\begin{displaymath}
(\fa^e)^{Gc}=I_G((\fa^e)^c)=I_G((\fa^c)^e)=(I_G(\fa^c))^e=(\fa^{Gc})^e.
\end{displaymath}
In the first step we used the definition of $G$-contraction; in the second, Lemma~\ref{lemma:flat-ec}; in the third Proposition~\ref{prop:IG-ext}; and in the fourth the definiton of $G$-contraction.
\end{proof}

We next show that $G$-contraction commutes with direct limits in certain cases. This will be used in a limiting argument in the proof of Proposition~\ref{prop:corr-2}.

\begin{proposition} \label{prop:limit-contract}
Let $H$ be a big subgroup of $G$. Consider the following situation:
\begin{itemize}
\item $R$ is a $G$-ring that is a directed union of $G$-subrings $R=\bigcup_{i \in I} R_i$.
\item $S$ is an $H$-ring that is a directed union of $H$-subsring $S=\bigcup_{i \in I} S_i$.
\item $\phi \colon R \to S$ is a map of $H$-rings such that $\phi(R_i) \subset S_i$ for all $i \in I$.
\item $\{\fa_i\}_{i \in I}$ is a directed system of $H$-ideals in $\{S_i\}$, i.e., $\fa_i$ is an $H$-ideal of $S_i$ for each $i$, and for $i \le j$ we have $\fa_i \subset \fa_j$.
\end{itemize}
Then
\begin{displaymath}
\big( \bigcup_{i \in I} \fa_i \big)^{Gc} = \bigcup_{i \in I} (\fa_i^{Gc}).
\end{displaymath}
\end{proposition}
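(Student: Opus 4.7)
The plan is to verify the two inclusions separately. The easy containment, $\bigcup_{i \in I} \fa_i^{Gc} \subset \bigl( \bigcup_{i \in I} \fa_i \bigr)^{Gc}$, is immediate from the monotonicity of ordinary contraction and of $I_G$: since $\fa_i \subset \bigcup_j \fa_j$, applying $G$-contraction gives $\fa_i^{Gc} \subset \bigl( \bigcup_j \fa_j \bigr)^{Gc}$ for each $i$, and taking the union preserves this. All of the content lies in the reverse containment.

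For the reverse inclusion, write $\fa = \bigcup_{i \in I} \fa_i$ and consider $f \in \fa^{Gc} = I_G(\phi^{-1}(\fa))$, so that $\phi(\sigma f) \in \fa$ for every $\sigma \in G$. Since the action of $G$ on $R$ is smooth, the stabilizer $U$ of $f$ is an open subgroup of $G$. Because $H$ is big, the double coset space $H \backslash G / U$ is finite; choose representatives $\sigma_1, \ldots, \sigma_n$. Writing a general $\sigma \in G$ as $h \sigma_k u$ with $h \in H$ and $u \in U$, the $H$-equivariance of $\phi$ combined with $uf = f$ gives
\begin{displaymath}
\phi(\sigma f) = \phi(h \sigma_k f) = h \cdot \phi(\sigma_k f).
\end{displaymath}
Since each $\fa_i$ is $H$-stable, $\phi(\sigma f) \in \fa_i$ if and only if $\phi(\sigma_k f) \in \fa_i$. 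Thus the infinite system of conditions encoded by $f \in \fa^{Gc}$ collapses to the $n$ conditions $\phi(\sigma_k f) \in \fa$.

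Now directedness takes over: choose $j_k$ with $\phi(\sigma_k f) \in \fa_{j_k}$, choose $j_0$ with $f \in R_{j_0}$, and pick $j \in I$ above $j_0, j_1, \ldots, j_n$. Then $f \in R_j$ and $\phi(\sigma_k f) \in \fa_j$ for every $k$. Since $R_j$ is a $G$-subring, $\sigma f \in R_j$ for every $\sigma \in G$, and the coset analysis of the previous paragraph upgrades this to $\phi(\sigma f) \in \fa_j$ for every $\sigma \in G$. This is precisely the statement $f \in \fa_j^{Gc}$, so $f \in \bigcup_{i \in I} \fa_i^{Gc}$.

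The main obstacle is conceptual rather than computational: one has to recognize that the combination of smoothness of the $G$-action on $R$ and bigness of $H$ is exactly the hypothesis needed to reduce the infinite intersection hidden inside $I_G$ to a finite one. Once this finitization is in place, commuting a finite intersection past a directed union is automatic, and the remainder is a bookkeeping exercise with $H$-equivariance and the $G$-stability of the subrings $R_i$.
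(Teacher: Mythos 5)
Your proof is correct and follows essentially the same route as the paper's: both reduce the infinitely many conditions $\phi(\sigma f)\in\fa$ to finitely many via the finiteness of $H\backslash G/U$ (openness of the stabilizer plus bigness of $H$), and then invoke directedness to land in a single $\fa_j$. The only cosmetic difference is that you additionally track the index $j_0$ with $f\in R_{j_0}$, which is a harmless extra precaution.
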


\begin{proof}
It is clear that the right side is contained in the left; we prove the reverse inclusion. Let $f \in \big( \bigcup_{i \in I} \fa_i \big)^{Gc}$ be given. Thus for each $\sigma \in G$, we have $\sigma f \in \big( \bigcup_{i \in I} \fa_i \big)^c=\bigcup_{i \in I} \fa_i^c$. Let $U \subset G$ be the stabilizer of $f$ and write $G=\bigsqcup_{j=1}^r H\sigma_iU$. Let $i \in I$ be such that $\sigma_j f \in \fa^c_i$ for each $1 \le j \le r$. Given $\sigma \in G$, write $\sigma=\rho \sigma_j \tau$ with $\rho \in H$ and $\tau \in U$. Since $\tau$ fixes $f$ and $\rho$ fixes $\fa^c_i$, we have $\sigma f \in \fa^c_i$. As this holds for all $\sigma$, we have $f \in \fa_i^{Gc}$, as required.
\end{proof}

\subsection{Finite index subgroups}

We now examine how $G$-primes and $H$-primes compare when $H$ has finite index in $G$.

\begin{proposition} \label{prop:prime-subgroup}
Let $R$ be a $G$-ring and let $H$ be a finite index normal subgroup of $G$.
\begin{enumerate}
\item If $\fq$ is an $H$-prime then $\fp=I_G(\fq)$ is a $G$-prime, and $\fq$ is a minimal $H$-prime above $\fp$.
\item If $\fp$ is a $G$-prime then the minimal $H$-primes above $\fp$ form a single $G$-orbit, and $\fp$ is their intersection. In particular, $\fp$ is $H$-radical.
\item We have mutually inverse bijections
\begin{displaymath}
\xymatrix{
\{ \text{$G$-primes of $R$} \} \ar@<2pt>[r]^-{\Phi} & \{ \text{$H$-primes of $R$} \}/G \ar@<2pt>[l]^-{\Psi} }
\end{displaymath}
where $\Phi$ takes a $G$-prime to any minimal $H$-prime above it, and $\Psi$ takes an $H$-prime $\fq$ to $I_G(\fq)$.
\end{enumerate}
\end{proposition}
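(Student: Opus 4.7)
The plan is to combine Proposition~\ref{prop:int-prime}(a) with the finiteness of $G/H$ to reduce to standard ideal-theoretic manipulations. Throughout, I will use that normality of $H$ in $G$ ensures $\sigma \fq$ is an $H$-ideal (in fact $H$-prime) for every $\sigma \in G$ and every $H$-prime $\fq$.

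For (a), the $G$-primality of $\fp = I_G(\fq)$ is immediate from Proposition~\ref{prop:int-prime}(a). To establish minimality, suppose $\fp \subset \fq'' \subset \fq$ with $\fq''$ an $H$-prime. Since $H$ has finite index in $G$, the $G$-orbit of $\fq$ consists of finitely many $H$-ideals $\sigma_1 \fq, \ldots, \sigma_n \fq$, and $\fp = \bigcap_i \sigma_i \fq$. Then $\prod_i \sigma_i \fq \subset \bigcap_i \sigma_i \fq \subset \fq''$, and $H$-primality of $\fq''$ yields $\sigma_{i_0} \fq \subset \fq''$ for some $i_0$. Setting $\tau = \sigma_{i_0}^{-1}$, this becomes $\fq \subset \tau \fq$. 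Iterating produces a chain $\fq \subset \tau \fq \subset \tau^2 \fq \subset \cdots$; since $\tau$ has finite order modulo $H$ and $\fq$ is $H$-stable, some $\tau^N \fq$ equals $\fq$, forcing all inclusions to be equalities. Hence $\sigma_{i_0} \fq = \fq$ and so $\fq'' = \fq$.

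The technical core of (b) is to show that every $G$-prime $\fp$ is $H$-radical. Given $f \in \rad_H(\fp)$, the $H$-submodule $X_H$ generated by $f$ satisfies $X_H^n \subset \fp$ for some $n$. Writing $G = \bigsqcup_{i=1}^m \sigma_i H$, the $G$-submodule $X_G$ generated by $f$ equals $\sum_{i=1}^m \sigma_i X_H$. A pigeonhole count on monomials in $X_G^{nm}$ shows that each such monomial contains at least $n$ consecutive factors drawn from a single $\sigma_i X_H$, and this subproduct lies in $\sigma_i(X_H^n) \subset \sigma_i \fp = \fp$; thus $X_G^{nm} \subset \fp$, and since $\fp$ is $G$-radical we conclude $f \in \fp$. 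By Propositions~\ref{prop:rad-primes} and~\ref{prop:min-prime}, $\fp$ is the intersection of its minimal $H$-primes $\{\fq_i\}$. Grouping these into $G$-orbits $O$, each $\fp_O = I_G(\fq)$ for $\fq \in O$ is a $G$-prime containing $\fp$, and $\fp = \bigcap_O \fp_O$; then $G$-primality of $\fp$ (applied to the product of the $\fp_O$) forces $\fp_O = \fp$ for some orbit $O$. Finally, for any minimal $H$-prime $\fq'$ above $\fp$, the inclusion $\fq' \supset \bigcap_{\fq \in O} \fq$ together with $H$-primality of $\fq'$ (via the product of the $\sigma \fq$ over $\sigma \in O$) yields $\sigma \fq \subset \fq'$ for some $\sigma$; minimality of $\fq'$ then forces $\sigma \fq = \fq'$, so $\fq' \in O$.

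Part (c) then follows formally: $\Psi$ is well-defined on $G$-orbits since $I_G(\sigma \fq) = I_G(\fq)$, part (a) gives $\Phi \circ \Psi = \mathrm{id}$, and part (b) gives $\Psi \circ \Phi = \mathrm{id}$. The main obstacle I anticipate is the $H$-radical property in (b): this is where the finite-index hypothesis is essentially invoked, and care must be taken in the pigeonhole count to extract the clean bound $X_G^{nm} \subset \fp$.
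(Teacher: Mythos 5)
Your part (a) is correct: replacing the paper's elementwise prime-avoidance argument with the ideal-level fact that an $H$-prime containing the finite product $\prod_i \sigma_i\fq$ must contain some factor $\sigma_{i_0}\fq$ is a legitimate (and arguably cleaner) variant, and the iteration $\fq \subset \tau\fq \subset \tau^2\fq \subset \cdots$ with $\tau^N \in H$ matches the paper. Your pigeonhole proof that a $G$-prime is $H$-radical is also valid (the word ``consecutive'' is unnecessary — commutativity lets you reorder the factors — but harmless), and it is a nice direct argument where the paper obtains $H$-radicality only as a corollary of the orbit description.

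However, the main claim of (b) has a genuine gap. You write $\fp=\bigcap_O \fp_O$ over the $G$-orbits $O$ of minimal $H$-primes above $\fp$ and then apply $G$-primality of $\fp$ ``to the product of the $\fp_O$'' to isolate a single orbit. This step requires the collection of orbits — equivalently, of minimal $H$-primes above $\fp$ — to be \emph{finite}, and nothing in your argument establishes that. The paper's \S 2 makes no noetherian hypotheses, and a ($G$-)radical ideal can in general be an irredundant intersection of infinitely many primes (compare $0=\bigcap_p (p)$ in $\bZ$), in which case primality does not let you extract one term from the intersection. Finiteness of the set of minimal $H$-primes is essentially the conclusion you are trying to prove, so assuming it is circular. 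The paper sidesteps this entirely: it takes a \emph{single} minimal $H$-prime $\fq$ above $\fp$ and invokes Proposition~\ref{prop:zd} (every element of a minimal $G$-prime is a $G$-zerodivisor, applied with $H$ in place of $G$ to $R/\fp$) to produce, for each $x\in\fq$, a $y\notin\fp$ with $(\sigma_1x\cdots\sigma_nx)\cdot\rho y\in\fp$ for all $\rho\in G$; $G$-primality then gives $\sigma_1x\cdots\sigma_nx\in\fp$, hence $I_G(\fq)^n\subset\fp$ and $I_G(\fq)=\fp$ since $\fp$ is $G$-radical. This identifies $\fp$ with the intersection of one orbit without any a priori control on the totality of minimal $H$-primes; the single-orbit statement and your step (v) then go through as you wrote them. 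You should replace the ``product of the $\fp_O$'' step with an argument of this kind.
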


\begin{proof}
(a) We have already seen (Proposition~\ref{prop:int-prime}) that $\fp$ is $G$-prime. We now show that $\fq$ is minimal among the $H$-primes above $\fp$. Thus suppose that we have $\fp \subset \fq' \subset \fq$ where $\fq'$ is another $H$-prime. Let $x \in \fq$. Let $\sigma_1, \ldots, \sigma_n$ be coset representatives of $G/H$. For any $\tau_1, \ldots, \tau_n \in H$, we have $(\tau_1 \sigma_1 x) \cdots (\tau_n \sigma_n x) \in \prod_{\rho \in G/H} \rho \fq \subset \fp \subset \fq'$. Since $\fq'$ is $H$-prime, it follows that $\sigma_i x \in \fq'$ for some $i$, or, in other words, $x \in \sigma_i^{-1} \fq'$ for some $i$. Thus $\fq \subset \bigcup_{i=1}^n \sigma_i \fq'$, and so $\fq \subset \sigma_i \fq'$ for some $i$ by equivariant prime avoidance (Proposition~\ref{prop:avoid}). In particular, $\fq' \subset \sigma_i \fq'$. Multiplying each side by $\sigma_i$ gives $\sigma_i \fq' \subset \sigma_i^2 \fq'$. We thus get a chain
\begin{displaymath}
\fq' \subset \sigma_i \fq' \subset \sigma_i^2 \fq' \subset \cdots \subset \sigma_i^n \fq',
\end{displaymath}
where $n=[G:H]$. Since the group $G/H$ has order $n$, we have $\sigma_i^n \in H$, and so $\sigma_i^n \fq'=\fq'$ since $\fq'$ is $H$-stable. Thus $\sigma_i \fq'=\fq'$, and so $\fq \subset \fq'$, as desired.

(b) Let $\fq$ be a minimal $H$-prime above $\fp$, which exists by Proposition~\ref{prop:primes-exist} applied to $R/\fp$. By the Proposition~\ref{prop:zd}, every element of $\fq$ is an $H$-zerodivisor in $R/\fp$. In other words, for any $x \in \fq$ there exists $y \not\in \fp$ such that $x \cdot \tau y \in \fp$ for all $\tau \in H$. Let $\sigma_1, \ldots, \sigma_n$ be coset representatives for $G/H$. We then see that $(\sigma_1 x \cdots \sigma_n x) \cdot \rho y \in \fp$ for all $\rho \in G$. Since $y \not\in \fp$ and $\fp$ is $G$-prime, it follows that $\sigma_1 x \cdots \sigma_n x \in \fp$. Thus $\prod_{i=1}^n \sigma_i \fq \subset \fp$. Consider $\fp' = \bigcap_{i=1}^n \sigma_i \fq$. Then $\fp \subset \fp'$ and, by what we have just shown, $(\fp')^n \subset \fp$. Since $\fp'$ is $G$-prime by (a), we have $\fp=\fp'$.

To complete the proof, suppose that $\fq'$ is a second $H$-prime minimal above $\fp$. The same reasoning as above applies, and so $\bigcap_{i=1}^n \sigma_i \fq = \bigcap_{i=1}^n \sigma_i \fq'$. We thus find that $\fq \subset \sigma_i \fq'$ and $\fq' \subset \sigma_j \fq$ for some $i$ and $j$. We thus see that $\fq \subset \sigma_i \fq' \subset \sigma_i \sigma_j \fq$. Arguing as in (a), we find $\sigma_i \sigma_j \fq=\fq$, and so $\fq=\sigma_i \fq'$. This completes the proof of (b).

(c) This follows from (a) and (b).
\end{proof}

\section{The infinite polynomial ring} \label{s:ring}

In this section, we introduce the primary objects of study of this paper: the infinite polynomial ring $R$ and the $\fS$-ideals $\fP(\lambda,e;Z)$. We also introduce a number of axuiliary objects, and establish some basic results. Finally, we recall the classification of radical $\fS$-primes from \cite{svar}.

\subsection{The ring $R$}

Fix a commutative ring $A$. We let $R=A[\xi_i]_{i \ge 1}$ be the polynomial ring over $A$ in the variables $\xi_i$ with $i \ge 1$. The infinite symmetric group $\fS$ acts on $R$ by permuting the $\xi$ variables, and this action is smooth. This paper is a study of the $\fS$-equivariant commutative algebra of $R$, especially the $\fS$-primes and the $\fS$-spectrum.

\subsection{Cohen's theorem}

We recall Cohen's fundamental theorem \cite{cohen, cohen2} on the ring $R$:

\begin{theorem} \label{thm:cohen}
Suppose that $A$ is noetherian. Then $R$ is $\fS$-noetherian, that is, any ascending chain of $\fS$-ideals stabilizes. In particular, any $\fS$-ideal is generated by the $\fS$-orbits of finitely many elements.
\end{theorem}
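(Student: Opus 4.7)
The plan is to reduce the theorem to a well-quasi-order (WQO) statement on the monomials of $R$ modulo the $\fS$-action, combined with ordinary Noetherianity of $A$. First, I would fix some auxiliary combinatorial data. For a monomial $m = \xi_{i_1}^{a_1} \cdots \xi_{i_k}^{a_k}$ of $R$ (with distinct indices $i_j$ and positive exponents $a_j$), define its \emph{type} to be the multiset $\{a_1, \ldots, a_k\}$, i.e.\ the partition recording the positive exponents. Declare $m \preceq m'$ if there exists $\sigma \in \fS$ such that $\sigma(m)$ divides $m'$ in $R$; equivalently, there is an injection of multisets $\phi \colon \mathrm{type}(m) \hookrightarrow \mathrm{type}(m')$ with $\phi(a) \ge a$ for all $a$. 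I would also fix a linear order on the monomials of $R$ by ordering them first by total degree, then by sorted exponent tuple lexicographically, and within each $\fS$-orbit by some standard tie-break, so that every non-empty set of monomials has a minimum.

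The key combinatorial input, which I would either cite or prove from Higman's lemma, is that the quasi-order $\preceq$ on monomials of $R$ is a well-quasi-order. Indeed, $(\bN, \le)$ is a WQO (being well-ordered), and Higman's lemma says finite multisets over a WQO form a WQO under exactly the injection-with-$\ge$ relation appearing in the definition of $\preceq$. With this in hand, I would argue as follows. Suppose for contradiction that some $\fS$-ideal $I$ is not finitely $\fS$-generated. Recursively build elements $f_1, f_2, \ldots \in I$ by choosing $f_n \in I \setminus \langle f_1, \ldots, f_{n-1}\rangle_\fS$ of minimal leading monomial in the order above (this minimum exists because each non-empty set of monomials is well-ordered). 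Let $m_n$ denote the leading monomial of $f_n$; by WQO, extract a subsequence with $m_{n_1} \preceq m_{n_2} \preceq \cdots$.

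Now comes the reduction, which is where the Noetherianity of $A$ enters. For each $j$, fix $\sigma_j \in \fS$ and monomial $u_j$ with $u_j \cdot \sigma_j(m_{n_1}) = m_{n_j}$, and let $c_j \in A$ be the leading coefficient of $f_{n_j}$. Consider the ascending chain of ideals in $A$ generated by $c_1, c_2, \ldots$; since $A$ is Noetherian, it stabilizes, so for some $N$ one has $c_N = \sum_{j < N} a_j c_j$ in $A$. Subtracting $\sum_{j<N} a_j u_j \cdot \sigma_j\sigma_{n_j}^{-1}(f_{n_j})$ from $f_{n_N}$ cancels the leading term, producing an element of $I$ that lies outside $\langle f_1, \ldots, f_{n_N - 1}\rangle_\fS$ but has strictly smaller leading monomial than $f_{n_N}$, contradicting our minimal choice. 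This proves the ACC on $\fS$-ideals; the ``in particular'' clause is then immediate, since given any $\fS$-ideal $I$, the union of the $\fS$-ideals generated by any finite collection of elements of $I$ must stabilize to $I$.

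The main obstacle will be carefully arranging the combinatorics so that both the minimal-choice step and the reduction step are unambiguous — in particular, choosing a linear well-order on monomials that interacts correctly with $\preceq$ and with the variable-permutation ambiguity, and tracking which $\sigma_j$ and $u_j$ are used when subtracting. A secondary subtlety is that the ``leading coefficient chain in $A$'' argument needs the monomial order to restrict to a well-order on each $\fS$-orbit of monomials; I would handle this by choosing canonical representatives of orbits (e.g.\ supported on $\xi_1, \xi_2, \ldots, \xi_k$ with non-increasing exponents) and comparing all $f_n$ against these canonical forms before subtracting.
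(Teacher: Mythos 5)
The paper does not prove this theorem; it simply cites Cohen. Your strategy is nonetheless the standard one found in the literature (Cohen, and later Aschenbrenner--Hillar and Hillar--Sullivant): Higman's lemma gives a well-quasi-order on monomials under ``divisibility up to $\fS$'', and a leading-term reduction argument converts this into the ascending chain condition. The WQO claim itself is correct as you state it, and the derivation of finite generation from ACC is fine.

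There is, however, one genuine gap, and it sits exactly at the point you defer to ``arranging the combinatorics.'' The reduction step needs the identity $\mathrm{LM}\bigl(u\cdot\sigma(f)\bigr)=u\cdot\sigma\bigl(\mathrm{LM}(f)\bigr)$ for the permutations $\sigma$ you use, and this \emph{fails} for arbitrary $\sigma\in\fS$ no matter which linear monomial order you choose: for instance, if the tie-break within an orbit puts $\xi_1\xi_2$ above $\xi_1\xi_3$, then $f=\xi_1\xi_2-\xi_1\xi_3$ has leading monomial $\xi_1\xi_2$, while $\sigma=(2\;3)$ gives $\mathrm{LM}(\sigma f)=\xi_1\xi_2\neq\sigma(\xi_1\xi_2)$. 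Without this identity you cannot conclude that the element obtained after subtraction has strictly smaller leading monomial than $f_{n_N}$, so the contradiction does not go through. The known fix is a real additional idea: one observes that whenever $\sigma(m)\mid m'$ for some $\sigma\in\fS$, one may choose $\sigma$ to restrict to a strictly increasing map on the support of $m$, and one then works with a lexicographic-type order (with $\xi_1<\xi_2<\cdots$) that \emph{is} preserved by increasing injections; this is the ``symmetric cancellation order'' of Aschenbrenner--Hillar. With that in place your argument closes. A second, smaller defect is that your cancellation data is indexed against $m_{n_1}$ (you write $u_j\cdot\sigma_j(m_{n_1})=m_{n_j}$ and then subtract $u_j\cdot\sigma_j\sigma_{n_j}^{-1}(f_{n_j})$, which does not produce terms with leading monomial $m_{n_N}$); the divisibilities must be taken into $m_{n_N}$, i.e.\ you need $v_j$ and $\tau_j$ with $v_j\cdot\tau_j(m_{n_j})=m_{n_N}$ and you subtract $a_j v_j\tau_j(f_{n_j})$. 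This is only an indexing error, but as written the displayed subtraction does not cancel the leading term.
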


In fact, there is a stronger result concern finitely generated $(R, \fS)$-modules, but we will not need that in this paper. We give a few consequences of this theorem:

\begin{corollary}
Suppose that $A$ is noetherian.
\begin{enumerate}
\item The $\fS$-spectrum $\Spec_{\fS}(R)$ of $R$ is noetherian.
\end{enumerate}
Now let $\fa$ be an $\fS$-ideal of $R$.
\begin{enumerate}[resume]
\item The ring $R/\fa$ has finitely many minimal $\fS$-primes.
\item The $\fS$-radical of $\fa$ is a finite intersection of $\fS$-primes.
\item We have $(\rad_{\fS}(\fa))^n \subset \fa$ for some $n \ge 1$.
\end{enumerate}
\end{corollary}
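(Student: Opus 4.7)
The plan is to derive all four statements from Cohen's theorem by combining it with the abstract material from \S\ref{s:eqca}, especially Proposition~\ref{prop:rad-primes} (primes cut out the $\fS$-radical) and Proposition~\ref{prop:fin-nilp} (finitely generated submodules of an $\fS$-radical are nilpotent modulo the ideal). The four parts build on one another: (a) is a direct transcription of Cohen's theorem, (b) follows from (a) applied to the quotient, (c) is (b) combined with Proposition~\ref{prop:rad-primes}, and (d) is a $\bZ[\fS]$-module finite-generation argument.

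For (a), I would first recall from \S\ref{ss:Gspec} that closed subsets of $\Spec_\fS(R)$ correspond bijectively to $\fS$-radical ideals via $\fa \mapsto V_\fS(\fa)$, with inclusions reversed. Thus a descending chain of closed subsets corresponds to an ascending chain of $\fS$-radical (in particular $\fS$-stable) ideals, which stabilizes by Theorem~\ref{thm:cohen}. Hence $\Spec_\fS(R)$ has DCC on closed subsets, i.e., is noetherian.

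For (b), I would note that any quotient $R/\fa$ of $R$ by an $\fS$-ideal inherits $\fS$-noetherianity from $R$, since $\fS$-ideals of $R/\fa$ pull back to $\fS$-ideals of $R$ containing $\fa$. Applying (a) to $R/\fa$, the space $\Spec_\fS(R/\fa)$ is noetherian; by the standard fact that a noetherian topological space has only finitely many irreducible components, and by the identification (recalled in \S\ref{ss:Gspec}) of irreducible components of $\Spec_\fS(R/\fa)$ with minimal $\fS$-primes of $R$ above $\fa$, we conclude. Part (c) is then immediate: by Proposition~\ref{prop:rad-primes}, $\rad_\fS(\fa)$ is the intersection of all $\fS$-primes containing $\fa$, and this intersection equals the intersection of the minimal ones (each $\fS$-prime above $\fa$ contains a minimal one by Proposition~\ref{prop:min-prime}), which is a finite intersection by (b).

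For (d), the key observation is that by Theorem~\ref{thm:cohen} the $\fS$-ideal $\fb := \rad_\fS(\fa)$ is generated, as an $\fS$-ideal, by the $\fS$-orbits of finitely many elements $b_1, \ldots, b_k \in \fb$. Let $X \subset \fb$ be the $\bZ[\fS]$-submodule they generate; this is a finitely generated object of $[R]^{\mathrm{rf}}$, so by Proposition~\ref{prop:fin-nilp} there exists $n$ with $X^n \subset \fa$. Since $\fb = RX$ (as an ideal), we get $\fb^n = R \cdot X^n \subset \fa$, giving the desired inclusion. There is no real obstacle here; the only thing to be careful of is the distinction between $\fS$-ideal generation and $\bZ[\fS]$-module generation, which is exactly bridged by passing from the finite list of $\fS$-generators to the finitely generated $\bZ[\fS]$-module $X$ they span before applying Proposition~\ref{prop:fin-nilp}.
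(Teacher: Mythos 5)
Your proposal is correct and follows essentially the same route as the paper: (a) via the correspondence between closed sets and ascending chains of $\fS$-radical ideals plus Cohen's theorem, (b) and (c) via noetherianity of the spectrum and Proposition~\ref{prop:rad-primes}, and (d) by taking a finitely generated $\fS$-submodule generating $\rad_{\fS}(\fa)$ as an ideal and applying Proposition~\ref{prop:fin-nilp}. The only cosmetic difference is that in (b) you pass to $\Spec_{\fS}(R/\fa)$ while the paper works with the closed subset $V_{\fS}(\fa) \subset \Spec_{\fS}(R)$; these are the same argument.
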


\begin{proof}
(a) Let $Z_{\bullet}$ be a descending chain of closed subsets of $\Spec_{\fS}(R)$. Write $Z_i=V_{\fS}(\fb_i)$ where $\fb_i$ is an $\fS$-radical ideal of $R$. Then $\fb_{\bullet}$ is an ascending chain of $\fS$-ideals of $R$, and thus stabilizes by Cohen's theorem. Hence the original chain $Z_{\bullet}$ stabilizes, and so $\Spec_{\fS}(R)$ is noetherian.

(b) The minimal $\fS$-primes of $R/\fa$ correspond to the irreducible components of $V_{\fS}(\fa) \subset \Spec_{\fS}(R)$. There are finitely many of these since $\Spec_{\fS}(\fa)$ is a noetherian space. The result follows.

(c) The minimal $\fS$-primes over $\fa$ correspond to the minimal primes of $R/\fa$, and so there are finitely many by (b). The result now follows from Proposition~\ref{prop:rad-primes}

(d) Let $X$ be a finitely generated $\fS$-submodule of $\rad_{\fS}(\fa)$ that generates it as an ideal; this exists by Cohen's theorem. By Proposition~\ref{prop:fin-nilp} we have $X^n \subset \fa$ for some $n$, from which the result follows.
\end{proof}

\subsection{Discriminants} \label{ss:disc}

Let $T$ be a commutative ring. We define the \defi{discriminant} of elements $x_1, \ldots, x_n \in T$ by
\begin{displaymath}
\Delta(x_1, \ldots, x_n) = \prod_{1 \le i<j \le n} (x_i-x_j).
\end{displaymath}
We also have the following additive expression for the discriminant:
\begin{displaymath}
\Delta(x_1, \ldots, x_n) = \sum_{\sigma \in \fS_n} \sgn(\sigma) x_{\sigma(1)}^{n-1} x_{\sigma(2)}^{n-2} \cdots x_{\sigma(n)}^0.
\end{displaymath}
We define $\Delta_n \in R$ to be the discriminant of $\xi_1, \ldots, \xi_n$. More generally, for a finite subset $S$ of $[\infty]$, we let $\Delta_S \in R$ be the discriminant of $\{\xi_i\}_{i \in S}$, ordered in the usual manner. Discriminants will play an important role in our study of $\fS$-ideals. The following simple proposition already hints that this might be the case:

\begin{proposition} \label{prop:disc}
Let $\fa$ be an $\fS$-ideal of $R$ and let $f \in \fa$ be non-zero. Then $\fa$ contains $c \cdot \Delta_n$ for some $n$, where $c \in A$ is some non-zero coefficient appearing in $f$.
\end{proposition}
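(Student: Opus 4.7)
The plan is an iterated Vandermonde argument. Since $f$ involves only finitely many variables, we may assume $f \in A[\xi_1,\ldots,\xi_N]$. Fix any nonzero coefficient $c = c_{(a_1,\ldots,a_N)}$ of $f$ (the coefficient of the monomial $\xi_1^{a_1} \cdots \xi_N^{a_N}$). I will show that $c \cdot \Delta_n \in \fa$ for some $n$.

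The core lemma to apply repeatedly is the following. Suppose $g = \sum_{j=0}^{d} \xi_k^j h_j \in \fa$, where the variable $\xi_k$ does not occur in any $h_j$. Let $L = \{k, \ell_1,\ldots,\ell_d\}$ be any set of $d+1$ distinct indices chosen so that $\xi_{\ell_i}$ likewise does not appear in any $h_j$. Then $\Delta_L \cdot h_j \in \fa$ for every $j$. Indeed, the transpositions $(k,\ell_i) \in \fS$ carry $g$ to $g_i := \sum_j \xi_{\ell_i}^j h_j \in \fa$, and these $d+1$ equations assemble into a Vandermonde system $V \mathbf{h} = \mathbf{g}$ whose determinant is $\pm \Delta_L$; the identity $\mathrm{adj}(V) \cdot V = \det(V) \cdot I$, valid over any commutative ring, then yields $\Delta_L h_j \in \fa$.

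Applying this lemma with $k=1$ to $f = \sum_j \xi_1^j h_j^{(1)}(\xi_2,\ldots,\xi_N)$, using fresh indices larger than $N$, produces $\Delta_{K_1} h_{a_1}^{(1)} \in \fa$ for a finite set $K_1 \ni 1$. Now iterate: write $h_{a_1}^{(1)} = \sum_j \xi_2^j h_j^{(2)}(\xi_3,\ldots,\xi_N)$ and choose fresh indices for $\xi_2$ from outside $K_1 \cup \{2,\ldots,N\}$. The transpositions $(2,m_l)$ used to produce $\fS$-translates of $\Delta_{K_1} h_{a_1}^{(1)}$ fix $\Delta_{K_1}$ pointwise, so the lemma applies again and gives $\Delta_{K_1}\Delta_{K_2} h_{a_2}^{(2)} \in \fa$. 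Continuing through $\xi_3,\ldots,\xi_N$, always choosing fresh indices outside all previously used ones, yields
\[ c \cdot \Delta_{K_1} \Delta_{K_2} \cdots \Delta_{K_N} \in \fa \]
with $K_1,\ldots,K_N$ pairwise disjoint finite subsets of $[\infty]$. Setting $K = K_1 \sqcup \cdots \sqcup K_N$, the product $\Delta_{K_1} \cdots \Delta_{K_N}$ divides $\Delta_K$ in $R$ (the quotient is the product of the cross-differences $\xi_i - \xi_j$ with $i,j$ in distinct $K_l$'s), so $c \Delta_K \in \fa$; applying any $\sigma \in \fS$ sending $K$ bijectively onto $\{1,\ldots,n\}$ with $n=|K|$ gives $c \Delta_n \in \fa$.

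The only real subtlety is the bookkeeping: at each iteration the new indices must be chosen outside all indices already in play, so that the transpositions used at step $k$ fix the accumulated discriminant $\Delta_{K_1}\cdots\Delta_{K_{k-1}}$ and so that the polynomials produced really are $\fS$-translates of the known ideal element. No deeper obstacle arises; the entire argument uses only $\fS$-invariance of $\fa$ and the adjugate identity for Vandermonde matrices.
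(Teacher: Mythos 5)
Your proof is correct. The paper itself does not prove this proposition — it simply cites \cite[Proposition~2.6]{svar} — and your iterated Vandermonde argument (passing to $\fS$-translates via transpositions into fresh variables and inverting the Vandermonde system with the adjugate identity) is a complete, self-contained proof along the standard lines one would expect for this statement; the bookkeeping of fresh indices is handled correctly, and the final passage from $\Delta_{K_1}\cdots\Delta_{K_N}$ to $\Delta_K$ via the cross-difference factors is valid since $\fa$ is an ideal.
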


\begin{proof}
See \cite[Proposition~2.6]{svar}.
\end{proof}

\subsection{Partitions}

We now introduce some basic terminology related to partitions that will be used throughout the paper. An \defi{$\infty$-partition} is a finite tuple $(\lambda_1, \ldots, \lambda_r)$ where $1 \le \lambda_{\alpha} \le \infty$, $\lambda_{\alpha} \ge \lambda_{\alpha+1}$, and $\lambda_1=\infty$.  More generally, \defi{$\infty$-composition} is a pair $(\lambda,\cI)$ consisting of a finite set $\cI$ and a function $\lambda \colon \cI \to \bZ_{\ge 1} \cup \{\infty\}$ such that $\lambda_{\alpha}=\infty$ for some $\alpha \in \cI$. We typically just write $\lambda$ for an $\infty$-composition, and refer to $\cI$ as the index set when needed. We regard an $\infty$-partition $\lambda=(\lambda_1, \ldots, \lambda_r)$ as an $\infty$-composition on the index set $[r]$. For an $\infty$-composition $\lambda$ on index set $\cI$, we write $\cI^{\infty}$ (resp.\ $\cI^{\rf}$) for the subset of $\cI$ consisting of indices $\alpha$ for which $\lambda_{\alpha}=\infty$ (resp.\ $\lambda_{\alpha}$ is finite), assuming there is no danger of ambiguity.

An \defi{isomorphism} of $\infty$-compositions $(\lambda,\cI) \to (\mu, \cK)$ is a bijection $\phi \colon \cI \to \cK$ of index sets such that $\lambda_{\alpha}=\mu_{\phi(\alpha)}$ for all $\alpha \in \cI$. Every $\infty$-composition is isomorphic to a unique $\infty$-partition. For an $\infty$-composition $\lambda$, we let $\rN_{\lambda}$ be the automorphism group of $\lambda$, i.e., the group of all isomorphisms from $\lambda$ to itself. This is a subgroup of the symmetric group on the index set, and is thus a finite group.

Let $\lambda$ be an $\infty$-composition with index set $\cI$. A \defi{weight function} on $\cI$ is a function $e \colon \cI \to \bZ_{\ge 1}$. We say that $e$ is \defi{($\lambda$-)reduced} if $e_{\alpha}=1$ whenever $\lambda_{\alpha}$ is finite, and define the \defi{($\lambda$-)reduction} of $e$, denoted $\red_{\lambda}(e)$ to be the weight function that coincides with $e$ on $\cI^{\infty}$ and is~1 on $\cI^{\rf}$. We write $e_{\max}$ for the maximum value of $e$.


Let $[\infty]=\{1,2,\ldots\}$ be the set of positive integers. A \defi{partition of $[\infty]$} is a finite family $\cU=\{U_{\alpha}\}_{\alpha \in \cI}$ where each $U_{\alpha}$ is a non-empty subset of $[\infty]$ such that the $U_{\alpha}$'s are disjoint and cover $[\infty]$. Let $\cU$ be a partition of $[\infty]$. We let $\pi \colon [\infty] \to \cI$ be the function assigning to each $i \in [\infty]$ the unique index $\alpha \in \cI$ such that $i \in U_{\alpha}$. For $\alpha \in \cI$, let $\lambda_{\alpha}=\# U_{\alpha}$. Then $\lambda$ is an $\infty$-composition with index set $\cI$. We call this the $\infty$-composition \defi{associated} to $\cU$, and say that $\cU$ has \defi{type} $\lambda$.

Let $\fY_{\cU}$ be the subgroup of $\fS$ consisting of all permutations $\sigma$ such that $\sigma(U_{\alpha}) \subset U_{\alpha}$ for all $\alpha \in \cI$. Then $\fY_{\cU}$ is the product of symmetric groups on the sets $U_{\alpha}$. We refer to $\fY_{\cU}$ as the \defi{Young subgroup} of $\fS$ associated to $\cU$. Let $\rN \fY_{\cU}$ be the normalizer of $\fY_{\cU}$ in $\fS$. This consists of all $\sigma \in \fS$ such that $\pi(\sigma(i))=\pi(\sigma(j))$ whenever $\pi(i)=\pi(j)$. There is a canonical short exact sequence
\begin{displaymath}
1 \to \fY_{\cU} \to \rN \fY_{\cU} \to \rN_{\lambda} \to 1
\end{displaymath}
that non-canonically splits. In particular, we see that $\fY_{\cU}$ is a finite index normal subgroup of $\rN\fY_{\cU}$.

\begin{proposition} \label{prop:big-young}
The Young subgroup $\fY_{\cU}$ is a big subgroup of $\fS$ in the sense of Definition~\ref{defn:big}.
\end{proposition}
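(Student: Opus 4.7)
The plan is to verify condition (a) of the equivalent characterizations of bigness: for every open subgroup $U$ of $\fS$, the double coset space $\fY_\cU \backslash \fS / U$ is finite. Since the subgroups $\fS_{>n}$ form a neighborhood basis of the identity in $\fS$, every open subgroup $U$ contains some $\fS_{>n}$, and the natural map $\fY_\cU \backslash \fS / \fS_{>n} \twoheadrightarrow \fY_\cU \backslash \fS / U$ is surjective. It therefore suffices to show that $\fY_\cU \backslash \fS / \fS_{>n}$ is finite for every $n \ge 1$.

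Next, I would identify the coset space $\fS / \fS_{>n}$ with the set $\Inj([n], [\infty])$ of injections $[n] \hookrightarrow [\infty]$, via $\sigma \fS_{>n} \mapsto (\sigma(1), \ldots, \sigma(n))$. Under this identification, the (left) action of $\fY_\cU$ is the obvious coordinatewise one, so I need to count the $\fY_\cU$-orbits on $\Inj([n], [\infty])$. Given an injection $(i_1, \ldots, i_n)$, define its \emph{type} to be the function $f \colon [n] \to \cI$ sending $k$ to $\pi(i_k)$, where $\pi \colon [\infty] \to \cI$ is the projection associated with $\cU$. Clearly, the type is an orbit invariant, and any type $f$ must satisfy $|f^{-1}(\alpha)| \le \lambda_\alpha$ for every $\alpha \in \cI$ (an inequality that is only a constraint when $\lambda_\alpha$ is finite).

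The key claim is that two injections of the same type lie in the same $\fY_\cU$-orbit. Given such a pair $(i_1,\ldots,i_n)$ and $(j_1,\ldots,j_n)$, for each $\alpha \in \cI$ the assignment $i_k \mapsto j_k$ (for $k$ with $f(k)=\alpha$) is a bijection between two equal-sized subsets of $U_\alpha$; since $U_\alpha$ is either infinite or finite with both subsets of the same size, this partial bijection can be extended to a permutation of $U_\alpha$. Assembling these extensions over $\alpha$ produces an element of $\fY_\cU = \prod_\alpha \fS_{U_\alpha}$ carrying one tuple to the other. Thus the orbits correspond bijectively to the set of admissible types, which is a subset of $\cI^{[n]}$ and hence finite, since $\cI$ is finite.

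The only mildly delicate point is the extension step when $\lambda_\alpha$ is finite: the argument genuinely uses that both subsets have the same size and lie inside the same finite set $U_\alpha$, which is why we needed the type constraint $|f^{-1}(\alpha)|\le \lambda_\alpha$ in the first place. Everything else is bookkeeping, so I anticipate no serious obstacle.
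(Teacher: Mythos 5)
Your proof is correct. The one point you gloss over is that elements of $\fS$ (and hence of $\fY_{\cU}$) are \emph{finitary} permutations, so in the extension step you must extend the partial bijection $i_k \mapsto j_k$ to a permutation of $U_{\alpha}$ that moves only finitely many elements; this is easy (extend to a permutation of the finite set $\{i_k\} \cup \{j_k\}$ and act as the identity elsewhere), but it is worth saying, since for infinite $U_{\alpha}$ "any bijection between the complements" would leave the group.

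Your route differs from the paper's mainly in packaging. The paper observes that $\fS/\fS_{>n}$ embeds as a $\fY_{\cU}$-subset of $X^n$ with $X=[\infty]$, notes that $X$ has finitely many $\fY_{\cU}$-orbits (the parts $U_{\alpha}$), and then invokes the abstract product lemma (Proposition~\ref{prop:prodfg}) together with the admissibility of $\fY_{\cU}$ to conclude that $X^n$, hence $\fS/\fS_{>n}$, is finitely generated over $\fY_{\cU}$. You instead carry out the orbit count by hand: identifying $\fS/\fS_{>n}$ with injective tuples and classifying $\fY_{\cU}$-orbits by the "type" function $[n] \to \cI$. Unwinding the paper's appeal to Roelcke precompactness of $\fY_{\cU}$ would produce essentially your computation, so the underlying combinatorics is the same; what your version buys is a self-contained, explicit argument (and an explicit parametrization of the double cosets), at the cost of redoing by hand what the general framework already provides.
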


\begin{proof}
Recall that $\fS_{>n}$ is the subgroup of $\fS$ fixing the numbers $1, \ldots, n$. The quotient $\fS/\fS_{>n}$ is isomorphic to a $\fS$-subset of $X^n$, where $X=[\infty]$ is the usual permutation set on which $\fS$ acts. Since $X$ is finitely generated as a $\fY_{\cU}$-set and $\fY_{\cU}$ is admissible, it follows that $X^n$ is finitely generated as a $\fY_{\cU}$-set. Thus the $\fY_{\cU}$-subset $\fS/\fS_{>n}$ is also finitely generated, which completes the proof.
\end{proof}

\subsection{Auxiliary rings} \label{ss:setup}

We now define some auxiliary rings that will be used throughout the paper. Let $\cU=\{U_{\alpha}\}_{\alpha \in \cI}$ be a partition of $[\infty]$, let $\pi \colon [\infty] \to \cI$ be the associated map, and let $\lambda$ be the associated $\infty$-composition.
\begin{itemize}
\item We put $R_{\cU}=R$. We use the $R_{\cU}$ notation to indicate context, e.g., if $\rN \fY_{\cU}$ is the relevant symmetry group.
\item We let $S_{\cU,0}$ be the ring $A[t_{\alpha}]_{\alpha \in \cI}$. This is a finite variable polynomial ring over $A$. The group $\rN_{\lambda}$ naturally acts by permuting the $t$'s.
\item We let $S_{\cU}$ be the ring $S_{\cU,0}[\epsilon_i]_{i \ge 1}$. The group $\rN \fY_{\cU}$ acts on $S_{\cU}$, by acting on both the $\epsilon_i$'s and on the $t_{\alpha}$'s.
\end{itemize}
We also introduce some localizations of these rings:
\begin{itemize}
\item $\tilde{R}_{\cU}$ is the localization of $R_{\cU}$ where we invert $\xi_i-\xi_j$ whenever $\pi(i) \ne \pi(j)$.
\item $\tilde{S}_{\cU,0}$ is the localization of $S_{\cU,0}$ where we invert $t_{\alpha}-t_{\beta}$ for all $\alpha \ne \beta$.
\item $\tilde{S}_{\cU}$ is the ring $\tilde{S}_{\cU,0}[\epsilon_i]_{i \ge 1}$.
\end{itemize}
A few more definitions:
\begin{itemize}
\item We let $\bA^{\cI}$ (or $\bA^{\cI}_A$) be the spectrum of $S_{\cU,0}$; this is an affine space over $\Spec(A)$ with coordinates indexed by $\cI$.
\item We let $\bU^{\cI}$ be the spectrum of $\tilde{S}_{\cU,0}$; this is the open subscheme of $\bA^{\cI}$ where the coordinates are distinct.
\item For a subset $Z$ of $\bU^{\cI}$ (i.e., a set of scheme-theoretic points), we let $\fI_0(Z) \subset \tilde{S}_{\cU,0}$ be its radical ideal, and we let $\fI(Z)$ be the extension of $\fI_0(Z)$ to $\tilde{S}_{\cU}$, which is also radical.
\item We let $I_{\cU,n} \subset R_{\cU}$ be the ideal generated by $(\xi_i-\xi_j)^n$ for all $i,j \in [\infty]$ with $\pi(i)=\pi(j)$. We let $\tilde{I}_{\cU,n}$ be its extension to $\tilde{R}_{\cU}$. We just write $I_{\cU}$ or $\tilde{I}_{\cU}$ when $n=1$. If $\fa$ is a $\fY_{\cU}$-ideal of $\tilde{R}_{\cU}$ then $\rad(\fa)$ contains $\tilde{I}_{\cU}$ if and only if $\fa$ contains $\tilde{I}_{\cU,n}$ for some $n$. 
\item We let $J_{\cU,n} \subset S_{\cU}$ be the ideal generated by $\epsilon_i^n$ for all $i \in [\infty]$. We let $\tilde{J}_{\cU,n}$ be its extension to $\tilde{S}_{\cU}$. We write $J_{\cU}$ or $\tilde{J}_{\cU}$ when $n=1$. As above, if $\fa$ is a $\fY_{\cU}$-ideal of $\tilde{S}_{\cU}$ then $\rad(\fa)$ contains $\tilde{J}_{\cU}$ if and only if $\fa$ contains $\tilde{J}_{\cU,n}$ for some $n$.
\item We let $\iota \colon R_{\cU} \to S_{\cU}$ be the $A$-algebra homomorphism defined by $\iota(\xi_i)=t_{\pi(i)}+\epsilon_i$.
\end{itemize}
We note that $\iota$ does \emph{not} induce a homomorphism $\tilde{R}_{\cU} \to \tilde{S}_{\cU}$. However, we do have the following observation, which will be adequate for our purposes:

\begin{proposition} \label{prop:iota} 
The map $\iota$ induces a ring homomorphism $\tilde{R}_{\cU} \to \tilde{S}_{\cU}/\tilde{J}_{\cU,n}$ for any $n$. The kernel of this homomorphism contains $\tilde{I}_{\cU,2n-1}$.
\end{proposition}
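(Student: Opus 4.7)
The plan is to verify two things in order: first, that $\iota$ sends each denominator of $\tilde{R}_{\cU}$ to a unit in $\tilde{S}_{\cU}/\tilde{J}_{\cU,n}$, so that it extends to $\tilde{R}_{\cU}$ by the universal property of localization; and second, that the generators $(\xi_i-\xi_j)^{2n-1}$ of $\tilde{I}_{\cU,2n-1}$ map to zero. Both steps reduce to a single binomial expansion.

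For the first step, fix $i,j$ with $\pi(i) \ne \pi(j)$. Then $\iota(\xi_i-\xi_j) = (t_{\pi(i)}-t_{\pi(j)}) + (\epsilon_i-\epsilon_j)$, and since $t_{\pi(i)}-t_{\pi(j)}$ is already a unit in $\tilde{S}_{\cU,0}$, we may factor out this difference and write
\[
\iota(\xi_i-\xi_j) = (t_{\pi(i)}-t_{\pi(j)})\bigl(1 + u\bigr), \qquad u = \frac{\epsilon_i-\epsilon_j}{t_{\pi(i)}-t_{\pi(j)}}.
\]
Expanding $u^{2n-1}$ by the binomial theorem, every term is (up to a unit scalar) a multiple of $\epsilon_i^a\epsilon_j^{2n-1-a}$ with $a+(2n-1-a)=2n-1$, so at least one of the exponents is $\ge n$. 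Hence $u^{2n-1} \in \tilde{J}_{\cU,n}$, so $u$ is nilpotent modulo $\tilde{J}_{\cU,n}$, and $1+u$ is a unit there. Thus $\iota(\xi_i-\xi_j)$ is a unit in $\tilde{S}_{\cU}/\tilde{J}_{\cU,n}$, and the map extends as claimed.

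For the second step, if $\pi(i)=\pi(j)$ then $t_{\pi(i)}=t_{\pi(j)}$ and $\iota(\xi_i-\xi_j) = \epsilon_i-\epsilon_j$. Expanding $(\epsilon_i-\epsilon_j)^{2n-1}$ by the binomial theorem produces only terms of the form $\pm\binom{2n-1}{k}\epsilon_i^k\epsilon_j^{2n-1-k}$ with $k+(2n-1-k)=2n-1$, so one of the two exponents is $\ge n$ and the term lies in $\tilde{J}_{\cU,n}$. Thus $(\xi_i-\xi_j)^{2n-1}$ is killed by the induced map. Since these elements generate $\tilde{I}_{\cU,2n-1}$ as an ideal, this ideal is contained in the kernel.

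There is no real obstacle in either direction, as both steps rest on the same combinatorial pigeonhole on binomial exponents. The only mild subtlety is recognizing that the elementary relation $a+b=2n-1 \Rightarrow \max(a,b)\ge n$ simultaneously forces $u$ to be nilpotent (so that $\iota$ extends) and forces $(\epsilon_i-\epsilon_j)^{2n-1}$ to vanish (so that the kernel is as claimed), which is what makes the sharp exponent $2n-1$ appear.
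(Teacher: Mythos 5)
Your proof is correct and follows essentially the same route as the paper: show $\iota(\xi_i-\xi_j)$ is a unit modulo $\tilde{J}_{\cU,n}$ when $\pi(i)\ne\pi(j)$ (unit plus nilpotent), and use the binomial expansion of $(\epsilon_i-\epsilon_j)^{2n-1}$ together with the pigeonhole on exponents when $\pi(i)=\pi(j)$. The only cosmetic difference is that you establish nilpotency of the correction term by the same binomial computation rather than simply citing that each $\epsilon_i$ is nilpotent in the quotient.
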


\begin{proof}
To prove the first statement, we must show that $\xi_i-\xi_j$ maps to a unit of $\tilde{S}_{\cU}/\tilde{J}_{\cU,n}$ when $\pi(i) \ne \pi(j)$. Put $\alpha=\pi(i)$ and $\beta=\pi(j)$. Then
\begin{displaymath}
\iota(\xi_i-\xi_j)=(t_{\alpha}-t_{\beta})+(\epsilon_i-\epsilon_j).
\end{displaymath}
Since $t_{\alpha}-t_{\beta}$ is a unit of $\tilde{S}_{\cU}$ and $\epsilon_i$ and $\epsilon_j$ are nilpotent in $\tilde{S}_{\cU}/\tilde{J}_{\cU,n}$, it follows that $\iota(\xi_i-\xi_j)$ is a unit of $\tilde{S}_{\cU}/\tilde{J}_{\cU,n}$, as required.

We now show that the kernel of $\iota \colon \tilde{R}_{\cU} \to \tilde{S}_{\cU}/\tilde{J}_{\cU,n}$ contains $\tilde{I}_{\cU,2n-1}$. Suppose that $\pi(i)=\pi(j)=\alpha$. Then
\begin{displaymath}
\iota(\xi_i-\xi_j)=(t_{\alpha}+\epsilon_i)-(t_{\alpha}+\epsilon_j)=\epsilon_i-\epsilon_j,
\end{displaymath}
and so
\begin{displaymath}
\iota((\xi_i-\xi_j)^{2n-1}) = (\epsilon_i-\epsilon_j)^{2n-1} = \sum_{k=0}^{2n-1} \binom{2n-1}{k} \epsilon_i^k \epsilon_j^{2n-1-k}.
\end{displaymath}
In each term in the sum, we have either $k \ge n$ or $2n-1 - k \ge n$, and so the expression belongs to $\tilde{J}_{\cU,n}$. The result follows.
\end{proof}

\begin{remark}
See Lemma~\ref{lem:class-2-3} for more information on the kernel of $\tilde{R}_{\cU} \to \tilde{S}_{\cU}/\tilde{J}_{\cU,n}$.
\end{remark}

\begin{corollary}
Let $\fa$ be an ideal of $\tilde{S}_{\cU}$ that contains $\tilde{J}_{\cU,n}$ for some $n$. Then we can form the contraction $\fa^c$ of $\fa$ to $\tilde{R}_{\cU}$ along $\iota$, and $\fa^c$ contains $\tilde{I}_{\cU,2n-1}$.
\end{corollary}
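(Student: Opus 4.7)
The plan is to apply Proposition~\ref{prop:iota} directly; the hypothesis $\fa \supset \tilde{J}_{\cU,n}$ is exactly what is needed to reduce the contraction question to the kernel statement already proved in that proposition.

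First I would observe that since $\fa$ contains $\tilde{J}_{\cU,n}$, the quotient map $\tilde{S}_{\cU} \to \tilde{S}_{\cU}/\fa$ factors through $\tilde{S}_{\cU}/\tilde{J}_{\cU,n}$. Composing this factorization with the ring homomorphism $\bar{\iota} \colon \tilde{R}_{\cU} \to \tilde{S}_{\cU}/\tilde{J}_{\cU,n}$ supplied by Proposition~\ref{prop:iota} produces a well-defined ring homomorphism $\tilde{R}_{\cU} \to \tilde{S}_{\cU}/\fa$; this is the sense in which one can ``contract $\fa$ along $\iota$'' despite $\iota$ not extending to a map $\tilde{R}_{\cU} \to \tilde{S}_{\cU}$. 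I would then define $\fa^c$ to be the kernel of this composite, equivalently $\bar{\iota}^{-1}(\bar{\fa})$, where $\bar{\fa}$ denotes the image of $\fa$ in $\tilde{S}_{\cU}/\tilde{J}_{\cU,n}$.

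With this setup, the inclusion $\fa^c \supset \tilde{I}_{\cU,2n-1}$ is immediate: by the second assertion of Proposition~\ref{prop:iota} we have $\ker(\bar{\iota}) \supset \tilde{I}_{\cU,2n-1}$, and $\ker(\bar{\iota}) \subset \fa^c$ because every element of $\ker(\bar{\iota})$ maps to $0 \in \bar{\fa}$. There is no main obstacle to overcome here—this corollary is pure bookkeeping, with all the genuine content already residing in Proposition~\ref{prop:iota}.
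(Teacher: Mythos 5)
Your proof is correct and is exactly the intended argument: the paper states this corollary without proof as an immediate consequence of Proposition~\ref{prop:iota}, and your factorization of the contraction through $\tilde{S}_{\cU}/\tilde{J}_{\cU,n}$ is the right way to make sense of it. As a bonus, defining $\fa^c$ as the kernel of the composite $\tilde{R}_{\cU} \to \tilde{S}_{\cU}/\fa$ makes it manifest that the contraction does not depend on the choice of $n$ with $\tilde{J}_{\cU,n} \subset \fa$.
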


Our basic strategy for studying $\fS$-primes of $R$ is to study extension and contraction along $\iota$, for appropriate choices of $\cU$. This is carried out in detail in \S \ref{s:class}.

\subsection{Some $\fS$-primes}

The following result gives some simple examples of $\fS$-primes:

\begin{proposition} \label{prop:powers-prime}
Suppose $A$ is a domain. Then the ideal $\langle \xi_i^n \rangle_{i \ge 1}$ is an $\fS$-prime of $R$ for any $n \ge 1$.
\end{proposition}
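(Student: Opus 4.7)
The plan is to verify $\fS$-primeness using the fourth characterization from the proposition on equivalent formulations: it suffices to show that if $f, g \in R$ with $f \cdot \sigma g \in \fp$ for every $\sigma \in \fS$, then $f \in \fp$ or $g \in \fp$. I will argue the contrapositive by producing, for any $f, g \notin \fp$, a single permutation $\sigma$ with $f \cdot \sigma g \notin \fp$.

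First I would reformulate membership in $\fp$ monomially: $\fp$ consists exactly of those polynomials every one of whose monomials has at least one variable of exponent $\ge n$. Equivalently, $f \notin \fp$ iff $f$ has at least one monomial $m_f$ in which every variable appears with exponent $< n$. Fix such $m_f$ for $f$ and $m_g$ for $g$, and let $c_{m_f}, c_{m_g} \in A$ be their (nonzero) coefficients. Let $S_f, S_g \subset [\infty]$ be the finite sets of variables appearing in $f$ and $g$, respectively.

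The key step is disjoint-support separation: since $S_f$ and $S_g$ are finite and $[\infty]$ is infinite, there exists $\sigma \in \fS$ with $\sigma(S_g) \cap S_f = \emptyset$. I would then analyze $f \cdot \sigma g$ by expanding as a sum of products $m \cdot \sigma m'$ where $m$ ranges over monomials of $f$ and $m'$ over monomials of $g$. Because $\mathrm{supp}(m) \subset S_f$ is disjoint from $\sigma \, \mathrm{supp}(m') \subset \sigma(S_g)$, the product monomial $m \cdot \sigma m'$ determines the pair $(m, m')$ uniquely; in particular, no cancellation can occur between distinct pairs. Therefore the monomial $m_f \cdot \sigma m_g$ appears in $f \cdot \sigma g$ with coefficient $c_{m_f} c_{m_g}$, which is nonzero precisely because $A$ is a domain (this is the one place the hypothesis enters). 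Since $m_f$ and $\sigma m_g$ each have all exponents $< n$ and their supports are disjoint, every exponent in $m_f \cdot \sigma m_g$ is also $< n$, so this monomial does not lie in $\fp$. Hence $f \cdot \sigma g \notin \fp$.

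There is no serious obstacle here; the proof is essentially a disjoint-support argument, and the only subtlety is ensuring that the witnessing monomial $m_f \cdot \sigma m_g$ survives in the expansion without cancellation — which is guaranteed by the disjointness of $S_f$ and $\sigma(S_g)$ together with $A$ being a domain. No appeal to Cohen's theorem, discriminants, or the machinery of \S\ref{s:eqca} beyond the elemental characterization of $\fS$-primeness is required.
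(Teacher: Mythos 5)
Your proof is correct and follows essentially the same route as the paper's: choose $\sigma$ so that $f$ and $\sigma g$ have disjoint variable supports, and then observe that a monomial with all exponents $<n$ in each factor produces such a monomial in the product with nonzero coefficient (using that $A$ is a domain). You have merely spelled out the no-cancellation step that the paper leaves implicit.
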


\begin{proof}
Call this ideal $\fp$. Suppose we have $f,g \in R$ such that $f \cdot (\sigma g) \in \fp$ for all $\sigma \in \fS$. Choose $\sigma$ so that $f$ and $\sigma g$ are in disjoint sets of variables. Since $f \cdot (\sigma g) \in \fp$, every monomial appearing in it contains an $n$th (or higher) power of some variable. It follows that the same must be true for $f$ or $\sigma g$ on its own, and so $f \in \fp$ or $g \in \fp$, as required.
\end{proof}

This example is important for two reasons. First, it shows that $\fS$-primes of $R$ need not be radical. And second, we can extend this example by combining it with the construction of radical $\fS$-primes from \cite{svar} to obtain many more $\fS$-primes. This leads to the $\fQ$ and $\fP$ ideals defined below.

\subsection{The $\fQ$ ideals}

Let $\lambda$ be an $\infty$-composition with index set $\cI$, let $\cU=\{U_{\alpha}\}_{\alpha \in \cI}$ be a partition of $[\infty]$ of type $\lambda$, and use other notation as in \S \ref{ss:setup}. Suppose that $Z$ is a subset of $\bU^{\cI}$ and $e$ is a weight function on $\cI$.

\begin{definition} \label{defn:Q-ideal}
We let $\fQ_{\cU}(\lambda,e;Z)$ be the ideal of $\tilde{S}_{\cU}$ generated by $\fI_0(Z)$ and $\epsilon_i^{e(\alpha)}$ for $\alpha \in \cI$ and $i \in U_{\alpha}$.
\end{definition}

It is clear that $\fQ_{\cU}(\lambda,e;Z)$ is a $\fY_{\cU}$-ideal. When $Z=\bU^{\cI}$, we omit $Z$ from the notation, and just write $\fQ_{\cU}(\lambda,e)$. Similarly, when $e=1$ identically, we omit it from the notation, and just write $\fQ_{\cU}(\lambda;Z)$. We note that $\fI_0(Z)$, and therefore $\fQ_{\cU}(\lambda, e; Z)$, only depends on the Zariski closure of $Z$, so we will typically assume $Z$ to be a closed subset of $\bU^{\cI}$. The $\fQ$ ideals play an important, though auxiliary, role in this paper. The following proposition gives the most notable property of the $\fQ$ ideals, and generalizes Proposition~\ref{prop:powers-prime}:

\begin{proposition} \label{prop:Qprime}
If $e$ is reduced and $Z$ is irreducible then $\fQ_{\cU}(\lambda, e; Z)$ is $\fY_{\cU}$-prime.
\end{proposition}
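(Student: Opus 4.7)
The plan is to work inside the quotient $C := \tilde{S}_{\cU}/\fQ_{\cU}(\lambda,e;Z)$ and verify the elemental characterization of $\fY_\cU$-primality: if $f,g \in \tilde{S}_\cU$ satisfy $f\cdot\sigma(g)\in\fQ_{\cU}(\lambda,e;Z)$ for every $\sigma\in\fY_\cU$, then $f$ or $g$ lies in $\fQ_{\cU}(\lambda,e;Z)$.

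First I would identify $C$ concretely. Set $B := \tilde{S}_{\cU,0}/\fI_0(Z)$; since $Z$ is irreducible, $\fI_0(Z)$ is prime and $B$ is a domain. Because $e$ is reduced, $e(\alpha)=1$ for each $\alpha\in\cI^\rf$, so the generators $\epsilon_i^{e(\alpha)}=\epsilon_i$ (for $i\in U_\alpha$, $\alpha\in\cI^\rf$) kill those $\epsilon_i$'s in the quotient. The ideal $\fI_0(Z)$ involves only the $t$-variables, so extending scalars from $\tilde{S}_{\cU,0}$ to $B$ commutes with the truncated-polynomial construction, yielding
\[
C \;\cong\; B[\epsilon_i : \alpha\in\cI^\infty,\, i\in U_\alpha] \,/\, \langle\,\epsilon_i^{e(\alpha)} : \alpha\in\cI^\infty,\, i\in U_\alpha\,\rangle.
\]
In particular, $C$ is a free $B$-module whose basis consists of the truncated monomials $\epsilon^{\mathbf{k}}=\prod_i\epsilon_i^{k_i}$, where $\mathbf{k}=(k_i)$ ranges over finitely-supported exponent vectors satisfying $0\le k_i<e(\pi(i))$. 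Note that $\fY_\cU = \prod_{\alpha}\fS_{U_\alpha}$ acts trivially on the $t_\alpha$'s (hence on $B$) and permutes the $\epsilon_i$'s within each part $U_\alpha$.

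Next I would run a disjoint-supports argument generalizing Proposition~\ref{prop:powers-prime}. Assume $f\cdot\sigma(g)\in\fQ_\cU(\lambda,e;Z)$ for all $\sigma\in\fY_\cU$ and $g\notin\fQ_\cU(\lambda,e;Z)$; the goal is $f\in\fQ_\cU(\lambda,e;Z)$. Let $\bar f,\bar g\in C$ be the images, so $\bar g\ne 0$. Expand each uniquely in the basis: $\bar f=\sum_{\mathbf{k}}a_\mathbf{k}\epsilon^\mathbf{k}$ and $\bar g=\sum_{\mathbf{l}}b_\mathbf{l}\epsilon^\mathbf{l}$ with $a_\mathbf{k}, b_\mathbf{l}\in B$, only finitely many nonzero. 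Let $J\subset\bigcup_{\alpha\in\cI^\infty}U_\alpha$ be a finite set containing the support of every $\mathbf{k}$ and $\mathbf{l}$ appearing. Since each $U_\alpha$ with $\alpha\in\cI^\infty$ is infinite, we may choose $\sigma=(\sigma_\alpha)\in\prod_{\alpha}\fS_{U_\alpha}=\fY_\cU$ with $\sigma(J)\cap J=\emptyset$.

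Finally I would extract the contradiction. Since $\fY_\cU$ acts trivially on $B$, we have $\sigma\bar g=\sum_\mathbf{l}b_\mathbf{l}\epsilon^{\sigma(\mathbf{l})}$, so
\[
\bar f\cdot(\sigma\bar g) \;=\; \sum_{\mathbf{k},\mathbf{l}} a_\mathbf{k} b_\mathbf{l}\, \epsilon^{\mathbf{k}+\sigma(\mathbf{l})}.
\]
The supports of $\mathbf{k}$ and $\sigma(\mathbf{l})$ lie in the disjoint sets $J$ and $\sigma(J)$; hence at every index at most one of $k_i, (\sigma\mathbf{l})_i$ is nonzero, so $k_i + (\sigma\mathbf{l})_i < e(\pi(i))$ and each $\epsilon^{\mathbf{k}+\sigma(\mathbf{l})}$ is still a basis monomial. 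Moreover, distinct pairs $(\mathbf{k},\mathbf{l})$ produce distinct basis monomials, so $\bar f\cdot(\sigma\bar g)=0$ in $C$ forces $a_\mathbf{k} b_\mathbf{l}=0$ in $B$ for every pair. Since $B$ is a domain and some $b_{\mathbf{l}_0}\ne 0$, every $a_\mathbf{k}$ vanishes, whence $\bar f=0$, i.e., $f\in\fQ_\cU(\lambda,e;Z)$. The only genuinely delicate step is verifying the freeness of $C$ as a $B$-module on the truncated-monomial basis; this is where both hypotheses enter crucially --- reducedness of $e$ to cleanly eliminate the finite-part $\epsilon$'s, and irreducibility of $Z$ to make $B$ a domain --- after which the disjoint-supports move proceeds exactly as in Proposition~\ref{prop:powers-prime}.
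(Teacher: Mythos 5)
Your proof is correct and follows essentially the same route as the paper: reduce modulo $\fI_0(Z)$ and the finite-part $\epsilon_i$'s (using reducedness of $e$) to a truncated polynomial ring over the domain $B=\tilde{S}_{\cU,0}/\fI_0(Z)$, then run the disjoint-variables argument of Proposition~\ref{prop:powers-prime}. Your explicit use of the free $B$-basis of truncated monomials just makes that final step more detailed than the paper's citation of it.
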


\begin{proof}
Put $\fq=\fQ_{\cU}(\lambda,e;Z)$. For simplicity, identify $\cI$ with $[r]$, and do so in such a way that $\lambda_1, \ldots, \lambda_s$ are infinite and $\lambda_{s+1}, \ldots, \lambda_r$ are finite. We can then identify $\tilde{S}_{\cU}$ with a polynomial ring over $\tilde{S}_{\cU,0}$ in variables $\xi_{i,j}$ where $1 \le i \le r$ and, when $\lambda_i$ is finite, $1 \le j \le \lambda_i$. Under this identification, $\fY_{\cU}$ is identified with the product of the $\fS_{\lambda_i}$'s. Let $B=\tilde{S}_{\cU,0}/\fI_0(Z)$, which is a domain. Since $\fq$ contains $\fI_0(Z)$ and the $\xi_{i,j}$ with $\lambda_i$ finite, it suffices to show that $\fq$ extends to a $\fY_{\cU}$-prime when we quotient by them. The quotient ring is $B[\xi_{i,j}]_{1 \le i \le s}$, the extension of $\fq$ is the ideal generated by $\xi_{i,j}^{e(i)}$ for all $i$ and $j$, and the group $\fY_{\cU}$ is identified with $\fS^s$ (the finite parts are now irrelevant since we have killed the associated variables). This ideal is $\fS^s$-prime by using exactly the same argument as from Proposition~\ref{prop:powers-prime}.
\end{proof}

We now establish a few more properties of the $\fQ$-ideals.

\begin{proposition} \label{prop:Qrad}
The radical of $\fQ_{\cU}(\lambda, e; Z)$ is $\fQ_{\cU}(\lambda; Z)$.
\end{proposition}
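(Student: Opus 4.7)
The plan is to prove the two containments separately; both reduce to direct unpacking of the definitions.

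First I would establish $\fQ_{\cU}(\lambda;Z) \subset \rad(\fQ_{\cU}(\lambda,e;Z))$. By definition, $\fQ_{\cU}(\lambda;Z)$ is generated (as an ideal of $\tilde{S}_{\cU}$) by $\fI_0(Z)$ together with the $\epsilon_i$ for $i \in [\infty]$. The generators coming from $\fI_0(Z)$ already lie in $\fQ_{\cU}(\lambda,e;Z)$ itself. For each $i$, writing $\alpha = \pi(i)$, the generator $\epsilon_i^{e(\alpha)}$ lies in $\fQ_{\cU}(\lambda,e;Z)$, so $\epsilon_i$ lies in its radical. Hence every generator of $\fQ_{\cU}(\lambda;Z)$ lies in $\rad(\fQ_{\cU}(\lambda,e;Z))$, giving the containment.

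For the reverse direction, it suffices to show that $\fQ_{\cU}(\lambda;Z)$ itself is radical, since clearly $\fQ_{\cU}(\lambda,e;Z) \subset \fQ_{\cU}(\lambda;Z)$ (as $\epsilon_i^{e(\alpha)} \in \langle \epsilon_i\rangle$). To verify that $\fQ_{\cU}(\lambda;Z)$ is radical, I would compute the quotient explicitly. Since $\tilde{S}_{\cU} = \tilde{S}_{\cU,0}[\epsilon_i]_{i \ge 1}$ and $\fQ_{\cU}(\lambda;Z)$ is generated by $\fI_0(Z) \subset \tilde{S}_{\cU,0}$ together with all the $\epsilon_i$, we get a canonical isomorphism
\begin{displaymath}
\tilde{S}_{\cU}/\fQ_{\cU}(\lambda;Z) \;\cong\; \tilde{S}_{\cU,0}/\fI_0(Z).
\end{displaymath}
By definition $\fI_0(Z)$ is the radical ideal of the subset $Z \subset \bU^{\cI}$, so the quotient on the right is reduced. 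Therefore $\fQ_{\cU}(\lambda;Z)$ is radical, and the containment $\rad(\fQ_{\cU}(\lambda,e;Z)) \subset \fQ_{\cU}(\lambda;Z)$ follows.

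There is no real obstacle here: the statement is essentially bookkeeping around the definitions in \S\ref{ss:setup} and Definition~\ref{defn:Q-ideal}. The only subtle point worth flagging is that $\fI_0(Z)$ is radical by construction (we may take $Z$ to be closed without loss of generality, as noted after Definition~\ref{defn:Q-ideal}), which is what makes the quotient reduced and thus $\fQ_{\cU}(\lambda;Z)$ radical.
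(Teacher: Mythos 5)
Your proof is correct and follows essentially the same route as the paper: both arguments observe that the two ideals have the same radical (each generator of one lies in the radical of the other) and then conclude by noting that $\tilde{S}_{\cU}/\fQ_{\cU}(\lambda;Z) \cong \tilde{S}_{\cU,0}/\fI_0(Z)$ is reduced because $\fI_0(Z)$ is radical. Your write-up just spells out the generator-checking that the paper dismisses as clear from the definitions.
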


\begin{proof}
It is clear from the definitions that $\rad(\fQ_{\cU}(\lambda,e;Z)) = \rad(\fQ_{\fU}(\lambda;Z))$. We have $\tilde{S}_{\cU}/\fQ_{\cU}(\lambda;Z)=\tilde{S}_{\cU,0}/\fI_0(Z)$, and so $\fQ_{\cU}(\lambda;Z)$ is radical since $\fI_0(Z)$ is.
\end{proof}

\begin{proposition} \label{prop:Qunion}
Let $\{Z_i\}_{i \in I}$ be a family of subsets of $\bU^{\cI}$, and put $Z=\bigcup_{i \in I} Z_i$. Then $\fQ_{\cU}(\lambda, e; Z)=\bigcap_{i \in I} \fQ_{\cU}(\lambda, e; Z_i)$.
\end{proposition}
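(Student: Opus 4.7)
The plan is to reduce the claim to the analogous statement for the ring $\tilde{S}_{\cU,0}$, where it is the familiar fact that the radical ideal of a union equals the intersection of the radical ideals.

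First, I would introduce the ``nilpotent'' ideal $K \subset \tilde{S}_{\cU}$ generated by $\epsilon_i^{e(\pi(i))}$ for all $i \ge 1$; this ideal is common to every $\fQ_{\cU}(\lambda,e;W)$. By the very definition of $\fQ_{\cU}(\lambda,e;W)$ in Definition~\ref{defn:Q-ideal}, we have
\begin{displaymath}
\fQ_{\cU}(\lambda,e;W) = \fI_0(W) \cdot \tilde{S}_{\cU} + K.
\end{displaymath}
In particular every $\fQ_{\cU}(\lambda,e;W)$ contains $K$, so both sides of the claimed equality contain $K$. Passing to the quotient $B := \tilde{S}_{\cU}/K$, it suffices to show that in $B$ we have $\fI_0(Z) \cdot B = \bigcap_{i \in I}\, \fI_0(Z_i) \cdot B$.

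Next I would exploit the structure of $B$ as an $A$-module, where $A := \tilde{S}_{\cU,0}$. Since $\tilde{S}_{\cU} = A[\epsilon_j]_{j \ge 1}$ and $K$ is generated by monomials in the $\epsilon_j$'s alone, $B$ is a \emph{free} $A$-module, with basis the monomials $\prod_j \epsilon_j^{k_j}$ where $0 \le k_j < e(\pi(j))$. Writing $B = \bigoplus_{m \in M} A \cdot m$ for this basis $M$, an element $b = \sum_m a_m m$ of $B$ lies in $\fa \cdot B$ (for $\fa \subset A$ an ideal) if and only if each coefficient $a_m$ lies in $\fa$. Consequently, extension of ideals from $A$ to $B$ commutes with \emph{arbitrary} intersections:
\begin{displaymath}
\Bigl(\bigcap_{i \in I} \fa_i\Bigr)\cdot B \;=\; \bigcap_{i \in I} (\fa_i \cdot B).
\end{displaymath}

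Finally I would apply this with $\fa_i = \fI_0(Z_i)$. The identity $\fI_0(Z) = \bigcap_{i \in I} \fI_0(Z_i)$, for $Z=\bigcup_{i \in I} Z_i$, holds because $\fI_0(W)$ is by definition the ideal of functions vanishing on $W$, and a function vanishes on a union iff it vanishes on each piece. Combining the two displays gives $\fI_0(Z) \cdot B = \bigcap_{i \in I} \fI_0(Z_i) \cdot B$ inside $B$, and pulling back to $\tilde{S}_{\cU}$ along $\tilde{S}_{\cU} \twoheadrightarrow B$ (which preserves intersections of ideals that contain $K$) yields the desired equality of $\fQ$-ideals. The only step requiring care is the commutation of extension with infinite intersections, and the explicit free basis of $B$ over $A$ makes this transparent; no noetherian or finiteness hypotheses on the index set $I$ are needed.
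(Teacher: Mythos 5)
Your proposal is correct and follows essentially the same route as the paper: both arguments rest on the observation that $\tilde{S}_{\cU}$ modulo the monomial ideal generated by the $\epsilon_i^{e(\pi(i))}$ is free over $\tilde{S}_{\cU,0}$ with basis the surviving $\epsilon$-monomials, so that membership in each $\fQ$-ideal is checked coefficientwise and the problem reduces to $\fI_0(\bigcup_i Z_i)=\bigcap_i \fI_0(Z_i)$. Your reorganization (quotienting by $K$ first, then invoking that extension to a free module commutes with arbitrary intersections) is a clean packaging of the same computation the paper carries out directly with the coefficients $c_m$.
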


\begin{proof}
Let $\fe$ be the ideal of $\tilde{S}_{\cU}$ generated by $\epsilon_i^{e(\alpha)}$ with $i \in U_{\alpha}$. Let $\cM$ be the set of monomials in the $\epsilon_i$'s that do not belong to $\fe$. If $\fI_0$ is an ideal of $\tilde{S}_{\cU,0}$ and $\fI$ is its extension to $\tilde{S}_{\cU}$, then $\tilde{S}_{\cU}/(\fI+\fe)$ is free as a $\tilde{S}_{\cU,0}/\fI_0$-module with basis $\cM$. Now, we must show
\begin{displaymath}
\fI(Z)+\fe=\bigcap_{i \in I} (\fI(Z_i)+\fe)
\end{displaymath}
Call the left ideal $\fa$ and the right ideal $\fb$. It is clear that $\fa \subset \fb$. We now prove the reverse. Suppose $f$ belongs to $\fb$. Since $\fa$ and $\fb$ both contain $\fe$, we may as well suppose that $f$ is a $\tilde{S}_{\cU,0}$-linear combination of the monomials in $\cM$; write $f=\sum_{m \in \cM} c_m m$ with $c_m \in \tilde{S}_{\cU,0}$. Since $f$ maps to~0 in $\tilde{S}_{\cU}/(\fI(Z_i)+J)$, we see that $c_m \in \fI_0(Z_i)$. Since this holds for all $i$ and $\bigcap_{i \in I} \fI_0(Z_i)=\fI_0(Z)$, we see that $c_m \in \fI_0(Z)$. Thus $f \in \fa$, which completes the proof.
\end{proof}

\begin{proposition} \label{prop:Qrad2}
The $\fY_{\cU}$-radical of $\fQ_{\cU}(\lambda, e; Z)$ is $\fQ_{\cU}(\lambda, \red_{\lambda}(e); Z)$.
\end{proposition}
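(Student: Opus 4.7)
The plan is to prove the two inclusions separately, with the heart of the matter being a pigeonhole argument on the finite blocks of $\cU$.

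First, I would show that $\fQ_\cU(\lambda, \red_\lambda(e); Z)$ is itself $\fY_\cU$-radical. Since $\fI_0(Z) = \fI_0(\overline{Z})$, we may replace $Z$ by its Zariski closure. The scheme $\bU^\cI$ is a noetherian affine scheme, so $\overline{Z}$ has only finitely many irreducible components $Z_1, \ldots, Z_m$. By Proposition~\ref{prop:Qunion},
\begin{displaymath}
\fQ_\cU(\lambda, \red_\lambda(e); Z) = \bigcap_{i=1}^m \fQ_\cU(\lambda, \red_\lambda(e); Z_i).
\end{displaymath}
Since $\red_\lambda(e)$ is, by definition, reduced, Proposition~\ref{prop:Qprime} guarantees that each factor is $\fY_\cU$-prime, hence $\fY_\cU$-radical. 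A routine check shows that a (finite) intersection of $\fY_\cU$-radical ideals is $\fY_\cU$-radical, so the claim follows.

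Next I would establish the equality $\rad_{\fY_\cU}(\fQ_\cU(\lambda, e; Z)) = \rad_{\fY_\cU}(\fQ_\cU(\lambda, \red_\lambda(e); Z))$. The inclusion $\fQ_\cU(\lambda, e; Z) \subset \fQ_\cU(\lambda, \red_\lambda(e); Z)$ is immediate from the generating description (replacing $e(\alpha)$ by $1$ when $\lambda_\alpha$ is finite only adds generators), giving one direction. For the reverse, it suffices to show that every additional generator, namely each $\epsilon_i$ with $i \in U_\alpha$ and $\lambda_\alpha$ finite, already lies in $\rad_{\fY_\cU}(\fQ_\cU(\lambda, e; Z))$. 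This is the key technical step. Fix such an $i$, let $m = e(\alpha)$, and set $n = \lambda_\alpha (m-1) + 1$. For any $\sigma_1, \ldots, \sigma_n \in \fY_\cU$, each $\sigma_j$ stabilizes $U_\alpha$, so
\begin{displaymath}
(\sigma_1 \epsilon_i) \cdots (\sigma_n \epsilon_i) = \prod_{j=1}^n \epsilon_{\sigma_j(i)}
\end{displaymath}
is a monomial of degree $n$ in the $\lambda_\alpha$ variables $\{\epsilon_k : k \in U_\alpha\}$. By the pigeonhole principle, some variable $\epsilon_k$ appears to the power at least $m = e(\alpha)$, so the product lies in $\fQ_\cU(\lambda, e; Z)$. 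This exhibits $\epsilon_i$ as a $\fY_\cU$-nilpotent element modulo $\fQ_\cU(\lambda, e; Z)$.

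Combining these two steps gives
\begin{displaymath}
\rad_{\fY_\cU}(\fQ_\cU(\lambda, e; Z)) = \rad_{\fY_\cU}(\fQ_\cU(\lambda, \red_\lambda(e); Z)) = \fQ_\cU(\lambda, \red_\lambda(e); Z),
\end{displaymath}
completing the proof. The only real subtlety is the pigeonhole step, which is where the hypothesis that the relevant $\lambda_\alpha$ is finite enters essentially; the rest amounts to bookkeeping with the results already proved about $\fQ$-ideals.
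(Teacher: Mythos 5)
Your proof is correct and follows essentially the same route as the paper's: decompose $Z$ into irreducibles so that $\fQ_{\cU}(\lambda,\red_{\lambda}(e);Z)$ becomes an intersection of $\fY_{\cU}$-primes (hence is $\fY_{\cU}$-radical), then use a pigeonhole argument on the finite blocks to show each extra generator $\epsilon_i$ is $\fY_{\cU}$-nilpotent modulo $\fQ_{\cU}(\lambda,e;Z)$; your bound $n=\lambda_{\alpha}(e_{\alpha}-1)+1$ is even slightly sharper than the paper's $N=e_{\alpha}\lambda_{\alpha}$. One caveat: in this section the coefficient ring $A$ is an arbitrary commutative ring, so $\bU^{\cI}$ need not be noetherian and $\overline{Z}$ need not have finitely many irreducible components; this is repaired trivially by writing $Z$ as the (possibly infinite) union of its one-point subsets, as the paper does, since Proposition~\ref{prop:Qunion} permits arbitrary unions and an arbitrary intersection of $\fY_{\cU}$-primes is still $\fY_{\cU}$-radical by Proposition~\ref{prop:rad-primes}.
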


\begin{proof}
Let $\fa=\fQ_{\cU}(\lambda, e; Z)$ and $\fb=\fQ_{\cU}(\lambda, \red_{\lambda}(e); Z)$. Write $Z=\bigcup_{i \in I} Z_i$ where each $Z_i$ is irreducible (e.g., one can take the $Z_i$'s to be the 1-element subsets of $Z$). By Proposition~\ref{prop:Qunion}, $\fb$ is the intersection of the ideals $\fQ_{\cU}(\lambda, \red_{\lambda}(e); Z_i)$, each of which is $\fY_{\cU}$-prime by Proposition~\ref{prop:Qprime}. Thus $\fb$ is $\fY_{\cU}$-radical.

We have $\fa \subset \fb$, and so $\rad_{\fY_{\cU}}(\fa) \subset \fb$. We now prove the reverse inclusion. It suffices to show that $\epsilon_i \in \rad_{\fY_{\cU}}(\fa)$ whenever $i \in U_{\alpha}$ with $U_{\alpha}$ finite. Thus let $i \in U_{\alpha}$ be given with $U_{\alpha}$ finite. Let $N=e_{\alpha} \cdot \lambda_{\alpha}$. Given $\sigma_1, \ldots, \sigma_N \in \fY_{\cU}$, we have $N$ elements $\sigma_1(i), \ldots, \sigma_N(i)$ of $U_{\alpha}$, and so one of them, say $j$, must appear at least $e_{\alpha}$ times. It follows that $\sigma_1(\epsilon_i) \cdots \sigma_N(\epsilon_i)$ contains $\epsilon_j^{e_{\alpha}}$ as a factor, which belongs to $\fa$. Thus $\sigma_1(\epsilon_i) \cdots \sigma_N(\epsilon_i)$ belongs to $\fa$ for all choices of $\sigma_1, \ldots, \sigma_N$ in $\fY_{\cU}$, and so $\epsilon_i \in \rad_{\fY_{\cU}}(\fa)$.
\end{proof}

\begin{proposition} \label{prop:Q-ext} 
Let $A \to A'$ be a flat ring map, let $\tilde{S}'_{\cU}=A' \otimes_A \tilde{S}_{\cU}$, let $Z \subset \bU^{\cI}_A$ be a closed subset, let $Z' \subset \bU^{\cI}_{A'}$ be the inverse image of $Z$, and let $e$ be a $\lambda$-reduced weighting. Let $\fq=\fQ_{\cU}(\lambda,e;Z)$ and $\fq'=\fQ_{\cU}(\lambda,e;Z')$. Then:
\begin{enumerate}
\item We have $\rad_{\fY_{\cU}}(\fq^e)=\fq'$.
\item If $\fI_0(Z)^e \subset \tilde{S}'_{\cU,0}$ is radical then $\fq^e=\fq'$.
\end{enumerate}
We note that if $A \to A'$ is a localization then (b) holds. 
\end{proposition}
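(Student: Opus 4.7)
The plan is to first show that $\fq'$ is $\fY_{\cU}$-radical, which reduces part (a) to two containments, and then to deduce part (b) as an immediate consequence.

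Since $e$ is $\lambda$-reduced, Proposition~\ref{prop:Qrad2} applied inside $\tilde{S}'_{\cU}$ yields $\rad_{\fY_{\cU}}(\fq')=\fQ_{\cU}(\lambda,\red_{\lambda}(e);Z')=\fq'$, so $\fq'$ is $\fY_{\cU}$-radical. The extension $\fq^e$ is generated in $\tilde{S}'_{\cU}$ by $\fI_0(Z)^e$ and the elements $\epsilon_i^{e(\alpha)}$; since $\fI_0(Z)^e\subset\fI_0(Z')\subset\fq'$ and the $\epsilon$-generators visibly lie in $\fq'$, we have $\fq^e\subset\fq'$, and therefore $\rad_{\fY_{\cU}}(\fq^e)\subset\rad_{\fY_{\cU}}(\fq')=\fq'$.

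For the reverse containment in (a), I would argue on generators of $\fq'$. The key input is the identification $\fI_0(Z')=\sqrt{\fI_0(Z)^e}$ as ideals of $\tilde{S}'_{\cU,0}$, which holds because $Z'$ is by definition the scheme-theoretic inverse image of $Z$ and $\fI_0$ sends a closed subset to its radical defining ideal. Given this, any $f\in\fI_0(Z')$ satisfies $f^n\in\fI_0(Z)^e\subset\fq^e$ for some $n$, and hence $f\in\rad(\fq^e)\subset\rad_{\fY_{\cU}}(\fq^e)$. Combined with the fact that the remaining generators $\epsilon_i^{e(\alpha)}$ of $\fq'$ sit inside $\fq^e$ directly, this yields $\fq'\subset\rad_{\fY_{\cU}}(\fq^e)$, finishing part (a).

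Part (b) then falls out of the same bookkeeping: if $\fI_0(Z)^e$ is already radical in $\tilde{S}'_{\cU,0}$, then $\fI_0(Z)^e=\sqrt{\fI_0(Z)^e}=\fI_0(Z')$, so the two generating sets described above for $\fq^e$ and $\fq'$ coincide, giving $\fq^e=\fq'$. For the final parenthetical remark, if $A\to A'$ is a localization then so is $\tilde{S}_{\cU,0}\to\tilde{S}'_{\cU,0}$, and localization preserves radicality of ideals, so the hypothesis of (b) holds automatically in that case. There is no serious obstacle in this argument; the only step requiring any care is the identification $\fI_0(Z')=\sqrt{\fI_0(Z)^e}$, which is a standard fact about base change of radical ideals and should be stated explicitly at the outset so the rest of the proof reduces to matching generating sets.
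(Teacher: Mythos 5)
Your argument follows the paper's proof almost step for step: establish that $\fq'$ is $\fY_{\cU}$-radical via Proposition~\ref{prop:Qrad2}, get $\fq^e\subset\fq'$ by comparing generators, and obtain the reverse containment from $\rad(\fI_0(Z)^e)=\fI_0(Z')$. Part (b) and the parenthetical remark are handled the same way in the paper.

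There is, however, one misjustified step. You write that $f^n\in\fq^e$ gives $f\in\rad(\fq^e)\subset\rad_{\fY_{\cU}}(\fq^e)$. The containment $\rad(\fa)\subset\rad_{\fY_{\cU}}(\fa)$ is \emph{false} in general --- indeed, the central example of the paper, $\fa=\langle\xi_i^2\rangle_{i\ge 1}$, has $\rad(\fa)=\langle\xi_i\rangle_{i\ge 1}$ but $\rad_{\fS}(\fa)=\fa$ since $\fa$ is $\fS$-prime. Membership in the equivariant radical requires a power of the entire $G$-orbit submodule to land in the ideal, not just a power of the single element. Your conclusion is nevertheless correct for the specific $f$ at hand, because $f$ lies in $\tilde{S}'_{\cU,0}$, on which $\fY_{\cU}$ acts trivially: the $\fY_{\cU}$-submodule generated by $f$ is just $\bZ f$, so $f^n\in\fq^e$ does yield $f\in\rad_{\fY_{\cU}}(\fq^e)$ directly from the definition. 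This is exactly the observation the paper makes (``since the action of $\fY_{\cU}$ on $\fI_0(Z)^e$ is trivial\dots''), and it should replace the false intermediate containment in your write-up.
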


\begin{proof}
Let $\fe \subset \tilde{S}_{\cU}$ be the ideal generated by $\epsilon_i^{e(\alpha)}$ with $i \in U_{\alpha}$, and let $\fe' \subset \tilde{S}'_{\cU}$ be defined analogously. Thus $\fq=\fI(Z)+\fe$ and $\fq'=\fI(Z')+\fe'$. As $\fe^e=\fe'$, we find $\fq^e=\fI(Z)^e+\fe'$. Now, $\rad(\fI(Z)^e)=\fI(Z')$. If $\fI_0(Z)^e$ is radical then so is $\fI(Z)^e$, and so $\fq^e=\fq'$; this proves (b). More generally, we find $\fq^e \subset \fq'$, and so $\rad_{\fY_{\cU}}(\fq^e) \subset \fq'$ since $\fq'$ is $\fY_{\cU}$-radical (Proposition~\ref{prop:Qrad2}). On the other hand, note that $\rad_{\fY_{\cU}}(\fq^e)$ contains $\rad_{\fY_{\cU}}(\fI(Z)^e)+\rad_{\fY_{\cU}}(\fe')$. Since the action of $\fY_{\cU}$ on $\fI_0(Z)^e$ is trivial, we see that $\rad_{\fY_{\cU}}(\fI(Z)^e)$ contains the radical of $\fI(Z)^e$. It follows that $\rad_{\fY_{\cU}}(\fq^e)$  contains $\fq'$. Thus (a) holds.
\end{proof}

\subsection{The $\fP$ ideals} \label{ss:P}

Let $\lambda$ be an $\infty$-composition on the index set $\cI$, let $e$ be a weighting on $\cI$, and let $Z$ be a subset of $\bU^{\cI}$. Choose a partition $\cU$ of $[\infty]$ of type $\lambda$. We now come to the most important objects of this paper:

\begin{definition}
We define $\fP(\lambda,e;Z)$ to be the $\fS$-contraction of $\fQ_{\cU}(\lambda,e;Z)$ to $R$.
\end{definition}

We make a number of remarks concerning this definition:
\begin{itemize}
\item See Definition~\ref{defn:G-contract} for the definition of $\fS$-contraction and Definition~\ref{defn:Q-ideal} for the definition of $\fQ_{\cU}(\lambda,e;Z)$.
\item The $\fS$-contraction is formed with respect to the composition $R \to \tilde{R}_{\cU} \stackrel{\iota\,}{\to} \tilde{S}_{\cU}/\tilde{J}_{\cU,n}$, where $n$ is large enough so that $\fQ_{\cU}(\lambda,e;Z)$ contains $\tilde{J}_{\cU,n}$; in fact, one can take $n=e_{\max}$.
\item One easily sees that $\fP(\lambda,e;Z)$ is independent of the choice of $\cU$.
\item As with the $\fQ$ ideals, we omit $Z$ from the notation when $Z=\bU^{\cI}$, and we omit $e$ from the notation when $e=1$.
\item Suppose that $(\lambda',e';Z')$ is a second triple, and let $\cI'$ be the index set of $\lambda'$. We say that $(\lambda,e;Z)$ is \defi{isomorphic} to $(\lambda',e';Z')$ if there is a bijection $\phi \colon \cI \to \cI'$ under which $\lambda$, $e$, and $Z$ correspond to $\lambda'$, $e'$, and $Z'$. In this case, one easily sees that $\fP(\lambda,e;Z)$ is equal to $\fP(\lambda',e';Z')$.  We will eventually prove that the converse holds as well.
\item In \S \ref{ss:intro-P}, we gave a different description of $\fP(\lambda,e;Z)$ in characteristic~0. One can show that it is equivalent; see Lemma~\ref{lem:contract-a-3} for the idea.
\end{itemize}
The most important property of the $\fP$ ideals is the following:

\begin{proposition} \label{prop:Pprime}
If $e$ is reduced and $Z$ is irreducible then $\fP(\lambda, e; Z)$ is $\fS$-prime.
\end{proposition}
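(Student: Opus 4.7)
The plan is to read off $\fP(\lambda,e;Z)$ as the $\fS$-contraction of a $\fY_{\cU}$-prime ideal and then invoke the abstract machinery of Proposition~\ref{prop:G-contract}(a), which says that $G$-contractions of $H$-primes along equivariant ring maps are $G$-prime. All the real content has already been done: Proposition~\ref{prop:Qprime} is the only calculation needed.

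First I would fix a partition $\cU$ of $[\infty]$ of type $\lambda$ and set $n=e_{\max}$. By Proposition~\ref{prop:Qprime}, the ideal $\fq:=\fQ_{\cU}(\lambda,e;Z)$ is $\fY_{\cU}$-prime in $\tilde{S}_{\cU}$. Since $e$ is reduced and $n=e_{\max}$, the ideal $\fq$ contains $\tilde{J}_{\cU,n}$; hence its image $\ol{\fq}$ in $\ol{S}:=\tilde{S}_{\cU}/\tilde{J}_{\cU,n}$ is a $\fY_{\cU}$-prime ideal of $\ol{S}$, and by construction $\fP(\lambda,e;Z)$ is by definition the $\fS$-contraction of $\ol{\fq}$ along the map $\iota\colon R\to \ol{S}$ of Proposition~\ref{prop:iota}.

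Next I would verify the hypotheses of Proposition~\ref{prop:G-contract}(a) with $G=\fS$ and $H=\fY_{\cU}$. The subgroup $\fY_{\cU}$ is an admissible subgroup of $\fS$ (it is a finite product of copies of $\fS$ and finite symmetric groups), so the notion of $\fS$-contraction of a $\fY_{\cU}$-ideal makes sense. It remains to check that $\iota\colon R\to \ol{S}$ is a map of $\fY_{\cU}$-rings. For $\sigma\in \fY_{\cU}$ one has $\pi(\sigma i)=\pi(i)$ (since $\sigma$ preserves each $U_\alpha$), so
\begin{displaymath}
\sigma\cdot \iota(\xi_i)=\sigma(t_{\pi(i)}+\epsilon_i)=t_{\pi(i)}+\epsilon_{\sigma(i)}=\iota(\xi_{\sigma(i)})=\iota(\sigma\cdot\xi_i),
\end{displaymath}
so $\iota$ is indeed $\fY_{\cU}$-equivariant.

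Applying Proposition~\ref{prop:G-contract}(a) now gives that $\fP(\lambda,e;Z)=\ol{\fq}^{\,\fS c}$ is $\fS$-prime, completing the proof. There is no real obstacle here: the proposition is a purely formal consequence of the abstract framework in \S\ref{s:eqca} combined with Proposition~\ref{prop:Qprime}, whose proof is where the actual argument (reducing to the clean case handled in Proposition~\ref{prop:powers-prime}) happens.
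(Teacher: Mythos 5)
Your proof is correct and follows exactly the paper's own route: the paper proves this proposition in one line by combining Proposition~\ref{prop:G-contract} with Proposition~\ref{prop:Qprime}. The details you supply (equivariance of $\iota$, passing to the quotient by $\tilde{J}_{\cU,n}$) are precisely the routine verifications the paper leaves implicit.
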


\begin{proof}
This follows from Proposition~\ref{prop:G-contract} and~\ref{prop:Qprime}.
\end{proof}

The following propositions give a few more simple properites of these ideals:

\begin{proposition} \label{prop:Pprop}
We have the following:
\begin{enumerate}
\item The radical of $\fP(\lambda, e; Z)$ is $\fP(\lambda; Z)$.
\item The $\fY_{\cU}$-radical of $\fP(\lambda, e; Z)$ is $\fP(\lambda, \red_{\lambda}(e); Z)$.
\item If $Z=\bigcup_{i \in I} Z_i$ then $\fP(\lambda, e; Z)=\bigcap_{i \in I} \fP(\lambda, e; Z_i)$.
\end{enumerate}
\end{proposition}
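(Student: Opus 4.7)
My plan is to derive each of the three statements from the corresponding property of the auxiliary ideal $\fQ_\cU(\lambda,e;Z)$ (Propositions~\ref{prop:Qrad}, \ref{prop:Qrad2}, \ref{prop:Qunion}) via the general machinery of $\fS$-contraction developed in \S\ref{s:eqca}. Recall that by construction $\fP(\lambda,e;Z)$ is the $\fS$-contraction $I_\fS \circ \iota^{-1}$ of $\fQ_\cU(\lambda,e;Z)$ along the partially defined map $\iota\colon R \to \tilde S_\cU/\tilde J_{\cU,n}$, and this is independent of the choice of $n \ge e_{\max}$.

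For (a), Proposition~\ref{prop:G-contract}(d) gives $\rad(\fP(\lambda,e;Z)) = \rad(\fQ_\cU(\lambda,e;Z))^{\fS c}$; combining with Proposition~\ref{prop:Qrad}, the right side is $\fQ_\cU(\lambda;Z)^{\fS c} = \fP(\lambda;Z)$. For (c), the $\fS$-contraction operation commutes with arbitrary intersections --- the preimage $\iota^{-1}(\cdot)$ does so trivially, and $I_\fS(\cdot)=\bigcap_{\sigma \in \fS} \sigma(\cdot)$ is itself an intersection --- so combining with Proposition~\ref{prop:Qunion} yields $\fP(\lambda,e;\bigcup_i Z_i) = \bigcap_i \fP(\lambda,e;Z_i)$.

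For (b), the central tool is Proposition~\ref{prop:G-contract}(c) with $G = \fS$ and $H = \fY_\cU$, whose applicability rests on the bigness of $\fY_\cU$ in $\fS$ (Proposition~\ref{prop:big-young}). This gives $\rad_\fS(\fP(\lambda,e;Z)) = \rad_{\fY_\cU}(\fQ_\cU(\lambda,e;Z))^{\fS c}$, which by Proposition~\ref{prop:Qrad2} equals $\fP(\lambda,\red_\lambda(e);Z)$. To upgrade from $\rad_\fS$ to the claimed $\rad_{\fY_\cU}$: one direction is automatic, since $\fY_\cU \subset \fS$ forces $\rad_\fS \subset \rad_{\fY_\cU}$, giving $\fP(\lambda,\red_\lambda(e);Z) \subset \rad_{\fY_\cU}(\fP(\lambda,e;Z))$. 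For the reverse, the containment $\fP(\lambda,e;Z) \subset \fQ_\cU(\lambda,e;Z)^c$ combined with the $\fY_\cU$-analog of Proposition~\ref{prop:rad-contract} yields $\rad_{\fY_\cU}(\fP(\lambda,e;Z)) \subset \fQ_\cU(\lambda,\red_\lambda(e);Z)^c$. To improve this to membership in $I_\fS(\fQ_\cU(\lambda,\red_\lambda(e);Z)^c) = \fP(\lambda,\red_\lambda(e);Z)$, I would use that $\rad_{\fY_\cU}(\fP(\lambda,e;Z))$ is $\fS$-stable --- because $\fP(\lambda,e;Z)$ is independent of the choice of $\cU$ of type $\lambda$, and $\tau \fY_\cU \tau^{-1} = \fY_{\tau\cU}$ --- which forces every $\fS$-translate of an element of $\rad_{\fY_\cU}(\fP)$ into $\fQ_\cU(\lambda,\red_\lambda(e);Z)^c$.

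The main obstacle I anticipate is this last step in (b): establishing the $\fS$-stability of $\rad_{\fY_\cU}(\fP(\lambda,e;Z))$ cleanly, which hinges on the intrinsic (partition-independent) nature of $\fP(\lambda,e;Z)$ together with the identification $\rad_{\fY_{\tau\cU}} = \tau \rad_{\fY_\cU} \tau^{-1}$. Once this symmetry is invoked, the passage between $\rad_\fS$ and $\rad_{\fY_\cU}$ becomes mechanical, and the rest of the proof is bookkeeping with the $\fQ$-identities already in hand.
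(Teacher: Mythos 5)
Your treatments of (a) and (c) are correct and coincide with the paper's (very terse) argument: both follow formally from Propositions~\ref{prop:Qrad} and~\ref{prop:Qunion} once one knows that $\fS$-contraction commutes with ordinary radicals (Proposition~\ref{prop:G-contract}(d)) and with arbitrary intersections.

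Part (b) is where the real content lies, and your proof has a genuine gap at exactly the step you flag. The two containments $\fP(\lambda,\red_\lambda(e);Z)=\rad_{\fS}(\fP(\lambda,e;Z))\subset\rad_{\fY_\cU}(\fP(\lambda,e;Z))$ and $\rad_{\fY_\cU}(\fP(\lambda,e;Z))\subset\fQ_\cU(\lambda,\red_\lambda(e);Z)^c$ are fine. But to improve the latter to membership in $I_\fS(\fQ_\cU(\lambda,\red_\lambda(e);Z)^c)$ you invoke the $\fS$-stability of $\rad_{\fY_\cU}(\fP(\lambda,e;Z))$, and the justification offered --- that $\fP(\lambda,e;Z)$ is independent of the choice of $\cU$ --- does not deliver it. Partition-independence is a statement about the ideal itself; what you need is that the \emph{operator} $\rad_{\fY_{\cU'}}(-)$ applied to this fixed ideal returns the same answer for every $\cU'=\tau\cU$ with $\tau\in\fS$, and that is not a formal consequence of $\fP$ being an $\fS$-ideal. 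For instance, let $\fa=\langle\xi_i^2\rangle_{i\ge 1}=\fP((\infty),(2))$ and $\cW=\{\{1\},[\infty]\setminus\{1\}\}$. Then $\xi_1\in\rad_{\fY_\cW}(\fa)$ (the $\fY_\cW$-submodule $\bZ\xi_1$ squares into $\fa$), whereas $\xi_2\notin\rad_{\fY_\cW}(\fa)$ (among the products $(\tau_1\xi_2)\cdots(\tau_n\xi_2)$ with $\tau_i\in\fY_\cW$ one finds the squarefree monomial $\xi_2\cdots\xi_{n+1}$). Hence $\rad_{\fY_\cW}(\fa)\ne(1\;2)\,\rad_{\fY_\cW}(\fa)=\rad_{\fY_{(1\;2)\cW}}(\fa)$, even though $\fa$ is $\fS$-prime. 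The obstruction in this example is that $\cW$ does not have the same type as the composition defining $\fa$; a correct proof of (b) must therefore use the hypothesis that $\cU$ has type $\lambda$ in an essential way, and your argument never does. As written, the stability claim you need is essentially equivalent to (b) itself, so the argument is circular at this point. (In fairness, the paper's own proof is a one-line citation of the $\fQ$-identities and Proposition~\ref{prop:G-contract} and does not address this either; closing the gap appears to require an argument in the style of Proposition~\ref{prop:corr-1} or Lemma~\ref{lem:Ngood} --- replacing the test partition by one in general position relative to $\cU$ and exploiting disjointness of variables --- rather than the formal machinery of \S\ref{s:eqca} alone.)
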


\begin{proof}
These statements follow from Propositions~\ref{prop:Qrad}, \ref{prop:Qunion}, and~\ref{prop:Qrad2}, and properties of $\fS$-contraction (Proposition~\ref{prop:G-contract}).
\end{proof}

\begin{proposition} \label{prop:P-ext} 
Let $A \to A'$ be a flat ring map, let $R'=A' \otimes_A R$, let $Z \subset \bU^{\cI}_A$ be a closed subset, let $Z' \subset \bU^{\cI}_{A'}$ be the inverse image of $Z$, and let $e$ be a $\lambda$-reduced weighting. Put $\fp=\fP(\lambda,e;Z)$ and $\fp'=\fP(\lambda,e;Z')$. Then:
\begin{enumerate}
\item We have $\rad_{\fS}(\fp^e)=\fp'$.
\item If $\fI_0(Z)^e \subset \tilde{S}'_{\cU,0}$ is radical then $\fp^e=\fp'$.
\end{enumerate}
We note that if $A \to A'$ is a localization then (b) holds.
\end{proposition}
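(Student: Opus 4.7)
The plan is to reduce both parts to Proposition~\ref{prop:Q-ext}, the analogous statement for the auxiliary ideals $\fQ_{\cU}(\lambda,e;Z)$. The two bridging tools are the base-change property for $\fS$-contraction (Proposition~\ref{prop:Gc-bc}) and the identity $\rad_{\fS}(\fa^{\fS c}) = \rad_{\fY_{\cU}}(\fa)^{\fS c}$ coming from Proposition~\ref{prop:G-contract}(c); both apply since $\fY_{\cU}$ is big in $\fS$ (Proposition~\ref{prop:big-young}). Fix a partition $\cU$ of $[\infty]$ of type $\lambda$, put $n=e_{\max}$, and set $\fq = \fQ_{\cU}(\lambda,e;Z)$ and $\fq' = \fQ_{\cU}(\lambda,e;Z')$. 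By definition $\fp = \fq^{\fS c}$ and $\fp' = (\fq')^{\fS c}$, where both contractions are taken along the map $R \to \tilde S_{\cU}/\tilde J_{\cU,n}$ (and its base change to $A'$, which is obtained termwise since localization and quotient commute with the flat tensor product $A' \otimes_A -$).

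First I would invoke Proposition~\ref{prop:Gc-bc} for the $\fY_{\cU}$-ideal $\fq$ to obtain
\[
\fp^e = (\fq^{\fS c})^e = (\fq^e)^{\fS c}.
\]
For part (b), the assumption that $\fI_0(Z)^e$ is radical combined with Proposition~\ref{prop:Q-ext}(b) gives $\fq^e = \fq'$, and hence $\fp^e = (\fq')^{\fS c} = \fp'$. For part (a), Proposition~\ref{prop:G-contract}(c) yields $\rad_{\fS}((\fq^e)^{\fS c}) = \rad_{\fY_{\cU}}(\fq^e)^{\fS c}$, and Proposition~\ref{prop:Q-ext}(a) identifies $\rad_{\fY_{\cU}}(\fq^e) = \fq'$; combining these gives $\rad_{\fS}(\fp^e) = (\fq')^{\fS c} = \fp'$. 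The closing parenthetical remark that (b) applies when $A \to A'$ is a localization is then automatic, since radicals commute with localization.

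There is no substantive obstacle in this argument: all the real work has been packaged into Propositions~\ref{prop:Q-ext}, \ref{prop:G-contract}(c), and~\ref{prop:Gc-bc}. The only point to verify is the formal compatibility of the chain of rings defining $\fP(\lambda,e;Z)$ with the flat base change $A \to A'$, which follows immediately from the commutation of $A' \otimes_A -$ with localization and quotient.
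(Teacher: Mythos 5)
Your proposal is correct and follows essentially the same route as the paper: establish $\fp^e=(\fq^e)^{\fS c}$ via Proposition~\ref{prop:Gc-bc}, then conclude (a) by commuting $\fS$-radicals with $\fS$-contraction (Proposition~\ref{prop:G-contract}(c)) together with Proposition~\ref{prop:Q-ext}(a), and (b) directly from Proposition~\ref{prop:Q-ext}(b). No substantive differences from the paper's argument.
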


\begin{proof}
Let $\fq=\fQ_{\cU}(\lambda,e;Z)$ and $\fq'=\fQ_{\cU}(\lambda,e;Z')$. Consider the diagram
\begin{displaymath}
\xymatrix{
\fp \ar@{}[r]|*=0[@]{\subset} & R \ar[d] \ar[r] & \tilde{S}_{\cU} \ar[d] & \ar@{}[l]|*=0[@]{\subset} \fq  \\
\fp' \ar@{}[r]|*=0[@]{\subset} & R' \ar[r] & \tilde{S}'_{\cU} & \ar@{}[l]|*=0[@]{\subset} \fq' }
\end{displaymath}
We have
\begin{equation} \label{eq:P-ext}
\fp^e = (\fq^{\fS c})^e = (\fq^e)^{\fS c}
\end{equation}
where in the first step we used the definition of $\fp$, and in the second Proposition~\ref{prop:Gc-bc}.

(a) Taking $\fS$-radicals in \eqref{eq:P-ext}, we find
\begin{displaymath}
\rad_{\fS}(\fp^e) = \rad_{\fS}((\fq^e)^{\fS c})=(\rad_{\fY_{\cU}}(\fq^e))^{\fS c} = (\fq')^{\fS c} = \fp'
\end{displaymath}
where in the second step we used Proposition~\ref{prop:G-contract}(c), in the third Proposition~\ref{prop:Q-ext}(a), and in the fourth the definition of $\fp'$.

(b) By Proposition~\ref{prop:Q-ext}(b), we have $\fq^e=\fq'$, so \eqref{eq:P-ext} gives $\fp^e=\fp'$.
\end{proof}

We now give a slightly different way to construct $\fP(\lambda,e;Z)$ which is sometimes convenient. Let $\psi \colon R \to T$ be a ring homomorphism, where $T$ is some commutative ring. We say that $\psi$ has \defi{class} $(\lambda,e;Z)$ if there exists a partition $\cU$ of $[\infty]$ of type $\lambda$ and a ring homomorphism $\psi_0 \colon \tilde{S}_{\cU,0} \to T$ such that the following conditions hold:
\begin{enumerate}
\item We have $\psi \vert_A = \psi_0 \vert_A$.
\item We have $\fI_0(Z) \subset \ker(\psi_0)$.
\item We have $\psi(\xi_i)=\psi_0(t_{\pi(i)})+\delta_i$ where $\delta_i \in T$ satisfies $\delta_i^{e(\pi(i))}=0$ and $\pi \colon [\infty] \to \cI$ is the function associated with $\cU$.
\end{enumerate}

\begin{proposition} \label{prop:ker-psi}
The following ideals of $R$ coincide:
\begin{enumerate}
\item $\fP(\lambda,e;Z)$
\item The intersection of $\ker(\psi)$ over all homomorphisms $\psi \colon R \to T$ of class $(\lambda,e;Z)$.
\item The intersection of $\ker(\psi)$ over all homomorphisms $\psi \colon R \to T$ of class $(\lambda,e;Z)$ where the nilradical of $T$ is maximal.
\end{enumerate}
\end{proposition}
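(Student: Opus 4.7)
The plan is to verify $(a)\subset(b)\subset(c)\subset(a)$, where $(b)\subset(c)$ is immediate because (c) intersects over a subcollection of the homomorphisms in (b).

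For $(a)\subset(b)$, let $\psi\colon R\to T$ be of class $(\lambda,e;Z)$ with witnessing data $\cU$, $\psi_0\colon \tilde{S}_{\cU,0}\to T$, and $\delta_i\in T$. I would extend $\psi_0$ to a ring homomorphism $\Psi\colon \tilde{S}_{\cU}\to T$ by setting $\Psi(\epsilon_i)=\delta_i$. Since $\delta_i^{e(\pi(i))}=0$, the map $\Psi$ kills $\tilde{J}_{\cU,n}$ for $n=e_{\max}$ and also kills $\fQ_{\cU}(\lambda,e;Z)$, whose generators $\fI_0(Z)$ and $\epsilon_i^{e(\pi(i))}$ all lie in $\ker(\Psi)$. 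Hence $\Psi$ descends to $\tilde{S}_{\cU}/\tilde{J}_{\cU,n}$. The composite $R\to\tilde{R}_{\cU}\xrightarrow{\iota}\tilde{S}_{\cU}/\tilde{J}_{\cU,n}\to T$ sends $\xi_i$ to $\psi_0(t_{\pi(i)})+\delta_i=\psi(\xi_i)$ and agrees with $\psi$ on $A$, so it equals $\psi$. Its kernel contains the contraction $\fq^c$ of $\fQ_{\cU}(\lambda,e;Z)$, and hence contains $\fP(\lambda,e;Z)=I_{\fS}(\fq^c)\subset\fq^c$. Intersecting over all such $\psi$ yields $(a)\subset(b)$.

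For $(c)\subset(a)$ I argue the contrapositive. Suppose $f\notin\fP(\lambda,e;Z)=I_{\fS}(\fq^c)$; then some $\sigma\in\fS$ satisfies $\sigma^{-1}f\notin\fq^c$, so the image $g$ of $\sigma^{-1}f$ in $R_0:=\tilde{S}_{\cU}/\fQ_{\cU}(\lambda,e;Z)$ is non-zero. Set $A_0:=\tilde{S}_{\cU,0}/\fI_0(Z)$, which is reduced because $\fI_0(Z)$ is radical. Then $R_0 = A_0[\epsilon_i]_{i\ge 1}/\langle\epsilon_i^{e(\pi(i))}\rangle$ is a free $A_0$-module on the monomials $\prod_i\epsilon_i^{a_i}$ with $0\le a_i<e(\pi(i))$. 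Writing $g=\sum_m c_m m$, some coefficient $c_{m_0}\in A_0$ is non-zero, hence non-nilpotent by reducedness, and so avoids some prime ideal of $A_0$. Let $K$ be the residue field at that prime, and set $T:=K\otimes_{A_0}R_0 = K[\epsilon_i]_{i\ge 1}/\langle\epsilon_i^{e(\pi(i))}\rangle$. Each $\epsilon_i$ is nilpotent in $T$ and $T/(\epsilon_i)_{i\ge 1}=K$ is a field, so $T$ has a unique prime ideal equal to its nilradical; thus $T$ has maximal nilradical. The induced map $R_0\to T$ sends $g$ to $\sum_m\overline{c_m}m$, which is non-zero since the monomials form a $K$-basis of $T$ and $\overline{c_{m_0}}\ne 0$. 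The composite $\psi'\colon R\to\tilde{R}_{\cU}\xrightarrow{\iota}R_0\to T$ is therefore of class $(\lambda,e;Z)$ (with data $\cU$, the obvious $\psi_0$, and $\delta_i$ the image of $\epsilon_i$ in $T$) and satisfies $\psi'(\sigma^{-1}f)\ne 0$. Finally, $\psi:=\psi'\circ\sigma^{-1}$ is a ring homomorphism $R\to T$ of class $(\lambda,e;Z)$ with partition $\sigma\cU$ (whose associated map is $\pi\circ\sigma^{-1}$), the same $\psi_0$, and $\delta'_i=\delta_{\sigma^{-1}(i)}$; and $\psi(f)=\psi'(\sigma^{-1}f)\ne 0$, contradicting $f$ lying in (c).

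The main obstacle is the second direction: producing a target ring $T$ with maximal nilradical that simultaneously detects the non-vanishing of $f$ and carries compatible class-$(\lambda,e;Z)$ data. The key observation is that $R_0$ is free as a module over the reduced ring $A_0$ on the allowed exponent monomials, which localizes the detection problem to a single non-zero coefficient in a reduced ring; once this is set up, $T=K\otimes_{A_0}R_0$ automatically has maximal nilradical. The remaining verifications---that $\Psi$ descends past $\tilde{J}_{\cU,n}$, that the composite recovers $\psi$, and that precomposing $\psi'$ with $\sigma^{-1}$ preserves the class with the shifted partition---are routine unpackings of the definitions.
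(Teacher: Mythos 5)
Your proposal is correct and follows essentially the same route as the paper: the inclusion (a)$\subset$(b) via lifting $\psi$ to $\tilde{S}_{\cU}$, and the detection step using that $\tilde{S}_{\cU}/\fQ_{\cU}(\lambda,e;Z)$ is free over the reduced ring $\tilde{S}_{\cU,0}/\fI_0(Z)$ on the restricted monomials, so a nonzero coefficient survives in some residue field $K$ and $K[\epsilon_i]/\langle\epsilon_i^{e(\pi(i))}\rangle$ has maximal nilradical. The only cosmetic difference is that you run the last step as a contrapositive with an explicit precomposition by $\sigma^{-1}$, where the paper fixes $\cU$ and invokes the $\fS$-stability of the ideal in (c); these are the same argument.
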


\begin{proof}
Let $\fp=\fP(\lambda,e;Z)$, let $\fb$ be the ideal in (b), and let $\fc$ be the ideal in (c). Then $\fb \subset \fc$. We first claim that $\fp \subset \fb$. To see this, let $\psi \colon R \to T$ be a homomorphism of class $(\lambda,e;Z)$. We must show that $\fp \subset \ker(\psi)$. Write $\psi(\xi_i)=\psi_0(t_{\pi(i)})+\delta_i$ as above, where $\pi \colon [\infty] \to \cI$ is associated to a partition $\cU$ of $[\infty]$ of type $\lambda$. Let $\tilde{\psi} \colon \tilde{S}_{\cU} \to T$ be the homomorphism defined by $\tilde{\psi} \vert_{\tilde{S}_{\cU,0}} =\psi_0$ and $\tilde{\psi}(\epsilon_i)=\delta_i$. Then $\fQ_{\cU}(\lambda,e;Z) \subset \ker(\tilde{\psi})$ and $\psi=\tilde{\psi} \circ \iota$. Since $\iota$ carries $\fp$ into $\fQ_{\cU}(\lambda,e;Z)$, we see that $\fp \subset \ker(\psi)$, as required.

We now show $\fc \subset \fp$. Fix a partition $\cU$ of $[\infty]$ of type $\lambda$. Let $\Sigma$ be the set of all homomorphisms $\psi \colon R \to T$ as in (c) defined with respect to $\cU$. Let $\tilde{\Sigma}$ be the set of associated homorphisms $\tilde{\psi} \colon \tilde{S}_{\cU} \to T$ as in the previous paragraph. Let $\fd$ be the intersection of $\ker(\tilde{\psi})$ over $\tilde{\psi} \in \tilde{\Sigma}$. From the relation $\psi=\tilde{\psi} \circ \iota$, we see $\fc \subset \fd^c$.

We claim that $\fd$ coincides with $\fq=\fQ_{\cU}(\lambda,e;Z)$. We have $\fq\subset \ker(\tilde{\psi})$ for each $\tilde{\psi} \in \tilde{\Sigma}$ by the first paragraph, and so $\fq \subset \fd$. Now suppose that $f$ is an element of $\tilde{S}_{\cU}$ that does not belong to $\fq$. Let $\cM$ be the set of monomials in the $\epsilon_i$'s that do not belong to $\fq$. Then, as in the proof of Proposition~\ref{prop:Qunion}, $\tilde{S}_{\cU}/\fq$ is a free $\tilde{S}_{\cU,0}/\fI_0(Z)$-module with basis $\cM$. Let $c$ be a non-zero coefficient of $f$. Since $\fI_0(Z)$ is radical, the intersection of all primes of $\tilde{S}_{\cU,0}/\fI_0(Z)$ is zero, and so we can find some prime $\fr$ that does not contain $c$. Let $\Omega$ be the residue field of $\fr$. Then homorphism $\tilde{S}_{\cU} \to \Omega[\epsilon_i]/\langle \epsilon_i^{e(\pi(i))} \rangle$ belongs to $\tilde{\Sigma}$ and does not contain $f$ in its kernel. Thus $f \not\in \fd$, and so $\fd=\fq$, as claimed. Since $\fc$ is $\fS$-stable and contained in the contraction of $\fq$, it is therefore contained in the $\fS$-contraction of $\fq$, which is $\fp$. This completes the proof.
\end{proof}

\subsection{Radical $\fS$-primes} \label{ss:radical}

We have seen that $\fP(\lambda; Z)$ is a radical $\fS$-prime when $Z$ is irreducible. In fact, the main result of \cite{svar}, which we now recall, asserts that these essentially account for all such ideals:

\begin{theorem} \label{thm:rad-prime}
Let $\fp$ be a radical $\fS$-prime of $R$. Then exactly one of the following two cases hold:
\begin{enumerate}
\item $\fp$ is the extension to $R$ of its contraction to $A$ (which is a prime ideal of $A$)
\item $\fp=\fP(\lambda;Z)$ for some $\infty$-composition $\lambda$ and irreducible closed subvariety $Z$ of $\bU^{\cI}$, where $\cI$ is the index set of $\lambda$. 
\end{enumerate}
Moreover, in case (b), the pair $(\lambda;Z)$ is unique up to isomorphism.
\end{theorem}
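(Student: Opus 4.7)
The plan is to reduce to the case where $A$ is a field, stratify $V(\fp)$ by partition type in the finitary locus, extract $\lambda$ as the ``generic'' partition type, and take $Z$ to be an irreducible component of the corresponding stratum. First I would dispose of case (a): setting $\fc=\fp\cap A$, which is a prime of $A$ by Proposition~\ref{prop:prime-contract}, if $\fp=\fc R$ we are done. Otherwise, using Proposition~\ref{prop:prime-loc} together with the base-change compatibility of the $\fP(\lambda;Z)$ under localization (Proposition~\ref{prop:P-ext}(b)), I may replace $A$ by $\Frac(A/\fc)$ and assume $A=K$ is a field with $\fp\cap K=0$. By Proposition~\ref{prop:disc}, $\fp$ then contains a discriminant $\Delta_N$, and so $V(\fp)$ is contained in the finitary locus $\bigcup_\cU \fX_\cU$, with $\cU$ ranging over partitions of $[\infty]$ with at most $N-1$ parts.

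Next I would locate the correct partition type. Let $\Lambda$ be the set of isomorphism classes of $\infty$-compositions $\lambda$ on index sets $\cI$ such that $V(\fp)\cap \fX_{[\cU]}(\bU^{\cI})$ is non-empty for some partition $\cU$ of type $\lambda$, and order $\Lambda$ by declaring $\lambda'\le\lambda$ when $\lambda'$ arises from $\lambda$ by merging blocks (multiplicities adding). The goal is to show $\Lambda$ has a unique maximal element. Existence follows from the $\Delta_N$ bound on the number of parts. Uniqueness is where $\fS$-primality genuinely enters: two incomparable maximal types $\lambda,\mu$ would allow me to write $V(\fp)$ as a union of two proper $\fS$-stable closed subvarieties obtained as closures of the strata of type refining $\lambda$ and type refining $\mu$, contradicting $\fS$-primality of $\fp$ via the radical form of equivariant prime avoidance (Proposition~\ref{prop:avoid}).

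Fix a partition $\cU$ of the maximal type $\lambda$ and set $W=V(\fp)\cap \fX_{[\cU]}(\bU^{\cI})$, a closed subset of $\bU^{\cI}$. A second application of $\fS$-primality, channeled through the normalizer $\rN\fY_\cU$ whose quotient by $\fY_\cU$ is $\rN_\lambda$, shows that $\rN_\lambda$ acts transitively on the irreducible components of $W$; here the bigness of $\fY_\cU$ (Proposition~\ref{prop:big-young}) and the finite-index correspondence between $\fY_\cU$- and $\rN\fY_\cU$-primes (Proposition~\ref{prop:prime-subgroup}) are the relevant tools. Pick one component $Z$; then I claim $V(\fp)=\fX_\lambda(Z)$. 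The containment $\supseteq$ is immediate from the construction. For $\subseteq$, any $x\in V(\fp)$ lies in some stratum whose partition type $\lambda'$ is, by maximality of $\lambda$, a coarsening of $\lambda$; a specialization argument, letting the coordinates of suitable points of $Z$ collide, expresses $x$ as a limit of points of $\fX_{[\lambda]}(Z)$ and places it in the closure $\fX_\lambda(Z)$. Since both $\fp$ and $\fP(\lambda;Z)$ are radical, their equality of zero loci forces $\fp=\fP(\lambda;Z)$, and uniqueness of $(\lambda;Z)$ up to isomorphism is built into the intrinsic description of $\lambda$ (the maximal element of $\Lambda$) and $Z$ (a component of $W$, well-defined up to $\rN_\lambda$).

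The main obstacle is the uniqueness of the maximal partition type and the $\rN_\lambda$-transitivity on components of $W$: both are substantive consequences of $\fS$-primality, not formal properties of the stratification, and both require the equivariant commutative algebra of \S\ref{s:eqca}, especially equivariant prime avoidance and the finite-index subgroup correspondence. A secondary subtlety is the limit step in the last paragraph: one must verify that smaller partition types appearing in $V(\fp)$ genuinely arise as degenerations of $\fX_{[\lambda]}(Z)$ rather than as independent components, which again appeals to the maximality of $\lambda$ coming from $\fS$-primality.
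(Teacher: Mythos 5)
Your proposal and the paper's proof diverge fundamentally in what they take as input. The paper does \emph{not} reprove the classification of radical $\fS$-primes: it reduces to the case of a field exactly as you do (contract to $A$, pass to $\Frac(A/\fc)$ via Propositions~\ref{prop:prime-contract}, \ref{prop:prime-loc} and~\ref{prop:P-ext}), and then simply invokes the main theorem of \cite{svar}, spending the rest of the proof translating that theorem's parametrization of $\fS$-irreducible closed subsets into the $\fP(\lambda;Z)$ language via Proposition~\ref{prop:ker-psi}, and handling non-noetherian $A$ (which \cite{svar} does not cover) by the limit/base-change argument. What you have written is, in effect, an outline of the proof of the main theorem of \cite{svar} itself: discriminants force $V(\fp)$ into the finitary locus, stratify by partition type, extract a unique maximal type, and identify $V(\fp)$ with $\fX_\lambda(Z)$. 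That is the right strategy for a from-scratch argument, but in the context of this paper it is re-deriving the cited prior result rather than proving the theorem as stated.

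Judged as a self-contained proof, the two steps you yourself flag as the main obstacles are genuine gaps, not routine verifications. The uniqueness of the maximal type is \cite[Proposition~8.4]{svar} (quoted here as Proposition~\ref{prop:type-char}); making your prime-avoidance argument work requires knowing that the locus of points of type $\preceq\lambda$ is Zariski closed (\cite[Theorem~8.6]{svar}) and that $V(\fp)$ is covered by finitely many such loci, neither of which is immediate. The $\rN_\lambda$-transitivity on components of $W$ likewise needs the full finite-index machinery of Proposition~\ref{prop:prime-subgroup} applied after Theorem~\ref{thm:class1}-style extension to $\tilde R_{\cU}$, not just a bare appeal to bigness. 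Finally, your closing ``specialization'' step is framed misleadingly: you do not need (and should not try) to exhibit each point of smaller type as an explicit degeneration of points of $Z$; the correct mechanism is again $\fS$-irreducibility, writing $V(\fp)$ as the union of the closed $\fS$-set $\fX_\lambda(Z)$ and the closed $\fS$-set of points of type strictly smaller than $\lambda$, and discarding the latter because $V(\fp)$ contains points of type $\lambda$. With those steps either supplied or replaced by citations to \cite{svar}, your route is sound; as written, it asserts rather than proves the substantive content.
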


\begin{proof}
There are some differences between our setup and the one from \cite{svar}, so we explain how to bridge the gaps.

First suppose that $A=K$ is a field. Then the main theorem of \cite{svar} shows that the non-zero $\fS$-irreducible closed subsets of $\fX=\Spec(R)$ are in bijection with pairs $(\lambda;Z)$ where $\lambda$ is an $\infty$-partition on the index set $[r]$ and $Z$ is an $\rN_{\lambda}$-irreducible closed subset of $\bU^r_K$. Explicitly, this bijection works as follows. Let $(\lambda;Z)$ be given and let $\fZ \subset \fX$ be the corresponding $\fS$-irreducible closed subset. Let $\cU$ be a partition of $[\infty]$ of type $\lambda$ and let $\fX_{[\cU]}$ be the subset of $\fX$ consisting of points $(x_i)_{i \ge 1}$ such that $x_i=x_j$ if and only if $i$ and $j$ belong to the same part of $\cU$. Then $\fX_{[\cU]}$ is isomorphic to $\bU^r$. Via this identification, we can regard $Z$ as a subset of $\fX$. The set $\fZ$ is then the Zariski closure of the union of the $\fS$-translates of $Z$.

Now, let $(\lambda,Z)$ be given corresponding to $\fZ$. Let $Z'$ be an irreducible component of $Z$. Then the union of the $\fS$-translates of $Z$ coincides with the union of the $\fS$-translates of $Z'$. Thus we could have defined $\fZ$ using $Z'$ as well. The ideal of $\fZ$ is exactly the ideal $\fP(\lambda;Z')$ by Proposition~\ref{prop:ker-psi}: indeed, in this case the intersection of $\ker(\psi)$'s is exactly describing the ideal for the Zariski closure of the $\fS$-translates of $Z'$. This shows that every radical $\fS$-prime has the form $\fP(\lambda;Z')$. Moreover, the uniquess from \cite{svar} shows that $\fP(\lambda;Z')=\fP(\lambda;Z'')$ if and only if $Z''=\sigma Z'$ for some $\sigma \in \rN_{\lambda}$, which translates to the stated uniqueness in this theorem.

We now treat the case of general coefficient ring. (The results of \cite{svar} are only stated for noetherian coefficient rings.) Let $\fp$ be a radical $\fS$-prime of $R$. Let $\fc=A \cap \fp$, which is a prime ideal of $A$ by Proposition~\ref{prop:prime-contract} (since $\fS$ acts trivially on $A$). Let $K$ be the residue field of $\fc$ and let $R'=K \otimes_A R$. Then $\fp$ corresponds to a radical $\fS$-prime of $R/\fc R$ which (by Proposition~\ref{prop:prime-loc}) extends to a radical $\fS$-prime, say $\fp'$ of $R'$. If $\fp' = 0$ then $\fp$ is the extension of $\fc$ and we are in case (a). On the other hand, if $\fp'$ is non-zero, then by the above, $\fp'$ has the form $\fP(\lambda;Z')$ for some $\infty$-composition $\lambda$ on an index set $\cI$ and irreducible closed subset $Z' \subset \bU^{\cI}_K$. Let $Z$ be the closure of $Z'$ in $\bU^{\cI}_A$. Then $\fP(\lambda;Z)$ is easily seen to contain $\fc$, and can thus be regarded as an ideal of $R/\fc R$. Its extension to $R'$ coincides with $\fP(\lambda;Z')$ by Proposition~\ref{prop:P-ext}. Thus, by Proposition~\ref{prop:prime-loc}, we have $\fp=\fP(\lambda;Z)$, which completes the proof.
\end{proof}

Suppose now that $\fp$ is an arbitrary $\fS$-prime of $R$. Then $\rad(\fp)$ is a radical $\fS$-prime by Proposition~\ref{prop:rad-prime}, and so we can apply the above theorem to it. This leads to the following important definition:

\begin{definition}
Let $\fp$ be an $\fS$-prime of $R$. We say that $\fp$ is \defi{finitary} if $\rad(\fp)$ has the form $\fP(\lambda; Z)$. In this case, we define the \defi{type} of $\fp$ to be (the isomorphism class of) $\lambda$.
\end{definition}

For example, let $\fp$ be an $\fS$-prime of the form $\fP(\lambda,e;Z)$. Then $\rad(\fp)=\fP(\lambda;Z)$ (Proposition~\ref{prop:Pprop}), and so $\fp$ is finitary of type $\lambda$. The following proposition classifies the non-finitary primes:

\begin{proposition} \label{prop:non-finitary}
Let $\fp$ be an $\fS$-prime of $R$ that is not finitary. Then $\fp$ is radical, and thus the extension to $R$ of a prime of $A$.
\end{proposition}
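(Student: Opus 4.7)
The plan is to invoke Theorem~\ref{thm:rad-prime} on $\rad(\fp)$ and then observe that, in the non-finitary case, the resulting quotient of $R$ is a domain, which forces $\fp$ to coincide with its radical.

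First, by Proposition~\ref{prop:rad-prime}, $\rad(\fp)$ is an $\fS$-prime of $R$, and it is of course radical. Theorem~\ref{thm:rad-prime} therefore applies, giving two mutually exclusive cases for $\rad(\fp)$. The non-finitary assumption on $\fp$ rules out case (b) of that theorem, so we must be in case (a): there exists a prime $\fc$ of $A$ such that
\begin{displaymath}
\rad(\fp) = \fc R.
\end{displaymath}
In particular, $\fc = A \cap \fp$.

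Next, I would pass to the quotient $R/\fc R$. Since $R = A[\xi_i]_{i \ge 1}$, we have $R/\fc R \cong (A/\fc)[\xi_i]_{i \ge 1}$, which is a polynomial ring over the integral domain $A/\fc$ and is therefore itself a domain. Let $\fp' \subset R/\fc R$ denote the image of $\fp$. Taking radicals commutes with the quotient, so
\begin{displaymath}
\rad(\fp') = \rad(\fp)/\fc R = \fc R/\fc R = 0,
\end{displaymath}
meaning every element of $\fp'$ is nilpotent in $R/\fc R$. Because $R/\fc R$ is a domain its only nilpotent is $0$, hence $\fp' = 0$. Pulling back, $\fp = \fc R = \rad(\fp)$, so $\fp$ is radical and is the extension of the prime $\fc$ of $A$, as claimed.

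There is no genuine obstacle to this argument; the only point requiring a moment's thought is that Theorem~\ref{thm:rad-prime} is available precisely because $\rad(\fp)$ is already a radical $\fS$-prime (via Proposition~\ref{prop:rad-prime}), and that the non-finitary hypothesis is exactly what eliminates case (b). Once that is in hand, the reduction to a polynomial ring over a domain makes the conclusion immediate.
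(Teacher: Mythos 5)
Your setup is fine: $\rad(\fp)$ is an $\fS$-prime by Proposition~\ref{prop:rad-prime}, the non-finitary hypothesis rules out case (b) of Theorem~\ref{thm:rad-prime}, and so $\rad(\fp)=\fc R$ for a prime $\fc$ of $A$. The problem is the final step. The image $\fp'$ of $\fp$ in $R/\fc R$ is $(\fp+\fc R)/\fc R$, and since $\fp \subset \rad(\fp)=\fc R$, the conclusion $\fp'=0$ is automatic and says only that $\fp \subset \fc R$ --- which is the trivial containment $\fp \subset \rad(\fp)$. ``Pulling back'' $\fp'=0$ recovers the preimage $\fp+\fc R=\fc R$, not $\fp$ itself, so the equality $\fp=\fc R$ does not follow. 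What actually needs proving is the reverse containment $\fc R \subset \fp$, and the quotient argument never touches it.

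The fix is short, and it is essentially the parenthetical claim $\fc=A\cap\fp$ that you state but neither justify nor use. Set $\fp_0=A\cap\fp$; this is a prime of $A$ (it is the contraction of an $\fS$-prime, and $\fS$ acts trivially on $A$, so $G$-primality reduces to ordinary primality). Then $\fc=A\cap\rad(\fp)=\rad(\fp_0)=\fp_0\subset\fp$, where the middle equality holds because $a^n\in\fp$ for $a\in A$ iff $a^n\in\fp_0$, and $\fp_0$ is prime hence radical. Therefore $\fc R\subset\fp$, and combined with $\fp\subset\rad(\fp)=\fc R$ this gives $\fp=\fc R=\rad(\fp)$. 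This is exactly the argument in the paper.
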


\begin{proof}
Let $\fp_0$ be the contraction of $\fp$ to $A$. This is a prime ideal of $A$: indeed, it is $\fS$-prime, but $\fS$ acts trivially on $A$. Now, the contraction of $\rad(\fp)$ to $A$ is $\rad(\fp_0)=\fp_0$. By assumption, $\rad(\fp)=\fp_0 R$. Since $\fp$ contains $\fp_0$, it thus contains $\rad(\fp)$, and so $\fp=\rad(\fp)$. The result follows.
\end{proof}

We now investigate finitary $\fS$-primes a bit more. Let $\fX=\Spec(R)$ and let $x$ be a $K$-point of $\fX$, for some field $K$. We say that $x$ is \defi{finitary} if the coordinates of $x$ take on only finitely many values. This condition is invariant under field extension, and so can be defined for scheme-theoretic points of $\fX$. The following proposition shows the relevance of this property:

\begin{proposition} \label{prop:finitary-char}
Let $\fp$ be an $\fS$-prime of $R$. Then $\fp$ is finitary if and only if $V(\fp) \subset \fX$ consists of finitary points.
\end{proposition}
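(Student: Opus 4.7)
The plan is to prove both directions by reducing to a discriminant computation, exploiting Theorem~\ref{thm:rad-prime} to split cases. Recall that $\rad(\fp)$ is a radical $\fS$-prime by Proposition~\ref{prop:rad-prime}, so Theorem~\ref{thm:rad-prime} gives two possibilities: either (a)~$\rad(\fp)=\fp_0 R$ for some prime $\fp_0 \subset A$, or (b)~$\rad(\fp)=\fP(\lambda;Z)$ with $\lambda$ on index set $\cI$ of size $r$. Case~(b) is, by definition, the statement that $\fp$ is finitary, so the task is to match case~(b) with the finitary-point condition.

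For the forward direction, I would first show that $\fP(\lambda;Z)$ contains $\Delta_{r+1}$. By the definition of $\fS$-contraction, this reduces to checking $\iota(\Delta_S) \in \fQ_{\cU}(\lambda;Z)$ for every $(r+1)$-element subset $S \subset [\infty]$. Writing $\iota(\Delta_S) = \prod_{i<j,\,i,j \in S}\bigl((t_{\pi(i)} - t_{\pi(j)}) + (\epsilon_i - \epsilon_j)\bigr)$ and using that $\fQ_{\cU}(\lambda;Z)$ contains every $\epsilon_i$ (as $e=1$), this expression is congruent modulo $\fQ_{\cU}(\lambda;Z)$ to $\prod_{i<j\in S}(t_{\pi(i)}-t_{\pi(j)})$, which vanishes by pigeonhole since $\pi\colon S \to \cI$ has $|S|=r+1>r$. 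Since $\fP(\lambda;Z)$ is $\fS$-stable, it therefore contains the entire $\fS$-orbit of $\Delta_{r+1}$, i.e., every $\Delta_T$ with $|T|=r+1$. Any prime $\fq \supset \fp$ satisfies $\fq \supset \rad(\fp)=\fP(\lambda;Z)$, so it contains each such $\Delta_T$. If the images of the $\xi_i$'s in the residue field $\kappa(\fq)$ took $r+1$ distinct values, picking witnessing indices $i_1,\ldots,i_{r+1}$ would give $\Delta_{\{i_1,\ldots,i_{r+1}\}} \ne 0$ in $\kappa(\fq)$, contradicting $\Delta_{\{i_1,\ldots,i_{r+1}\}} \in \fq$. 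Hence $\fq$ is finitary.

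For the reverse direction, suppose every $\fq \in V(\fp)$ is finitary; I would rule out case~(a) by contradiction. If $\rad(\fp)=\fp_0 R$, then $\fp_0 R$ is itself prime, since $R/\fp_0 R \cong (A/\fp_0)[\xi_i]_{i \ge 1}$ is a domain, so $\fp_0 R \in V(\fp)$. Its residue field $\Frac((A/\fp_0)[\xi_i])$ contains the $\xi_i$'s as algebraically independent elements over $A/\fp_0$, so they are pairwise distinct, contradicting finitariness. Hence we are in case~(b) and $\fp$ is finitary. The one non-formal step is the inclusion $\Delta_{r+1}\in\fP(\lambda;Z)$, but this is essentially immediate once one unwinds the $\fS$-contraction definition and invokes the pigeonhole estimate $|\cI|=r<r+1$; all other steps are bookkeeping and applications of already-established results.
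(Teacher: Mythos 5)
Your proof is correct. It follows the same overall strategy as the paper — both directions come down to discriminants and the classification of non-finitary primes — but the implementation of the forward direction is different. The paper first reduces to the case where $A$ is a domain with $A \cap \fp = 0$, invokes Proposition~\ref{prop:disc} to place $c\cdot\Delta_n$ in $\fp$ for some nonzero $c \in A$, and then uses $\fS$-primality to strip off $c$; you instead verify directly from the $\fS$-contraction definition of $\fP(\lambda;Z)$ that $\Delta_{r+1}$ lies in $\rad(\fp)$, via the pigeonhole observation that $\pi$ cannot be injective on an $(r{+}1)$-element set. Your route is more self-contained (it bypasses the reduction step and the appeal to Proposition~\ref{prop:disc}) and yields the explicit bound that the coordinates take at most $r$ values, at the cost of unwinding the definition of the $\fQ$ and $\fP$ ideals. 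For the reverse direction the two arguments are essentially identical in substance: the paper reduces to $\fp = 0$ using Proposition~\ref{prop:non-finitary}, while you exhibit the prime $\fp_0 R$ itself as a non-finitary point of $V(\fp)$; both hinge on the fact that a non-finitary $\fS$-prime is the extension of a prime of $A$.
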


\begin{proof}
Replacing $A$ with $A/(A \cap \fp)$, we can assume that $A$ is a domain and $A \cap \fp=0$. If $\fp$ is not finitary then $\fp=0$, and so $V(\fp)=\fX$ contains non-finitary points. Suppose now that $\fp$ is finitary. Then $\fp \ne 0$ and so by Proposition~\ref{prop:disc}, $\fp$ contains $c \cdot \Delta_n$ for some $n$ and non-zero $c \in A$. Since $\fp$ is $\fS$-prime and $c$ is an $\fS$-invariant element not in $\fp$, we have $\Delta_n \in \fp$. Thus $V(\fp) \subset V(\fq)$, where $\fq$ is the $\fS$-ideal generated by $\Delta_n$. It is clear that every point in $V(\fq)$ is finitary, and so the result follows.
\end{proof}

We now investigate $V(\fp)$ more closely when $\fp$ is finitary. Let $x$ be a finitary $K$-point of $\fX$. Let $\{y_{\alpha}\}_{\alpha \in \cI}$ be the finite set of values that the coordinates of $x$ assume, and let $U_{\alpha} \subset [\infty]$ be the set of indices $i$ such that $x_i=y_{\alpha}$. Let $\lambda$ be the $\infty$-composition on $\cI$ defined by $\lambda_{\alpha}=\# U_{\alpha}$. We define the \defi{type} of $x$ to be the (isomorphism class of) $\lambda$. Again, this definition is invariant under field extension, and so can be defined for scheme-theoretic points of $\fX$.

Let $\lambda$ and $\mu$ be $\infty$-compositions on index sets $\cI$ and $\cK$. We define $\mu \preceq \lambda$ if there exists a function $\phi \colon \cI \to \cK$ such that $\mu_{\beta} \le \sum_{\alpha \in \phi^{-1}(\beta)} \lambda_{\alpha}$ for all $\beta \in \cK$. In words, $\mu \preceq \lambda$ means that $\mu$ is obtained from $\lambda$ by merging parts and reducing parts.

\begin{proposition} \label{prop:type-char}
Let $\fp=\fP(\lambda;Z)$ be a finitary radical $\fS$-prime of $R$ and let $\Lambda$ be the set of $\infty$-partitions such that $V(\fp)$ contains a point of type $\lambda$. Then $\lambda$ is the unique maximal element of $\Lambda$, that is $\lambda \in \Lambda$ and every $\mu \in \Lambda$ satisfies $\mu \preceq \lambda$.
\end{proposition}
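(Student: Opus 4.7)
For part (1), fix a partition $\cU=\{U_\alpha\}_{\alpha\in\cI}$ of $[\infty]$ of type $\lambda$ and let $\eta$ be the generic point of $Z$, with residue field $K:=\Frac(\tilde S_{\cU,0}/\fI_0(Z))$. The image of $\eta$ under the inclusion $\fX_{[\cU]}(Z)\subset\fX$ is a $K$-point $x$ with $x_i=t_{\pi(i)}$; since $Z\subset\bU^\cI$, the $t_\alpha\in K$ are pairwise distinct, and each occurs exactly $\lambda_\alpha=\#U_\alpha$ times among the $x_i$. Thus $x\in\fX_{[\cU]}(Z)\subset V(\fp)$ has type $\lambda$, so $\lambda\in\Lambda$.

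For part (2), first reduce to the case $Z=\bU^\cI$ via $\fP(\lambda)\subseteq\fP(\lambda;Z)$, which gives $V(\fp)\subseteq V(\fP(\lambda))=:\fX_\lambda$; it then suffices to show that the type $\mu$ of any point $x$ of $\fX_\lambda$ satisfies $\mu\preceq\lambda$. Write $x_i=z_{\beta(i)}$ for $\beta\colon[\infty]\to\cK$ ($\cK$ an index set for $\mu$) with $z_\gamma\in K$ pairwise distinct, and set $V_\gamma=\beta^{-1}(\gamma)$ (of size $\mu_\gamma$). For each $N\ge 1$ put $R_N=A[\xi_1,\ldots,\xi_N]$, $\fp_N=\fp\cap R_N$, $x^{(N)}=(x_1,\ldots,x_N)\in\bA^N$, and let $\cV^{(N)}$ denote the coincidence partition of $x^{(N)}$. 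Unwinding $\fX_\lambda=\overline{\bigcup_\sigma\sigma\fX_{[\cU]}}$ yields
\[
V(\fp_N)=\bigcup_{\cU'}\fX_{\cU'}^{(N)},
\]
where $\cU'$ ranges over partitions of $[N]$ admitting an injective labelling $\iota\colon\cU'\hookrightarrow\cI$ with $|P|\le\lambda_{\iota(P)}$ for each part $P$, and $\fX_{\cU'}^{(N)}\subset\bA^N$ is the corresponding diagonal subspace.

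Since $x^{(N)}\in V(\fp_N)$, there is some allowed $\cU'_N$ refining $\cV^{(N)}$. Combining the labelling $\iota_N$ with the refinement defines a function $\phi_N\colon\cI\to\cK$ sending $\iota_N(P)\mapsto\gamma$ (the unique $\gamma$ with $P\subseteq V_\gamma$) for $P\in\cU'_N$, extended arbitrarily on $\cI\setminus\iota_N(\cU'_N)$. As there are only finitely many such functions, some fixed $\phi$ arises as $\phi_N$ for arbitrarily large $N$; for this $\phi$ and every $\gamma\in\cK$,
\[
|V_\gamma\cap[N]|=\sum_{P\in\iota_N^{-1}(\phi^{-1}(\gamma))}|P|\le\sum_{\alpha\in\phi^{-1}(\gamma)}\lambda_\alpha
\]
along the relevant subsequence. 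Letting $N\to\infty$ (with values in $\bZ_{\ge 1}\cup\{\infty\}$) gives $\mu_\gamma\le\sum_{\alpha\in\phi^{-1}(\gamma)}\lambda_\alpha$ for every $\gamma$, proving $\mu\preceq\lambda$.

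The main obstacle I anticipate is the finite-level identity $V(\fp_N)=\bigcup_{\cU'}\fX_{\cU'}^{(N)}$. The inclusion ``$\supseteq$'' is the easier direction: each allowed $\cU'$ extends to a type-$\lambda$ partition of $[\infty]$ (using $\lambda_1=\infty$), so the corresponding diagonal is the Zariski closure of the restriction of some $\sigma\fX_{[\cU]}$ to $\bA^N$. The reverse inclusion amounts to showing that restricting an arbitrary $\lambda$-type partition of $[\infty]$ to $[N]$ yields an allowed partition of $[N]$, which is a direct counting check; the subtlety is to argue that every irreducible component of $V(\fp_N)$ has this form and not some exotic scheme-theoretic thickening. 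Once this identity is in hand, the rest of part (2) is just the pigeonhole-and-limit argument above.
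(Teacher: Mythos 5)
Your argument is correct, but it takes a genuinely different route from the paper's. The paper disposes of this proposition in two lines: the membership $\lambda \in \Lambda$ is declared clear from the construction, and the maximality of $\lambda$ is obtained by citing \cite[Proposition~8.4]{svar}. You instead reprove that external input from scratch. Your part (1) — evaluating at the generic point of $Z$ via the class-$(\lambda,1;Z)$ homomorphism with $\delta_i=0$ and invoking Proposition~\ref{prop:ker-psi} — is exactly the ``clear from the construction'' step. For part (2), the reduction to $Z=\bU^{\cI}$ and the truncation-plus-pigeonhole argument are sound, and the identity you flag as the main obstacle does hold, for a softer reason than you fear: $V(\fp\cap R_N)$ is by definition the Zariski closure of the image of $V(\fp)$ under the projection $\fX\to\bA^N$, and since $\fX_{[\lambda]}=\bigcup_\sigma\sigma\fX_{[\cU]}$ is dense in $V(\fP(\lambda))$ and each translate projects onto the locally closed ``exact coincidence'' stratum of its restricted partition, the closure is the union of the corresponding diagonal subspaces — a finite union, since only finitely many partitions of $[N]$ arise as restrictions of type-$\lambda$ partitions. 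No analysis of irreducible components or worry about scheme-theoretic thickenings is needed, because $V(-)$ depends only on the radical. The remaining steps check out: $\cU'_N$ refines the coincidence partition of $x^{(N)}$, injectivity of $\iota_N$ gives $\lvert V_\gamma\cap[N]\rvert\le\sum_{\alpha\in\phi_N^{-1}(\gamma)}\lambda_\alpha$, and pigeonhole on the finitely many maps $\cI\to\cK$ lets you pass to the limit (noting that $\mu_\gamma\ge 1$ forces $\phi^{-1}(\gamma)\ne\emptyset$, as required by the definition of $\preceq$). What your approach buys is a self-contained proof independent of \cite{svar}; what it costs is the combinatorial bookkeeping above, which the paper simply outsources.
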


\begin{proof}
It is clear from the construction of $\fp$ that $V(\fp)$ contains a point of type $\lambda$. The fact that $\lambda$ is the unique maximal element of $\Lambda$ follows from \cite[Proposition~8.4]{svar}.
\end{proof}

To close this section, we observe that type is well-behaved along certain contractions:

\begin{proposition} \label{prop:type-contract}
Let $A \to A'$ be a ring homomorphism, let $R'=A' \otimes_A R$, and let $\fp$ be a finitary $\fS$-prime of $R'$ of type $\lambda$. Then $\fp^c$ is a finitary $\fS$-prime of $R$ of type $\lambda$.
\end{proposition}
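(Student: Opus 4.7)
My plan is to combine the intrinsic characterizations of finitariness and type (Propositions~\ref{prop:finitary-char} and~\ref{prop:type-char}) with a direct comparison of ideals on the two sides. By Proposition~\ref{prop:prime-contract}, $\fp^c$ is automatically an $\fS$-prime of $R$, so it suffices to show
\begin{enumerate}
\item[(i)] $V(\fp^c) \subset \Spec(R)$ consists of finitary points of type $\preceq \lambda$, and
\item[(ii)] $V(\fp^c)$ contains a point of type $\lambda$.
\end{enumerate}
Granting these, Proposition~\ref{prop:finitary-char} gives finitariness of $\fp^c$, so $\rad(\fp^c) = \fP_R(\mu; Y)$ for some $\mu, Y$; Proposition~\ref{prop:type-char} then pins down $\mu$ as the unique maximal type appearing in $V(\fp^c)$, which by (i) and (ii) equals $\lambda$.

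For (ii), write $\rad(\fp) = \fP_{R'}(\lambda; Z')$ for an irreducible closed $Z' \subset \bU^{\cI}_{A'}$; Proposition~\ref{prop:type-char} applied on the $R'$ side produces a point $x' \in V(\fp)$ of type $\lambda$. The composition $x = x' \circ \iota \colon R \to \kappa(x')$ lies in $V(\fp^c)$ and has coordinates $x(\xi_i) = x'(\xi_i)$, hence the same type $\lambda$.

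For (i), the key is to produce an irreducible closed $Z \subset \bU^{\cI}_A$ with $\fP_R(\lambda; Z) \subset \fP_{R'}(\lambda; Z')^c$. Let $\fJ = \fI_0(Z') \cap \tilde{S}_{\cU,0}$, the contraction of the prime $\fI_0(Z') \subset \tilde{S}_{\cU,0} \otimes_A A'$ along the structure map $\tilde{S}_{\cU,0} \to \tilde{S}_{\cU,0} \otimes_A A'$; this is a prime ideal, and it is proper because $\tilde{S}_{\cU,0} \to (\tilde{S}_{\cU,0} \otimes_A A')/\fI_0(Z')$ lands in a nonzero domain. Set $Z = V(\fJ) \subset \bU^{\cI}_A$, so that $\fI_0(Z) = \fJ$. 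Using the characterization from Proposition~\ref{prop:ker-psi}, any homomorphism $\psi' \colon R' \to T$ of class $(\lambda; Z')$ is determined by an underlying $\psi'_0 \colon \tilde{S}_{\cU,0} \otimes_A A' \to T$ with $\fI_0(Z') \subset \ker \psi'_0$; restricting $\psi'_0$ to $\tilde{S}_{\cU,0}$ gives a map killing $\fJ = \fI_0(Z)$, and one checks that $\psi' \circ \iota \colon R \to T$ satisfies the defining conditions for class $(\lambda; Z)$ (with $\delta_i = 0$ since $e = 1$). Hence $\ker(\psi' \circ \iota) \supset \fP_R(\lambda; Z)$, and intersecting over all such $\psi'$ gives $\fP_{R'}(\lambda; Z')^c \supset \fP_R(\lambda; Z)$. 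Combined with $\rad(\fp^c) = \rad(\fp)^c = \fP_{R'}(\lambda; Z')^c$ (Proposition~\ref{prop:rad-contract}), we obtain $V(\fp^c) \subset V(\fP_R(\lambda; Z))$, and the latter consists of finitary points of type $\preceq \lambda$ by Propositions~\ref{prop:finitary-char} and~\ref{prop:type-char} applied to $\fP_R(\lambda; Z)$.

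The main technical point is the class-compatibility verification in the previous paragraph: one must carefully identify the restricted universal map and confirm that $\fI_0(Z)$ really is the contraction $\fJ$. Once this bookkeeping is done, the proof is a straightforward assembly of results already proved in the paper.
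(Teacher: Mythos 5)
Your proof is correct, but it takes a genuinely different route for the crucial step — showing that every point of $V(\fp^c)$ is finitary of type $\preceq \lambda$. The paper gets this almost for free by invoking \cite[Theorem~8.6]{svar}: the locus $\fX'_{\lambda}$ of points of type $\preceq\lambda$ is cut out by the $\fS$-orbits of finitely many polynomials with \emph{integral} coefficients, independent of the coefficient ring, so $\fp$ contains powers of these polynomials and therefore so does $\fp^c$, giving $V(\fp^c)\subset\fX_{\lambda}$ directly. You instead contract the variety: writing $\rad(\fp)=\fP_{R'}(\lambda;Z')$, you form the prime $\fJ=\fI_0(Z')\cap\tilde{S}_{\cU,0}$, set $Z=V(\fJ)$, and use the characterization of $\fP(\lambda;Z)$ as an intersection of kernels of class-$(\lambda;Z)$ homomorphisms (Proposition~\ref{prop:ker-psi}) to show $\fP_R(\lambda;Z)\subset\fP_{R'}(\lambda;Z')^c=\rad(\fp^c)$, whence $V(\fp^c)\subset V(\fP_R(\lambda;Z))$ and Propositions~\ref{prop:finitary-char} and~\ref{prop:type-char} finish the job. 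The class-compatibility verification you flag does go through: restriction of $\psi'_0$ to $\tilde{S}_{\cU,0}$ kills exactly $\fJ=\fI_0(Z)$ and the coordinate condition is inherited verbatim. Your approach trades the external input \cite[Theorem~8.6]{svar} for a slightly longer internal argument built on Theorem~\ref{thm:rad-prime} and Proposition~\ref{prop:ker-psi}; the paper's version is shorter but leans on the universality of the defining equations of $\fX_{\lambda}$. Two cosmetic points: the identity $\rad(\fp^c)=\rad(\fp)^c$ you use is the ordinary-radical statement (standard commutative algebra), not literally Proposition~\ref{prop:rad-contract}, which concerns $G$-radicals; and at the very end you implicitly use antisymmetry of $\preceq$ on isomorphism classes to conclude $\mu=\lambda$ from $\mu\preceq\lambda\preceq\mu$ — true, and implicitly assumed throughout the paper, but worth a word.
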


\begin{proof}
Set $\fX'=\Spec(R')$.  Let $\fX'_{[\lambda]} \subset \fX'$ be the set of points of type $\lambda$ and let $\fX'_{\lambda}$ be its Zariski closure. The equations defining $\fX'_{\lambda}$ are independent of the coefficient ring $A'$ and have integral coefficients \cite[Theorem~8.6]{svar}; suppose that the $\fS$-orbits of $f_1, \ldots, f_r \in \bZ[\xi_i]$ define $\fX'_{\lambda}$. Since $V(\fp) \subset \fX'_{\lambda}$, we see that $\fp$ contains a power of each $f_i$. Thus $\fp^c$ contains a power of each $f_i$ as well, and so $V(\fp^c) \subset \fX_{\lambda}$. Thus every point of $V(\fp^c)$ is finitary of type $\preceq \lambda$, which shows that $\fp^c$ is finitary. Any point of $V(\fp)$ of type $\lambda$ defines a point of $V(\fp^c)$ of type $\lambda$. Thus $V(\fp^c)$ has a point of type $\lambda$, and so the type of $\fp^c$ is $\lambda$.
\end{proof}

\section{Some contractions} \label{s:contract}

In this section we compute certain ideal contractions. These computations will play a key role in the proof of the classification theorem in the subsequent section.

\subsection{Statement of results}

We fix the following notation for \S \ref{s:contract}:
\begin{itemize}
\item $K$ is a field.
\item $R_n=K[\xi_1, \ldots, \xi_n]$ and $S_n=K[t,\epsilon_1, \ldots, \epsilon_n]$.
\item $\iota_n \colon R_n \to S_n$ is the $K$-algebra homomorphism defined by $\iota_n(\xi_i)=t+\epsilon_i$.
\item $q$ is a positive integer.
\item $\fb_{n,q}$ is the ideal of $S_n$ generated by $\epsilon_i^q$ for $1 \le i \le n$.
\end{itemize}
The following is the main result of this section:

\begin{theorem} \label{thm:contract}
We have the following:
\begin{enumerate}
\item If $K$ has characteristic~0 then $\fb_{n,q}^c$ is generated by $(\xi_i-\xi_j)^{2q-1}$ for $1 \le i,j \le n$.
\item If $K$ has characteristic $p>0$ and $q$ is a power of $p$ then $\fb_{n,q}^c$ is generated by $(\xi_i-\xi_j)^q$ for $1 \le i,j \le n$.
\end{enumerate}
\end{theorem}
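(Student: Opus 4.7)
The plan is to prove each inclusion of the theorem separately. The easy direction--that the listed generators lie in $\fb_{n,q}^c$--is a direct computation using $\iota_n(\xi_i - \xi_j) = \epsilon_i - \epsilon_j$: in characteristic~$0$, every term of the binomial expansion $(\epsilon_i - \epsilon_j)^{2q-1} = \sum_k \binom{2q-1}{k} \epsilon_i^k (-\epsilon_j)^{2q-1-k}$ has either $k \ge q$ or $2q-1-k \ge q$ and so lies in $\fb_{n,q}$; in characteristic $p$ with $q = p^r$, Frobenius gives the stronger identity $(\epsilon_i - \epsilon_j)^q = \epsilon_i^q - \epsilon_j^q \in \fb_{n,q}$.

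For the reverse inclusion I would change coordinates to $y_i = \xi_i - \xi_1$ for $i \ge 2$, and on the $S_n$-side introduce $u = t + \epsilon_1$. Since $\iota_n(\xi_1) = u$, $\iota_n(y_i) = \epsilon_i - \epsilon_1$, and $\fb_{n,q}$ does not involve $u$, the variable $u$ is transparent to the contraction, which is therefore extended from its contraction to $K[y_2, \ldots, y_n]$. This reduces the theorem to computing the kernel of
\begin{equation*}
\pi_n \colon K[y_2, \ldots, y_n] \longrightarrow E_n := K[\epsilon_1, \ldots, \epsilon_n]/\langle \epsilon_i^q\rangle, \qquad y_i \longmapsto \epsilon_i - \epsilon_1.
\end{equation*}
A uniform upper bound $\ker \pi_n \subseteq \langle y_i^q\rangle$ follows by factoring $\pi_n = \psi \circ \phi$ through $K[y_1, \ldots, y_n]$ with $\phi(y_i) = y_i - y_1$ and $\psi(y_i) = \epsilon_i$ (so $\ker \psi = \langle y_i^q\rangle$): setting $y_1 = 0$ in the defining relation $\phi(f) \in \langle y_i^q\rangle$ forces $f \in \langle y_2^q, \ldots, y_n^q\rangle$.

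In characteristic $p$ with $q = p^r$, Frobenius gives the matching lower bound $y_i^q \in \ker \pi_n$, so $\ker \pi_n = \langle y_i^q\rangle$, and translating back via $(\xi_i - \xi_j)^q = (\xi_i - \xi_1)^q - (\xi_j - \xi_1)^q$ yields part~(b). For part~(a) I would induct on $n$: the case $n = 2$ reduces to a direct computation showing $(\epsilon_2 - \epsilon_1)^k$ is nonzero in $E_2$ exactly for $k \le 2q-2$. For the inductive step, use $E_n = E_{n-1} \otimes_K K[\epsilon_n]/\epsilon_n^q$ (free of rank $q$) and expand $f = \sum_k f_k y_n^k$ with $f_k \in K[y_2, \ldots, y_{n-1}]$; the vanishing of $\pi_n(f)$ yields, for each $0 \le l < q$, an equation $\sum_{j=0}^{q-1} (-1)^j \binom{l+j}{l} \epsilon_1^j \pi_{n-1}(f_{l+j}) = 0$ in $E_{n-1}$. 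A further $\epsilon_1$-adic decomposition $E_{n-1} = E_{n-2}[\epsilon_1]/\epsilon_1^q$ together with the inductive hypothesis $\ker \pi_{n-1} = \langle y_i^{2q-1}, (y_i-y_j)^{2q-1}\rangle$ should combine to force each monomial of $f$ outside the claimed ideal to have zero coefficient.

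The hard part will be managing the combinatorial bookkeeping in the inductive step of characteristic~$0$: the resulting system of $q^2$ equations must be disentangled to match each coefficient-of-$f$ constraint with the corresponding generator of the target ideal, and the characteristic-$0$ assumption enters crucially through the nonvanishing of the binomial coefficients $\binom{l+j}{l}$. An alternative, potentially cleaner route is to compute the Hilbert series of $K[y_2, \ldots, y_n]/\langle y_i^{2q-1}, (y_i - y_j)^{2q-1}\rangle$ and verify degree-by-degree that it equals the Hilbert series of the image of $\pi_n$ in $E_n$; equality of Hilbert series, combined with the a priori containment $\langle y_i^{2q-1}, (y_i-y_j)^{2q-1}\rangle \subseteq \ker \pi_n$ from Step~1, would then yield the claimed equality of ideals.
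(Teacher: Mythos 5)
Your reduction to the kernel of $\pi_n \colon K[y_2,\ldots,y_n] \to E_n$, $y_i \mapsto \epsilon_i-\epsilon_1$, is correct (the variable $u=t+\epsilon_1$ really is transparent, since the map factors as an isomorphism $K[\xi_1]\cong K[u]$ tensored with $\pi_n$ over $K$, and $K[u]$ is flat). The easy containment is the same computation as in the paper. Your treatment of part (b) is complete and is genuinely different from the paper's: the paper proves (b) by a base-change argument along the Frobenius-type map $\xi_i \mapsto \xi_i^q$, using faithful flatness of $R_n[t]\to R_n[t]$, $t\mapsto t^p$, to reduce to the case $q=1$; your sandwich $\langle y_i^q\rangle \subseteq \ker\pi_n \subseteq \langle y_i^q\rangle$, with the upper bound obtained by factoring through $K[y_1,\ldots,y_n]$ and setting $y_1=0$, is more elementary and self-contained. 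That is a nice alternative.

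The gap is in part (a), which is the hard half of the theorem. Your upper bound $\ker\pi_n\subseteq\langle y_i^q\rangle$ is far too weak in characteristic~$0$ (the target ideal $\langle y_i^{2q-1},(y_i-y_j)^{2q-1}\rangle$ is a proper subideal of $\langle y_i^q\rangle$ already for $n=3$), and the inductive step you outline is exactly the missing content: the system of equations $\sum_j(-1)^j\binom{l+j}{l}\epsilon_1^j\pi_{n-1}(f_{l+j})=0$ cannot be ``disentangled'' by a further $\epsilon_1$-adic splitting, because each $\pi_{n-1}(f_{l+j})$ already involves $\epsilon_1$ through every variable $y_i\mapsto\epsilon_i-\epsilon_1$, so the powers of $\epsilon_1$ do not separate. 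The Hilbert-series alternative is also not a shortcut, since computing the Hilbert series of $K[y_2,\ldots,y_n]/\langle y_i^{2q-1},(y_i-y_j)^{2q-1}\rangle$ is itself a nontrivial problem about power ideals. The paper's mechanism is different and is what you are missing: it proves the stronger statement with varying exponents, that $\fb_n(q)^c=\fa_n(q)$ for $q=(q_1,\ldots,q_n)$, where $\fa_n(q)$ is generated by $(\xi_i-\xi_j)^{q_i+q_j-1}$, and inducts on $\sum_i(q_i-1)$ rather than on $n$. The key observation (via Taylor expansion) is that $f\in\fb_n(q)^c$ implies $\partial_n f\in\fb_n(q')^c$ with $q'=(q_1,\ldots,q_{n-1},q_n-1)$; by induction $\partial_n f\in\fa_n(q')$, and one then \emph{antidifferentiates} the resulting expression term by term to produce $g\in\fa_n(q)$ with $\partial_n g=\partial_n f$, so that $f-g$ lives in $K[\xi_1,\ldots,\xi_{n-1}]$ and is handled by a second application of the inductive hypothesis. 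The characteristic-$0$ hypothesis enters through the division by $q_i+q_n+j-1$ in the antiderivative, not through nonvanishing of binomial coefficients. Without some such device your induction on $n$ does not close.
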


\begin{remark} \label{rmk:contract}
The obvious analog of Theorem~\ref{thm:contract} for $n=\infty$ follows from the theorem, as $\fb_{\infty,q}=\bigcup_{n \ge 0} \fb_{n,q}$ and contraction commutes with directed unions. From this, see that $\fP((\infty),(q))$ is generated by $(\xi_i-\xi_j)^{2q-1}$ when $A$ is a field of characteristic~0, as stated in Example~\ref{ex:P-inf-n}.
\end{remark}

\begin{remark} 
In fact, the generators of $\fb_{n,q}^c$ given in Theorem~\ref{thm:contract} form a Gr\"obner basis. Our proof of this result is quite involved, and not included in this paper.
\end{remark}

\subsection{Proof of Theorem~\ref{thm:contract}(a)} \label{ss:contract-a}

We assume throughout \S \ref{ss:contract-a} that $K$ has characteristic~0. Let $q=(q_1, \ldots, q_n)$ be a tuple of positive integers. Define $\fb_n(q)$ to be the ideal of $S_n$ generated by $\epsilon_i^{q_i}$ for $1 \le i \le n$, and define $\fa_n(q)$ to be the ideal of $R_n$ generated by $(\xi_i-\xi_j)^{q_i+q_j-1}$ for $1 \le i,j \le n$. We prove:

\begin{proposition} \label{prop:contract-a}
The contraction of $\fb_n(q)$ to $R_n$ is $\fa_n(q)$.
\end{proposition}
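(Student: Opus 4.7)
The plan is to prove the two inclusions separately. The containment $\fa_n(q) \subseteq \fb_n(q)^c$ is elementary: applying $\iota_n$ to the generator $(\xi_i - \xi_j)^{q_i+q_j-1}$ yields $(\epsilon_i - \epsilon_j)^{q_i+q_j-1}$, and in the binomial expansion every term $\binom{q_i+q_j-1}{k}\epsilon_i^k(-\epsilon_j)^{q_i+q_j-1-k}$ satisfies $k \geq q_i$ or $q_i + q_j - 1 - k \geq q_j$ (otherwise their sum $q_i + q_j - 1$ would be at least $q_i + q_j$), so every term lies in $\fb_n(q)$.

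For the reverse inclusion, I will use the Taylor expansion
\[
\iota_n(f) = f(t+\epsilon_1,\ldots,t+\epsilon_n) = \sum_{k \in \bZ_{\geq 0}^n}\tfrac{1}{k_1!\cdots k_n!}\bigl(\partial_1^{k_1}\cdots\partial_n^{k_n}f\bigr)(t,\ldots,t)\,\epsilon_1^{k_1}\cdots\epsilon_n^{k_n}.
\]
Since $K$ has characteristic $0$, the factorials are invertible, and the monomials $\epsilon_1^{k_1}\cdots\epsilon_n^{k_n}$ with $k_i < q_i$ form a $K[t]$-basis of $S_n/\fb_n(q)$. Hence $\iota_n(f) \in \fb_n(q)$ is equivalent to $(\partial_1^{k_1}\cdots\partial_n^{k_n}f)(t,\ldots,t) = 0$ as a polynomial in $t$ for every $k$ with $k_i < q_i$. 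I will prove by induction on $n$ that this derivative condition forces $f \in \fa_n(q)$. The base case $n=1$ is immediate, as taking $k=0$ already gives $f=0=\fa_1(q)$.

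For the inductive step, expand $f = \sum_m h_m(\xi_1,\ldots,\xi_{n-1})(\xi_n - \xi_1)^m$ with $h_m \in R_{n-1}$. Differentiating term by term and evaluating on the diagonal converts the vanishing hypothesis into a triangular system
\[
\sum_{\ell=0}^{k_1}(\ell+k_n)!\binom{k_1}{\ell}(-1)^{\ell}\bigl(\partial_1^{k_1-\ell}\partial_2^{k_2}\cdots\partial_{n-1}^{k_{n-1}}h_{\ell+k_n}\bigr)(t,\ldots,t) = 0, \qquad 0 \leq k_i < q_i.
\]
The case $k_1 = 0$ says exactly that each $h_{k_n}$ satisfies the derivative condition for $\fb_{n-1}(1,q_2,\ldots,q_{n-1})^c$, so by induction $h_{k_n} \in \fa_{n-1}(1,q_2,\ldots,q_{n-1})$. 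Using the equations with $k_1 = 1, 2, \ldots, q_1 - 1$ to sharpen the constraints on each $h_m$, and invoking $(\xi_i - \xi_n) = (\xi_i - \xi_1) - (\xi_n - \xi_1)$ to recognize products $(\xi_i - \xi_1)^{q_i+q_n-1-m}(\xi_n - \xi_1)^m$ as contributions from the generators $(\xi_i - \xi_n)^{q_i+q_n-1}$ of $\fa_n(q)$, one reassembles $f$ inside $\fa_n(q)$.

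The main obstacle is the combinatorial bookkeeping: cleanly inverting the triangular system to isolate each $h_m$, matching the sharpened exponents $q_i + q_n - 1 - m$ that the induction must produce (rather than the weaker $q_i$ exponents one gets from the $k_1 = 0$ slice alone), and verifying that the total contribution reassembles exactly to $\fa_n(q)$ and not to a strictly larger ideal such as $\bigcap_j \langle (\xi_i - \xi_j)^{q_i} : i \neq j\rangle$, which in general strictly contains $\fa_n(q)$; this strict containment rules out approaches using only naive specializations such as $\epsilon_j \mapsto 0$ in $S_n$. The $n = 2$ case, where the system collapses to a direct recurrence forcing $h_m = 0$ for all $m < q_1 + q_2 - 1$, serves as the model computation.
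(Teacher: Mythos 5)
Your first inclusion and your Taylor-expansion criterion are exactly the paper's Lemmas~\ref{lem:contract-a-1}--\ref{lem:contract-a-3}, and they are fine. The problem is the inductive step, which you have not actually carried out: you name the "combinatorial bookkeeping" as the main obstacle, and that obstacle is the entire content of the proposition. Concretely, the $k_1=0$ slice of your triangular system only shows $h_{k_n}\in\fa_{n-1}(1,q_2,\ldots,q_{n-1})$, i.e.\ it controls $(\xi_1-\xi_j)$ only to the exponent $q_j$ rather than the needed $q_1+q_j-1$; and the equations with $k_1>0$ do not "sharpen the constraints on each $h_m$" individually — they only assert that certain $K$-linear combinations of diagonal restrictions of derivatives of \emph{several} $h_m$'s vanish. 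Since the diagonal values of derivatives of a combination do not isolate membership of each $h_m$ in an ideal, inverting the triangular system does not hand you a statement to which the inductive hypothesis for $n-1$ variables applies. Your $n=2$ model is misleading here precisely because in that case each equation involves a single $h_m$ and forces it to vanish outright; for $n\ge 3$ the $h_m$ are genuinely entangled.

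The paper sidesteps this by inducting on $\lvert q\rvert=\sum_i(q_i-1)$ rather than on $n$. Assuming $q_n\ge 2$, one notes $\partial_n f\in\fb_n(q')^c$ with $q'=(q_1,\ldots,q_{n-1},q_n-1)$, applies the inductive hypothesis to write $\partial_n f$ explicitly in terms of the generators of $\fa_n(q')$, and then \emph{anti-differentiates} that expression in $\xi_n$ to produce $g\in\fa_n(q)$ with $\partial_n g=\partial_n f$ (the exponent on $(\xi_i-\xi_n)$ goes up by one under anti-differentiation, which is exactly what converts $q_i+q_n-2$ into $q_i+q_n-1$). Then $h=f-g$ lies in $R_{n-1}\cap\fb_n(q)^c=\fb_{n-1}(q_1,\ldots,q_{n-1})^c$, and a second application of the inductive hypothesis (valid because $q_n\ge 2$ makes $\lvert(q_1,\ldots,q_{n-1})\rvert<\lvert q\rvert$) finishes the argument. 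If you want to keep your expansion in powers of $\xi_n-\xi_1$, you would need to supply the missing mechanism that upgrades the exponent from $q_j$ to $q_1+q_j-1$; the anti-derivative device is the cleanest known way to do so, and without it or a substitute your argument does not close.
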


Taking $q_1=\cdots=q_n=q$ yields Theorem~\ref{thm:contract}(a). We require several lemmas before proving the proposition. We begin with the following simple observation:

\begin{lemma} \label{lem:contract-a-1}
The ideal $\fa_n(q)$ is contained in the contraction of $\fb_n(q)$.
\end{lemma}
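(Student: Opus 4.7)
The plan is to verify directly that each generator of $\fa_n(q)$ maps into $\fb_n(q)$ under $\iota_n$. Since $\iota_n(\xi_i) = t + \epsilon_i$, we have $\iota_n(\xi_i - \xi_j) = \epsilon_i - \epsilon_j$, so it suffices to show that $(\epsilon_i - \epsilon_j)^{q_i + q_j - 1}$ lies in $\fb_n(q)$ for all $i, j$.

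The key step is the binomial expansion
\begin{displaymath}
(\epsilon_i - \epsilon_j)^{q_i + q_j - 1} = \sum_{k=0}^{q_i + q_j - 1} \binom{q_i + q_j - 1}{k} (-1)^{q_i+q_j-1-k}\, \epsilon_i^{k}\, \epsilon_j^{q_i + q_j - 1 - k}.
\end{displaymath}
For each term in the sum, I would observe that by a pigeonhole argument either $k \ge q_i$ or $q_i + q_j - 1 - k \ge q_j$, since otherwise $k \le q_i - 1$ and $q_i + q_j - 1 - k \le q_j - 1$, which together force $q_i + q_j - 1 \le q_i + q_j - 2$, a contradiction. Hence every term contains either $\epsilon_i^{q_i}$ or $\epsilon_j^{q_j}$ as a factor, and therefore lies in $\fb_n(q)$.

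There is no real obstacle here; this is the same elementary argument used in Proposition~\ref{prop:iota}, merely with the general exponents $q_i, q_j$ in place of a single $n$. Once the containment $\iota_n((\xi_i-\xi_j)^{q_i+q_j-1}) \in \fb_n(q)$ is established for all $i,j$, the generators of $\fa_n(q)$ all lie in $\fb_n(q)^c$, and so $\fa_n(q) \subset \fb_n(q)^c$.
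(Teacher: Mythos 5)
Your proof is correct and is essentially identical to the paper's: both expand $\iota_n((\xi_i-\xi_j)^{q_i+q_j-1})=(\epsilon_i-\epsilon_j)^{q_i+q_j-1}$ binomially and use the pigeonhole observation that each term contains $\epsilon_i^{q_i}$ or $\epsilon_j^{q_j}$.
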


\begin{proof}
We have $\iota(\xi_i-\xi_j)=\epsilon_i-\epsilon_j$. Letting $N=q_i+q_j-1$, we thus find
\begin{displaymath}
\iota((\xi_i-\xi_j)^N) = \sum_{s=0}^N \binom{N}{s} \epsilon_i^s \epsilon_j^{N-s}.
\end{displaymath}
In each term in the sum, we have either $s \ge q_i$ or $N-s \ge q_j$, and so either $\epsilon_i^s$ or $\epsilon_j^{N-s}$ belongs to $\fb_n(q)$. We thus see that $(\xi_i-\xi_j)^N$ belongs to the contraction of $\fb_n(q)$, as required.
\end{proof}

Let $\fa_n(1)$ be the ideal $\fa_n(q)$ with $q_1=\cdots=q_n=1$; explicitly, $\fa_n(1)$ is generated by $\xi_i-\xi_j$ for $1 \le i, j \le n$. Similarly define $\fb_n(1)$; it is generated by the elements $\epsilon_i$ for $1 \le i \le n$.

\begin{lemma} \label{lem:contract-a-2}
The proposition is true for $q=1$: that is, the contraction of $\fb_n(1)$ is $\fa_n(1)$.
\end{lemma}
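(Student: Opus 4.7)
The plan is to identify both $\fb_n(1)^c$ and $\fa_n(1)$ as the kernel of the same obvious $K$-algebra homomorphism $R_n \to K[t]$, and conclude they are equal. The inclusion $\fa_n(1) \subset \fb_n(1)^c$ is already Lemma~\ref{lem:contract-a-1}, so only the reverse inclusion needs work.

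First I would observe that $S_n/\fb_n(1) \cong K[t]$ canonically, since $\fb_n(1) = (\epsilon_1,\ldots,\epsilon_n)$. Under this isomorphism, the composite $R_n \xrightarrow{\iota_n} S_n \twoheadrightarrow S_n/\fb_n(1) \cong K[t]$ sends $\xi_i = t + \epsilon_i$ to $t$ for every $i$. Call this composite $\phi$. By definition of contraction, $\fb_n(1)^c = \ker(\phi)$.

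Next I would compute $\ker(\phi)$ directly. Introduce the change of variables $\eta_i = \xi_i - \xi_1$ for $2 \le i \le n$, so that $R_n = K[\xi_1,\eta_2,\ldots,\eta_n]$ as a polynomial ring in $n$ algebraically independent elements. Under $\phi$, we have $\phi(\xi_1) = t$ and $\phi(\eta_i) = 0$ for $i \ge 2$, so $\phi$ is the evaluation map killing exactly the ideal $(\eta_2,\ldots,\eta_n)$. Thus $\ker(\phi) = (\eta_2,\ldots,\eta_n) = (\xi_2 - \xi_1,\ldots,\xi_n - \xi_1)$. Since every generator $\xi_i - \xi_j$ of $\fa_n(1)$ lies in this ideal (as $\xi_i - \xi_j = (\xi_i - \xi_1) - (\xi_j - \xi_1)$), and conversely each $\xi_i - \xi_1$ is a generator of $\fa_n(1)$, we get $\ker(\phi) = \fa_n(1)$.

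Combining the two identifications gives $\fb_n(1)^c = \ker(\phi) = \fa_n(1)$, which is the desired equality. There is no substantive obstacle here; the lemma is the trivial base case $q=(1,\ldots,1)$ of Proposition~\ref{prop:contract-a}, where the combinatorics of expanding $(\epsilon_i - \epsilon_j)^{q_i+q_j-1}$ collapses entirely. The real difficulty will appear for general $q$, where one has to extract the sharp exponents $q_i+q_j-1$ from an arbitrary $f$ with $\iota_n(f) \in \fb_n(q)$; I expect this to use induction on $\max q_i$ together with this base case applied after suitable substitution.
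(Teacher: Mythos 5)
Your proof is correct and is essentially the paper's own argument: the paper also identifies $\fb_n(1)^c$ with the kernel of the composite $R_n \to S_n/\fb_n(1) \cong K[t]$ and writes $f = g(\xi_1) + \sum_{i=2}^n h_i(\xi_i - \xi_1)$, which is exactly your change of variables to $\xi_1, \eta_2, \ldots, \eta_n$. No differences worth noting.
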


\begin{proof}
We have $\fa_n(1) \subset \fb_n(1)^c$ by Lemma~\ref{lem:contract-a-1} (or direct observation). We prove the reverse inclusion. Thus suppose that $f \in \fb_n(1)^c$. Write $f=g(\xi_1)+\sum_{i=2}^n h_i (\xi_i-\xi_1)$ where $g$ is a univariate polynomial and $h_i \in R_n$. The ideal $\fb_n(1)^c$ is the kernel of the composition
\begin{displaymath}
R_n \to S_n \to S_n/\fb_n(1) \cong K[t],
\end{displaymath}
where the final isomorphism maps each $\epsilon_i$ to~0. Since $f$ belongs to $\fb_n(1)^c$, it maps to~0 in $K[t]$. On the other hand, its image in $K[t]$ is $g(t)$. Thus $g=0$, and so $f \in \fa_n(1)$.
\end{proof}

Let $\partial_i \colon R_n \to R_n$ denote partial derivative with respect to $\xi_i$. We have the following useful characterization of the contraction of $\fb_n(q)$:

\begin{lemma} \label{lem:contract-a-3}
Let $f \in R_n$ be given. Then $f$ belongs to the contraction of $\fb_n(q)$ if and only if $\partial_1^{m_1} \cdots \partial_n^{m_n} f$ belongs to $\fa_n(1)$ for all $0 \le m_i < q_i$.
\end{lemma}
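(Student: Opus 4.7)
The plan is to translate membership in $\fb_n(q)^c$ into the vanishing of certain ``Taylor coefficients'' of $f$ under the substitution $\xi_i\mapsto t+\epsilon_i$, and then invoke Lemma~\ref{lem:contract-a-2} to convert those vanishing conditions into membership in $\fa_n(1)$.

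First, I would write down the multivariate Taylor expansion in characteristic zero:
\begin{displaymath}
\iota_n(f)=f(t+\epsilon_1,\ldots,t+\epsilon_n)=\sum_{m\in\bZ_{\ge 0}^n}\tfrac{1}{m!}\,(\partial^m f)(t,\ldots,t)\,\epsilon^m,
\end{displaymath}
where $m=(m_1,\ldots,m_n)$, $m!=m_1!\cdots m_n!$, $\partial^m=\partial_1^{m_1}\cdots\partial_n^{m_n}$, and $\epsilon^m=\epsilon_1^{m_1}\cdots\epsilon_n^{m_n}$. This identity reduces to the single-variable binomial theorem applied monomial-by-monomial in $f$, and is valid because $K$ has characteristic~0.

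Next, I would observe that $S_n/\fb_n(q)$ is a free $K[t]$-module with basis $\{\epsilon^m : 0\le m_i<q_i\}$. Reducing the Taylor expansion modulo $\fb_n(q)$ kills exactly the terms where some $m_i\ge q_i$, so
\begin{displaymath}
\iota_n(f)\equiv\sum_{0\le m_i<q_i}\tfrac{1}{m!}\,(\partial^m f)(t,\ldots,t)\,\epsilon^m\pmod{\fb_n(q)}.
\end{displaymath}
Because the monomials appearing here form part of a $K[t]$-basis, this image vanishes if and only if $(\partial^m f)(t,\ldots,t)=0$ in $K[t]$ for every multi-index $m$ with $0\le m_i<q_i$.

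Finally, I would invoke Lemma~\ref{lem:contract-a-2}: the proof of that lemma identified $\fa_n(1)$ as exactly the kernel of the evaluation map $R_n\to K[t]$ given by $\xi_i\mapsto t$. Thus $(\partial^m f)(t,\ldots,t)=0$ in $K[t]$ is equivalent to $\partial^m f\in\fa_n(1)$, which yields the lemma.

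The only genuinely delicate point is justifying the Taylor formula with the factorial denominators, and this is where characteristic~0 is essential; everything else is a direct book-keeping argument using freeness of $S_n/\fb_n(q)$ as a $K[t]$-module and the already-established description of $\fa_n(1)$.
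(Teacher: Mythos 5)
Your proposal is correct and follows essentially the same route as the paper's proof: Taylor-expand $\iota_n(f)$ in the $\epsilon_i$'s, read off that membership in $\fb_n(q)$ is equivalent to vanishing of the coefficients with all $m_i<q_i$, and convert that to membership in $\fa_n(1)$ via Lemma~\ref{lem:contract-a-2}. Your explicit remark that $S_n/\fb_n(q)$ is free over $K[t]$ on the surviving monomials just makes precise a point the paper leaves implicit.
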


\begin{proof}
We have
\begin{displaymath}
\iota_n(f) = f(t+\epsilon_1, \ldots, t+\epsilon_n) = \sum_{m_1,\ldots,m_n \ge 0} \frac{1}{m_1! \cdots m_n!} (\partial_1^{m_1} \cdots \partial_n^{m_n} f)(t, \ldots, t) \epsilon_1^{m_1} \cdots \epsilon_n^{m_n},
\end{displaymath}
where in the second step we took the Taylor expansion of $f$. This belongs to $\fb_n(q)$ if and only if the coefficient of $\epsilon_1^{m_1} \cdots \epsilon_n^{m_n}$ vanishes whenever $0 \le m_i <q_i$ for all $i$. We thus require $(\partial_1^{m_1} \cdots \partial_n^{m_n} f)(t, \ldots, t)=0$ for such $m$'s, which is exactly the condition that $\partial_1^{m_1} \cdots \partial_n^{m_n} f$ belongs to $\fb_n(1)^c$, which equals $\fa_n(1)$ by Lemma~\ref{lem:contract-a-2}.
\end{proof}

\begin{proof}[Proof of Proposition~\ref{prop:contract-a}]
We proceed by induction on $\vert q \vert=\sum_{i=1}^n (q_i-1)$. If $\vert q \vert=0$ then $q=1$, and the result follows from Lemma~\ref{lem:contract-a-2}; this is the base case of the induction. Now suppose that $\vert q \vert>0$. Applying a permutation if necessary, we assume that $q_n \ge 2$. Since $\fa_n(q) \subset \fb_n(q)^c$ by Lemma~\ref{lem:contract-a-1}, it is enough to prove the reverse inclusion. Thus let $f \in \fb_n(q)^c$ be given. We show that $f \in \fa_n(q)$.

Let $q'=(q_1, \ldots, q_{n-1}, q_n-1)$. By Lemma~\ref{lem:contract-a-3}, we see that $\partial_n f$ belongs to $\fb_n(q')^c$. Since $\vert q' \vert<\vert q \vert$, we have $\fb_n(q')^c=\fa_n(q')$ by the inductive hypothesis. We therefore have an expression
\begin{displaymath}
\partial_n f = \sum_{1 \le i,j <n} a_{i,j} (\xi_i-\xi_j)^{q_i+q_j-1} + \sum_{1 \le i <n} b_i (\xi_i-\xi_n)^{q_i+q_n-2}
\end{displaymath}
where $a_i,b_i \in R_n$. Write $b_i=\sum_{j \ge 0} b_{i,j} (\xi_i-\xi_n)^j$ where $b_{i,j} \in R_{n-1}$. Thus
\begin{displaymath}
\partial_n f = \sum_{1 \le i,j <n} a_{i,j} (\xi_i-\xi_j)^{q_i+q_j-1} + \sum_{1 \le i <n} \sum_{0 \le j} b_{i,j} (\xi_i-\xi_n)^{q_i+q_n+j-2}.
\end{displaymath}
Let $\tilde{a}_{i,j} \in R_n$ be an anti-derivative of $a_{i,j}$ with respect to $\xi_n$, i.e., an element satisfying $\partial_n \tilde{a}_{i,j}=a_{i,j}$. Put
\begin{displaymath}
g = \sum_{1 \le i,j < n} \tilde{a}_{i,j} (\xi_i-\xi_j)^{q_i+q_j-1} + \sum_{1 \le i <n} \sum_{0 \le j} b_{i,j} \frac{(\xi_i-\xi_n)^{q_i+q_n+j-1}}{q_i+q_n+j-1}.
\end{displaymath}
Then $g$ belongs to $\fa_n(q)$ and $\partial_n g=\partial_n f$. We thus see that $h=f-g$ belongs to $\fb_n(q)^c$ (since $\fa_n(q) \subset \fb_n(q)^c$ by Lemma~\ref{lem:contract-a-1}) and $\partial_nh=0$, i.e., $h$ belongs to $R_{n-1}$.

Now, we have a commutative diagram
\begin{displaymath}
\xymatrix@C=4em{ R_{n-1} \ar[r]^{\iota_{n-1}} \ar[d] & S_{n-1} \ar[d] \\
R_n \ar[r]^{\iota_n} & S_n }
\end{displaymath}
where the vertical maps are the natural inclusions. We have seen that $h \in R_{n-1}$ belongs to the contraction of $\fb_n(q)$ to $R_{n-1}$. We observed this by first contracting to $R_n$ and then to $R_{n-1}$, but since the diagram commutes, we can first contract to $S_{n-1}$ and then to $R_{n-1}$. The contraction of $\fb_n(q)$ to $S_{n-1}$ is easily seen to be $\fb_{n-1}(q'')$ where $q''=(q_1, \ldots, q_{n-1})$. We thus see that $h$ belongs to $\fb_{n-1}(q'')^c$. Since $q_n \ge 2$, we have $\vert q'' \vert<\vert q \vert$, and so $\fb_{n-1}(q'')^c=\fa_{n-1}(q'')$ by the inductive hypothesis. Since $\fa_{n-1}(q'')$ is clearly contained in $\fa_n(q)$ as subsets of $R_n$, we see that $h\in \fa_n(q)$. Thus $f=g+h$ belongs to $\fa_n(q)$, which completes the proof.
\end{proof}

\subsection{Proof of Theorem~\ref{thm:contract}(b)} \label{ss:contract-b}

We now assume that $K$ has characteristic $p>0$ and that $q$ is a power of $p$. Let $\phi_n \colon R_n \to R_n$ be the $K$-algebra homomorphism that raises each variable to the $q$th power, and similarly define $\psi_n \colon S_n \to S_n$. We begin with a lemma:

\begin{lemma} \label{lem:contract-b}
Consider the commutative diagram
\begin{displaymath}
\xymatrix@C=4em{
R_n \ar[d]_{\phi_n} \ar[r]^{\iota_n} & S_n \ar[d]^{\psi_n} \\
R_n \ar[r]^{\iota_n} & S_n }
\end{displaymath}
and let $\fc$ be an ideal of $S_n$. Then $(\fc^e)^c=(\fc^c)^e$.
\end{lemma}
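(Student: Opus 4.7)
The plan is to make a change of coordinates so that $\iota_n$ becomes a standard polynomial inclusion, and then argue by unique expansion in the free-module basis coming from the Frobenius.

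First I would replace $S_n$ by the polynomial ring $T := R_n[t']$ via the $K$-algebra isomorphism $\tau \colon T \to S_n$ determined by $\tau(\xi_i) = t + \epsilon_i$ and $\tau(t') = t$. (This is an isomorphism because $t, t+\epsilon_1, \dots, t+\epsilon_n$ are algebraically independent generators of $S_n$; its inverse sends $\epsilon_i \mapsto \xi_i - t'$.) Under $\tau$ the map $\iota_n$ becomes the natural inclusion $R_n \hookrightarrow R_n[t']$. The essential point is that, because $q$ is a power of the characteristic $p$, one has $(t+\epsilon_i)^q = t^q + \epsilon_i^q$, from which a short computation shows that $\tau^{-1} \circ \psi_n \circ \tau$ is the $K$-algebra endomorphism $\Psi \colon T \to T$ raising each of the generators $\xi_1, \dots, \xi_n, t'$ to its $q$-th power. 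In particular, $\Psi$ restricts to $\phi_n$ on $R_n$, which is just a rephrasing of commutativity of the diagram.

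After this identification, what must be shown is that for any ideal $\fc$ of $T$,
\[
\Psi(\fc)\, T \cap R_n \;=\; \phi_n(\fc \cap R_n)\, R_n.
\]
The inclusion $\supseteq$ is immediate: for $d \in \fc \cap R_n$ we have $\phi_n(d) = \Psi(d) \in \Psi(\fc)$, so $\phi_n(d)\cdot a \in \Psi(\fc)\,T \cap R_n$ for every $a \in R_n$.

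For the reverse inclusion I would exploit the fact that $T$ is free as a module over its subring $\Psi(T) = K[\xi_1^q, \dots, \xi_n^q, (t')^q]$, with basis the monomials $\{\xi^{\alpha}(t')^a : 0 \le \alpha_i < q,\ 0 \le a < q\}$, and that $R_n$ is free over $\phi_n(R_n)$ with basis the subset of these monomials with $a=0$. Given $r \in R_n$ lying in $\Psi(\fc)\,T$, write $r = \sum_j \Psi(c_j) b_j$ with $c_j \in \fc$ and $b_j \in T$; expanding each $b_j$ in the free basis and collecting terms produces an expression
\[
r \;=\; \sum_{\alpha, a} \Psi(d_{\alpha,a})\, \xi^{\alpha} (t')^a
\]
in which every coefficient $d_{\alpha,a}$ is a $T$-linear combination of the $c_j$ and therefore lies in $\fc$. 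On the other hand, since $r \in R_n$, expanding it directly in the same basis gives $r = \sum_\alpha \Psi(r_\alpha)\, \xi^{\alpha}(t')^0$ for unique $r_\alpha \in R_n$. Comparing the two expansions, and using injectivity of $\Psi$, forces $d_{\alpha,a} = 0$ for $a > 0$ and $r_\alpha = d_{\alpha,0} \in \fc \cap R_n$. Hence $r \in \phi_n(\fc \cap R_n)\, R_n$, as required.

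The only delicate step is the change of variables; once $\iota_n$ is recognized as an inclusion into a polynomial ring and $\psi_n$ as the ``total'' $q$-th power endomorphism, the rest is a clean application of freeness. This reduction is precisely where the hypothesis that $q$ be a power of $p$ enters — it is exactly what makes $\psi_n$ compatible with the coordinate change, which is why this part of Theorem~\ref{thm:contract} is specific to positive characteristic.
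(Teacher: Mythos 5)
Your proof is correct. It shares the paper's starting observation --- that $S_n$, viewed as an $R_n$-algebra via $\iota_n$, is just a polynomial ring $R_n[t']$ in one variable, so that $\iota_n$ becomes a genuine polynomial inclusion --- but from there the two arguments diverge. The paper factors $\psi_n$ through the pushout $R_n \otimes_{R_n,\phi_n,\iota_n} S_n$, identifies the induced map $\gamma$ as the $R_n$-linear substitution $t \mapsto t^q$, and then concludes by combining two general principles: faithful flatness of $\gamma$ (so contraction undoes extension along it) and flat base change for the cocartesian square (Lemma~\ref{lemma:flat-ec}). You instead identify $\psi_n$, after the coordinate change, with the $K$-algebra endomorphism $\Psi$ of $K[\xi_1,\dots,\xi_n,t']$ raising every generator to the $q$-th power, and verify the ideal identity by hand using the monomial basis of $T$ over $\Psi(T)$ (exponents $<q$ in each variable) together with the compatible basis of $R_n$ over $\phi_n(R_n)$ and the injectivity of $\Psi$. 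Your route is more elementary and self-contained --- the freeness-and-unique-expansion computation replaces both flatness inputs at once --- at the cost of being specific to this Frobenius-type situation, whereas the paper's factorization argument isolates exactly which formal properties ($\gamma$ faithfully flat, $\phi_n$ flat) are responsible and would transport to other settings. Both correctly locate the role of the hypothesis that $q$ is a power of $p$: it is what makes the square commute and makes $\psi_n$ compatible with the change of variables.
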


\begin{proof}
We first note that the above diagram does indeed commute, because $q$ is a power of $p$. Now, consider the following commutative diagram
\begin{displaymath}
\xymatrix@C=4em{
R_n \ar[d]^{\phi_n} \ar[r]^{\iota_n} & S_n \ar[d]^{\alpha} \ar@{=}[r] & S_n \ar[d]^{\psi_n} \\
R_n \ar[r]^-{\beta} & R_n \otimes_{R_n,\phi_n,\iota_n} S_n \ar[r]^-{\gamma} & S_n }
\end{displaymath}
where $\alpha=1 \otimes \id$, $\beta=\id \otimes 1$, and $\gamma(a \otimes b)=\iota_n(a) \psi_n(b)$. If we regard $S_n$ as an $R_n$-algebra via $\iota_n$, then the natural map $R_n[t] \to S_n$ is an isomorphism. With this identification, we have $R_n \otimes_{R_n} S_n \cong S_n$, and the map $\gamma \colon S_n \to S_n$ is the $R_n$-linear map taking $t$ to $t^p$. In particular, we see that $\gamma$ is faithfully flat. Now, we have
\begin{displaymath}
(\fc^{e,\psi_n})^{c,\gamma}=((\fc^{e,\alpha})^{e,\gamma})^{c,\gamma}=\fc^{e,\alpha}
\end{displaymath}
since extension along $\gamma$ followed by contraction along $\gamma$ is the identity, as $\gamma$ is faithfully flat. We also have
\begin{displaymath}
(\fc^{e,\alpha})^{c,\beta}=(\fc^{c,\iota_n})^{e,\phi_n}
\end{displaymath}
by Lemma~\ref{lemma:flat-ec}; note that $\phi_n$ is flat. Combining the above two identities and using that $\gamma \circ \beta=\iota_n$ yields the stated formula.
\end{proof}

\begin{proof}[Proof of Theorem~\ref{thm:contract}(b)]
Apply Lemma~\ref{lem:contract-b} with $\fc=\fb_{n,1}$. We have $\fb_{n,1}^e=\fb_{n,q}$ and $\fa_{n,1}^e=\fa_{n,q}$ by direct inspection (for the latter we use the identity $\xi_i^q-\xi_j^q=(\xi_i-\xi_j)^q$, which is valid because $q$ is a power of $p$), and $\fb_{n,1}^c=\fa_{n,1}$ by the analog of Lemma~\ref{lem:contract-a-2}. Combining these identities, we find $\fb_{n,q}^c=\fa_{n,q}$, as required.
\end{proof}

\section{Classification of $\fS$-primes} \label{s:class}

In this section, we prove the classification theorem (Theorem~\ref{mainthm}), which is the first principal theorem of this paper.

\subsection{Statement of results} \label{s:class-overview}

We begin by stating the classification theorem in its most general form:

\begin{theorem} \label{thm:class}
Let $\fp$ be a finitary $\fS$-prime of $R$. Then $\fp=\fP(\lambda,e;Z)$ for some data $(\lambda,e;Z)$ consisting of an $\infty$-composition $\lambda$ on an index set $\cI$, a $\lambda$-reduced weighting $e$ on $\cI$, and an irreducible closed subset $Z$ of $\bU^{\cI}$. Moreover, the data $(\lambda,e;Z)$ is unique up to isomorphism.
\end{theorem}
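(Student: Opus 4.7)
The plan is to reduce the classification to $\fY_{\cU}$-primes of the auxiliary ring $\tilde{S}_{\cU}$, by transporting $\fp$ along the chain
\begin{displaymath}
R \longrightarrow \tilde{R}_{\cU} \xrightarrow{\ \iota\ } \tilde{S}_{\cU}/\tilde{J}_{\cU,N}
\end{displaymath}
of Section~\ref{ss:setup}. Geometrically this reflects the fact that $V(\fp)$ is an infinitesimal thickening of $\fX_\lambda(Z)$ and, being $\fS$-equivariant, is determined by its behavior along a single component $\fX_{[\cU]}(Z)$; the ring $\tilde{S}_{\cU}/\tilde{J}_{\cU,N}$ is an algebraic model for a finite-order formal neighborhood of that component.

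For existence, let $\fp$ be a finitary $\fS$-prime. By Proposition~\ref{prop:rad-prime} and Theorem~\ref{thm:rad-prime}, $\rad(\fp) = \fP(\lambda;Z)$ for a unique pair $(\lambda, Z)$, and I fix a partition $\cU$ of type $\lambda$. The extension $\fp \tilde{R}_{\cU}$ is proper — because no product of cross-block differences $\xi_i - \xi_j$ (with $\pi(i)\ne\pi(j)$) vanishes on $\fX_{[\cU]}(Z) \subset V(\rad(\fp))$ — and its radical contains $\tilde{I}_{\cU,1}$, since $\xi_i-\xi_j$ does vanish on $\fX_\lambda(Z)$ when $\pi(i)=\pi(j)$. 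Applying Proposition~\ref{prop:fin-nilp} to a finite $\rN\fY_{\cU}$-generating set of $\tilde{I}_{\cU,1}$ yields some $N$ with $\tilde{I}_{\cU,N} \subset \fp\tilde{R}_{\cU}$. By Proposition~\ref{prop:iota}, $\iota$ descends to a ring map $\tilde{R}_{\cU}/\tilde{I}_{\cU,2N-1} \to \tilde{S}_{\cU}/\tilde{J}_{\cU,N}$; by Theorem~\ref{thm:contract} this map is an injection onto a subring over which the target is a nilpotent extension in the sense of Section~\ref{ss:nilp}. Proposition~\ref{prop:nilp2} then transports $\fp\tilde{R}_{\cU}$ to a unique $\rN\fY_{\cU}$-prime $\fq$ of $\tilde{S}_{\cU}$ containing $\tilde{J}_{\cU,N}$, and identifies $\fp$ with the $\fS$-contraction $\fq^{\fS c}$ (Definition~\ref{defn:G-contract}). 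This step is the substance of the intermediate results Theorems~\ref{thm:class1} and~\ref{thm:class2} promised in the introduction.

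I next classify $\fq$. Since $\fY_{\cU}$ is finite-index normal in $\rN\fY_{\cU}$, Proposition~\ref{prop:prime-subgroup} picks a minimal $\fY_{\cU}$-prime $\fq_0$ above $\fq$ with $\fq = I_{\rN\fY_{\cU}}(\fq_0)$. The ring $\tilde{S}_{\cU}/\tilde{J}_{\cU,N}$ decomposes as $\tilde{S}_{\cU,0} \otimes_A \bigotimes_{i \ge 1} A[\epsilon_i]/\langle \epsilon_i^N \rangle$, with $\fY_{\cU} = \prod_\alpha \fS_{U_\alpha}$ acting trivially on the first factor and by block-permutation on the $\epsilon_i$. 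A separate-variables argument — together with the classification of $\fS_{U_\alpha}$-primes containing a power ideal, which is an easy adaptation of Proposition~\ref{prop:powers-prime} — forces $\fq_0 = \fQ_{\cU}(\lambda, e; Y)$ for a unique $\lambda$-reduced weighting $e$ and a unique irreducible closed $Y \subset \bU^{\cI}$. Unwinding the definition of $\fP$ as an $\fS$-contraction now gives $\fp = \fq^{\fS c} = \fP(\lambda, e; Y)$, and combining Proposition~\ref{prop:Pprop}(a) with the uniqueness in Theorem~\ref{thm:rad-prime} forces $Y = Z$. Uniqueness of the full triple $(\lambda, e; Z)$ up to isomorphism is automatic, since at each stage the output is canonical: $(\lambda, Z)$ is determined by $\rad(\fp)$, and $e$ is read off from $\fq_0$.

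The principal obstacle is the middle step — turning $\iota$ into a nilpotent extension amenable to Proposition~\ref{prop:nilp2}. In characteristic zero, Theorem~\ref{thm:contract}(a) pins the kernel of $\iota$ modulo $\tilde{J}_{\cU,N}$ exactly as $\tilde{I}_{\cU,2N-1}$, and the quotient of $\tilde{S}_{\cU}/\tilde{J}_{\cU,N}$ by the image is generated by $\epsilon_i$'s of nilpotency index $\le N$, so the nilpotent-extension structure falls out cleanly. In positive characteristic the kernel is smaller (Theorem~\ref{thm:contract}(b)), so the nilpotency index must be re-tuned, but the structural outline is the same. I expect essentially all the technical weight of the proof to sit in this reduction.
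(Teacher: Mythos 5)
Your outline follows the paper's architecture (localize to $\tilde{R}_{\cU}$, pass to $\tilde{S}_{\cU}$ via the nilpotent extension, classify the resulting $\fY_{\cU}$-primes), but it has a genuine gap at the first step: you assert that $\fp$ "is identified with the $\fS$-contraction $\fq^{\fS c}$," i.e.\ that $\fp=(\fp\tilde{R}_{\cU})^{\fS c}$, without proof. Proposition~\ref{prop:nilp2} cannot deliver this — it only relates primes of $\tilde{R}_{\cU}$ (modulo $\tilde{I}_{\cU,2N-1}$) to primes of $\tilde{S}_{\cU}/\tilde{J}_{\cU,N}$; it says nothing about recovering $\fp$ from its localization. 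The danger is concrete: $\fp^e$ only remembers $\fp$ on the locus where the cross-block differences are invertible, so a priori $(\fp^e)^{\fS c}$ could strictly contain $\fp$. This recovery statement is Proposition~\ref{prop:corr-2} in the paper, and it is the deepest input to the whole classification: one sets $M=(\fp^e)^{\fS c}/\fp$, observes $M$ dies after localizing to $\tilde{R}_{\cU}$ hence has no support at points of type $\lambda$, and then invokes the support theorem for finitely $\fS$-generated equivariant modules (\cite[Theorem~9.6]{svar}, Proposition~\ref{prop:supp}) together with $\fS$-primality of $\fp$ to force $M=0$. Relatedly, your argument needs $\fp^e$ to be $\rN\fY_{\cU}$-prime before Proposition~\ref{prop:nilp2} can be applied to it; extensions of $G$-primes along localizations by non-invariant elements are not automatically $G$-prime, and in the paper this is \emph{deduced from} $\fp=(\fp^e)^{\fS c}$ via Proposition~\ref{prop:corr-1}(b), not assumed.

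Your assessment of where the technical weight sits is therefore inverted: the passage $\tilde{R}_{\cU}\to\tilde{S}_{\cU}$ is comparatively formal once Theorem~\ref{thm:contract} is in hand, whereas the passage $R\to\tilde{R}_{\cU}$ is where the external input from \cite{svar} is indispensable. Two smaller wobbles: $\xi_i-\xi_j$ with $\pi(i)=\pi(j)$ does \emph{not} vanish on all of $\fX_{\lambda}(Z)$ (only on $\fX_{[\cU]}(Z)$, not on its other $\fS$-translates), so the claim $\tilde{I}_{\cU}\subset\rad(\fp^e)$ needs the localization to kill the other translates rather than the reason you give; and Proposition~\ref{prop:fin-nilp} concerns the $G$-radical, not the ordinary radical, so extracting $\tilde{I}_{\cU,N}\subset\fp^e$ from $\tilde{I}_{\cU}\subset\rad(\fp^e)$ should instead use finiteness of the set of $\fY_{\cU}$-orbits of generators, as in \S\ref{ss:setup}. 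The final classification step over $\tilde{S}_{\cU}$ (separation of variables plus the power-ideal analysis) matches Lemmas~\ref{lem:class-3-1}--\ref{lem:class-3-3} and is fine.
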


The proof of this theorem will take the entirety of \S \ref{s:class}. We deduce the theorem from three auxiliary theorems, which we now describe.

Fix an $\infty$-composition $\lambda$ on $\cI$ and a corresponding partition $\cU=\{\cU_{\alpha}\}_{\alpha \in \cI}$ of $[\infty]$. We use notation as in \S \ref{ss:setup}. We constructed the $\fS$-prime $\fP(\lambda,e;Z)$ as the $\fS$-contraction to $R$ of the $\fY_{\cU}$-prime $\fQ_{\cU}(\lambda,e;Z)$ of $\tilde{S}_{\cU}$ along the composition $R \to \tilde{R}_{\cU} \to \tilde{S}_{\cU}/\tilde{J}_{\cU,n}$ for $n \gg 0$. Of course, to form this $\fS$-contraction, we can first form the ordinary contraction to $\tilde{R}_{\cU}$, and then perform the $\fS$-contraction of the resulting ideal. We thus see that $\fP(\lambda,e;Z)$ is the $\fS$-contraction of a $\fY_{\cU}$-prime of $\tilde{R}_{\cU}$ (and, in fact, one containing $\tilde{I}_{\cU}$ in its radical). As a first step in proving Theorem~\ref{thm:class}, we show that every $\fS$-prime of type $\lambda$ can be realized in this manner:

\begin{theorem} \label{thm:class1}
Extension and $\fS$-contraction induce mutually inverse bijections
\begin{displaymath}
\{ \text{$\fS$-primes $\fp \subset R$ of type $\lambda$} \} \leftrightarrow \{ \text{$\rN \fY_{\cU}$-primes $\fq \subset \tilde{R}_{\cU}$ with $\tilde{I}_{\cU} \subset \rad(\fq)$} \}.
\end{displaymath}
\end{theorem}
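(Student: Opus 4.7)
The plan is to verify that each map is well-defined on the stated domain and that their composites are identities. Most of the work is a formal assembly of results from \S\ref{s:eqca}; the genuinely substantive step is the identity $(\fq^{\fS c})^e=\fq$.

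\textbf{Well-definedness.} Let $\fp$ be an $\fS$-prime of $R$ of type $\lambda$. By Theorem~\ref{thm:rad-prime} we have $\rad(\fp)=\fP(\lambda;Z)$ for some irreducible $Z\subset\bU^{\cI}$, and any closed point of $\fX_{[\cU]}(Z)\subset V(\rad(\fp))$ has distinct coordinate values across different $\cU$-parts, so the multiplicative set $S$ inverted in $\tilde R_{\cU}$ is disjoint from $\rad(\fp)$, hence from $\fp$; thus $\fp^e$ is proper. Since $S$ is $\rN\fY_{\cU}$-stable, $\fp^e$ is an $\rN\fY_{\cU}$-ideal. To see that $\fp^e$ is $\rN\fY_{\cU}$-prime, first observe that $\fp$ is automatically $\rN\fY_{\cU}$-prime in $R$: if $\fa\fb\subset\fp$ for $\rN\fY_{\cU}$-ideals, then the $\fS$-ideals they generate still satisfy $(\fS\fa)(\fS\fb)\subset\fp$ since $(\sigma_1 a)(\sigma_2 b)=\sigma_1(a\cdot\sigma_1^{-1}\sigma_2 b)\in\sigma_1\fp=\fp$, and $\fS$-primality forces $\fa\subset\fp$ or $\fb\subset\fp$. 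An adaptation of Proposition~\ref{prop:prime-loc} to the $\rN\fY_{\cU}$-stable set $S$ then yields $\rN\fY_{\cU}$-primality of $\fp^e$: if $\fa'\fb'\subset\fp^e$ for $\rN\fY_{\cU}$-ideals of $\tilde R_{\cU}$, then $(\fa')^c(\fb')^c\subset\fp^{ec}=\fp$, one contraction lies in $\fp$, and re-extending via the localization identity $\fa'^{ce}=\fa'$ completes the argument. Finally, $\tilde I_{\cU}\subset\rad(\fp^e)$ follows because inside $U=\Spec\tilde R_{\cU}$ the only $\fS$-translate of $\fX_{[\cU]}(Z)$ that meets $U$ is the canonical one (any other would force $\xi_i=\xi_j$ across $\cU$-parts), so $V(\fp^e)\subset V(\tilde I_\cU)$. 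Conversely, if $\fq$ is an $\rN\fY_{\cU}$-prime with $\tilde I_\cU\subset\rad(\fq)$, then by Cohen's theorem $\tilde I_{\cU,n}\subset\fq$ for some $n$, Proposition~\ref{prop:G-contract}(a) with $H=\rN\fY_{\cU}\subset\fS$ gives that $\fq^{\fS c}$ is $\fS$-prime, and $V(\fq)\subset V(\tilde I_\cU)\cap U=\fX_{[\cU]}$ forces $V(\fq^{\fS c})\subset\fX_\lambda$ with a type-$\lambda$ witness from any closed point of the nonempty $V(\fq)$, so the type is $\lambda$ by Proposition~\ref{prop:type-char}.

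\textbf{Mutual inverse.} The identity $(\fp^e)^{\fS c}=\fp$ is a one-line check: $\fp^{ec}=\fp$ since $\fp\cap S=\emptyset$, and then $I_{\fS}(\fp)=\fp$ since $\fp$ is $\fS$-stable. The real content is the reverse identity $(\fq^{\fS c})^e=\fq$. The inclusion $\subset$ is immediate from $\fq^{\fS c}\subset\fq^c$ and $(\fq^c)^e=\fq$. For the reverse, let $f\in\fq$ and write $f=g_0/s_0$ with $g_0\in\fq^c$, $s_0\in S$; it suffices to produce $u\in S$ with $\sigma(ug_0)\in\fq^c$ for every $\sigma\in\fS$, for then $ug_0\in\fq^{\fS c}$ and $f=(ug_0)/(us_0)\in(\fq^{\fS c})^e$. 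For $\sigma\in\rN\fY_{\cU}$ this is automatic from $g_0\in\fq^c$. For $\sigma\notin\rN\fY_{\cU}$, since $\sigma\cU\ne\cU$, there exist indices $a,b$ with $\pi(a)\ne\pi(b)$ yet $\pi(\sigma a)=\pi(\sigma b)$, giving $\sigma((\xi_a-\xi_b)^n)=(\xi_{\sigma a}-\xi_{\sigma b})^n\in I_{\cU,n}\subset\fq^c$. By bigness of $\rN\fY_{\cU}$ in $\fS$ (Proposition~\ref{prop:big-young}), the double coset space $\rN\fY_{\cU}\backslash\fS/\fS_{>M}$ is finite for each $M$; one chooses $M$ large enough that every nontrivial double coset is detected on $\{1,\dots,M\}$ (a nontrivial coset representative $\sigma_i$ satisfies $\sigma_i\cU|_M\ne\cU|_M$, so the witness pair can be taken inside $\{1,\dots,M\}$), picks such $(a_i,b_i)$ for each nontrivial rep, and sets $u=\prod_i(\xi_{a_i}-\xi_{b_i})^n\in S$. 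Then $u$ is $\fS_{>M}$-stable; for $\sigma\in\rN\fY_{\cU}\cdot\fS_{>M}$ the target factor $\sigma g_0$ is already in $\fq^c$, while for $\sigma$ in any other coset the factor $\sigma_i$-labelled in $\sigma u$ lies in $I_{\cU,n}\subset\fq^c$, so $\sigma u\in\fq^c$ and $\sigma(ug_0)\in\fq^c$ as required.

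\textbf{Main obstacle.} The principal technical difficulty is the construction of the multiplier $u$. The geometric content is the scheme-theoretic disjointness of non-$\rN\fY_{\cU}$ translates of $V(\fq)$ inside $U$: the hypothesis $\tilde I_\cU\subset\rad(\fq)$, via the finite exponent $\tilde I_{\cU,n}\subset\fq$, forces each $\sigma\fq^c$ with $\sigma\cU\ne\cU$ to contain an element of $S$, i.e., to become the unit ideal in $\tilde R_{\cU}$. The subtlety in converting this into a single explicit multiplier is that witnesses for cosets represented by elements of $\fS_{>M}$ that act nontrivially far outside $\{1,\dots,M\}$ need not themselves sit inside $\{1,\dots,M\}$; the resolution is that exactly those troublesome cosets turn out to lie in the trivial double coset $\rN\fY_{\cU}\cdot\fS_{>M}$ (because $\sigma_i\tau=\mathrm{id}$ for an appropriate $\tau\in\fS_{>M}$), so they do not require a witness at all, and the bigness of $\rN\fY_{\cU}$ reduces the problem to finitely many genuine cosets for which $M$-internal witnesses do exist. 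The remaining pieces of the proof are routine applications of Propositions~\ref{prop:prime-loc}, \ref{prop:int-prime}, and \ref{prop:G-contract} together with Theorem~\ref{thm:rad-prime}.
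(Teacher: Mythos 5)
The fatal gap is your claim that $(\fp^e)^{\fS c}=\fp$ is ``a one-line check.'' The step ``$\fp^{ec}=\fp$ since $\fp\cap S=\emptyset$'' is the standard localization fact for \emph{ordinary} primes, and Proposition~\ref{prop:prime-loc} extends it to equivariant primes only when the inverted set consists of $G$-\emph{invariant} elements. Neither hypothesis holds here: $\fp$ is merely $\fS$-prime (it need not be prime, or even radical), and $S$ is generated by the non-invariant elements $\xi_i-\xi_j$ with $\pi(i)\ne\pi(j)$. From $sf\in\fp$ with $s\in S$ you cannot invoke $\fS$-primality to conclude $f\in\fp$: that would require $s\cdot\sigma(f)\in\fp$ for \emph{all} $\sigma\in\fS$ (or $\sigma(s)\cdot f\in\fp$ for all $\sigma$), and you have it only for $\sigma=\id$. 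This is exactly the hard half of the theorem. The paper never proves $\fp^{ec}=\fp$; it proves only the weaker statement $(\fp^e)^{\fS c}=\fp$ (Proposition~\ref{prop:corr-2}), and the proof is not formal: it forms the finitely $\fS$-generated module $M=(\fp^e)^{\fS c}/\fp$, observes $\tilde{R}_{\cU}\otimes_R M=0$, and invokes the support theorem \cite[Theorem~9.6]{svar} (Proposition~\ref{prop:supp}) to conclude that $V(\ann(M))^{\top}$ contains no point of type $\lambda$, after which $\fS$-primality applied to $\ann(M)\cdot(\fp^e)^{\fS c}\subset\fp$ forces $M=0$. Your proposal contains no substitute for this input, and the same gap infects your well-definedness argument: the $\rN\fY_{\cU}$-primality of $\fp^e$ also rests on $\fp^{ec}=\fp$, and in the paper it is deduced from Proposition~\ref{prop:corr-2} together with Proposition~\ref{prop:corr-1}(b), not proved independently.

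A second, independent error: an $\fS$-prime of $R$ is \emph{not} automatically $\rN\fY_{\cU}$-prime. Your computation $(\sigma_1 a)(\sigma_2 b)=\sigma_1(a\cdot\sigma_1^{-1}\sigma_2 b)$ does not show $(\fS\fa)(\fS\fb)\subset\fp$, because $\sigma_1^{-1}\sigma_2 b$ need not lie in $\fb$ when $\sigma_1^{-1}\sigma_2\notin\rN\fY_{\cU}$. Concretely, for $\fp=\fP((\infty,\infty))$ the $\rN\fY_{\cU}$-ideals of functions vanishing on $\fX_{\cU}$ and on the union of the remaining $\fS$-translates of $\fX_{[\cU]}$ have product contained in $\fp$, yet neither is contained in $\fp$; primality only emerges after localizing to $\tilde{R}_{\cU}$, which kills the other translates and is why the theorem is stated there. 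By contrast, your argument for $(\fq^{\fS c})^e=\fq$ is sound in outline and is essentially the paper's Proposition~\ref{prop:corr-1}(a): your double-coset bookkeeping repackages the $N$-good dichotomy, but the assertion that every double coset lacking a witness pair inside $[M]$ is the trivial coset $\rN\fY_{\cU}\fS_{>M}$ is precisely Lemma~\ref{lem:Ngood}(a) and requires the counting argument under the hypothesis $M\gg\cU$, not the parenthetical ``$\sigma_i\tau=\id$.''
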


We note that the difference between $\rN \fY_{\cU}$-prime and $\fY_{\cU}$-prime is not so significant due to Proposition~\ref{prop:prime-subgroup}. The proof of Theorem~\ref{thm:class1} relies on one piece of non-trivial input, namely, a theorem about the support of equivariant $R$-modules proven in \cite{svar}. Given the above theorem, our task is reduced to classifying the primes appearing on the right side of the correspondence. This is addressed by our second auxiliary theorem:

\begin{theorem} \label{thm:class2}
Contraction induces a bijection
\begin{displaymath}
\{ \text{$\fY_{\cU}$-primes $\fp \subset \tilde{R}_{\cU}$ with $\tilde{I}_{\cU} \subset \rad(\fp)$} \} \leftarrow
\{ \text{$\fY_{\cU}$-primes $\fq \subset \tilde{S}_{\cU}$ with $\tilde{J}_{\cU} \subset \rad(\fq)$} \}
\end{displaymath}
\end{theorem}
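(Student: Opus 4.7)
The plan is to construct explicit mutually inverse maps, following the template of Proposition~\ref{prop:nilp2} (the nilpotent-extension bijection), while handling the fact that our extension is merely \emph{nil} rather than nilpotent. Fix $n \ge 1$ and set $A_n = \tilde R_\cU/\tilde I_{\cU,2n-1}$ and $B_n = \tilde S_\cU/\tilde J_{\cU,n}$. The map $\iota$ of Proposition~\ref{prop:iota} factors as an injection $\bar\iota \colon A_n \hookrightarrow B_n$ by Theorem~\ref{thm:contract} (in characteristic zero, and in characteristic $p$ when $n$ is a power of $p$; since we may freely replace $n$ by any larger value this suffices). Identifying $A_n$ with its image, we have $B_n = A_n + \fn$ with $\fn = \langle \epsilon_i \rangle_{i \ge 1}$. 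The forward map is contraction, $\Phi_n(\fq) = \fq \cap A_n$, which is $\fY_\cU$-prime by Proposition~\ref{prop:prime-contract}. The candidate inverse is $\Psi_n(\fp) = \rad_{\fY_\cU}(\fp B_n)$, mirroring Proposition~\ref{prop:nilp2}.

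The main obstacle is precisely that $\fn$ is only nil, not nilpotent: the monomials $\epsilon_{i_1} \cdots \epsilon_{i_k}$ with distinct indices are nonzero for every $k$, so no power of $\fn$ vanishes. Hence Proposition~\ref{prop:nilp2} does not apply off the shelf. The key technical work is to adapt the arithmetic of Proposition~\ref{prop:nilp} to this setting, where the decisive replacement is an equivariant, module-theoretic statement: for any $\fY_\cU$-ideal $\fb \subset B_n$ and finitely generated $\fY_\cU$-submodule $Y \in [\fb]^{\rf}$, one has $Y^N \subset \fb^{ce}$ for some $N$ (and likewise $Y^N \subset \fa$ whenever $Y \subset \fa^{ec}$ for a $\fY_\cU$-ideal $\fa \subset A_n$). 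The point is that $Y$ involves only finitely many $\fY_\cU$-orbits of generators; absorbing the $\fY_\cU$-action via bigness of $\fY_\cU$ (Proposition~\ref{prop:big-young}) and the finite generation guaranteed by Cohen's theorem, the behavior of these generators is controlled by a finite truncation of the $\epsilon_i$-variables, on which $\fn$ is genuinely nilpotent and Proposition~\ref{prop:nilp} applies. With this substitute in hand, the proof of Proposition~\ref{prop:nilp2} transposes almost verbatim—replacing element-wise arguments with finitely generated $\fY_\cU$-submodules throughout—to yield a bijection $\Phi_n \leftrightarrow \Psi_n$ between $\fY_\cU$-primes of $A_n$ and $\fY_\cU$-primes of $B_n$.

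To conclude, I verify compatibility across different values of $n$. For $m \ge n$ the surjections $A_m \twoheadrightarrow A_n$ and $B_m \twoheadrightarrow B_n$ commute with $\bar\iota$, and the two bijections $\Phi_m \leftrightarrow \Psi_m$ and $\Phi_n \leftrightarrow \Psi_n$ agree on common primes (those containing the larger ideal). Passing to directed unions—for which Proposition~\ref{prop:limit-contract} ensures compatibility with $\fY_\cU$-contraction—identifies $\bigcup_n \{\fY_\cU\text{-primes }\fq\text{ of }\tilde S_\cU\text{ with }\tilde J_{\cU,n} \subset \fq\}$ with $\{\fY_\cU\text{-primes }\fq\text{ with }\tilde J_\cU \subset \rad(\fq)\}$ (and similarly on the $\tilde R_\cU$ side), yielding the stated bijection. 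The hard part is the nil-extension analog of Proposition~\ref{prop:nilp}; everything else is a direct extension of machinery already established.
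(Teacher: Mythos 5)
Your overall architecture --- contract in one direction, take $\rad_{\fY_{\cU}}$ of the extension in the other, work modulo $\tilde{J}_{\cU,n}$ and $\tilde{I}_{\cU,m}$ for matched $(m,n)$, then let $n$ vary --- is the same as the paper's, but you have misdiagnosed the obstacle, and the lemma you defer is exactly where the content lies; your sketch of it does not work. You take $\fn=\langle\epsilon_i\rangle_{i\ge1}$, observe it is nil but not nilpotent, and propose a ``nil-extension analog'' of Proposition~\ref{prop:nilp} proved by restricting a finitely generated $\fY_{\cU}$-submodule $Y\subset\fb$ to ``a finite truncation of the $\epsilon_i$-variables.'' No such truncation exists: a single generator involving $\epsilon_1$ has a $\fY_{\cU}$-orbit meeting every $\epsilon_i$ in that part, and a product $(\sigma_1 g)\cdots(\sigma_N g)$ can land on $\epsilon_{i_1}\cdots\epsilon_{i_N}$ with the $i_j$ pairwise distinct, which is nonzero for every $N$. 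So the terms with many $\fn$-factors in the expansion underlying Proposition~\ref{prop:nilp} do not vanish, and one must instead show they lie in $\fb^{ce}$ --- which requires using the relations $\epsilon_i-\epsilon_j\in\fb^{ce}$ supplied by the subring, not nilpotence on a finite set of variables. As stated, the key lemma is asserted rather than proved.

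The fix, and what the paper does, is to choose $\fn$ smaller: pick one index $j(\alpha)\in U_{\alpha}$ for each $\alpha\in\cI$ and let $\fn$ be generated by the finitely many elements $\epsilon_{j(\alpha)}$. Since $t_{\alpha}=\iota(\xi_{j(\alpha)})-\epsilon_{j(\alpha)}$ and then $\epsilon_i=\iota(\xi_i)-t_{\pi(i)}$, one still has $\tilde{S}_{\cU}/\tilde{J}_{\cU,n}=\iota(\tilde{R}_{\cU}/\tilde{I}_{\cU,m})+\fn$; and $\fn$, being generated by finitely many nilpotents, is genuinely nilpotent (the definition in \S\ref{ss:nilp} does not require $\fn$ to be $G$-stable or to be the full ``$\epsilon$-part''). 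Proposition~\ref{prop:nilp2} then applies verbatim and no new machinery is needed. Separately, your injectivity claim for $\bar{\iota}$ invokes Theorem~\ref{thm:contract}, which is proved over a field; for general $A$ you still need the paper's second step of splitting primes according to $\fq\cap\bZ=(0)$ or $(p)$ and base-changing to $A\otimes\bQ$ or $A/(p)$ before the field-case argument becomes available.
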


The inverse map can be described explicitly; see Remark~\ref{rmk:class-2}. The proof of this theorem also relies on one piece of non-trivial input, namely, the contraction calculations of \S \ref{s:contract}. Given Theorem~\ref{thm:class2}, our task is once again reduced to classifying the primes appearing on the right side of the correspondence. Our third auxiliary theorem accomplishes this:

\begin{theorem} \label{thm:class3}
Let $\fq$ be a $\fY_{\cU}$-prime of $\tilde{S}_{\cU}$ with $\tilde{J}_{\cU} \subset \rad(\fq)$. Then $\fq=\fQ_{\cU}(\lambda,e;Z)$ for some reduced weighting $e$ and irreducible closed subsets $Z \subset \bU^{\cI}$.
\end{theorem}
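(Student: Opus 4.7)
The plan is to construct the triple $(\lambda, e; Z)$ directly from $\fq$ and verify both containments $\fQ_\cU(\lambda, e; Z) \subset \fq$ and $\fq \subset \fQ_\cU(\lambda, e; Z)$. Set $\fp_0 = \fq \cap \tilde{S}_{\cU,0}$; since $\fY_\cU$ acts trivially on $\tilde{S}_{\cU,0}$, Proposition~\ref{prop:prime-contract} makes $\fp_0$ an ordinary prime ideal, and I take $Z \subset \bU^\cI$ to be the corresponding irreducible subvariety. For $\alpha \in \cI^\infty$ I define $e_\alpha = \min\{n \ge 1 : \epsilon_i^n \in \fq\}$ for any $i \in U_\alpha$, well-defined by $\fS(U_\alpha)$-symmetry and finite because $\tilde J_\cU \subset \rad(\fq)$. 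For $\alpha \in \cI^{\rf}$ I set $e_\alpha = 1$, justified by a pigeonhole argument in the style of Proposition~\ref{prop:Qrad2}: a product $\sigma_1(\epsilon_i) \cdots \sigma_N(\epsilon_i)$ with sufficiently many factors in the finite set $U_\alpha$ must contain some $\epsilon_j^n \in \fq$, so $\epsilon_i \in \rad_{\fY_\cU}(\fq) = \fq$. The easier inclusion $\fQ_\cU(\lambda, e; Z) \subset \fq$ is then immediate from the definitions.

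The main obstacle is the reverse inclusion. I pass to $B = \tilde S_\cU / \fQ_\cU(\lambda, e; Z) \cong D_Z[\epsilon_i]/\langle \epsilon_i^{e_{\pi(i)}} \rangle$ with $D_Z = \tilde S_{\cU,0}/\fp_0$ a domain; the image $\bar\fq$ is a $\fY_\cU$-prime with $\bar\fq \cap D_Z = 0$ and $\epsilon_i^{e_\alpha - 1} \notin \bar\fq$ for every $\alpha \in \cI^\infty$ and $i \in U_\alpha$, and I need $\bar\fq = 0$. Localizing at $D_Z \setminus \{0\}$ and invoking Proposition~\ref{prop:prime-loc} reduces to the case where $D_Z$ is a field $K$, so $B = K[\epsilon_i]/\langle \epsilon_i^{e_{\pi(i)}} \rangle$.

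The key step is a \emph{monomial extraction}: given $0 \ne f = \sum_{\vec m \in S} c_{\vec m} \epsilon^{\vec m} \in \bar\fq$ with $|S| \ge 2$, there is an index $i$ such that $M_i = \max_{\vec n \in S} n_i$ is attained by some but not all $\vec n \in S$ (choose $i$ where two distinct elements of $S$ disagree). Multiplying $f$ by $\epsilon_i^{e_{\pi(i)} - M_i}$ annihilates exactly those terms with $n_i = M_i$ (since $\epsilon_i^{e_{\pi(i)}} = 0$) and preserves the others, producing $f' \in \bar\fq$ with strictly smaller support. Iterating yields a nonzero monomial $\epsilon^{\vec m}$ in $\bar\fq$ (after dividing by the surviving scalar, which is possible since $K$ is a field).

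A \emph{peel-off} then extracts the contradiction. If $\vec m = 0$ then $1 \in \bar\fq$ violates properness; otherwise, pick an index $i_1$ in the support of $\vec m$ with $\alpha = \pi(i_1)$ (necessarily $e_\alpha \ge 2$, as $\epsilon_{i_1}^{m_{i_1}} \ne 0$ in $B$), and factor $\epsilon_{i_1}^{e_\alpha - 1 - m_{i_1}} \cdot \epsilon^{\vec m}$ as $x \cdot y$ where $y = \epsilon_{i_1}^{e_\alpha - 1}$ and $x = \prod_{i \ne i_1} \epsilon_i^{m_i}$. I verify $x \cdot \sigma(y) \in \bar\fq$ for every $\sigma \in \fY_\cU$ by three cases: $\sigma(i_1) = i_1$ (the product equals $xy$), $\sigma(i_1)$ lies in the support of $\vec m$ (the product vanishes because $\epsilon_{\sigma(i_1)}^{2(e_\alpha - 1)} = 0$ when $e_\alpha \ge 2$), and $\sigma(i_1)$ is fresh in $U_\alpha$ (the transposition $\tau \in \fS(U_\alpha)$ swapping $i_1$ with $\sigma(i_1)$ realizes $x \cdot \sigma(y)$ as $\tau(\epsilon^{\vec m}) \cdot \epsilon_{\sigma(i_1)}^{e_\alpha - 1 - m_{i_1}}$, which lies in $\bar\fq$). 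The $\fY_\cU$-prime property then forces $y \in \bar\fq$ (contradicting the minimality of $e_\alpha$) or $x \in \bar\fq$, and induction on the size of the support of $\vec m$ closes the argument.
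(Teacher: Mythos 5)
Your proof is correct and follows essentially the same route as the paper's: contract to $\tilde{S}_{\cU,0}$ and localize to reduce to the case of a field of coefficients, extract a pure monomial from any hypothetical extra element of $\fq$ by multiplying to kill unwanted terms, and then use $\fY_{\cU}$-primality to peel that monomial down one variable at a time until you contradict the minimality of the $e_{\alpha}$. The only organizational difference is that you work in the quotient by $\fQ_{\cU}(\lambda,e;Z)$ and run a single induction over all orbits at once, whereas the paper first reduces to one symmetric-group orbit at a time via a tensor decomposition (Lemmas~\ref{lem:class-3-1}--\ref{lem:class-3-3}); the underlying mechanism is identical.
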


The proof of this theorem is elementary, and accomplished through a direct analysis. The proof of Theorem~\ref{thm:class} is divided into three main steps (corresponding to the above three theorems), and one final short subsection tying up the loose ends.

\subsection{Step 1a}

In this subsection, we show that $(\fq^{\fS c})^e=\fq$ for $\fq$ as in Theorem~\ref{thm:class1}. This should be thought of as the ``easy'' half of Theorem~\ref{thm:class1}: $\rN\fY_{\cU}$-primes $\fq$ as in the theorem are extremely constrained, and so, in principle, should be easier to understand than the primes $\fp$ on the other side of the correspondence.

Before getting into the details of the proof, let us illustrate the essential idea in a simple case. Suppose that $\fq$ is as in the theorem. The $\fS$-contraction of $\fq$ is defined as an infinite intersection, so the main concern is that it might be too small. Here is why this is not the case. Let $f$ belong to the ordinary contraction of $\fq$; note that $\tilde{R}_{\cU}$ is a localization of $R$, so the ordinary contraction has plenty of elements. Suppose for simplicity that $\cU$ has two parts $\cU_1$ and $\cU_2$, that $1 \in \cU_1$ and $2 \in \cU_2$, and that $f$ only uses the variables $\xi_1$ and $\xi_2$. Let $s=\xi_1-\xi_2$ (a unit of $\tilde{R}_{\cU}$), and let $k$ be such that $\tilde{I}_{\cU, k} \subset \fq$. We claim that $s^kf$ belongs to the $\fS$-contraction of $\fq$, i.e., $\sigma(s^kf) \in \fq^c$ for all $\sigma \in \fS$. There are two cases. First suppose that $\sigma(1)$ and $\sigma(2)$ belong to different parts of $\cU$. Then we can find an element $\tau$ of $\rN \fY_{\cU}$ such that $\sigma(1)=\tau(1)$ and $\sigma(2)=\tau(2)$. Since $\fq$ is stable by $\rN \fY_{\cU}$, we find that $\sigma(s^kf)=\tau(s^kf)$ belongs to $\fq^c$. Now suppose that $\sigma(1)$ and $\sigma(2)$ belong to the same part of $\cU$. Then $\sigma(s^k) \in I_{\cU, k}$, and so $\sigma(s^kf) \in \fq^c$. This establishes the claim. We thus have a good supply of elements in the $\fS$-contraction of $\fq$.

We now begin the work in earnest. We first introduce some notation and terminology. Let $N$ be a positive integer. We say that $\sigma \in \fS$ is \defi{$N$-good} if for every $1 \le i,j \le N$ with $\pi(i) \ne \pi(j)$ we have $\pi(\sigma(i)) \ne \pi(\sigma(j))$. We say that $N$ is \defi{sufficiently large relative to $\cU$}, denoted $N \gg \cU$, if $[N]$ contains every finite part of $\cU$, and for every infinite part $U_{\alpha}$ of $\cU$ the intersection $[N] \cap U_{\alpha}$ is non-empty and strictly larger (in cardinality) than every finite part of $\cU$. We put $s_N=\prod_{1 \le i,j \le N, \pi(i) \ne \pi(j)} (\xi_i-\xi_j)$, which is a unit of $R_{\cU}$. For $f \in R$, we write $N \gg f$ if every variable $\xi_i$ appearing in $f$ has $i<N$. The following is the key lemma:
 
\begin{lemma} \label{lem:Ngood}
Let $N \gg \cU$ and let $\sigma \in \fS$.
\begin{enumerate}
\item Suppose $\sigma$ is $N$-good. Then there exists $\tau \in \rN \fY_{\cU}$ such that $\sigma(i)=\tau(i)$ for all $1 \le i \le N$.
\item Suppose $\sigma$ is not $N$-good. Then $\sigma(s_N^k) \in I_{\cU, k}$.
\end{enumerate}
\end{lemma}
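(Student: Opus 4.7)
The plan is to unpack the $N$-good hypothesis in (a) into a length-preserving permutation of $\cI$ and patch it with $\sigma$ to extend to $\tau \in \rN\fY_\cU$, while (b) is an immediate factorization.

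For part (a), I will first observe that the $N$-good condition together with its contrapositive says precisely that $\pi\circ\sigma\vert_{[N]}$ both respects and separates the fibers of $\pi\vert_{[N]}$; hence there is a well-defined injection $\phi : \pi([N]) \to \cI$ satisfying $\phi(\pi(i)) = \pi(\sigma(i))$ for all $i \in [N]$. The hypothesis $N \gg \cU$ forces $\pi([N]) = \cI$, so $\phi$ is a bijection of $\cI$.

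The main technical point is then to show $\phi \in \rN_\lambda$. Setting $m_\alpha = |U_\alpha \cap [N]|$, the definition of $N \gg \cU$ gives $m_\alpha = \lambda_\alpha$ for $\alpha \in \cI^{\rf}$ and $m_\alpha > \max\{\lambda_\beta : \lambda_\beta < \infty\}$ for $\alpha \in \cI^{\infty}$. Since $\sigma$ injects $[N] \cap U_\alpha$ into $U_{\phi(\alpha)}$, we have $\lambda_{\phi(\alpha)} \ge m_\alpha$; for infinite $\alpha$ this rules out $\phi(\alpha) \in \cI^{\rf}$, so $\phi$ preserves $\cI^{\infty}$, and by finiteness of $\cI$ also $\cI^{\rf}$. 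Summing the inequalities $\lambda_{\phi(\alpha)} \ge \lambda_\alpha$ over $\cI^{\rf}$ then forces equality, giving $\phi \in \rN_\lambda$. With this in hand, I define $\tau$ piecewise: on each finite $U_\alpha$, the map $\sigma\vert_{U_\alpha}$ is an injection into the equal-size set $U_{\phi(\alpha)}$ and hence automatically a bijection, so set $\tau\vert_{U_\alpha} = \sigma\vert_{U_\alpha}$; on each infinite $U_\alpha$, extend $\sigma\vert_{U_\alpha \cap [N]}$ by any bijection from the countably infinite set $U_\alpha \setminus [N]$ to the countably infinite set $U_{\phi(\alpha)} \setminus \sigma(U_\alpha \cap [N])$. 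The assembled $\tau$ lies in $\rN\fY_\cU$ and agrees with $\sigma$ on $[N]$.

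Part (b) is nearly immediate: pick $i, j \in [N]$ witnessing the failure of $N$-goodness, so $\pi(i) \ne \pi(j)$ and $\pi(\sigma(i)) = \pi(\sigma(j))$. Then $\xi_i - \xi_j$ is one of the factors defining $s_N$, so $\sigma(s_N)$ has $\xi_{\sigma(i)} - \xi_{\sigma(j)}$ as a factor, and $(\xi_{\sigma(i)} - \xi_{\sigma(j)})^k$ is by definition a generator of $I_{\cU,k}$; hence $\sigma(s_N^k) = \sigma(s_N)^k \in I_{\cU,k}$. The main obstacle is the size-matching step producing $\phi \in \rN_\lambda$ in part (a); the definition of $N \gg \cU$ is engineered precisely so that the finite/infinite dichotomy on $\cI$ is visible already on $[N]$, which is what makes this step clean. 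Everything else is bookkeeping.
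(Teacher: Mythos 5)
Your argument follows the same route as the paper's, and part (b) together with the second half of part (a) (showing $\phi\in\rN_{\lambda}$ and assembling $\tau$) is correct. However, there is a gap at the very first step of (a): the well-definedness of $\phi$. You claim that $N$-goodness ``together with its contrapositive'' shows that $\pi\circ\sigma\vert_{[N]}$ both respects and separates the fibers of $\pi\vert_{[N]}$. A statement and its contrapositive are logically identical, and $N$-goodness is only the separation direction: $\pi(i)\ne\pi(j)$ implies $\pi(\sigma(i))\ne\pi(\sigma(j))$. It does not say that $\pi(i)=\pi(j)$ implies $\pi(\sigma(i))=\pi(\sigma(j))$, which is exactly what you need for $\phi(\pi(i)):=\pi(\sigma(i))$ to be well-defined; a priori nothing in the definition prevents $\sigma$ from scattering a single set $[N]\cap U_{\alpha}$ across several parts of $\cU$. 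The fix is a counting argument, and it is how the paper proceeds: the sets $X_{\alpha}=\pi(\sigma([N]\cap U_{\alpha}))$ are non-empty (the condition $N\gg\cU$ guarantees $[N]\cap U_{\alpha}\ne\emptyset$ for every $\alpha$) and pairwise disjoint (by $N$-goodness), and there are $\#\cI$ of them inside the $\#\cI$-element set $\cI$, so each is a singleton. This is precisely the point where the finiteness of $\cI$ and the non-emptiness clause in the definition of $N\gg\cU$ are used. Once this step is inserted, the remainder of your proof --- the size comparison $\lambda_{\phi(\alpha)}\ge\#([N]\cap U_{\alpha})$ forcing $\phi\in\rN_{\lambda}$, the piecewise construction of $\tau$, and the factorization argument in (b) --- is correct and coincides with the paper's.
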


\begin{proof}
For $\alpha \in \cI$, let $X_{\alpha}$ be the set of indices $\beta \in \cI$ such that $\sigma$ maps some element of $[N] \cap \cU_{\alpha}$ into $\cU_{\beta}$. Since $[N] \cap \cU_{\alpha}$ is non-empty, it follows that $X_{\alpha}$ is non-empty. The $N$-goodness of $\sigma$ exactly means that $X_{\alpha}$ and $X_{\beta}$ are disjoint for $\alpha \ne \beta$. Since $\cI$ is finite, it follows that each $X_{\alpha}$ is a singleton. We thus have a unique permutation $\eta \colon \cI \to \cI$ such that $X_{\alpha}=\{\eta(\alpha)\}$.

Since $\sigma$ maps $[N] \cap \cU_{\alpha}$ injectively into $\cU_{\eta(\alpha)}$, we see that $\#([N] \cap \cU_{\alpha}) \le \# \cU_{\eta(\alpha)}$ for all $\alpha$. If $\cU_{\alpha}$ is finite then $[N]$ contains $\cU_{\alpha}$, and so we see $\# \cU_{\alpha} \le \# \cU_{\eta(\alpha)}$. If $\cU_{\alpha}$ is infinite then $[N] \cap \cU_{\alpha}$ is bigger than all the finite pieces of $\cU$, and so $\cU_{\eta(\alpha)}$ must also be infinite. We thus see that $\# \cU_{\alpha} \le \# \cU_{\eta(\alpha)}$ in all cases. Since $\cI$ is finite, it follows that $\# \cU_{\alpha}=\# \cU_{\eta(\alpha)}$ for all $\alpha$.

For $\alpha \in \cI$, define a bijection $\tau_{\alpha} \colon \cU_{\alpha} \to \cU_{\eta(\alpha)}$ as follows. If $\cU_{\alpha}$ is finite, let $\tau_{\alpha}$ be the restriction of $\sigma$ to $\cU_{\alpha}$, which is indeed such a bijection. If $\cU_{\alpha}$ is infinite, then $\sigma$ induces an injection $[N] \cap \cU_{\alpha} \to \cU_{\eta(\alpha)}$, and we let $\tau_{\alpha}$ be any bijective extension of this map. Let $\tau \in \fS$ be the permutation restricting to $\tau_{\alpha}$ on $\cU_{\alpha}$. Then $\tau \in \rN \fY_{\cU}$ and $\sigma(i)=\tau(i)$ for $1 \le i \le N$, as required.

(b) We have
\begin{displaymath}
\sigma(s^k_N) = \prod_{1 \le i,j \le N, \pi(i) \ne \pi(j)} (\xi_{\sigma(i)}-\xi_{\sigma(j)})^k.
\end{displaymath}
Since $\sigma$ is not $N$-good, there exists $1 \le i,j \le N$ with $\pi(i)\ne \pi(j)$ but $\pi(\sigma(i))=\pi(\sigma(j))$. The $(i,j)$ factor in the above product then belongs to $I_{\cU, k}$, which completes the proof.
\end{proof}

We now reach the main result of this section:

\begin{proposition} \label{prop:corr-1}
Let $\fq$ be an $\rN \fY_{\cU}$-ideal of $\tilde{R}_{\cU}$ such that $\tilde{I}_{\cU} \subset \rad(\fq)$.
\begin{enumerate}
\item We have $\fq=(\fq^{\fS c})^e$.
\item Suppose that $\fq^{\fS c}$ is $\fS$-prime. Then $\fq$ is $\rN \fY_{\cU}$-prime.
\end{enumerate}
\end{proposition}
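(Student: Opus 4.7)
I would split the proof along the two claims, with part~(a) furnishing the technical content and part~(b) following as a formal consequence.

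For part~(a), the inclusion $(\fq^{\fS c})^e \subset \fq$ is immediate: since $\fq^{\fS c} = \bigcap_{\sigma \in \fS} \sigma(\fq^c) \subset \fq^c$ and $\tilde{R}_{\cU}$ is a localization of $R$, we have $(\fq^{\fS c})^e \subset (\fq^c)^e \subset \fq$. For the reverse inclusion, the same localization property gives $\fq = (\fq^c)^e$, so it suffices to exhibit every $u \in \fq^c = R \cap \fq$ as an element of $(\fq^{\fS c})^e$. Fix $k$ with $\tilde{I}_{\cU,k} \subset \fq$ (possible since $\tilde{I}_{\cU} \subset \rad(\fq)$), and choose $N$ with $N \gg \cU$ and $N \gg u$. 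The key claim is that $s_N^k u \in \fq^{\fS c}$, i.e., $\sigma(s_N^k u) \in \fq$ for every $\sigma \in \fS$. If $\sigma$ is $N$-good, Lemma~\ref{lem:Ngood}(a) furnishes $\tau \in \rN \fY_{\cU}$ agreeing with $\sigma$ on $[N]$; since $s_N^k u$ involves only $\xi_i$ with $i \le N$, $\sigma(s_N^k u) = \tau(s_N^k u) \in \fq$ by the $\rN \fY_{\cU}$-stability of $\fq$. Otherwise, Lemma~\ref{lem:Ngood}(b) gives $\sigma(s_N^k) \in I_{\cU,k} \subset \tilde{I}_{\cU,k} \subset \fq$, so $\sigma(s_N^k u) \in \fq$ as well. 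Since $s_N$ is a unit of $\tilde{R}_{\cU}$, we recover $u = s_N^{-k}(s_N^k u) \in (\fq^{\fS c})^e$.

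For part~(b), I would use the ideal characterization of equivariant primeness. Let $\fa, \fb$ be $\rN\fY_{\cU}$-ideals of $\tilde{R}_{\cU}$ with $\fa \fb \subset \fq$; the goal is to show $\fa \subset \fq$ or $\fb \subset \fq$. Replacing $\fa$ by $\fa + \fq$ and $\fb$ by $\fb + \fq$ preserves both the product containment and the two target conditions (since $\fa \subset \fq$ iff $\fa + \fq \subset \fq$, and $(\fa+\fq)(\fb+\fq) \subset \fa\fb + \fq \subset \fq$), but now each of $\fa, \fb$ contains $\tilde{I}_{\cU,k}$, so part~(a) applies to each. Chasing contractions to $R$, $\fa^{\fS c} \fb^{\fS c} \subset \fa^c \fb^c \subset (\fa \fb)^c \subset \fq^c$; the left side is $\fS$-stable as a product of $\fS$-ideals of $R$, hence contained in $\bigcap_{\sigma \in \fS} \sigma(\fq^c) = \fq^{\fS c}$. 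By the hypothesis that $\fq^{\fS c}$ is $\fS$-prime, one of $\fa^{\fS c}, \fb^{\fS c}$ lies in $\fq^{\fS c}$; extending and invoking part~(a) on both sides then gives $\fa \subset \fq$ or $\fb \subset \fq$.

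The principal obstacle is part~(a). Because the $\fS$-contraction is an infinite intersection over $\fS$, it is not a priori clear that $\fq^{\fS c}$ is large enough to reconstruct $\fq$ upon extension. The element $s_N^k u$ is engineered precisely to split the orbit of $u$ into two tractable cases: $N$-good permutations reduce to the $\rN\fY_{\cU}$-action (under which $\fq$ is stable), while the remaining permutations are neutralized by the factor $s_N^k$ via the nilpotency of $\tilde{I}_{\cU}$ modulo $\fq$. Once (a) is in place, (b) becomes a formal equivariant diagram chase.
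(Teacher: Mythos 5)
Your part~(a) is essentially the paper's argument verbatim: the element $s_N^k u$ with the $N$-good/not-$N$-good dichotomy from Lemma~\ref{lem:Ngood} is exactly the mechanism used there. Part~(b) is correct but takes a genuinely different route. The paper re-runs the same combinatorial machinery a second time: starting from $f \cdot \sigma(g) \in \fq^c$ for all $\sigma \in \rN\fY_{\cU}$, it shows $\sigma(s_N^k f)\cdot \tau(s_N^k g) \in \fq^c$ for \emph{all} $\sigma,\tau \in \fS$ by the same case analysis, invokes the elemental criterion for $\fS$-primality of $\fq^{\fS c}$, and then clears denominators to pass from $\fq^c$ to $\fq$. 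You instead deduce (b) formally from (a): after the harmless replacement $\fa \mapsto \fa+\fq$, $\fb \mapsto \fb+\fq$ (which puts both ideals in the scope of (a)), the chain $\fa^{\fS c}\fb^{\fS c} \subset (\fa\fb)^c \subset \fq^c$ together with $\fS$-stability of the left side lands the product inside $\fq^{\fS c}$, and extending back via (a) finishes. This is cleaner in that the combinatorial input ($N$-goodness, $s_N^k$) is quarantined entirely in (a), and it makes transparent that (b) is a formal consequence of the bijectivity statement in (a) plus general properties of $\fS$-contraction; the paper's version is more self-contained at the level of elements. The only point worth making explicit in your write-up is the triviality that $\fq$ is a proper ideal (if $\fq = \tilde{R}_{\cU}$ then $\fq^{\fS c}=R$, contradicting $\fS$-primality), since properness is part of the definition of $\rN\fY_{\cU}$-prime.
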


\begin{proof}
Let $k$ be such that $\tilde{I}_{\cU, k} \subset \fq$.

(a) It is clear that $(\fq^{\fS c})^e \subset \fq$. Now suppose that $f \in \fq^c$. Let $N \gg f,\cU$. We claim that $s_N^k f \in \fq^{\fS c}$. We must show that $\sigma(s_N^k f) \in \fq^c$ for any $\sigma \in \fS$. Thus let $\sigma$ be given. If $\sigma$ is $N$-good then there exists $\tau \in \rN \fY_{\cU}$ such that $\sigma(i)=\tau(i)$ for $1 \le i \le N$, and so $\sigma(s_N^k f)=\tau(s_N^k f)$ belongs to $\fq^c$ since $\fq$ is $\rN \fY_{\cU}$-stable. Now suppose that $\sigma$ is not $N$-good. Then $\sigma(s_N^k) \in I_{\cU,k}$, and so $\sigma(s_N^k f) \in \fq^c$. This proves the claim. Now, since $s_N^k$ is a unit of $\tilde{R}_{\cU}$, it follows that $f \in (\fq^{\fS c})^e$. Thus $(\fq^{\fS c})^e$ contains $(\fq^c)^e=\fq$, and so we have $\fq=(\fq^{\fS c})^e$, as required.

(b) We first show that $\fq^c$ is $\rN \fY_{\cU}$-prime. Thus suppose that $f \cdot \sigma(g) \in \fq^c$ for all $\sigma \in \rN \fY_{\cU}$. Since $\fq^c$ is $\rN \fY_{\cU}$-stable, it follows that $\sigma(f) \cdot \tau(g) \in \fq^c$ for all $\sigma,\tau \in \rN \fY_{\cU}$. Let $N \gg \cU,f,g$. We claim that $\sigma(s_N^k f) \cdot \tau(s_N^k g) \in \fq^c$ for all $\sigma,\tau \in \fS$. If $\sigma$ is not $N$-good then $\sigma(s_N^k) \in \fq^c$, which gives the required containment; similarly if $\tau$ is not $N$-good. If both $\sigma$ and $\tau$ are $N$-good then we can find $\sigma',\tau' \in \rN \fY_{\cU}$ such that $\sigma(i)=\sigma'(i)$ and $\tau(i)=\tau'(i)$ for $1 \le i \le N$, and so $\sigma(s_N^k f) \tau(s_N^k g)=\sigma'(s_N^k f) \tau'(s_N^k g)$ belongs to $\fq^c$. This establishes the claim. It follows that $\sigma(s_N^k f) \cdot (s_N^k g) \in \fq^{\fS c}$ for all $\sigma \in \fS$. Since $\fq^{\fS c}$ is $\fS$-prime, it follows that $s_N^k f \in \fq^{\fS c}$ or $s_N^k g \in \fq^{\fS c}$; without loss of generality, assume the former. Thus $s_N^k f \in \fq^c$. Since $s_N^k$ is a unit of $\tilde{R}_{\cU}$, we see that $f \in \fq^c$. Thus $\fq^c$ is $\rN \fY_{\cU}$-prime.

We now show that $\fq$ itself is $\rN \fY_{\cU}$-prime. Thus suppose that $f \cdot \sigma(g) \in \fq$ for all $\sigma \in \rN \fY_{\cU}$, with $f,g \in \tilde{R}_{\cU}$. Write $f=f_0/s$ and $g=g_0/s'$ with $s,s'$ units of $R_{\cU}$ and $f_0,g_0 \in R$. We thus see that $f_0 \cdot \sigma(g_0) \in \fq^c$ for all $\sigma$. Thus, by the previous paragraph, $f_0 \in \fq^c$ or $g_0 \in \fq^c$, and so $f \in \fq$ or $g \in \fq$.
\end{proof}

\subsection{Step 1b} \label{ss:class-1b}

We now prove the other half of Theorem~\ref{thm:class1} (more or less); namely, that $\fp=(\fp^e)^{\fS c}$ for an $\fS$-prime $\fp$ of type $\lambda$. As indicated, this is the harder part of the theorem, since we have to handle arbitrary $\fS$-primes. To deal with this, we leverage our theory of radical $\fS$-primes developed in \cite{svar}. We review the relevant results now.

Recall that an $(R,\fS)$-module is a $\fS$-equivariant $R$-module $M$ such that the action of $\fS$ on $M$ is smooth. Let $M$ be such a module. Let $\supp(M)$ denote the support of $M$, i.e., the set of all (ordinary) primes $\fp$ of $R$ for which the localization $M_{\fp} \ne 0$. This is a subset of $\fX=\Spec(R)$. We have $\supp(M) \subset V(\ann{M})$. Ideally, these two sets would coincide under some reasonable finiteness hypothesis. This is not quite the case, but something close does hold.

Suppose that $A$ is noetherian. Let $\fZ$ be a closed $\fS$-subset of the finitary locus $\fX_{\fin}$. Then $\fZ=\fZ_1 \cup \cdots \cup \fZ_n$ where each $\fZ_i$ is $\fS$-irreducible (this relies on the fact that $A$ is noetherian). Suppose that $\fZ_i$ has type $\lambda_i$. We define $\fZ_i^{\top}$ to be the set of points of $\fZ_i$ of type $\lambda$. We define $\fZ^{\top}=\fZ_1^{\top} \cup \cdots \cup \fZ_n^{\top}$. We note that $\fZ^{\top}$ is Zariski dense in $\fZ$. The following result is \cite[Theorem~9.6]{svar}:

\begin{proposition} \label{prop:supp}
Suppose that $A$ is noetherian. Let $M$ be an $(R,\fS)$-module that is finitely $\fS$-generated (i.e., generated as an $R$-module by finitely many $\fS$-orbits). Suppose that the set $V(\ann(M))$ consists of finitary points. Then
\begin{displaymath}
V(\ann{M})^{\top} \subset \supp(M) \subset V(\ann{M}).
\end{displaymath}
\end{proposition}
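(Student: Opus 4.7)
The easy containment $\supp(M) \subset V(\ann M)$ is standard commutative algebra: if $\fp \not\supset \ann M$, an element of $\ann M$ outside $\fp$ is a unit in $R_\fp$ that annihilates $M_\fp$, forcing $M_\fp = 0$. No equivariance is used.

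For the hard containment $V(\ann M)^\top \subset \supp(M)$, my plan is to reduce to the $\fS$-irreducible case and then exploit the local equivariant structure at a generic point. Using $\fS$-noetherianity (Theorem~\ref{thm:cohen}) and the classification of radical $\fS$-primes (Theorem~\ref{thm:rad-prime}), write $\rad_{\fS}(\ann M)=\fP_1 \cap \cdots \cap \fP_n$ with $\fP_i=\fP(\lambda_i;Z_i)$ and $\fZ_i = V(\fP_i)$. Fix one $i$. Equivariant prime avoidance (Proposition~\ref{prop:avoid}) provides an element $a$ of $\bigcap_{j\ne i}\fP_j$ not in $\fP_i$; then $N=aM$ is a nonzero finitely $\fS$-generated $(R,\fS)$-submodule with $\rad_\fS(\ann N)=\fP_i$, using Proposition~\ref{prop:fin-nilp} to absorb the finite powers required by Cohen's theorem. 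Since $\supp(N)\subset\supp(M)$ and the maximality in Proposition~\ref{prop:type-char} ensures $\fZ_i^\top$ is disjoint from $\bigcup_{j\ne i}\fZ_j$, it suffices to treat each $N$ separately. We may therefore assume $V(\ann M)=\fX_\lambda(Z)$ is $\fS$-irreducible of type $\lambda$.

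Now fix $x\in\fX_\lambda(Z)^\top$ and choose a partition $\cU$ of $[\infty]$ of type $\lambda$ with $x\in\fX_{[\cU]}(Z)$. Invert the units $\xi_i-\xi_j$ with $\pi(i)\ne\pi(j)$ to pass to $\tilde R_\cU$ and consider $\tilde M_\cU=M\otimes_R\tilde R_\cU$; by flatness of localization, $\supp(\tilde M_\cU)=\supp(M)\cap\Spec(\tilde R_\cU)$ and $x$ defines a point of the latter. Using the map of Proposition~\ref{prop:iota}, the jet parametrizing $x$ factors through $\tilde S_\cU/\tilde J_{\cU,n}$ for some $n$; after killing the $\fY_\cU$-invariants from $\tilde S_{\cU,0}$ corresponding to $Z$, one is reduced to a module over a finite-variable polynomial ring carrying an action of a finite product of symmetric groups, where ordinary generic-freeness and minimal-primes arguments yield non-vanishing of the stalk at the image of $x$. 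Pulling back gives $\tilde M_{\cU,x}\ne 0$, hence $M_x\ne 0$.

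The principal obstacle is the last step: producing a nonzero stalk precisely at the generic stratum without the reduction collapsing. The subtlety is that $\fS$-primes need not be radical, so isolating the contribution of a single component $\fZ_i$ must be done with $\fS$-radicals (Propositions~\ref{prop:rad-primes}, \ref{prop:rad-contract}) rather than ordinary ones, and one must verify that the localization $M\mapsto\tilde M_\cU$ does not accidentally kill the component of interest. The hypothesis $x\in\fZ^\top$ is essential here: lower-type points would lie in $\fX_{[\cU']}(Z')$ for a coarser partition $\cU'$, and after localizing at $\tilde R_\cU$ the generic flatness argument would no longer apply.
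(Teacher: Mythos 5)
The paper does not prove this proposition itself: it is quoted verbatim from \cite[Theorem~9.6]{svar}, so there is no internal proof to compare your attempt against. Judged on its own, your writeup gets the easy containment and a workable reduction, but the entire content of the hard containment is missing from the final step.

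The reduction to the $\fS$-irreducible case is essentially right, with two corrections. First, for a single element $a$ the module $aM$ is neither $\fS$-stable nor annihilated by a power of $\fP_i$; you should take $N=X^kM$ where $X$ is the $\fS$-submodule of $R$ generated by $a$ and $k$ is chosen so that $(\fP_iX)^k\subset\ann(M)$ (you allude to this, so it is a fixable imprecision). Second, Proposition~\ref{prop:type-char} does \emph{not} give that $\fZ_i^{\top}$ is disjoint from $\bigcup_{j\ne i}\fZ_j$ — two components of the same type can share top-type points — but this disjointness is not actually needed: it suffices that $x\in V(\ann N)^{\top}$, which holds because $x$ has type $\lambda_i$.

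The genuine gap is the last step. Writing $M=\sum_{j,\sigma}R\,\sigma m_j$ with finitely many generators $m_j$, one has $\supp(M)=\bigcup_{j,\sigma}\sigma V(\ann m_j)$, a union of $\fS$-translates of finitely many closed sets, while $V(\ann M)$ is the Zariski \emph{closure} of that union. What must be ruled out is precisely that this union is dense in $\fX_{\lambda}(Z)$ yet misses some points of type $\lambda$; that is, one must show the translates of the finitely many sets $V(\ann m_j)$ cover the whole top stratum, not merely a dense subset. "Generic freeness and minimal-primes arguments" only yield non-vanishing at generic points, i.e.\ density, which is the trivial direction. Moreover, the mechanism you propose does not act on modules: $\iota\colon\tilde R_{\cU}\to\tilde S_{\cU}/\tilde J_{\cU,n}$ is a ring map with no accompanying functor transporting $M$ to a $\tilde S_{\cU}$-module, and even after passing to $Z$ one is still looking at a module over an infinite-variable ring carrying an action of infinite symmetric groups, so no finite-variable generic-freeness argument applies. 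The covering statement you need — that an $\fS_{>n}$-stable closed subset of the finitary locus whose $\fS$-closure is $\fX_{\lambda}(Z)$ must already meet every top-type point under translation — is exactly the content of \cite[Theorem~9.6]{svar} and requires a separate combinatorial-geometric argument that your sketch does not contain.
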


With this tool in hand, we can now prove our main result:

\begin{proposition} \label{prop:corr-2}
Let $\fp$ be an $\fS$-prime of $R$ of type $\lambda$. Then $\fp=(\fp^e)^{\fS c}$.
\end{proposition}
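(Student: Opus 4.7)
One inclusion is formal: for $f\in\fp$, the $\fS$-stability of $\fp$ gives $\sigma f\in\fp\subset\fp^{ec}$ for every $\sigma\in\fS$, so $f\in I_{\fS}(\fp^{ec})=(\fp^e)^{\fS c}$.

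For the reverse inclusion, I argue by contradiction: suppose some $g\in(\fp^e)^{\fS c}$ is not in $\fp$. Let $\tilde M$ be the $\fS$-stable $R$-submodule of $R$ generated by $\fp$ together with the orbit $\fS g$, and set $M=\tilde M/\fp$, a nonzero $(R,\fS)$-module that is finitely $\fS$-generated by the class of $g$. The module $M$ has two crucial features. First, every element is killed by some product of factors $\xi_i-\xi_j$ with $\pi(i)\ne\pi(j)$: indeed, $\tilde M\subset(\fp^e)^{\fS c}\subset\fp^{ec}$, so for any $h\in\tilde M$ there is such an $s$ with $sh\in\fp$. Second, $\ann(M)=\fp$: the inclusion $\supset$ is obvious, while $\ann(M)\cdot\tilde M\subset\fp$ combined with $\tilde M\not\subset\fp$ and the $\fS$-primality of $\fp$ forces $\ann(M)\subset\fp$.

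After a standard reduction to the residue field at $\fp\cap A$ (using Propositions~\ref{prop:prime-loc} and~\ref{prop:Gc-bc} to transport the desired equality along the base change $A\to\Frac(A/(\fp\cap A))$), I may assume $A$ is noetherian. Since $V(\ann M)=V(\fp)$ consists of finitary points by Proposition~\ref{prop:finitary-char}, the hypotheses of Proposition~\ref{prop:supp} are satisfied and I obtain $V(\fp)^{\top}\subset\supp(M)$. Now write $\rad(\fp)=\fP(\lambda;Z)$ via Theorem~\ref{thm:rad-prime}, and let $\fq_0$ be the generic point of the irreducible component $\overline{\fX_{[\cU]}(Z)}$ of $V(\fp)$; by construction $\fq_0$ is a scheme-theoretic point of type $\lambda$ whose associated partition is literally $\cU$, so no $\xi_i-\xi_j$ with $\pi(i)\ne\pi(j)$ lies in $\fq_0$. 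Localizing at $\fq_0$ therefore inverts every such $\xi_i-\xi_j$, and the torsion property of $M$ forces $M_{\fq_0}=0$, contradicting $\fq_0\in V(\fp)^{\top}\subset\supp(M)$.

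The principal obstacle is arranging the module $M$ so that Proposition~\ref{prop:supp} applies — specifically, identifying $\ann(M)$ with $\fp$ so that $V(\ann M)$ is large enough to contain the generic point of $\overline{\fX_{[\cU]}(Z)}$ — and then recognizing that very generic point as a member of $V(\fp)^{\top}$ disjoint from the $\xi_i-\xi_j$ locus; the noetherian reduction and the easy inclusion are routine bookkeeping with the equivariant machinery of \S\ref{s:eqca}.
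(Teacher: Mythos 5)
Your argument is essentially the paper's proof: both apply Proposition~\ref{prop:supp} to a finitely $\fS$-generated quotient module measuring the failure of the equality, pin down its annihilator as $\fp$ via $\fS$-primality, and derive a contradiction from the fact that every element of the module is killed by a product of factors $\xi_i-\xi_j$ with $\pi(i)\ne\pi(j)$. Your reorganizations are sound and in two places cleaner: taking the cyclic module generated by the orbit of a single $g$ makes finite $\fS$-generation automatic (the paper needs Cohen's theorem for its $M=(\fp^e)^{\fS c}/\fp$), and contradicting the support statement at the generic point of $\ol{\fX_{[\cU]}(Z)}$ --- whose associated partition is literally $\cU$, so that all the relevant $\xi_i-\xi_j$ are invertible there --- avoids the paper's $\fS^{\rm big}$-stability argument. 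The one imprecision is the noetherian reduction: $A\to\Frac(A/(\fp\cap A))$ is not flat, so Proposition~\ref{prop:Gc-bc} does not apply to it as stated. This is easily repaired: either factor the reduction as the quotient $A\to A/(\fp\cap A)$ (harmless, since $(\fp\cap A)R\subset\fp$ and all the relevant extensions and contractions pass to the quotient) followed by the flat localization to the fraction field, using Proposition~\ref{prop:prime-loc} to descend the equality; or follow the paper and write $A$ as a directed union of noetherian subrings, invoking Propositions~\ref{prop:type-contract} and~\ref{prop:limit-contract}.
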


\begin{proof}
First suppose that $A$ is noetherian. Let $\fp'=(\fp^e)^{\fS c}$. It is clear that $\fp \subset \fp' \subset \fp^e$. Let $M=\fp'/\fp$; this is an $(R,\fS)$-module and is finitely $\fS$-generated by Cohen's theorem (Theorem~\ref{thm:cohen}). Since $(\fp')^e=\fp^e$, we find $\tilde{R}_{\cU} \otimes_R M=0$. It follows that $\supp(M) \cap \Spec(\tilde{R}_{\cU})=\emptyset$. If $x$ is a point of $\fX$ of type $\lambda$ then $\fS^{\rm big} x$ meets $\Spec(\tilde{R}_{\cU})$, where $\fS^{\rm big}$ denotes the group of all permutations of $[\infty]$. Since $\supp(M)$ is $\fS^{\rm big}$-stable, it follows that it has no point of type $\lambda$.

Let $\fa=\ann(M)$, an $\fS$-ideal of $R$. We have $\fp \subset \fa$, and so $V(\fa) \subset V(\fp)$ consists of finitary points (see Proposition~\ref{prop:finitary-char}). Appealing to Proposition~\ref{prop:supp}, we find $V(\fa)^{\top} \subset \supp(M)$. In particular, $V(\fa)^{\top}$ does not contain any point of type $\lambda$.

We have $\fa \fp' \subset \fp$, so $\fa \subset \fp$ or $\fp' \subset \fp$ since $\fp$ is $\fS$-prime. As both opposite containments hold, we must have $\fa=\fp$ or $\fp'=\fp$. The former is impossible since $V(\fa)^{\top}$ has no point of type $\lambda$ but $V(\fp)^{\top}$ does, as $\fp$ has type $\lambda$ (see Proposition~\ref{prop:type-char}). Thus $\fp'=\fp$, which completes the proof in the noetherian case.

We now treat the general case. Write $A=\bigcup_{i \in I} A_i$, where $\{A_i\}_{i \in I}$ is a direct family of noetherian subrings of $A$ (e.g., one could take all finitely generated $\bZ$-subalgebras of $A$). Let $R_i=A_i[\xi_j]_{j \ge 1}$, so that $R$ is the directed union of the $R_i$. Let $\fp_i$ be the contraction of $\fp$ to $R_i$; this is an $\fS$-prime of $R_i$ of type $\lambda$ (Proposition~\ref{prop:type-contract}). We thus have
\begin{displaymath}
(\fp^e)^{\fS c}=\big( \bigcup_{i \in I} \fp_i^e \big)^{\fS c}=\bigcup_{i \in I} (\fp_i^e)^{\fS c} = \bigcup_{i \in I} \fp_i = \fp.
\end{displaymath}
In the first and last step, we used the equality $\fp=\bigcup_{i \in I} \fp_i$; in the second, we appealed to Proposition~\ref{prop:limit-contract}; and in the third, we used the noetherian case. The result follows.
\end{proof}

\subsection{Completion of Step~1}

We now finish the proof of Theorem~\ref{thm:class1}. Let $X$ be the set of $\fS$-primes $\fp$ of $R$ of type $\lambda$, and let $Y$ be the set of $\rN \fY_{\cU}$-primes $\fq$ of $\tilde{R}_{\cU}$ such that $\tilde{I}_{\cU} \subset \rad(\fq)$. We must show that extension and $\fS$-contraction induce mutually inverse bijections $X \leftrightarrow Y$.

Suppose $\fq \in Y$. Then $\fq^{\fS c}$ is an $\fS$-prime of $R$ (Proposition~\ref{prop:G-contract}). We claim that it has type $\lambda$. Let $\fq'=\rad(\fq)$. Then $\fq'$ is an $\rN \fY_{\cU}$-prime (Proposition~\ref{prop:rad-prime}) and radical, and contains $\tilde{I}_{\cU}$. Since $\tilde{R}_{\cU}/\tilde{I}_{\cU} \cong \tilde{S}_{\cU,0}$, we see that $\fq'$ corresponds to a $\Aut(\lambda)$-prime $\fc$ of $\tilde{S}_{\cU,0}$. The $\fS$-contraction of $\fq'$ to $R$ is the radical $\fS$-prime $\fP(\lambda; Z)$, where $Z$ is the $\Aut(\lambda)$-irreducible variety defined by $\fc$. Since this contraction coincides with $\rad(\fq^{\fS c})$, the claim follows. We thus see that $\fq^{\fS c} \in X$. Thus $\fS$-contraction does indeed define a map $\phi \colon Y \to X$.

Now suppose that $\fp \in X$. Then $\fp^e$ is certainly an $\rN \fY_{\cU}$-ideal of $\tilde{R}_{\cU}$. By Proposition~\ref{prop:corr-2}, we have $(\fp^e)^{\fS c}=\fp$, and so by Proposition~\ref{prop:corr-1}(b) we see that $\fp^e$ is $\rN \fY_{\cU}$-prime. Thus extension does indeed define a map $\psi \colon X \to Y$.

As we have just seen, Proposition~\ref{prop:corr-2} shows that $\phi \circ \psi=\id_X$. Proposition~\ref{prop:corr-1}(a) gives $\psi \circ \phi=\id_Y$. Thus $\phi$ and $\psi$ are mutually inverse, which completes the proof.

\subsection{Step 2} \label{ss:class2}

We require a few lemmas before proving the theorem.

%

\begin{lemma} \label{lem:class-2-2}
Suppose that $A$ contains a field.
\begin{enumerate}
\item If $\bQ \subset A$ then $\ker(R_{\cU} \to S_{\cU}/J_{\cU,n})=I_{\cU,2n-1}$.
\item If $\bF_p \subset A$ and $n$ is a power of $p$ then $\ker(R_{\cU} \to S_{\cU}/J_{\cU,n})=I_{\cU,n}$.
\end{enumerate}
\end{lemma}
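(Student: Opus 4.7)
The containment $I_{\cU,2n-1} \subset \ker(R_{\cU} \to S_{\cU}/J_{\cU,n})$ in case (a) (resp.\ $I_{\cU,n}$ in case (b)) is immediate from the binomial-expansion computation used in the proof of Proposition~\ref{prop:iota}; the binomial identities do not depend on $A$, so we get this inclusion for free over any base. The entire content of the lemma is therefore the reverse inclusion, i.e., that the kernel is no larger than asserted. My plan has two stages: reduce to the prime field, then decompose the map as a tensor product over the parts of $\cU$ and invoke Theorem~\ref{thm:contract}.

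\textbf{Reduction to the prime field.} Let $K \subset A$ be the prime field (so $K = \bQ$ in case (a) and $K = \bF_p$ in case (b)). Since $K$ is a field, $A$ is flat (in fact free) as a $K$-module. Write $R_{\cU}(K)=K[\xi_i]_{i\ge 1}$, $S_{\cU}(K)=K[t_\alpha]_{\alpha\in\cI}[\epsilon_i]_{i\ge 1}$, and $J_{\cU,n}(K)$, $I_{\cU,m}(K)$ for the analogous ideals over $K$. Then $R_{\cU}(A)=A\otimes_K R_{\cU}(K)$, $S_{\cU}(A)/J_{\cU,n}(A)=A\otimes_K\bigl(S_{\cU}(K)/J_{\cU,n}(K)\bigr)$, and similarly $I_{\cU,m}(A)=A\otimes_K I_{\cU,m}(K)$. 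By $K$-flatness of $A$, the exact sequence
\begin{displaymath}
0 \to \ker(\iota_K) \to R_{\cU}(K) \to S_{\cU}(K)/J_{\cU,n}(K)
\end{displaymath}
tensored with $A$ over $K$ remains exact, so it suffices to prove the kernel statement in the case $A = K$.

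\textbf{Tensor decomposition over a field.} Fix $\alpha\in\cI$ and set $R_\alpha=K[\xi_i]_{i\in\cU_\alpha}$, $S_\alpha=K[t_\alpha,\epsilon_i]_{i\in\cU_\alpha}$, and $J_{\alpha,n}=(\epsilon_i^n)_{i\in\cU_\alpha}\subset S_\alpha$. Let $\iota_\alpha\colon R_\alpha\to S_\alpha$ send $\xi_i\mapsto t_\alpha+\epsilon_i$. Then
\begin{displaymath}
R_{\cU}(K)=\bigotimes_{\alpha\in\cI} R_\alpha,\qquad S_{\cU}(K)/J_{\cU,n}(K)=\bigotimes_{\alpha\in\cI}\bigl(S_\alpha/J_{\alpha,n}\bigr),
\end{displaymath}
and the map $R_{\cU}(K)\to S_{\cU}(K)/J_{\cU,n}(K)$ is the tensor product over $K$ of the maps $\bar\iota_\alpha\colon R_\alpha\to S_\alpha/J_{\alpha,n}$. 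By Theorem~\ref{thm:contract}(a) (together with Remark~\ref{rmk:contract} to pass from finitely to infinitely many variables, using that contraction commutes with directed unions), the kernel of $\bar\iota_\alpha$ is exactly $I_{\alpha,2n-1}$, the ideal of $R_\alpha$ generated by $(\xi_i-\xi_j)^{2n-1}$ for $i,j\in\cU_\alpha$. Likewise Theorem~\ref{thm:contract}(b) gives $\ker(\bar\iota_\alpha)=I_{\alpha,n}$ in case (b).

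\textbf{Assembling the kernel.} For each $\alpha$ we have an injection of $K$-vector spaces $R_\alpha/I_{\alpha,2n-1}\hookrightarrow S_\alpha/J_{\alpha,n}$. Tensor products of injections of $K$-vector spaces are injections (everything is $K$-flat), so
\begin{displaymath}
\bigotimes_{\alpha\in\cI}\bigl(R_\alpha/I_{\alpha,2n-1}\bigr)\;\hookrightarrow\;\bigotimes_{\alpha\in\cI}\bigl(S_\alpha/J_{\alpha,n}\bigr)=S_{\cU}(K)/J_{\cU,n}(K).
\end{displaymath}
Since $I_{\cU,2n-1}$ is generated by the $(\xi_i-\xi_j)^{2n-1}$ with $\pi(i)=\pi(j)$, it is precisely the sum over $\alpha$ of the extensions to $R_{\cU}(K)$ of the $I_{\alpha,2n-1}$, and hence the left-hand tensor product is canonically $R_{\cU}(K)/I_{\cU,2n-1}(K)$. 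This identifies the kernel of $R_{\cU}(K)\to S_{\cU}(K)/J_{\cU,n}(K)$ as exactly $I_{\cU,2n-1}(K)$, proving (a). Part (b) follows identically, replacing $2n-1$ by $n$ and invoking Theorem~\ref{thm:contract}(b).

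The only potential obstacle is ensuring that the single-part, infinite-variable version of Theorem~\ref{thm:contract} really computes the kernel of $\bar\iota_\alpha$ exactly (not just up to radical or up to an $\fS$-ideal); but this is a plain directed-colimit argument and is noted in Remark~\ref{rmk:contract}. All other steps are formal consequences of flatness of $A$ over $K$ and of tensor products of injections of vector spaces being injections.
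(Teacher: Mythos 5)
Your proof is correct and follows essentially the same route as the paper's: reduce to a field base by flatness, decompose $R_{\cU} \to S_{\cU}/J_{\cU,n}$ as a tensor product over the parts of $\cU$, and invoke Theorem~\ref{thm:contract} together with Remark~\ref{rmk:contract} for each factor. The only cosmetic differences are that you reduce to the prime field rather than an arbitrary subfield, and you spell out the tensor-product-of-injections step that the paper leaves implicit.
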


\begin{proof}
First suppose that $A=K$ is a field. For $\alpha \in \cI$, let $R_{\alpha}=K[\xi_i]_{i \in \cU_{\alpha}}$ and $S_{\alpha}=K[t_{\alpha},\epsilon_i]_{i \in \cU_{\alpha}}$. Then $R_{\cU}=\bigotimes R_{\alpha}$ and $S_{\cU}=\bigotimes S_{\alpha}$. Let $I_{\alpha,n} \subset R_{\alpha}$ be the ideal generated by $(\xi_i-\xi_j)^n$ with $i,j \in \cU_{\alpha}$ and let $J_{\alpha,n} \subset S_{\alpha}$ be the ideal generated by $\epsilon_i^n$ with $i \in \cU_{\alpha}$. Then $I_{\cU,n}$ is generated by the $I_{\alpha,n}$'s and $J_{\cU,n}$ by the $J_{\alpha,n}$'s. Finally, $\iota$ decomposes as a tensor product of maps $\iota_{\alpha} \colon R_{\alpha} \to S_{\alpha}$. It thus suffices to show that $\iota_{\alpha}^{-1}(J_{\alpha,n})$ is either $I_{\alpha,2n-1}$ or $I_{\alpha,n}$ depending on which case we are in. This is exactly Theorem~\ref{thm:contract} (see also Remark~\ref{rmk:contract} for the case when $U_{\alpha}$ is infinite).

The general case follows from the field case by base change. Precisely, let $k$ be a subfield of $A$ and use primes to denote the analogous objects over $k$. Then, in case (a), we have shown that $I'_{\cU,2n-1}$ is the kernel of $R'_{\cU} \to S'_{\cU}/J'_{\cU,n}$. Applying the exact functor $A \otimes_k (-)$ yields the analogous statement without primes. Similarly for (b).
\end{proof}

\begin{lemma} \label{lem:class-2-3}
Suppose that $A$ contains a field.
\begin{enumerate}
\item If $\bQ \subset A$ then $\ker(\tilde{R}_{\cU} \to \tilde{S}_{\cU}/\tilde{J}_{\cU,n})=\tilde{I}_{\cU,2n-1}$.
\item If $\bF_p \subset A$ and $n$ is a power of $p$ then $\ker(\tilde{R}_{\cU} \to \tilde{S}_{\cU}/\tilde{J}_{\cU,n})=\tilde{I}_{\cU,n}$.
\end{enumerate}
\end{lemma}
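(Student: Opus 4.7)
The plan is to deduce this from Lemma~\ref{lem:class-2-2} by a pure localization argument, using the exactness of localization and the fact that the relevant localization on the $S$-side collapses correctly modulo $J_{\cU,n}$.

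Let $\phi \colon R_{\cU} \to S_{\cU}/J_{\cU,n}$ denote the composition $R_{\cU} \xrightarrow{\iota} S_{\cU} \to S_{\cU}/J_{\cU,n}$, and let $T \subset R_{\cU}$ be the multiplicative set generated by the elements $\xi_i-\xi_j$ with $\pi(i) \ne \pi(j)$, so that $\tilde{R}_{\cU}=T^{-1}R_{\cU}$. By Lemma~\ref{lem:class-2-2}, we have $\ker(\phi)=I_{\cU,2n-1}$ in case (a) and $\ker(\phi)=I_{\cU,n}$ in case (b). Since localization is exact, it suffices to identify the target of the localized map: concretely, I will show that the natural map
\[
\phi(T)^{-1}\bigl(S_{\cU}/J_{\cU,n}\bigr) \;\longrightarrow\; \tilde{S}_{\cU}/\tilde{J}_{\cU,n}
\]
is an isomorphism, and then appeal to the general fact that the kernel of the localized map $T^{-1}R_{\cU}\to\phi(T)^{-1}(S_{\cU}/J_{\cU,n})$ equals $T^{-1}\ker(\phi)$.

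The key computation is purely formal. For $i,j$ with $\pi(i)=\alpha\neq\beta=\pi(j)$, one has
\[
\phi(\xi_i-\xi_j) \;=\; (t_\alpha-t_\beta) \,+\, (\epsilon_i-\epsilon_j).
\]
In $S_{\cU}/J_{\cU,n}$ the element $\epsilon_i-\epsilon_j$ is nilpotent (since $\epsilon_i^n,\epsilon_j^n$ vanish, so $(\epsilon_i-\epsilon_j)^{2n-1}=0$). Therefore, in any localization of $S_{\cU}/J_{\cU,n}$ in which $t_\alpha-t_\beta$ is inverted, the factorization
\[
(t_\alpha-t_\beta)+(\epsilon_i-\epsilon_j) \;=\; (t_\alpha-t_\beta)\!\left(1+(t_\alpha-t_\beta)^{-1}(\epsilon_i-\epsilon_j)\right)
\]
exhibits $\phi(\xi_i-\xi_j)$ as a unit, since $1$ plus a nilpotent is a unit. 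Conversely, in any localization where $\phi(\xi_i-\xi_j)$ is inverted, the equation $t_\alpha-t_\beta = \phi(\xi_i-\xi_j)-(\epsilon_i-\epsilon_j)$ expresses $t_\alpha-t_\beta$ as a unit minus a nilpotent, hence a unit. So inverting $\phi(T)$ is the same as inverting the $t_\alpha-t_\beta$ with $\alpha\neq\beta$. Since $\tilde{S}_{\cU}$ is by definition $S_{\cU}$ localized at these elements and $\tilde{J}_{\cU,n}=J_{\cU,n}\cdot\tilde{S}_{\cU}$, this identifies $\phi(T)^{-1}(S_{\cU}/J_{\cU,n})$ with $\tilde{S}_{\cU}/\tilde{J}_{\cU,n}$.

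Combining these observations: the localized map $\tilde{R}_{\cU} \to \tilde{S}_{\cU}/\tilde{J}_{\cU,n}$ agrees with the map obtained from $\phi$ by localizing at $T$ upstairs and at $\phi(T)$ downstairs, whose kernel is $T^{-1}\ker(\phi)$. Feeding in the values of $\ker(\phi)$ from Lemma~\ref{lem:class-2-2} gives $\tilde{I}_{\cU,2n-1}$ in case (a) and $\tilde{I}_{\cU,n}$ in case (b). There is no serious obstacle; the only subtlety is verifying that the two natural localizations on the $S$-side coincide, which is handled by the nilpotent-unit computation above.
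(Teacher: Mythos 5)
Your proposal is correct and follows essentially the same route as the paper's proof: both localize Lemma~\ref{lem:class-2-2} at the multiplicative set generated by the $\xi_i-\xi_j$ with $\pi(i)\ne\pi(j)$, and both justify the identification of the localized target with $\tilde{S}_{\cU}/\tilde{J}_{\cU,n}$ by observing that, modulo $J_{\cU,n}$, the images $\phi(\xi_i-\xi_j)$ and the elements $t_\alpha-t_\beta$ differ by nilpotents and hence generate the same localization. Your write-up just makes the nilpotent-unit computation more explicit than the paper does.
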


\begin{proof}
Let $\Sigma \subset R_{\cU}$ be the multiplicative set generated by $\xi_i-\xi_j$ with $\pi(i) \ne \pi(j)$, so that $\tilde{R}_{\cU}$ is the localization of $R_{\cU}$ at $\Sigma$. Let $\Sigma' \subset S_{\cU}$ be the multiplicative set generated by $t_{\alpha}-t_{\beta}$ with $\alpha \ne \beta$, so that $\tilde{S}_{\cU}$ is by definition the localization of $S_{\cU}$ at $\Sigma'$. Modulo $J_{\cU,n}$, the two sets $\Sigma$ and $\Sigma'$ coincide up to nilpotents, i.e., if $x \in \Sigma$ then there exists $y \in \Sigma'$ such that $x-y$ is nilpotent, and conversely (see the proof of Proposition~\ref{prop:iota}). It follows that $\tilde{S}_{\cU}/\tilde{J}_{\cU,n}$ is the localization of $S_{\cU}/J_{\cU,n}$ at $\Sigma$. Thus the result follows from localizing Lemma~\ref{lem:class-2-2} at $\Sigma$.
\end{proof}

\begin{proof}[Proof of Theorem~\ref{thm:class2}]
We proceed in two cases.

\textit{\textbf{Case 1:} $A$ contains a field.}
If $k$ has characteristic~0, let $n$ be a positive integer and $m=2n-1$; if $k$ has positive characteristic~$p$, let $n$ be a power of $p$ and $m=n$. By Lemma~\ref{lem:class-2-3}, we have an injection of rings
\begin{equation} \label{eq:class2}
\tilde{R}_{\cU}/\tilde{I}_{\cU,m} \to \tilde{S}_{\cU}/\tilde{J}_{\cU,n}.
\end{equation}
For each $\alpha \in \cI$, choose $j(\alpha) \in U_{\alpha}$, and let $\fn$ be the ideal of $\tilde{S}_{\cU}/\tilde{J}_{\cU,n}$ generated by the $\epsilon_{j(\alpha)}$ with $\alpha \in \cI$. Since $\fn$ is generated by finitely many nilpotent elements, it is nilpotent.

We claim that
\begin{displaymath}
\tilde{S}_{\cU}/\tilde{J}_{\cU,n} = \tilde{R}_{\cU}/\tilde{I}_{\cU,m} + \fn.
\end{displaymath}
To see this, note that $T=\tilde{R}_{\cU}/\tilde{I}_{\cU,m} + \fn$ is an $A$-subalgebra of $\tilde{S}_{\cU}/\tilde{J}_{\cU,m}$. It therefore suffices to show that $T$ contains each $t_{\alpha}$ and each $\epsilon_i$. By definition, $T$ contains $\iota(\xi_i)=t_{\pi(i)}+\epsilon_i$ for all $i$ and $\epsilon_{j(\alpha)}$ for all $\alpha$. We thus see that $T$ contains $t_{\alpha}=\iota(\xi_{j(\alpha)})-\epsilon_{j(\alpha)}$ for $\alpha$. It therefore also contains $\epsilon_i=\iota(\xi_i)-t_{\pi(i)}$ for all $i$. This proves the claim.

From the above paragraph, we see that \eqref{eq:class2} is a nilpotent extension of rings in the sense of \S \ref{ss:nilp}. Thus, by Proposition~\ref{prop:nilp2}, we see that contraction gives a bijection
\begin{displaymath}
\{ \text{$\fY_{\cU}$-primes $\fp \subset \tilde{R}_{\cU}$ with $\tilde{I}_{\cU,m} \subset \fp$} \} \leftarrow
\{ \text{$\fY_{\cU}$-primes $\fq \subset \tilde{S}_{\cU}$ with $\tilde{J}_{\cU,n} \subset \fq$} \}.
\end{displaymath}
Taking the union over all $n$ yields the desired result.

\textit{\textbf{Case 2:} $A$ is arbitary.}
Let $X$ and $Y$ be the two sets of ideals in the theorem, so that we must show that extension and contraction give mutually inverse bijections $X \leftrightarrow Y$. For a prime number $p$, let $X_p$ be the set of primes $\fp \in X$ such that $\fp \cap \bZ=(p)$; similarly, let $X_0$ be the set of primes $\fp \in X$ such that $\fp \cap \bZ=(0)$. Similarly define $Y_p$ and $Y_0$. Then $X$ is the disjoint union of $X_0$ and the $X_p$, and similarly for $Y$.

Fix a prime $p>0$. Let $\ol{X}_p$ be and $\ol{Y}_p$ be the two sets of primes appearing in Theorem~\ref{thm:class2} for $A/(p)$. Then we have bijections $X_p \leftrightarrow \ol{X}_p$ and $Y_p \leftrightarrow \ol{Y}_p$ that are compatible with extension and contraction. Since extension and contraction give mutually inverse bijections $\ol{X}_p \leftrightarrow \ol{Y}_p$ by the first case, the same holds for $X_p \leftrightarrow Y_p$.

Now, let $\ol{X}_0$ and $\ol{Y}_0$ be the two sets of primes appearing in Theorem~\ref{thm:class2} for $A \otimes \bQ$. Then we have bijections $X_0 \leftrightarrow \ol{X}_0$ and $Y_0 \leftrightarrow \ol{Y}_0$ (by Proposition~\ref{prop:prime-loc}) that are compatible with extension and contraction. Since extension and contraction give mutually inverse bijections $\ol{X}_0 \leftrightarrow \ol{Y}_0$ by the first case, the same holds for $X_0 \leftrightarrow Y_0$. This completes the proof.
\end{proof}

\begin{remark} \label{rmk:class-2} 
The inverse to the bijection in Theorem~\ref{thm:class2} can be described as follows. Let $\fp$ be a $\fY_{\cU}$-prime of $\tilde{R}_{\cU}$ with $\tilde{I}_{\cU} \subset \rad(\fp)$. Let $m_0$ be such that $\tilde{I}_{\cU,m_0} \subset \fp$. If $\fp$ has residue characteristic~0, choose $m \ge m_0$ odd and put $m=2n-1$; if $\fp$ has residue characteristic $p>0$, choose $m \ge m_0$ a power of $p$ and put $m=n$. Let $\fq=\rad_{\fY_{\cU}}(\fp^e+\tilde{J}_{\cU,n})$. Then $\fq$ is $\fY_{\cU}$-prime, contains $\tilde{J}_{\cU}$ in its radical, and contracts to $\fp$. This follows from Proposition~\ref{prop:nilp2}. Thus $\fq$ corresponds to $\fp$ under the bijection in Theorem~\ref{thm:class2}.
\end{remark}

\subsection{Step 3}
We begin with a few lemmas. The following lemma is essentially a special case of the theorem, and contains the main points of the proof.

\begin{lemma} \label{lem:class-3-1}
Suppose that $A=K$ is a field. Let $\fp$ be an $\fS$-prime of $R$ such that $\rad(\fp)=\langle \xi_i \rangle_{i \ge 1}$. Let $n \ge 1$ be minimal such that $\xi_1^n \in \fp$. Then $\fp=\langle \xi_i^n \rangle_{i \ge 1}$.
\end{lemma}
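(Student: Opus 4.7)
The containment $\langle \xi_i^n \rangle \subset \fp$ is immediate from $\fS$-stability of $\fp$ combined with $\xi_1^n \in \fp$. The content of the lemma is the reverse containment; the case $n=1$ is trivial, so I assume $n \ge 2$. My plan is to pass to the quotient ring $\bar{R} = R/\langle \xi_i^n \rangle$ and consider the image $\bar{\fp}$ of $\fp$. Since $\fp$ contains $\langle \xi_i^n \rangle$, the ideal $\bar{\fp}$ is well-defined; by the standard correspondence between $\fS$-primes of $R$ containing a given $\fS$-ideal and $\fS$-primes of the quotient (and noting that $\langle \xi_i^n \rangle$ is $\fS$-prime by Proposition~\ref{prop:powers-prime}), $\bar{\fp}$ is an $\fS$-prime of $\bar{R}$ if nonzero. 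The goal is to show $\bar{\fp}=0$, which I would do by contradiction: assuming $\bar{\fp} \ne 0$, deduce $\bar{\xi}_1^{n-1} \in \bar{\fp}$, contradicting minimality of $n$. The key observation enabling everything is that in $\bar{R}$, multiplication by $\bar{\xi}_i^{n-1}$ annihilates any element whose every monomial has $\bar{\xi}_i$-exponent $\ge 1$, because $\bar{\xi}_i^{n}=0$.

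To extract useful structure from $\bar{\fp}$, I would pick a nonzero $\bar{g} \in \bar{\fp}$ that minimizes $k :=$ the smallest cardinality of the support (set of variables appearing) of any monomial of any nonzero element of $\bar{\fp}$. Since $\bar{\fp} \subset \langle \bar{\xi}_i \rangle$, we have $k \ge 1$. After applying a permutation, I may assume $\bar{g}$ has a monomial with support $[k]$, and choose $N \ge k$ so that $\bar{g}$ uses only $\bar{\xi}_1,\dots,\bar{\xi}_N$. Multiplying by $M := \bar{\xi}_{k+1}^{n-1} \cdots \bar{\xi}_N^{n-1}$ kills every monomial involving any $\bar{\xi}_j$ with $j \in \{k+1,\dots,N\}$, leaving $q(\bar{\xi}_1,\dots,\bar{\xi}_k) \cdot M \in \bar{\fp}$, where $q$ is a nonzero polynomial each of whose monomials has every one of $\bar{\xi}_1,\dots,\bar{\xi}_k$ to positive power (this positivity uses the minimality of $k$). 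Next I would apply $\fS$-primality with $x=q$ and $y=M$: for arbitrary $\sigma \in \fS$, either $\sigma(\{k+1,\dots,N\})$ is disjoint from $[k]$ (and one finds $\tau \in \fS$ fixing $[k]$ with $\tau M=\sigma M$, so $x \cdot \sigma y = \tau(xy) \in \bar{\fp}$), or $\sigma(j)=i$ for some $j>k$ and $i \in [k]$ (in which case $x \cdot \sigma y$ contains the factor $q \cdot \bar{\xi}_i^{n-1}=0$ by the key observation). Hence $q \in \bar{\fp}$ or $M \in \bar{\fp}$.

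The branch $M \in \bar{\fp}$ is dispatched by iterating $\fS$-primality: with $x'=\bar{\xi}_{k+1}^{n-1}$ and $y'$ the remaining factor of $M$, any $\sigma$ sending some index of $y'$ to $k+1$ forces $x'\cdot \sigma y'$ to contain $\bar{\xi}_{k+1}^{2(n-1)}=0$ (using $n \ge 2$), while other $\sigma$ are handled by conjugation; hence $\bar{\xi}_{k+1}^{n-1} \in \bar{\fp}$ (contradicting minimality by $\fS$-stability) or we iterate with one fewer factor and eventually reach a single $\bar{\xi}_i^{n-1} \in \bar{\fp}$, again contradicting minimality. The branch $q \in \bar{\fp}$ I would handle by induction on $k$. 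For $k=1$, write $q = \sum_{a=a_1}^{n-1} c_a \bar{\xi}_1^a$ with $a_1 \ge 1$ and $c_{a_1}\ne 0$; multiplying by $\bar{\xi}_1^{n-1-a_1}$ collapses $q$ to the single term $c_{a_1}\bar{\xi}_1^{n-1}$, so $\bar{\xi}_1^{n-1} \in \bar{\fp}$, contradiction. For $k \ge 2$, let $a_1 = \min_\alpha \alpha_1$ over monomials of $q$; multiplying $q$ by $\bar{\xi}_1^{n-1-a_1}$ retains only monomials with $\alpha_1=a_1$, each becoming $\bar{\xi}_1^{n-1} \cdot \bar{\xi}_2^{\alpha_2}\cdots \bar{\xi}_k^{\alpha_k}$. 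Hence $\bar{\xi}_1^{n-1}\cdot \tilde{q}(\bar{\xi}_2,\dots,\bar{\xi}_k) \in \bar{\fp}$ for a nonzero $\tilde{q}$ in $k-1$ variables, each of whose monomials uses all of them. An $\fS$-primality check analogous to the one above (permutations $\sigma$ either admit a conjugation fixing $1$, or satisfy $\sigma(j)=1$ for some $j \in \{2,\dots,k\}$, in which case $\bar{\xi}_1^{n-1} \cdot \sigma \tilde{q}=0$ since every monomial of $\sigma \tilde{q}$ uses $\bar{\xi}_1$) yields $\bar{\xi}_1^{n-1} \in \bar{\fp}$ (contradiction) or $\tilde{q} \in \bar{\fp}$, invoking the inductive hypothesis.

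The main obstacle is the bookkeeping in each $\fS$-primality check: one must ensure the "positivity" condition on exponents is preserved through every multiplication, so that the permutation cases where far-away variables collide with the active support automatically vanish via the relation $\bar{\xi}_i^n = 0$. Maintaining this condition at each stage---first in passing from $\bar{g}$ to $q$, and then in the inductive reduction from $q$ in $k$ variables to $\tilde{q}$ in $k-1$---is what makes the $\fS$-primality applications go through cleanly.
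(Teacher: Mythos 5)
Your proof is correct. It runs on the same engine as the paper's --- multiply by complementary monomial powers so that the relation $\xi_i^n \in \fp$ kills unwanted terms, then apply $\fS$-primality with a case split on whether $\sigma$ collides with the active variables --- but the first step is organized differently. The paper takes any $f \in \fp \setminus \langle \xi_i^n\rangle_{i\ge 1}$ (with no monomial already in that ideal), picks a monomial $\xi_1^{k_1}\cdots\xi_r^{k_r}$ of $f$ that is \emph{minimal with respect to divisibility}, and multiplies by $\xi_1^{n-1-k_1}\cdots\xi_r^{n-1-k_r}$: minimality forces every other monomial to acquire some exponent $\ge n$, so $(\xi_1\cdots\xi_r)^{n-1}\in\fp$ in one stroke, and a single peeling chain of primality applications (your ``branch $M\in\bar{\fp}$'') then reaches $\xi_1^{n-1}\in\fp$ and the contradiction. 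You instead track the minimal support size $k$ of a monomial occurring in $\bar{\fp}$; this makes your $q$ ``dense'' in $[k]$ but leaves it a genuine polynomial rather than a pure power product, which is what necessitates your second branch and the descent on $k$. (That descent closes more simply than you state: $\tilde{q}$ is a nonzero element of $\bar{\fp}$ with a monomial of support size $k-1$, directly contradicting the minimality of $k$, so no separate induction is needed.) Both arguments are sound; the paper's divisibility-minimal choice is the one extra trick that collapses your two branches into one.
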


\begin{proof}
Let $\fq=\langle \xi_i^n \rangle_{i \ge 1}$. We thus have $\fq \subset \fp$, and we must show $\fp=\fq$. If $n=1$ then $\fq$ is maximal, and so the result follows. Assume $n>1$ in what follows, and suppose by way of contradiction that $\fp \ne \fq$.

Let $h_r=(\xi_1 \cdots \xi_r)^{n-1}$. We claim that $\fp$ contains $h_r$ for some $r \ge 1$. Indeed, let $f$ be an element of $\fp$ not contained in $\fq$. We can assume that no monomial in $f$ belongs to $\fq$. Suppose that $f$ uses $\xi_1, \ldots, \xi_r$. Of the monomials occurring in $f$ with non-zero coefficient, let $\xi_1^{k_1} \cdots \xi_r^{k_r}$ be one that is minimal with respect to divisibility. Multiplying $f$ by $\xi_1^{n-k_1-1} \cdots \xi_r^{n-k_r-1}$, we see that $h_r$ appears in $f$ with non-zero coefficients and all other monomials in $f$ have some exponent $\ge n$ and thus belong to $\fq$. Hence $h_r \in \fp$, as claimed.

Suppose $r>1$ and $h_r \in \fp$. We claim that $h_{r-1} \cdot \sigma(h_1)$ belongs to $\fp$ for all $\sigma \in \fS$. Indeed, if $\sigma(1)$ is distinct from $1, \ldots, r-1$ then $h_{r-1} \cdot \sigma(h_1)$ belongs to the $\fS$-oribt of $h_r$, which belongs to $\fp$. Otherwise, $\sigma(1)$ coincides with one of $1, \ldots, r-1$, and $h_{r-1} \cdot \sigma(h_1)$ is then divisible by $\xi_{\sigma(1)}^{2n-2}$, which belongs to $\fp$ as $2n-2 \ge n$; thus $h_{r-1} \cdot \sigma(h_1) \in \fp$. This proves the claim. Since $\fp$ is $\fS$-prime, we conclude that $\fp$ contains $h_{r-1}$ or $h_1$.

Continuing in this manner, we find that $\fp$ contains $h_1=\xi_1^{n-1}$, contradicting the definition of $n$. We conclude that $\fp=\fq$, as desired.
\end{proof}

\begin{lemma} \label{lem:class-3-2}
Let $K$ be a field and let $\fp$ be an $\fS_n$-prime of $K[\xi_1, \ldots, \xi_n]$ such that $\rad(\fp)=\langle \xi_i \rangle_{1 \le i \le n}$. Then $\fp=\langle \xi_i \rangle_{1 \le i \le n}$.
\end{lemma}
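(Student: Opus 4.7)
The plan is to mimic the argument of Lemma \ref{lem:class-3-1}, modulo one extra twist to handle the fact that in the finite-variable case the ideal $\langle \xi_i^N\rangle_{1\le i\le n}$ is, in contrast to its infinite cousin, \emph{not} $\fS_n$-prime when $N\ge 2$. Put $\fm=\langle \xi_1,\ldots,\xi_n\rangle$ and suppose for contradiction that $\fp\ne\fm$. By symmetry $\xi_1\notin\fp$, so the minimal $N$ with $\xi_1^N\in\fp$ satisfies $N\ge 2$, and by symmetry again $\langle \xi_i^N\rangle_{1\le i\le n}\subset\fp$. For $1\le r\le n$ set $h_r=(\xi_1\cdots\xi_r)^{N-1}$.

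The dichotomy I would set up is based on whether $h_n\in\fp$. If $h_n\notin\fp$, then since $h_n$ is symmetric, $h_n\cdot\sigma(h_n)=h_n^2=\prod_{i=1}^n\xi_i^{2(N-1)}$ lies in $\langle \xi_i^N\rangle\subset\fp$ (using $2(N-1)\ge N$). Taking $f=g=h_n$ therefore directly violates $\fS_n$-primality. If instead $h_n\in\fp$, I would run the reduction from Lemma \ref{lem:class-3-1} verbatim: given $h_r\in\fp$ with $r\ge 2$, I would verify that $h_{r-1}\cdot\sigma(h_1)\in\fp$ for every $\sigma\in\fS_n$, arguing in two subcases. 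When $\sigma(1)\in\{r,r+1,\ldots,n\}$ (a non-empty set because $r\le n$), the product is a squarefree-in-the-variable-supports element of the $\fS_n$-orbit of $h_r$, hence in $\fp$; when $\sigma(1)\in\{1,\ldots,r-1\}$, the product is divisible by $\xi_{\sigma(1)}^{2(N-1)}\in\fp$. By $\fS_n$-primality either $h_{r-1}\in\fp$ or $h_1\in\fp$, and iterating eventually forces $h_1=\xi_1^{N-1}\in\fp$, contradicting the minimality of $N$.

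The only place the finite case departs from Lemma \ref{lem:class-3-1} is the preliminary step of \emph{producing} some $h_r$ inside $\fp$: in the infinite setting this is done via a monomial-minimality argument against the $\fS$-prime $\langle \xi_i^N\rangle_{i\ge 1}$, but in our setting no such $\fS_n$-prime exists, so the above dichotomy on $h_n$ replaces that step. The main obstacle, if one can call it that, is simply noticing that the $h_n\notin\fp$ branch must be handled separately and that the reduction on $r$ stays within $\{1,\ldots,n\}$ since the range $\{r,\ldots,n\}$ used to identify $h_{r-1}\cdot\sigma(h_1)$ with an $\fS_n$-translate of $h_r$ is non-empty precisely when $r\le n$, which holds throughout the induction. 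Both branches yield a contradiction, so $\fp=\fm$.
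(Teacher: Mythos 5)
Your argument is correct, but it takes a genuinely different route from the paper. The paper's proof is a one-liner: since $\fS_n$ is finite, Proposition~\ref{prop:prime-subgroup} (applied with the trivial subgroup, which has finite index) shows that every $\fS_n$-prime is an intersection of ordinary primes and hence radical, so $\fp=\rad(\fp)$ immediately. You instead adapt the combinatorial descent of Lemma~\ref{lem:class-3-1} to the finite-variable setting, and your two modifications are exactly the right ones: the observation that $h_n=(\xi_1\cdots\xi_n)^{N-1}$ is $\fS_n$-invariant, so $h_n\cdot\sigma(h_n)=h_n^2\in\langle\xi_i^N\rangle\subset\fp$ forces $h_n\in\fp$ by $\fS_n$-primality (replacing the monomial-minimality step of the infinite case), and the check that the index set $\{r,\ldots,n\}$ used to realize $h_{r-1}\cdot\sigma(h_1)$ as an $\fS_n$-translate of $h_r$ is nonempty throughout the descent. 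The trade-off is clear: your proof is self-contained and elementary but longer, while the paper's exploits the general structure theory already developed in \S\ref{s:eqca} and, as a bonus, explains \emph{why} the finite case is so much easier than the infinite one (finite-group equivariant primes are always radical, so no weight $e>1$ can appear on a finite part).
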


\begin{proof}
Since $\fS_n$ is finite, any $\fS_n$-prime is radical (Proposition~\ref{prop:prime-subgroup}), so the result follows.
\end{proof}

\begin{lemma} \label{lem:class-3-3}
Let $A=K$ be a field. Let $\fp$ be a $\fY_{\cU}$-prime of $R_{\cU}$ such that $\rad(\fp)=\langle \xi_i \rangle_{i \ge 1}$. For $\alpha \in \cU$, pick $i \in U_{\alpha}$ and let $e(\alpha) \ge 1$ be minimal such that $\xi_i^{e(\alpha)} \in \fp$; note that this is independent of $i$. Then $e$ is a reduced weighting on $\cI$ and we have $\fp=\langle \xi_i^{e(\pi(i))} \rangle_{i \ge 1}$.
\end{lemma}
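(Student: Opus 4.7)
My plan is to adapt the monomial-descent argument of Lemma~\ref{lem:class-3-1} to the Young subgroup setting, and then separately rule out $e(\alpha) \ge 2$ on finite parts by exhibiting a failure of $\fY_{\cU}$-primality. Well-definedness of $e(\alpha)$ is immediate: since $\fS_{U_\alpha} \subseteq \fY_{\cU}$ stabilizes $\fp$ and acts transitively on $\{\xi_i\}_{i \in U_\alpha}$, the minimal $k$ with $\xi_i^k \in \fp$ is the same for every $i \in U_\alpha$. Setting $\fq := \langle \xi_i^{e(\pi(i))} \rangle_{i \ge 1}$, the containment $\fq \subseteq \fp$ is immediate from the definition of $e$, and the task reduces to $\fp \subseteq \fq$ together with the reducedness of $e$.

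For $\fp \subseteq \fq$ I argue by contradiction. Given $f \in \fp \setminus \fq$ chosen so that no monomial of $f$ lies in $\fq$, let $\xi_{i_1}, \ldots, \xi_{i_r}$ be its variables and let $m_0 = \xi_{i_1}^{k_1} \cdots \xi_{i_r}^{k_r}$ be a monomial of $f$ minimal under divisibility; the assumption forces $k_j \le e(\pi(i_j)) - 1$ for each $j$. Multiplying $f$ by $\prod_j \xi_{i_j}^{e(\pi(i_j))-1-k_j}$ produces an element of $\fp$ congruent modulo $\fq$ to the single monomial $h := \prod_{j=1}^{r} \xi_{i_j}^{e(\pi(i_j))-1}$, so $h \in \fp$. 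Because $\rad(\fp) = \langle \xi_i \rangle_{i \ge 1}$ contains no nonzero constant, $m_0$ has positive degree, so at least one index $j_0$ satisfies $e(\pi(i_{j_0})) \ge 2$. After dropping indices with $e(\pi(i_j)) = 1$ (which contribute $1$ to $h$), I may assume all surviving exponents are positive and run an inductive descent on $r$.

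The descent step mirrors the $h_{r-1} \cdot \sigma(h_1)$ argument in Lemma~\ref{lem:class-3-1}. Set $\alpha = \pi(i_1)$, $e_1 = e(\alpha) \ge 2$, and $h' := \prod_{j=2}^{r} \xi_{i_j}^{e(\pi(i_j))-1}$. For each $\sigma \in \fY_{\cU}$ I claim $h' \cdot \sigma(\xi_{i_1}^{e_1-1}) \in \fp$, splitting on $\sigma(i_1) \in U_\alpha$ (using that $\fY_{\cU}$ preserves parts). If $\sigma(i_1) = i_1$, the product equals $h$. If $\sigma(i_1) = i_{j_0}$ with $j_0 \ge 2$, then $\pi(i_{j_0}) = \alpha$ forces $e(\pi(i_{j_0})) = e_1$, so the $\xi_{i_{j_0}}$-exponent in the product is $2(e_1 - 1) \ge e_1$, placing the product in $\fq$. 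If $\sigma(i_1) \notin \{i_1, \ldots, i_r\}$, the transposition $\tau = (i_1\ \sigma(i_1)) \in \fS_{U_\alpha}$ fixes each $i_j$ with $j \ge 2$ and satisfies $\tau(h) = h' \cdot \xi_{\sigma(i_1)}^{e_1-1}$, so the product lies in the $\fY_{\cU}$-orbit of $h$. By $\fY_{\cU}$-primality, either $\xi_{i_1}^{e_1-1} \in \fp$ (contradicting minimality of $e_1$) or $h' \in \fp$, and the descent proceeds. The base case $r = 1$ yields $\xi_{i_1}^{e_1-1} \in \fp$, again contradicting minimality. Hence $\fp = \fq$.

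It remains to show $e$ is reduced. If some $U_\alpha$ were finite with $e(\alpha) \ge 2$, set $f := \prod_{i \in U_\alpha} \xi_i^{e(\alpha)-1}$. Every $\sigma \in \fY_{\cU}$ permutes $U_\alpha$ among itself and therefore fixes $f$, so $f \cdot \sigma(f) = f^2 = \prod_{i \in U_\alpha} \xi_i^{2(e(\alpha)-1)} \in \fq = \fp$, while $f \notin \fq$ since every exponent in $f$ is strictly less than $e(\alpha)$; this contradicts $\fY_{\cU}$-primality. The most delicate point is the three-case analysis in the descent: its correctness hinges on $\sigma \in \fY_{\cU}$ preserving $U_\alpha$, so that any coincidence $\sigma(i_1) = i_{j_0}$ forces $e(\pi(i_{j_0})) = e_1$ and the exponent bound $2(e_1 - 1) \ge e_1$ activates precisely when $e_1 \ge 2$, exactly the range in which we need a contradiction.
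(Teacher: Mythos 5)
Your proof is correct, and it takes a genuinely different route from the paper's. The paper reduces to the single-part case: it writes $R_{\cU}=\bigotimes_{\alpha} R_{\alpha}$ with $\fY_{\cU}=\prod_{\alpha} G_{\alpha}$, contracts $\fp$ to each tensor factor, and invokes Lemma~\ref{lem:class-3-1} on infinite parts and Lemma~\ref{lem:class-3-2} on finite parts (where finiteness of $\fS_n$ forces radicality); it then needs a separate "monomial splitting" step showing that any monomial of $\fp$ has some part-factor in $\fp$, before combining with the minimal-monomial trick. You instead run the descent of Lemma~\ref{lem:class-3-1} globally across all parts at once, with the three-case analysis on $\sigma(i_1)$ doing the work that the tensor decomposition does in the paper; the key observation that $\sigma\in\fY_{\cU}$ preserves parts (so a collision $\sigma(i_1)=i_{j_0}$ forces $e(\pi(i_{j_0}))=e_1$ and hence exponent $2(e_1-1)\ge e_1$) is exactly right, and your handling of the dropped $e=1$ indices is sound since those lie in different parts from any surviving $i_1$. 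Your reducedness argument — the $\fY_{\cU}$-invariant element $f=\prod_{i\in U_{\alpha}}\xi_i^{e(\alpha)-1}$ with $f^2\in\fq=\fp$ but $f\notin\fp$ — is a clean direct primality violation replacing the appeal to Lemma~\ref{lem:class-3-2}, and its placement after establishing $\fp=\fq$ avoids any circularity since the descent never uses reducedness. What the paper's route buys is modularity (it reuses the two preceding lemmas as black boxes and localizes the problem to one part at a time); what yours buys is self-containment, avoiding the need to check that contractions to tensor factors remain equivariant primes with the right radical.
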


\begin{proof}
Let $\fq=\langle \xi_i^{e(\pi(i))} \rangle_{i \ge 1}$. Then $\fq \subset \fp$ and we must prove equality. For $\alpha \in \cI$, let $R_{\alpha}=K[\xi_i]_{i \in U_{\alpha}}$, so that $R=\bigotimes R_{\alpha}$ (as in the proof of Lemma~ \ref{lem:class-2-2}). Let $G=\fY_{\cU}$ and $G_{\alpha}=\fS_{\cU_{\alpha}}$, so that $G=\prod_{\alpha \in \cI} G_{\alpha}$. We can regard $G$ as acting on $R_{\alpha}$ by letting $G_{\beta}$ act trivially for $\beta \ne \alpha$; in this way, the inclusion $R_{\alpha} \to R$ is $G$-equivariant. It follows that the contraction $\fp_{\alpha}$ of $\fp$ to $R_{\alpha}$ is a $G_{\alpha}$-prime. Now, if $U_{\alpha}$ is infinite then $(G_{\alpha}, R_{\alpha})$ is isomorphic to $(\fS, R)$. Thus, by Lemma~\ref{lem:class-3-1}, we find that $\fp_{\alpha}=\langle \xi_i^{e(\alpha)} \rangle_{i \in U_{\alpha}}$. If $U_{\alpha}$ is finite of cardinality $n$ then $(G_{\alpha}, R_{\alpha})$ is isomorphic to $(\fS_n, K[\xi_1, \ldots, \xi_n])$. Thus, by Lemma~\ref{lem:class-3-2}, we find $\fp_{\alpha} = \langle \xi_i \rangle_{i \in U_{\alpha}}$. In particular, we find $e(\alpha)=1$, and so $e$ is reduced. We thus see that for $\alpha \in \cI$, the contractions of $\fp$ and $\fq$ to $R_{\alpha}$ coincide.

Suppose that $m$ is a monomial belonging to $\fp$. Write $m=\prod_{\alpha \in \cI} m_{\alpha}$ where $m_{\alpha}$ is a monomial in $R_{\alpha}$. We claim that $m_{\alpha} \in \fp$ for some $\alpha$. Pick $\beta \in \cI$ and let $m'=\prod_{\alpha \ne \beta} m_{\alpha}$. Given $\sigma=(\sigma_{\alpha})_{\alpha \in \cI}$ in $G$, we have $\sigma m_{\beta}=\sigma_{\beta} m_{\beta}$. Thus $(\sigma m_{\beta}) m'=\sigma_{\beta}(m)$ belongs to $\fp$. Since this holds for all $\sigma$ and $\fp$ is $G$-prime, it follows that $m_{\beta} \in \fp$ or $m' \in \fp$. In the latter case, we apply the same argument to some $\gamma \in \cI \setminus \{\beta\}$ and continue by induction. The claim follows.

Now, suppose $\fq \ne \fp$. We claim that there exists a monomial in $\fp$ that does not belong to $\fq$. The argument is the same as that used in the proof of Lemma~\ref{lem:class-3-1}. Indeed, let $f \in \fp \setminus \fq$. Suppose $f$ used $\xi_1, \ldots, \xi_r$. Of all monomials appearing in $f$ with non-zero coefficient, let $\xi_1^{n_1} \cdots \xi_r^{n_r}$ be one that is minimal with respect to divisibility. Multiplying $f$ by $\xi_1^{e(\pi(1))-n_1-1} \cdots \xi_r^{e(\pi(r))-n_r-1}$, we see that $m=\xi_1^{e(\pi(1))-1} \cdots \xi_r^{e(\pi(r))-1}$ appears with non-zero coefficient, and all other monomials have a factor of the form $\xi_i^k$ with $k \ge e(\pi(i))$ and thus belong to $\fq$. Thus $m \in \fp \setminus \fq$ proving the claim.

Combining the previous two paragraphs, we see that there is some $\alpha \in \cI$ and a monomial $m \in R_{\alpha}$ such that $m \in \fp \setminus \fq$. However, this is a contradiction as $\fp$ and $\fq$ have the same contraction to $R_{\alpha}$. Thus $\fp=\fq$.
\end{proof}

We are now ready to prove the theorem:

\begin{proof}[Proof of Theorem~\ref{thm:class3}]
Let $\fq$ be a $\fY_{\cU}$-prime of $\tilde{S}_{\cU}$ such that $\tilde{J}_{\cU} \subset \rad(\fq)$. Let $\fc$ be the contraction of $\fq$ to $\tilde{S}_{\cU,0}$, which is a prime ideal, and let $K$ be the residue field of $\fc$. Let $S'=\tilde{S}_{\cU} \otimes_{\tilde{S}_{\cU,0}} K \cong K[\epsilon_i]_{i \ge 1}$. The extension $\fq^e$ of $\fq$ to $S'$ is $\fY_{\cU}$-prime (Proposition~\ref{prop:prime-loc}). Since $\tilde{J}_{\cU}^e \subset \rad(\fq^e)$ and $\tilde{J}_{\cU}^e=\langle \epsilon_i \rangle_{i \ge 1}$ is maximal, we have $\rad(\fq^e)=\tilde{J}_{\cU}^e$.

Now, let $R'=K[\xi_i]_{i \ge 1}$ be like $R$ but with coefficient ring $K$. Then the $K$-algebra homomorphism $R' \to S'$ given by $\xi_i \mapsto \epsilon_i$ is an isomorphism and $\fY_{\cU}$-equivariant. It thus follows from Lemma~\ref{lem:class-3-3} that $\fq^e=\langle \epsilon_i^{e(\pi(i))} \rangle_{i \ge 1}$ for some reduced weighting $e$. We thus see that $\fq^e$ coincides with the extension of $\fQ_{\cU}(\lambda, e; Z)$, where $Z \subset \bU^{\cI}_A$ is the variety defined by $\fc$. Hence $\fq=\fQ_{\cU}(\lambda,e;Z)$ by Proposition~\ref{prop:prime-loc}.
\end{proof}

\subsection{Completion of the proof}

We now finish the proof of Theorem~\ref{thm:class}. Let $\fp$ be an finitary $\fS$-prime of $R$ of type $\lambda$. Let $\cI$ be the index set of $\lambda$ and let $\cU$ be a partition of $[\infty]$ of type $\lambda$. Let $\fp_1$ be the extension of $\fp$ to $\tilde{R}_{\cU}$. By Theorem~\ref{thm:class1}, $\fp_1$ is an $\rN \fY_{\cU}$-prime of $\tilde{R}_{\cU}$ with $\tilde{I}_{\cU} \subset \rad(\fp_1)$, and $\fp$ is the $\fS$-contraction of $\fp_1$. Since $\fY_{\cU}$ is a finite index normal subgroup of $\rN \fY_{\cU}$, it follows from Proposition~\ref{prop:prime-subgroup} that there is a $\fY_{\cU}$-prime $\fp_2$ of $\tilde{R}_{\cU}$ such that $\fp_1=\bigcap_{\sigma \in \rN_{\lambda}} \sigma \fp_2$. Since $\fp_1 \subset \fp_2$, we have $\tilde{I}_{\cU} \subset \rad(\fp_2)$. Per Theorem~\ref{thm:class2}, let $\fp_3$ be the unique $\fY_{\cU}$-prime of $\tilde{S}_{\cU}$ that contracts to $\fp_2$ and satisfies $\tilde{J}_{\cU} \subset \rad(\fp_3)$. By Theorem~\ref{thm:class3}, we have $\fp_3=\fQ_{\cU}(\lambda,e;Z)$ for some $e$ and $Z$. We thus see that $\fp$ is the $\fS$-contraction of $\fp_3$ to $R$, and so $\fp=\fP(\lambda,e;Z)$, as required.

The uniqueness aspect of Theorem~\ref{thm:class} follows from the above analysis. Indeed, the type of $\fp$ is unique up to isomorphism by definition. Once this is fixed, the only choice in the above analysis is that of $\fp_2$. Suppose $\fp_2'$ is a second choice, leading to data $(e',Z')$. Then $\fp_2'=\sigma \fp_2$ for some $\sigma \in \rN_{\lambda}$ by Proposition~\ref{prop:prime-subgroup}, which shows that $(e',Z')=\sigma (e,Z)$, and so $(\lambda,e;Z)$ is isomorphic to $(\lambda,e';Z')$.

\section{Containments} \label{s:contain}

In this section we prove the containment theorem (Theorem~\ref{mainthm2}), which is the second principal theorem of this paper.

\subsection{The $\Theta$ construction} \label{ss:theta}

Let $\lambda$ and $\mu$ be $\infty$-compositions with index sets $\cI$ and $\cK$ and let $e$ and $d$ be reduced weightings on $\cI$ and $\cK$. Given a subset $Z$ of $\bU^{\cI}$, we define a closed subset $\Theta^{\lambda,e}_{\mu,d}(Z)$ of $\bU^{\cK}$. When the parameters $\lambda$, $e$, $\mu$, and $d$ are clear from context, as will be the case throughout most of this section, we simply write $\Theta(Z)$. The $\Theta$ construction will be important throughout this section and the remainder of the paper.

Consider a pair $(\cE, \phi)$ where $\cE$ is a subset of $\cI$ and $\phi \colon \cE \to \cK$ is a function. For $\beta \in \cK$, let $\cE_{\beta}$ be the fiber of $\phi$ above $\beta$. We say that the pair $(\cE, \phi)$ is \defi{good} if the following conditions hold:
\begin{itemize}
\item[(G1)] For $\beta \in \cK$ we have $\mu_{\beta} \le \sum_{\alpha \in \cE_{\beta}} \lambda_{\alpha}$.
\item[(G2)] For $\beta \in \cK^{\infty}$ we have $d_{\beta} \le \sum_{\alpha \in \cE^{\infty}_{\beta}} e_{\alpha}$.
\end{itemize}
Condition (G1) implies that $\phi \colon \cE \to \cK$ is surjective. We note that ``good'' depends on $\lambda$, $e$, $\mu$, and $d$. When these quantities are not clear from context, we say that $(\cE, \phi)$ is ``a good pair between $(\mu,d)$ and $(\lambda,e)$.''

Let $(\cE, \phi)$ be a good pair. Let $i \colon \bA^{\cK} \to \bA^{\cE}$ be the map associated to $\phi$, and let $p \colon \bA^{\cI} \to \bA^{\cE}$ be the projection map. We define $\Theta^{\phi}_{\cE}(Z) \subset \bU^{\cK}$ to be $i^{-1}(\ol{p(Z)}) \cap \bU^{\cK}$. Since $\phi$ is surjective, $i$ is a closed immersion, and so $\Theta^{\phi}_{\cE}(Z)$ is a closed subset of $\bU^{\cK}$. Finally, we define $\Theta(Z)$ to be the union of the $\Theta^{\phi}_{\cE}(Z)$ over all good pairs $(\cE,\phi)$. This is a closed subset of $\bU^{\cK}$.

\begin{remark} \label{rmk:theta}
The $\Theta$ construction has  the following geometric interpretation. Regard $\bU^{\cI}$ as the configuration space of distinct points in $\bA^1$ where the points are labeled by $\cI$, have multiplicities specified by $\lambda$, and have weights specified by $e$, as described in \S \ref{ss:intro-spec}. Similarly interpret $\bU^{\cK}$. Let $y=(y_{\beta})_{\beta \in \cK}$ be a point of $\bU^{\cK}$. Consider a 1-parameter family $z(t)$ in $Z$, defined for $t \ne 0$. We regard $z(t)$ as a collection $\{z_{\alpha}(t)\}_{\alpha \in \cI}$ of 1-parameter families in $\bA^1$ defined for $t \ne 0$. Suppose that for each $\beta \in \cK$ there is some subset $\cE_{\beta}$ of $\cI$ such that the following conditions hold:
\begin{enumerate}
\item The $z_{\alpha}(t)$'s with $\alpha \in \cE_{\beta}$ each converge to $y_{\alpha}$ as $t \to 0$.
\item The total multiplicity of the points $z_{\alpha}$ with $\alpha \in \cE_{\beta}$ is at least the multiplicity of $y_{\beta}$.
\item If $y_{\beta}$ has infinite multiplicity, then the total weight of the points $z_{\alpha}$ with $\alpha \in \cE_{\beta}$ and $z_{\alpha}$ of infinite multiplicity, is at least the weight of $y_{\beta}$
\end{enumerate}
Then $y$ belongs to $\Theta(Z)$. To see this, take $\cE=\bigcup_{\beta \in \cK} \cE_{\beta}$, and define $\phi \colon \cE \to \cK$ by mapping $\cE_{\beta}$ to $\beta$. Conditions (b) and (c) show that $(\cE, \phi)$ is good, while condition (a) shows that $i(y)$ belongs to $\ol{p(Z)}$. (Note that the $z_{\alpha}(t)$ with $\alpha \not\in \cE$ do not need to converge as $t \to 0$: this is why it is important to first form the projection $p(Z)$ and only then take the Zariski closure $\ol{p(Z)}$ when defining $\Theta^{\phi}_{\cE}(Z)$.) The converse of the above discussion holds too. We thus see that $\Theta(Z)$ can be described as the set of ``degenerations'' of configurations in $Z$ that satisfy  certain numerical conditions on the multiplicities and weights.
\end{remark}

\subsection{Statement of results}
\label{ss:contain-statement}

Let $(\lambda,e)$ and $(\mu,d)$ be as in the previous section, and let $Z \subset \bU^{\cI}$ and $Y \subset \bU^{\cK}$ be closed subsets. Put $\fp=\fP(\lambda,e;Z)$ and $\fq=\fP(\mu,d;Y)$. The following is the most general version of the containment theorem, and the main result of \S \ref{s:contain}.

\begin{theorem} \label{thm:contain}
We have $\fp \subset \fq$ if and only if $Y \subset \Theta(Z)$.
\end{theorem}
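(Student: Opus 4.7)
The plan is to prove the two directions separately, after a common reduction. Using Proposition~\ref{prop:Pprop}(c), both $\fp$ and $\fq$ decompose as intersections of $\fP$-ideals over irreducible components, and $\Theta$ commutes with finite unions in $Z$ (since $\Theta^\phi_\cE$ does, via $\overline{p(Z_1\cup Z_2)}=\overline{p(Z_1)}\cup\overline{p(Z_2)}$). Since $\fq$ is $\fS$-prime, the standard prime-avoidance argument reduces the containment $\fp\subset\fq$ to $\fP(\lambda,e;Z')\subset\fP(\mu,d;Y_j)$ for an irreducible component $Y_j$ of $Y$ and a suitable irreducible component $Z'$ of $Z$; the hypothesis decomposes analogously. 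With $Y$ irreducible and $\Theta(Z)=\bigcup_{(\cE,\phi)}\Theta^\phi_\cE(Z)$ a finite union of closed sets in $\bU^\cK$, the condition $Y\subset\Theta(Z)$ becomes $Y\subset\Theta^\phi_\cE(Z)$ for a single good pair.

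For the sufficiency direction, I fix a good pair $(\cE,\phi)$ with $Y\subset\Theta^\phi_\cE(Z)$ and use Proposition~\ref{prop:ker-psi}(c) to write $\fq=\bigcap_\psi\ker(\psi)$ over homomorphisms $\psi\colon R\to T$ of class $(\mu,d;Y)$ with $T$ having maximal nilradical. For each such $\psi$, I will construct a homomorphism $\psi'\colon R\to T'$ of class $(\lambda,e;Z)$ and a ring map $\beta\colon T'\to T$ with $\psi=\beta\circ\psi'$; then for $f\in\fp$ we get $\psi(f)=\beta(\psi'(f))=0$ by Proposition~\ref{prop:ker-psi}(b), hence $f\in\fq$. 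To build $\psi'$: let $\cU$ be the type-$\mu$ partition underlying $\psi$; refine $\cU$ to a type-$\lambda$ partition $\cV$ by a map $\tilde\phi\colon\cI\to\cK$ extending $\phi$ on $\cE$, absorbing the excess mass $\sum_{\alpha\in\cE_\beta}\lambda_\alpha-\mu_\beta$ permitted by (G1) into parts indexed by $\cI\setminus\cE$ that still lie in appropriate $U_\beta$. Then use $i(Y)\subset\overline{p(Z)}$ to lift the $Y$-valued coefficient map $\psi_0$ to a $Z$-valued coefficient map $\psi'_0$ on a suitable flat extension of $T$, and use (G2) to pick the nilpotency orders of the $\delta'_i$ so they map under $\beta$ to the $\delta_i$.

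For the necessity direction I argue by contrapositive: assume $Y\not\subset\Theta^\phi_\cE(Z)$ for every good pair, and produce $f\in\fp\setminus\fq$. Following the outline in the introduction, I first handle the special case $\mu=(\infty)$, $d=(n)$, $Y=\{0\}$, where $\fq=\langle\xi_i^n\rangle_{i\ge 1}$ by Example~\ref{ex:P-inf-n-zero}. Here the failure of every good-pair condition translates into a finite system of numerical inequalities on $(\lambda,e)$ and the local structure of $Z$ at $0\in\bA^\cI$; from these I combinatorially build a polynomial $f\in\fp$, verified via the vanishing-of-derivatives description of \S\ref{ss:intro-P} (equivalently via Proposition~\ref{prop:ker-psi}), whose support includes a monomial with every exponent less than $n$, so $f\notin\fq$. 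The general case reduces to this special one by a carefully chosen specialization: composing with an $R$-algebra quotient that collapses $Y$ and $\mu$ to the form $(\{0\},(\infty),(n))$ while preserving failure of $(C)$; the element produced in the special case lifts back to $f\in\fp\setminus\fq$ in general.

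The main obstacle will be the polynomial construction in the necessity direction's special case. Exploiting the simultaneous failure of every good pair to produce a single polynomial whose monomial structure certifies non-membership in $\langle\xi_i^n\rangle_{i\ge 1}$, while respecting the derivative bounds imposed by $e$ along configurations in $Z$, requires delicate combinatorial bookkeeping of which indices contribute to which configuration value. The remaining steps---the reductions, the dual use of Proposition~\ref{prop:ker-psi} on both sides, and the passage from the special case to the general case---form a framework designed to isolate and concentrate the difficulty in this single combinatorial construction.
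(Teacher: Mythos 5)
Your outer reductions (to irreducible $Y$, and via Proposition~\ref{prop:Pprop}(c) and the finiteness of the union defining $\Theta$, to a single good pair) are fine, but the sufficiency direction contains a genuine gap that cannot be repaired within your framework. You propose to factor each class-$(\mu,d;Y)$ homomorphism $\psi$ as $\beta\circ\psi'$ with $\psi'$ of class $(\lambda,e;Z)$. Such a factorization does not exist in the interesting cases. By definition, a class-$(\lambda,e;Z)$ homomorphism involves a map $\psi'_0$ out of $\tilde S_{\cV,0}$, in which every $t_\alpha-t_\beta$ is inverted; hence $\psi'(\xi_i-\xi_j)=(z_{\rho(i)}-z_{\rho(j)})+(\delta'_i-\delta'_j)$ is forced to be a \emph{unit} plus a nilpotent, i.e.\ a unit, whenever $i,j$ lie in different parts of the refining partition $\cV$. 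But when two such parts map under $\phi$ to the same $\beta\in\cK$ (the collision case, which is the entire content of the theorem), $\psi(\xi_i-\xi_j)=\delta_i-\delta_j$ is nilpotent. A ring homomorphism $\beta$ cannot send a unit to a nilpotent element of a nonzero ring, so $\psi\ne\beta\circ\psi'$. The point $i(y)$ lies only in the Zariski \emph{closure} of $p(Z)$, not in $p(Z)$; realizing it requires a degenerating family (a point of $Z$ over the fraction field of a valuation ring $B$ specializing to $y$), and the "limit" is not induced by a ring map from the generic fibre. This is exactly why the paper instead argues by contradiction: given $f\in\fp\setminus\fq$, it first normalizes $f$ by multiplying by discriminants and skew-averaging over Young subgroups to produce an element of $\fp$ of the form $(1+g)\prod\Delta_{I_{\beta,\gamma}}$, and then evaluates this single element at one carefully built homomorphism over $K\otimes_B T_0$, showing the value is a unit times a visibly nonzero product of discriminants in disjoint nilpotent variables. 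Your plan has no substitute for this normalization-plus-degeneration step.

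Two further points. First, you have located the "monomial with all exponents $<n$" device in the wrong direction: in the paper that argument belongs to the sufficiency direction (assume $Y\subset\Theta(Z)$ and $\fp\not\subset\fq$, derive a contradiction), not to the construction of an element of $\fp\setminus\fq$ when $Y\not\subset\Theta(Z)$. For the latter (your "necessity") direction, no reduction to $\fq=\langle\xi_i^n\rangle$ is needed or used: the paper directly constructs $h=h_1h_2h_3$ for arbitrary $(\mu,d;Y)$, where $h_3$ is built from equations of $\ol{p(Z)}$ that fail to vanish at a chosen $y\in Y\setminus\Theta(Z)$, and non-membership in $\fq$ is certified by a single evaluation at a class-$(\mu,d;\{y\})$ homomorphism. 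Your proposed "specialization collapsing $Y$ and $\mu$ to $(\{0\},(\infty),(n))$" is not defined and would in any case lose the role of the defining equations of $\ol{p(Z)}$ at $y$, which is where the hypothesis $y\notin\Theta(Z)$ actually enters. Second, even granting the sufficiency framework, the general case requires the reduction through the localized ideals $\fP^{\phi}_{\cU}$ of $\tilde R_{\cU}$ and flat base change to a local ring with algebraically closed residue field; these steps are absent from your outline.
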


If $Y$ is irreducible then $Y \subset \Theta(Z)$ if and only if $Y \subset \Theta^{\phi}_{\cE}(Z)$ for some good pair $(\cE, \phi)$. This yields the version of the containment theorem formulated in the introduction (Theorem~\ref{mainthm2}).

The proof of Theorem~\ref{thm:contain} is extremely involved, and consumes the rest of this section. We now give an overview. For this, it will be convenient to separate and label the two logical implications of the theorem:
\begin{enumerate}
\item If $\fp \subset \fq$ then $Y \subset \Theta(Z)$.
\item If $Y \subset \Theta(Z)$ then $\fp \subset \fq$.
\end{enumerate}
The proof of (a), carried out in \S \ref{ss:contain-a}, is reasonably straightforward: we prove the contrapositive by simply writing down an element of $\fp$ not in $\fq$. The proof of (b) is much more difficult: the containment $\fp \subset \fq$ is a statement about every element of $\fp$, but it is hard to get a handle on the elements of $\fp$ since it is defined as an infinite intersection of contracted ideals. This second step will take the remainder of \S \ref{s:contain}.

To describe the core idea of the proof of (b), suppose that $A$ is a field, $\mu=(\infty)$, $d=(n)$, and $Y=\{0\}$. In this case, we can write down the ideal $\fq$ explicitly: it is simply $\langle \xi_i^n \rangle_{i \ge 1}$. Suppose that $Y \subset \Theta(Z)$ but $\fp \not\subset \fq$. Let $f$ be an element of $\fp$ that does not belong to $\fq$. Since $f$ does not belong to $\fq$, it contains some monomial with non-zero coefficient where all exponents are $<n$. Multiplying $f$ by certain discriminants and skew-averaging over appropriate symmetric groups, we show that $\fp$ contains an element of the form $f_1 f_2$ where $f_1$ is a disjoint product of certain discriminants and $f_2$ has non-zero constant term. Using the fact that $0 \in \Theta(Z)$, we construct a specific homomorphism $\psi \colon R \to T$ of class $(\lambda,e;Z)$, and show that $\psi(f_1)\ne 0$ and $\psi(f_2)$ is a unit. It follows that $\psi(f_1f_2) \ne 0$, which is a contradiction. We carry out the details of this argument in a simple case in \S \ref{sss:simple}.

The prove (b) in the general, we essentially reduce to the above case. Much of the work in proving (b) is actually devoted to carrying out this reduction, so we attempt to summarize the key ideas. Let $\cU$ be a partition of $[\infty]$ of type $\mu$. In \S \ref{ss:tilde-primes}, we introduce certain $\fY_{\cU}$-ideals of $\tilde{R}_{\cU}$, denoted $\fP^{\phi}_{\cU}(\lambda,e;Z)$. Here $\lambda$ is an $\infty$-composition that refines $\mu$ (and the refinment is specified by $\phi$) and $e$ and $Z$ are as usual. These ideals are analogous to the familiar $\fS$-ideals $\fP(\lambda,e;Z)$. In \S \ref{ss:ext}, we study the extension $\fS$-ideals of $R$ to $\tilde{R}_{\cU}$, and show that the extension $\fP(\lambda,e;Z)$ is a specific finite intersection of ideals of the form $\fP^{\phi}_{\cU}(\kappa,e;Z)$. Using this, in \S \ref{ss:contain2} we reduce Theorem~\ref{thm:contain}(b) to an analogous statement (Theorem~\ref{thm:contain2}) concerning containments $\fp \subset \fq$ where $\fp=\fP^{\phi}_{\cU}(\lambda,e;Z)$ and $\fq=\fP^{\id}_{\cU}(\mu,d;Y)$. The $\id$ in the subscript for $\fq$ indicates that we are considering the trivial refinement of $\mu$; such an ideal is analogous to an $\fS$-ideal of $R$ of the form $\fP(\nu,f;W)$ where $\nu$ has a single part. Thus this reduction nearly places us in the context of the previous paragraph. In \S \ref{ss:punctual}, prove Theorem~\ref{thm:contain2} in the ``punctual case'' where $Y$ is a point: this proceeds much like the previous paragraph. Finally, in \S \ref{ss:contain-gen}, we deduce the general case of Theorem~\ref{thm:contain2} from the punctual case.

\subsection{Proof of Theorem~\ref{thm:contain}(a)} \label{ss:contain-a}

Fix $\fS$-ideals $\fp=\fP(\lambda,e;Z)$, $\fq = \fP(\mu,d;Y)$ as above throughout this subsection. The goal of this subsection is to prove the following result:

\begin{proposition} \label{prop:contain-a}
Assume $Y \not\subset \Theta(Z)$. Then $\fp \not\subset \fq$.
\end{proposition}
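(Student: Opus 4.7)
The plan is to prove the contrapositive: assuming $y \in Y \setminus \Theta(Z)$ for some scheme-theoretic point $y$ with residue field $K$, I will exhibit an explicit element $f \in \fp$ with $f \notin \fq$. The starting move is to form the class-$(\mu,d;Y)$ homomorphism
\[
\psi_y \colon R \to T_y := K[\epsilon_i]_{i \ge 1}\big/\langle \epsilon_i^{d_{\pi(i)}} \rangle_{i \ge 1}, \qquad \xi_i \mapsto y_{\pi(i)} + \epsilon_i,
\]
for a chosen partition $\cU = \{U_\beta\}_{\beta \in \cK}$ of $[\infty]$ of type $\mu$ with associated $\pi \colon [\infty] \to \cK$. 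By Proposition~\ref{prop:ker-psi} we have $\fq \subset \ker(\psi_y)$, so it suffices to produce $f \in \fp$ with $\psi_y(f) \neq 0$.

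Next I would extract the separation data provided by the hypothesis. For each good pair $(\cE,\phi)$ between $(\mu,d)$ and $(\lambda,e)$, the condition $y \notin \Theta^\phi_\cE(Z)$ says $i(y) \notin \overline{p(Z)}$ in $\bA^\cE$, so there is a polynomial $h_{(\cE,\phi)} \in A[t_\alpha]_{\alpha \in \cE}$ vanishing on $p(Z)$ but not at $i(y)$. Choosing a section $n \colon \cE \to [\infty]$ with $n(\alpha) \in U_{\phi(\alpha)}$, I substitute $t_\alpha \mapsto \xi_{n(\alpha)}$ to lift $h_{(\cE,\phi)}$ to $H_{(\cE,\phi)} \in R$.

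The candidate $f$ will be a large product: a suitable power of each $H_{(\cE,\phi)}$ as $(\cE,\phi)$ ranges over good pairs, together with \emph{barrier} factors of the form $(\xi_i - \xi_j)^{k_{ij}}$ whose role is to kill $f$ under class-$(\lambda,e;Z)$ maps coming from pairs that \emph{fail} either (G1) or (G2). To verify $f \in \fp$, by Proposition~\ref{prop:ker-psi} I take an arbitrary class-$(\lambda,e;Z)$ map $\tilde\psi$ with partition $\cV$, read off the pair $(\cE,\phi)$ recording how $\cV$ meets $\cU$, and case-split: if $(\cE,\phi)$ is good, then $H_{(\cE,\phi)}$ vanishes under $\tilde\psi$ because $h_{(\cE,\phi)}|_{p(Z)} = 0$ together with the nilpotency estimate on the $\eta_i$'s controls the error terms; if $(\cE,\phi)$ fails (G1) or (G2), the corresponding barrier factor vanishes under $\tilde\psi$ using the contraction calculation of Theorem~\ref{thm:contract}, which translates bounds on the nilpotent orders $\eta_i^{e_{\rho(i)}}=0$ into vanishing of appropriate discriminant powers. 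Finally, to verify $\psi_y(f) \neq 0$ I note that $\psi_y(H_{(\cE,\phi)}) \equiv h_{(\cE,\phi)}(i(y)) \pmod{\mathrm{maximal\ ideal\ of\ } T_y}$ is a unit, and the barrier factors map to nonzero elements of $T_y$ so long as their exponents stay below the relevant $d_{\pi(i)}$.

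The main obstacle will be choosing the exponents in the barrier factors precisely. Failure of (G1) is a multiplicity condition and failure of (G2) is a weight-sum condition; they are killed by discriminant powers of quite different orders (controlled by the two cases of Theorem~\ref{thm:contract}). The barrier exponents therefore require two separate calibrations, each large enough to force vanishing of $f$ under the corresponding non-good class-$(\lambda,e;Z)$ maps, yet small enough that $\psi_y(f)$ survives in $T_y$; matching these two bounds and patching them together with the $H_{(\cE,\phi)}$-factors is where the combinatorial bookkeeping concentrates.
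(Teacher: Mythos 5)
Your overall strategy is the paper's: prove the contrapositive by exhibiting an explicit $h\in\fp\setminus\fq$, detected on the $\fq$-side by the single class-$(\mu,d;Y)$ map $\psi_y$ at a point $y\in Y\setminus\Theta(Z)$, and built from (i) ``barrier'' factors that kill $h$ under class-$(\lambda,e;Z)$ maps whose read-off pair $(\cE,\phi)$ is not good, and (ii) lifts of equations $u_{\cE,\phi}$ separating $i(y)$ from $\ol{p(Z)}$ that kill $h$ when $(\cE,\phi)$ is good. This is exactly Constructions~\ref{con:g} and~\ref{con:h} and Lemmas~\ref{lem:in-fp}, \ref{lem:not-in-fq}.

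There is, however, a genuine gap in step (ii). You lift $h_{(\cE,\phi)}$ to $R$ by a \emph{single} pre-chosen section $n\colon\cE\to[\infty]$ with $n(\alpha)\in U_{\phi(\alpha)}$. But when you test $f$ against an arbitrary class-$(\lambda,e;Z)$ map $\tilde\psi$ with partition $\cV=\{V_\alpha\}$, you need $\tilde\psi(\xi_{n(\alpha)})=z_\alpha+(\text{nilpotent})$, i.e.\ $n(\alpha)\in V_\alpha$; all that is guaranteed is $n(\alpha)\in U_{\phi(\alpha)}$, and $U_{\phi(\alpha)}$ splits into several parts $V_{\alpha'}$ with $\phi(\alpha')=\phi(\alpha)$. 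If $n(\alpha)\in V_{\alpha'}$ with $\alpha'\ne\alpha$, then $\tilde\psi(H_{(\cE,\phi)})$ evaluates $h_{(\cE,\phi)}$ at a permuted/garbled tuple of the coordinates of $z$, which has no reason to lie on $\ol{p(Z)}$, so this factor need not vanish and $f\in\fp$ fails. The paper's fix is to take, for each good $(\cE,\phi)$, the product of lifts $u_{\cE,\phi,\cV}$ over a finite set $\Pi'_{\cE,\phi}$ of representatives of all compatible partitions restricted to the window $[m]$ (Construction~\ref{con:g}); each such lift is still a unit under $\psi_y$, so the product survives on the $\fq$-side. Two smaller calibration points you should also be aware of: condition (G1) is not enforced by any barrier factor but by choosing the window sizes $\tau_\beta$ large enough (in particular $\tau_\beta=nd_\beta$ with $n$ exceeding the sum of the finite parts of $\lambda$), and the vanishing of the discriminant barriers under non-good maps is an elementary nilpotency computation (a discriminant of more than $e_\alpha$ elements each with $\delta_i^{e_\alpha}=0$ vanishes), not an application of Theorem~\ref{thm:contract}; likewise their survival under $\psi_y$ requires the barriers to be discriminants on \emph{disjoint} blocks of size exactly $d_\beta$, not merely individual factors $(\xi_i-\xi_j)^{k_{ij}}$ with small exponents.
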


For the rest of the subsection, we assume $Y \not\subset \Theta(Z)$. We will show $\fp \not\subset \fq$ by explicitly writing down an element of $\fp$ that does not belong to $\fq$. We fix the following notation:
\begin{itemize}
\item $\cI$ is the index set of $\lambda$ and $\cK$ is the index set of $\mu$.
\item $n=1+\sum_{\alpha \in \cI^{\rf}} \lambda_{\alpha}$.
\item For $\beta \in \cK$ define $\tau_{\beta}=\mu_{\beta}$ if $\mu_{\beta}$ is finite, and $\tau_{\beta}=nd_{\beta}$ if $\mu_{\beta}$ is infinite.
\item $m=\sum_{\beta \in \cK} \tau_{\beta}$.
\item For a subset $S$ of $[\infty]$, we put $\ol{S}=S \cap [m]$.
\item $\cU=\{U_{\beta}\}_{\beta \in \cK}$ is a partition of $[\infty]$ of type $\mu$ such that $\ol{U}_{\beta}$ has size $\tau_{\beta}$ for each $\beta$.
\item For $\beta \in \cK^{\infty}$, let $\ol{U}_{\beta}=\ol{U}_{\beta,1} \sqcup \cdots \sqcup \ol{U}_{\beta,n}$ be a partition of $\ol{U}_{\beta}$ into sets of size $d_{\beta}$.
\item $y$ is a point of $Y$ that does not belong to $\Theta(Z)$. \qedhere
\end{itemize}

\begin{construction} \label{con:g}
Suppose $(\cE, \phi)$ is good.
\begin{itemize}
\item Let $\cV=\{V_{\alpha}\}_{\alpha \in \cI}$ be a partition of $[\infty]$. We say that $\cV$ is \defi{compatible} with $(\cE, \phi)$ if the following conditions hold:
\begin{itemize}[leftmargin=3.5em]
\item[(C1)] We have $\alpha \in \cE$ if and only if $\ol{V}_{\alpha} \ne \emptyset$.
\item[(C2)] We have $\ol{U}_{\beta} = \bigcup_{\alpha \in \cE_{\beta}} \ol{V}_{\alpha}$.
\item[(C3)] We have $\#(\ol{U}_{\beta,i} \cap \ol{V}_{\alpha}) \le e_{\alpha}$ for all $\alpha \in \cI$, $\beta \in \cK^{\infty}$, and $1 \le i \le n$.
\end{itemize}
Let $\Pi_{\cE,\phi}$ be the set of partitions $\cV$ that are compatible with $(\cE, \phi)$. It is easy to see that $\Pi_{\cE,\phi}$ contains a partition of type $\lambda$, and so $\Pi_{\cE,\phi}$ is nonempty.
\item Define two elements $\cV=\{V_{\alpha}\}$ and $\cV'=\{V'_{\alpha}\}$ of $\Pi_{\cE,\phi}$ to be equivalent if $\ol{V}_{\alpha}=\ol{V}'_{\alpha}$ for all $\alpha \in \cI$. Let $\Pi'_{\cE,\phi}$ be a subset of $\Pi_{\cE,\phi}$ containing one representative from each equivalence class. Note that $\Pi'_{\cE,\phi}$ is a finite set.
\item Choose an element  $u_{\cE,\phi} \in A[t_{\alpha}]_{\alpha \in \cE}$ that belongs to the ideal of $\ol{p(Z)} \subset \bA^{\cE}$ but does not vanish at $i(y) \in \bA^{\cE}$. This is possible because $y$ does not belong to $\Theta^{\phi}_{\cE}(Z)=i^{-1}(\ol{p(Z)})$ by assumption.
\item For $\cV \in \Pi'_{\cE,\phi}$, let $u_{\cE,\phi,\cV}$ be an element of $R$ using the variables $\{\xi_i\}_{1 \le i \le m}$ that maps to $u_{\cE,\phi}$ under the homomorphism $R \to A[t_{\alpha}]_{\alpha \in \cI}$ sending $\xi_i$ to $t_{\rho(i)}$, where $\rho \colon [\infty] \to \cI$ is associated to $\cV$. Note that this is possible since $u_{\cE,\phi}$ only uses the $t_{\alpha}$'s with $\alpha \in \cE$ and for every $\alpha \in \cE$ the set $\ol{V}_{\alpha}$ is non-empty.
\item Let $g_{\cE,\phi} = \prod_{\cV \in \Pi'_{\cE,\phi}} u_{\cE,\phi,\cV}^{\# \cE \cdot e_{\max}}$. \qedhere 
\end{itemize}
\end{construction}

\begin{construction} \label{con:h}
Let $N=2e_{\max}-1$. For each good $(\cE, \phi)$, let $g_{\cE,\phi}$ be as in Construction~\ref{con:g}. Now put:
\begin{displaymath}
h_1 = \prod_{\beta \in \cK^{\infty}} \prod_{1 \le k \le n} \Delta_{\ol{U}_{\beta,k}}, \qquad
h_2 = \prod_{\substack{\beta \ne \gamma \in \cK, \\ i \in \ol{U}_{\beta}, j \in \ol{U}_{\gamma}}} (\xi_i-\xi_j)^N, \qquad
h_3 = \prod_{\textrm{$(\cE,\phi)$ good}} g_{\cE,\phi}.
\end{displaymath}
Finally, put $h = h_1 h_2 h_3$.
\end{construction}

An extended example of these constructions is given in \S \ref{ss:circle}.

\begin{remark} \label{rem:optimize}
We note that our arguments below remain unaffected if we replace $h_3 = \prod_{\textrm{$(\cE,\phi)$ good}} g_{\cE,\phi}$ by the product of distinct elements in $\{g_{\cE,\phi} \colon  \textrm{$(\cE,\phi)$ good} \}$. We use this observation to optimize our example in \S \ref{ss:circle}.
\end{remark}	

 Proposition~\ref{prop:contain-a} is a consequence of the following more precise result:

\begin{proposition}
	\label{prop:element-h}
Let $h$ be as in Construction~\ref{con:h}. Then $h \in \fp \setminus \fq$.
\end{proposition}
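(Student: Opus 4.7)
The plan is to verify the two conditions $h \in \fp$ and $h \notin \fq$ separately, in both cases using Proposition~\ref{prop:ker-psi}: $h \in \fp$ amounts to showing $\psi(h)=0$ for every homomorphism $\psi\colon R\to T$ of class $(\lambda,e;Z)$, and $h \notin \fq$ amounts to exhibiting one $\psi$ of class $(\mu,d;Y)$ with $\psi(h)\ne 0$. The factors $h_1,h_2,h_3$ are engineered so that non-vanishing of $\psi(h_1h_2)$ forces the partition $\cV$ underlying $\psi$ to come from a good pair $(\cE,\phi)$, and the element $u_{\cE,\phi}$ inside $h_3$ then kills $\psi(h_3)$ via the hypothesis $y\notin\Theta(Z)$. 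For this argument to be uniform we arrange in Construction~\ref{con:g} that $u_{\cE,\phi,\cV}$ uses only one variable $\xi_{i_\alpha}$ per $\alpha\in\cE$, with $i_\alpha$ chosen in $\ol V_\alpha$.

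For $h\in \fp$, fix $\psi$ of class $(\lambda,e;Z)$, given by a partition $\cV=\{V_\alpha\}_{\alpha\in\cI}$ of type $\lambda$, a classical part $\psi_0$ killing $\fI_0(Z)$, and elements $\delta_i \in T$ with $\psi(\xi_i)=\psi_0(t_{\rho(i)})+\delta_i$ and $\delta_i^{e_{\rho(i)}}=0$. Assume $\psi(h_1 h_2)\ne 0$; I will deduce $\psi(h_3)=0$. Non-vanishing of $\psi(h_2)$ rules out any pair $(i,j)$ with $\rho(i)=\rho(j)$ and $\pi_\cU(i)\ne\pi_\cU(j)$, since the corresponding factor maps to $(\delta_i-\delta_j)^N=0$ via binomial expansion (using $N=2e_{\max}-1\ge 2e_{\rho(i)}-1$). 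Hence each nonempty $\ol V_\alpha$ lies in a single $\ol U_{\phi(\alpha)}$, defining $\phi\colon\cE\to\cK$ with $\cE=\{\alpha:\ol V_\alpha\ne\emptyset\}$; counting $|\ol U_\beta|=\sum_{\alpha\in\cE_\beta}|\ol V_\alpha|$ and using $n > \sum_{\alpha\in\cI^\rf}\lambda_\alpha$ yields (G1). Non-vanishing of $\psi(h_1)$ forces (C3): if $r:=|\ol U_{\beta,k}\cap\ol V_\alpha|>e_\alpha$, then the sub-Vandermonde in $\{\delta_i\}_{i\in\ol U_{\beta,k}\cap\ol V_\alpha}$ is a factor of $\psi(\Delta_{\ol U_{\beta,k}})$ and vanishes, since every term in its determinantal expansion contains some $\delta_i^{r-1}$ with $r-1\ge e_\alpha$. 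A pigeonhole over $k\in\{1,\ldots,n\}$ then extracts (G2): the total "finite" contribution $\sum_k\sum_{\alpha\in\cE_\beta^\rf}|\ol U_{\beta,k}\cap\ol V_\alpha|\le\sum_{\alpha\in\cI^\rf}\lambda_\alpha=n-1$, so some $k^*$ has zero finite contribution, whence $d_\beta\le\sum_{\alpha\in\cE_\beta^\infty}e_\alpha$ by (C3). Thus $(\cE,\phi)$ is good and $\cV\in\Pi_{\cE,\phi}$; let $\cV'\in\Pi'_{\cE,\phi}$ be its equivalence-class representative. The classical part of $\psi(u_{\cE,\phi,\cV'})$ is $\psi_0(u_{\cE,\phi})=0$, since $u_{\cE,\phi}$ lies in the ideal of $\ol{p(Z)}$ and hence of $Z$. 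So $\psi(u_{\cE,\phi,\cV'})\in\fn$, where $\fn\subset T$ is the ideal generated by the $\#\cE$ nilpotents $\delta_{i_\alpha}$ (each of order $\le e_{\max}$); a monomial count then gives $\fn^{\#\cE\cdot e_{\max}}=0$, so $\psi(g_{\cE,\phi})=0$ and $\psi(h_3)=0$.

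For $h\notin\fq$, construct a single $\psi$ of class $(\mu,d;Y)$. Let $K$ be the residue field of $y$, put $T=K[\delta_i]/\langle\delta_i^{d_{\pi_\cU(i)}}\rangle$, and set $\psi_0(t_\beta)=y_\beta$, $\psi(\xi_i)=y_{\pi_\cU(i)}+\delta_i$; since $y\in Y$, this has class $(\mu,d;Y)$. Each factor of $\psi(h_2)$ has invertible constant part $y_\beta-y_\gamma\ne 0$ (coordinates of $y\in\bU^\cK$ are distinct), so $\psi(h_2)$ is a unit. Each $\psi(\Delta_{\ol U_{\beta,k}})$ is the Vandermonde of $d_\beta$ variables $\delta_i$ truncated by $\delta_i^{d_\beta}$, nonzero because its determinantal expansion $\sum_\sigma\sgn(\sigma)\prod_i\delta_i^{\sigma(i)-1}$ is a sum of distinct nonzero monomials with all exponents $\le d_\beta-1$; disjoint factors over $(\beta,k)$ give $\psi(h_1)\ne 0$. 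For each good $(\cE',\phi')$ and each $\cV\in\Pi'_{\cE',\phi'}$, the chosen $i_\alpha\in\ol V_\alpha\subset\ol U_{\phi'(\alpha)}$ satisfies $\pi_\cU(i_\alpha)=\phi'(\alpha)$, so the classical part of $\psi(u_{\cE',\phi',\cV})$ is $u_{\cE',\phi'}(\{y_{\phi'(\alpha)}\}_{\alpha\in\cE'})\ne 0$ by the defining property of $u_{\cE',\phi'}$ (using $y\notin\Theta^{\phi'}_{\cE'}(Z)$), hence $\psi(u_{\cE',\phi',\cV})$ is a unit plus nilpotent and so a unit; therefore $\psi(g_{\cE',\phi'})$ and $\psi(h_3)$ are units. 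Thus $\psi(h)$ is a nonzero element times units, nonzero, giving $h\notin\fq$.

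The main obstacle is the bookkeeping in the first step: extracting (G1), (C3), and (G2) simultaneously from the non-vanishing of $\psi(h_1)$ and $\psi(h_2)$, and then recognizing that the partition $\cV$ is not merely close to but genuinely compatible with the good pair $(\cE,\phi)$ in the precise sense of $\Pi_{\cE,\phi}$. The exponent $\#\cE\cdot e_{\max}$ in $g_{\cE,\phi}$ is calibrated to this application: it works only because $u_{\cE,\phi,\cV}$ is taken to use just $\#\cE$ variables, keeping the ambient nilpotent ideal $\fn$ small enough for the uniform bound $\fn^{\#\cE\cdot e_{\max}}=0$ to hold across all $\psi$.
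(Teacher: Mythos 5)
Your proof is correct and follows essentially the same route as the paper's: both halves go through the class-$(\lambda,e;Z)$ (resp.\ $(\mu,d;Y)$) homomorphism criterion, with non-vanishing of $\psi(h_1h_2)$ forcing a good pair $(\cE,\phi)$ compatible with $\cV$ (via the same counting and pigeonhole arguments for (G1)--(G2) and the discriminant argument for (C3)), the factor $g_{\cE,\phi}$ then killing $\psi(h)$, and the explicit jet at $y$ witnessing $h \notin \fq$. Your stipulation that $u_{\cE,\phi,\cV}$ use one variable $\xi_{i_\alpha}$ per $\alpha \in \cE$ is just an explicit form of the lift the paper's argument tacitly relies on when bounding $\psi(u_{\cE,\phi,\cV'})^{\#\cE\cdot e_{\max}}$.
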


We break the proof of Proposition~\ref{prop:element-h} into two lemmas.

\begin{lemma}
\label{lem:in-fp}
Let $h$ be as in Construction~\ref{con:h}. Then $h \in \fp$.
\end{lemma}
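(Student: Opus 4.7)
The plan is to invoke Proposition~\ref{prop:ker-psi}, which reduces the task to showing $\psi(h)=0$ for every homomorphism $\psi\colon R \to T$ of class $(\lambda,e;Z)$. Such a $\psi$ is determined by a partition $\cV=\{V_\alpha\}_{\alpha\in\cI}$ of $[\infty]$ of type $\lambda$ with associated map $\rho\colon[\infty]\to\cI$, a ring map $\psi_0\colon \tilde{S}_{\cV,0}\to T$ with $\fI_0(Z)\subset\ker(\psi_0)$, and nilpotents $\delta_i\in T$ satisfying $\delta_i^{e_{\rho(i)}}=0$, via $\psi(\xi_i)=\psi_0(t_{\rho(i)})+\delta_i$. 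Set $\cE=\{\alpha\in\cI:\ol{V}_\alpha\ne\emptyset\}$. I will show, by case analysis on how $\cV$ sits inside $\cU$ on $[m]$, that at least one of $h_1,h_2,h_3$ contains a sub-factor mapping to zero under $\psi$.

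If some $\ol{V}_\alpha$ meets two distinct blocks $\ol{U}_\beta$ and $\ol{U}_\gamma$, pick $i\in\ol{U}_\beta$, $j\in\ol{U}_\gamma$ with $\rho(i)=\rho(j)=\alpha$; then $(\xi_i-\xi_j)^N$ appears in $h_2$, and $\psi((\xi_i-\xi_j)^N)=(\delta_i-\delta_j)^N=0$ because $N=2e_{\max}-1\ge 2e_\alpha-1$ forces every binomial term to contain $\delta^{e_\alpha}$. Otherwise, each $\ol{V}_\alpha$ (for $\alpha\in\cE$) lies in a unique $\ol{U}_{\phi(\alpha)}$, defining $\phi\colon\cE\to\cK$; (C1) and (C2) then hold automatically, and (G1) follows from comparing sizes using $\#\ol{U}_\beta=\tau_\beta$ and $n=1+\sum_{\cI^{\rf}}\lambda_\alpha$ (if $\mu_\beta=\infty$ and all $\alpha\in\cE_\beta$ had $\lambda_\alpha$ finite, we would obtain the contradiction $nd_\beta\le n-1$). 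If (C3) fails, some $\ol{U}_{\beta,k}\cap\ol{V}_\alpha$ contains $e_\alpha+1$ indices $i_0,\dots,i_{e_\alpha}$; the sub-discriminant $\prod_{p<q}(\xi_{i_p}-\xi_{i_q})$ of $\Delta_{\ol{U}_{\beta,k}}$ maps to $\prod_{p<q}(\delta_{i_p}-\delta_{i_q})$, and the additive formula for the Vandermonde expresses this as a sum in which every term contains some $\delta_{i_{\sigma(0)}}^{e_\alpha}=0$; hence $\psi(h_1)=0$.

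If (C3) holds, a pigeonhole argument using that $\sum_{\cE_\beta^{\rf}}\#\ol{V}_\alpha\le n-1$ while $\ol{U}_\beta$ is partitioned into $n$ sub-blocks of size $d_\beta$ produces some $k_0$ with $\ol{U}_{\beta,k_0}\subset\bigsqcup_{\cE_\beta^\infty}\ol{V}_\alpha$; (C3) applied to this sub-block then yields $d_\beta\le\sum_{\cE_\beta^\infty}e_\alpha$, i.e.\ (G2). Thus $(\cE,\phi)$ is good, $\cV\in\Pi_{\cE,\phi}$, and some $\cV'\in\Pi'_{\cE,\phi}$ agrees with $\cV$ on $[m]$, making $g_{\cE,\phi}$ a factor of $h_3$. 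Taking $u_{\cE,\phi,\cV'}$ to involve only one variable $\xi_{i_\alpha}$ per $\alpha\in\cE$ (with $i_\alpha\in\ol{V}_\alpha$), its constant Taylor term under $\psi$ is $\psi_0(u_{\cE,\phi})$, which vanishes because $u_{\cE,\phi}$ lies in the ideal of $\ol{p(Z)}$ and $\psi_0|_{A[t_\alpha]_{\alpha\in\cE}}$ factors through $p(\ol{Z})\subset\ol{p(Z)}$. Therefore $\psi(u_{\cE,\phi,\cV'})$ lies in the ideal $(\delta_{i_\alpha}:\alpha\in\cE)$, which is nilpotent of order at most $\sum_{\cE}(e_\alpha-1)+1\le\#\cE\cdot e_{\max}$, so $\psi(u_{\cE,\phi,\cV'})^{\#\cE\cdot e_{\max}}=0$ and $\psi(h_3)=0$. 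The delicate part of the argument is this last case: it requires both the pigeonhole step---which is exactly why $n$ was taken to be $1+\sum_{\cI^{\rf}}\lambda_\alpha$ and why each infinite $\ol{U}_\beta$ was split into $n$ equal sub-blocks---and the nilpotency estimate, which justifies the exponent $\#\cE\cdot e_{\max}$ in the definition of $g_{\cE,\phi}$ together with a careful choice of $u_{\cE,\phi,\cV'}$ using as few variables as possible.
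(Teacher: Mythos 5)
Your proof is correct and follows essentially the same route as the paper's: reduce via the class-$(\lambda,e;Z)$ homomorphisms, show that failure of the refinement condition kills $h_2$, failure of (C3) kills $h_1$ via the additive Vandermonde expansion, and otherwise the pair $(\cE,\phi)$ is good (with (G2) from the same pigeonhole on the sub-blocks $\ol{U}_{\beta,k}$) so that $\psi(u_{\cE,\phi,\cV'})^{\#\cE\cdot e_{\max}}=0$ forces $\psi(h_3)=0$. The only cosmetic difference is that you organize it as a case analysis while the paper assumes $\psi(h_1),\psi(h_2)\neq 0$ and derives $\psi(h_3)=0$.
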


\begin{proof}
Let $\psi \colon R \to T$ be a homomorphism of class $(\lambda,e;Z)$. Write $\psi(\xi_i)=z_{\rho(i)}+\delta_i$ where $\rho \colon [\infty] \to \cI$ is associated to a partition $\cV=\{V_{\alpha}\}_{\alpha \in \cI}$ of type $\lambda$ of $[\infty]$, $z=(z_{\alpha})_{\alpha \in \cI}$ is a $T$-point of $Z$, and $\delta_i^{e(\rho(i))}=0$. We show that $\psi(h)=0$. To do this, we assume that $\psi(h_1)$ and $\psi(h_2)$ are non-zero and show that $\psi(h_3)=0$.

\textit{\textbf{Step 1:} the pair $(\cE, \phi)$.} Let $\cE \subset \cI$ be the set of indices $\alpha$ such that $\ol{V}_{\alpha}$ is non-empty. Then $\{\ol{V}_{\alpha}\}_{\alpha \in \cE}$ forms a partition of $[m]$. We claim that this partition refines the partition $\{\ol{U}_{\beta}\}_{\beta \in \cK}$, meaning that for each $\alpha \in \cE$ there is a (necessarily unique) $\beta \in \cK$ such that $\ol{V}_{\alpha} \subset \ol{U}_{\beta}$. Suppose not. Then there exists $i,j \in \ol{V}_{\alpha}$ with $i \in \ol{U}_{\beta}$ and $j \in \ol{U}_{\gamma}$ for some $\beta \ne \gamma$ in $\cK$. Now, $\psi(\xi_i)-\psi(\xi_j)=\delta_i-\delta_j$, and $(\delta_i-\delta_j)^N=0$. It follows that $\psi((\xi_i-\xi_j)^N)=0$, and so $\psi(h_2)=0$, a contradiction. Thus the claim follows. We therefore find that there is a unique function $\phi \colon \cE \to \cK$ such that $\phi(\alpha)=\beta$ if $\ol{V}_{\alpha} \subset \ol{U}_{\beta}$.

\textit{\textbf{Step 2:} goodness and compatibility.} We now show that $(\cE, \phi)$ is good and $\cV$ is compatible with $(\cE, \phi)$. We know from Step~1 that (C1) and (C2) hold.

We now verify (G1). Suppose $\beta \in \cK$. We have $\ol{U}_{\beta}=\bigcup_{\alpha \in \cE_{\beta}} \ol{V}_{\alpha}$, and so $\# \ol{U}_{\beta}=\sum_{\alpha \in \cE_{\beta}} \# \ol{V}_{\alpha}$. If $\beta \in \cK^{\rf}$ then $\# U_{\beta}=\mu_{\beta}$; since $\# \ol{V}_{\alpha} \le \lambda_{\alpha}$, we thus find $\mu_{\beta} \le \sum_{\alpha \in \cE_{\beta}} \lambda_{\alpha}$, as required. If $\beta \in \cK^{\infty}$ then $\# U_{\beta}$ is greater than the sum of all finite parts of $\lambda$, and so there must be some $\alpha \in \cE_{\beta}$ with $\lambda_{\alpha}=\infty$; thus $\mu_{\beta} \le \sum_{\alpha \in \cE_{\beta}} \lambda_{\alpha}$.

We now verify (C3), namely, that $\#(\ol{V}_{\alpha} \cap \ol{U}_{\beta,k})\le e_{\alpha}$ for all $\alpha \in \cE$, $\beta \in \cK^{\infty}$, and $1 \le k \le n$. Suppose not. For $i,j \in \ol{V}_{\alpha} \cap \ol{U}_{\beta,k}$, we have $\psi(\xi_i-\xi_j)=\delta_i-\delta_j$. We thus see that $\psi(\Delta_{\ol{U}_{\beta,k}})$ contains as a factor the discriminant of $(\delta_i)_{i \in \ol{V}_{\alpha} \cap \ol{U}_{\beta,k}}$. This is a discriminant on $>e_{\alpha}$ quantities, and so in every term at least one of the $\delta_i$ is raised to the $e_{\alpha}$ power. Since $\delta_i^{e_{\alpha}}=0$, we thus see that this discriminant vanishes. Thus $\psi(\Delta_{\ol{U}_{\beta,k}})=0$, and so $\psi(h_1)=0$, a contradiction. This proves the claim.

Finally, we verify (G2). Suppose, by way of contradiction, there is some $\beta \in \cK^{\infty}$ such that $d_{\beta}>\sum_{\alpha \in \cE^{\infty}_{\beta}} e_{\alpha}$. Since $n$ is greater than the sum of all finite parts of $\lambda$, some there is some $1 \le k \le n$ such that $\ol{U}_{\beta,k}$ does not meet any $V_{\alpha}$ with $\alpha$ finite. Since
\begin{displaymath}
\sum_{\alpha \in \cE^{\infty}_{\beta}} e_{\alpha} < d_{\beta}=\# \ol{U}_{\beta,k}=\sum_{\alpha \in \cE^{\infty}_{\beta}} \# (\ol{V}_{\alpha} \cap \ol{U}_{\beta,k}),
\end{displaymath}
there must be some $\alpha \in \cE^{\infty}_{\beta}$ such that $\#(\ol{V}_{\alpha} \cap \ol{U}_{\beta,k})>e_{\alpha}$, contradicting (C3).

\textit{\textbf{Step 3:} vanishing of $\psi(h_3)$.} We now show that $\psi(h_3)=0$. We do this by showing $\psi(g_{\cE,\phi})=0$. In fact, letting $\cV' \in \Pi'_{\cE,\phi}$ be equivalent to $\cV$, we show that $\psi(u_{\cE, \phi,\cV'})^{\# \cE \cdot e_{\max}} = 0$. Since $u_{\cE,\phi}$ vanishes on $p(Z)$ and $(z_{\alpha})_{\alpha \in \cE}$ is a $T$-point of $p(Z)$, it follows that $u_{\cE,\phi}$ vanishes at $(z_{\alpha})$. Thus if we substitute $z_{\rho(i)}$ for $\xi_i$ in $u_{\cE,\phi, \cV'}$ we get~0 (note that $u_{\cE,\phi,\cV'}$ only uses $\xi_i$ with $i \le m$, so we can change $\cV'$ to $\cV$ here). Therefore when we substitute $\psi(\xi_i)=z_{\rho(i)}+\delta_i$ for $\xi_i$ in $u_{\cE, \phi,\cV'}$, we get something that belongs to the ideal of $T$ generated by the $\delta_i$ with $1 \le i \le m$. Since $\delta_i^{e_{\max}}=0$ and $u_{\cE, \phi,\cV'}$ uses at most $\# \cE$ variables, we see that $\psi(u_{\cE, \phi,\cV'})^{\# \cE \cdot e_{\max}} = 0$. The result follows.
\end{proof}

\begin{lemma}
	\label{lem:not-in-fq}
Let $h$ be as in Construction~\ref{con:h}. Then $h \not\in \fq$.
\end{lemma}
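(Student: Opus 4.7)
The plan is to invoke Proposition~\ref{prop:ker-psi}: since $\fq=\fP(\mu,d;Y)$ equals the intersection of $\ker(\psi)$ over all ring homomorphisms $\psi \colon R \to T$ of class $(\mu,d;Y)$, it suffices to exhibit a single such $\psi$ satisfying $\psi(h) \ne 0$.

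For the construction, I would use the point $y \in Y$ together with the partition $\cU=\{U_\beta\}_{\beta \in \cK}$ fixed before Construction~\ref{con:g}. Let $\Omega$ denote the residue field of $y$ viewed as a point of $\tilde S_{\cU,0}/\fI_0(Y)$, and set
\[
T \;=\; \Omega[\epsilon_i]_{i \ge 1}\big/\bigl\langle \epsilon_i^{d(\pi(i))} \,\big|\, i \ge 1\bigr\rangle,
\]
where $\pi \colon [\infty] \to \cK$ is the function associated to $\cU$. Define $\psi \colon R \to T$ by $\psi(\xi_i) = y_{\pi(i)} + \epsilon_i$, where $y_{\pi(i)} \in \Omega$ is the image of $t_{\pi(i)}$. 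Taking $\psi_0 \colon \tilde S_{\cU,0} \to \Omega$ to be the residue map at $y$, conditions (a)--(c) in the definition of class are immediate, so $\psi$ is of class $(\mu,d;Y)$.

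To verify $\psi(h) \ne 0$, I would examine the three factors of $h=h_1 h_2 h_3$ separately. For $h_2$, each factor $\psi(\xi_i-\xi_j)=(y_{\pi(i)}-y_{\pi(j)})+(\epsilon_i-\epsilon_j)$ is indexed by a pair with $\pi(i) \ne \pi(j)$, so the constant term is a nonzero element of $\Omega$ (since $y \in \bU^{\cK}$) and the nilpotent correction makes the whole expression a unit in $T$; hence $\psi(h_2)$ is a unit. For $h_3$, I would show each factor $\psi(u_{\cE,\phi,\cV})$ is a unit: by compatibility condition (C2), for every $\alpha \in \cE$ and every $i \in \ol V_\alpha$ one has $\pi(i)=\phi(\alpha)$, so reducing modulo the nilpotent ideal $(\epsilon_i)_{i \ge 1}$ in $T$ gives $\psi(u_{\cE,\phi,\cV}) \equiv u_{\cE,\phi}(i(y)) \pmod{(\epsilon_i)}$. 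By the very choice of $u_{\cE,\phi}$ in Construction~\ref{con:g}, this value is nonzero in $\Omega$, so $\psi(u_{\cE,\phi,\cV})$ is a unit in $T$, and thus $\psi(h_3)$ is too.

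The main obstacle---and the only step that is not a bare constant-term check---is showing $\psi(h_1) \ne 0$. Because the sets $\ol U_{\beta,k}$ are pairwise disjoint by construction, the factors $\psi(\Delta_{\ol U_{\beta,k}})$ involve pairwise disjoint collections of $\epsilon$-variables, and $T$ decomposes as a tensor product of truncated polynomial rings in which each factor lives in its own tensor summand; so it suffices to verify that each individual discriminant is nonzero. For $i,j \in \ol U_{\beta,k}$ we have $\psi(\xi_i-\xi_j)=\epsilon_i-\epsilon_j$, so $\psi(\Delta_{\ol U_{\beta,k}})$ is the discriminant on $d_\beta$ variables $\epsilon_i$ (with $i \in \ol U_{\beta,k}$) subject to $\epsilon_i^{d_\beta}=0$. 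Using the additive expansion
\[
\Delta(\epsilon_{i_1},\ldots,\epsilon_{i_{d_\beta}}) \;=\; \sum_{\sigma \in \fS_{d_\beta}} \mathrm{sgn}(\sigma)\, \epsilon_{i_{\sigma(1)}}^{d_\beta-1} \epsilon_{i_{\sigma(2)}}^{d_\beta-2} \cdots \epsilon_{i_{\sigma(d_\beta)}}^0,
\]
every exponent appearing is at most $d_\beta-1<d_\beta$, and the resulting monomials are pairwise distinct elements of the standard monomial basis of the truncated polynomial ring; hence the signed sum does not vanish. Combining the three calculations, $\psi(h)=\psi(h_1)\psi(h_2)\psi(h_3)$ is a nonzero element of $T$, and since $\fq \subset \ker(\psi)$, we conclude $h \notin \fq$.
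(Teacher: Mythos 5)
Your proof is correct and follows essentially the same route as the paper: you construct the same test homomorphism $\psi(\xi_i)=y_{\pi(i)}+\epsilon_i$ of class $(\mu,d;Y)$, show $\psi(h_2)$ and $\psi(h_3)$ are units by reducing modulo the nilpotents, and show $\psi(h_1)\ne 0$ via the additive expansion of the discriminant together with disjointness of the variable sets. No further comment is needed.
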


\begin{proof}
Let $\Omega$ be the residue field of the point $y \in Y$, which is naturally an $A$-algebra. We can thus represent $y$ as a tuple $(y_{\beta})_{\beta \in \cK}$ with $y_{\beta} \in \Omega$. let $T=\Omega[\delta_i]/\langle \delta_i^{d(\pi(i))} \rangle$ and consider the $A$-algebra homomorphism $\psi \colon R \to T$ given by $\psi(\xi_i) = y_{\pi(i)} + \delta_i$, where $\pi \colon [\infty] \to \cK$ is associated to $\cU$. Then $\psi$ is a map of class $(\mu, d; Y)$. Thus to show that $h \not\in \fq$, it suffices to show that $\psi(h) \ne 0$. We show that $\psi(h_2)$ and $\psi(h_3)$ are units and $\psi(h_1)$ is non-zero.

\textit{\textbf{Step 1:} $\psi(h_1)$ is non-zero.} The element $\psi(\Delta_{\ol{U}_{\beta,k}})$ is the discriminant of the $d_{\beta}$ elements $\{\delta_i \mid i \in \ol{U}_{\beta,k}\}$. This is a polynomial in which all powers of the variables are $<d_{\beta}$ (see \S \ref{ss:disc}), and thus a non-zero element of $T$. (Note that for $i \in \ol{U}_{\beta,k}$, the defining relation for $\delta_i$ is $\delta_i^{d_{\beta}}=0$.) We have
\begin{displaymath}
\psi(h_1) = \prod_{\beta \in \cK^{\infty}} \prod_{1 \le k \le n} \psi(\Delta_{\ol{U}_{\beta,k}}).
\end{displaymath}
We have just seen that each factor in the product is non-zero. Since the factors use disjoint sets of variables, it follows that their product is non-zero as well.

\textit{\textbf{Step 2:} $\psi(h_2)$ is a unit.} We have
\begin{displaymath}
\psi(h_2) = \prod_{\substack{\beta \ne \gamma \in \cK, \\ i \in \ol{U}_{\beta}, j \in \ol{U}_{\gamma}}} (y_{\beta}-y_{\gamma})^N \pmod{\langle \delta_i \rangle_{i \ge 1}}
\end{displaymath}
Since $y$ is a point of $Y \subset \bU^{\cK}$, we have $y_{\beta} \ne y_{\gamma}$, and so the product is a non-zero element of $\Omega$. Since the $\delta_i$'s are nilpotent, it follows that $\psi(h_2)$ is a unit.

\textit{\textbf{Step 3:} $\psi(h_3)$ is a unit.}
Suppose $(\cE, \phi)$ is good and $\cV \in \Pi'_{\cE, \phi}$. Since $u_{\cE,\phi}$ does not vanish at $i(y)$, we see that by taking $u_{\cE,\phi}$ and performing the substitutions $t_{\alpha} \to y_{\phi(\alpha)}$, we obtain a non-zero element of $\Omega$. As $\psi(u_{\cE,\phi,\cV})$ is equal to this element modulo the $\delta_i$'s, and the $\delta_i$'s are nilpotent, we see that $\psi(u_{\cE,\phi,\cV})$ is a unit of $T$. Thus $\psi(h_3)$ is a unit of $T$.
\end{proof}

\subsection{Ideals of $\tilde{R}_{\cU}$} \label{ss:tilde-primes}

Let $\mu$ be an $\infty$-composition with index set $\cK$, let $\cU=\{U_{\beta}\}_{\beta \in \cK}$ be a partition of $[\infty]$ of type $\mu$, and let $\pi \colon [\infty] \to \cK$ be the associated function. Let $\lambda$ be an $\infty$-composition with index set $\cI$, and let $\phi \colon \cI \to \cK$ be a function such that $\mu_{\beta}=\sum_{\phi(\alpha)=\beta} \lambda_{\alpha}$; we call $(\lambda, \phi)$ a \defi{refinement} of $\mu$. Let $\cV=\{V_{\alpha}\}_{\alpha \in \cI}$ be a partition of $[\infty]$ of type $\lambda$. We say that $\cV$ is a \defi{refinement} of $\cU$ if $V_{\alpha} \subset U_{\phi(\alpha)}$ for all $\alpha \in \cI$. In this case, $\{V_{\alpha}\}_{\alpha \in \cI_{\beta}}$ forms a partition of the set $U_{\beta}$ for each $\beta \in \cK$.

Let $(\lambda,\phi)$ be a refinement of $\mu$, let $e$ be a weighting on $\cI$, and let $Z$ be a subset of $\bU^{\cI}$. Choose a partition $\cV$ of $[\infty]$ of type $\lambda$ refining $\cU$. We then have a natural homomorphism $\tilde{R}_{\cU} \to \tilde{R}_{\cV}$. Recall that $\fQ_{\cV}(\lambda,e;Z)$ is the prime of $\tilde{S}_{\cV}$ associated to $(\lambda,e;Z)$ (see Definition~\ref{defn:Q-ideal}). We define $\fP^{\phi}_{\cU}(\lambda,e;Z)$ to be the $\fY_{\cU}$-contraction of $\fQ_{\cV}(\lambda,e;Z)$ along the maps $\tilde{R}_{\cU} \to \tilde{R}_{\cV} \to \tilde{S}_{\cV}/\tilde{J}_{\cV,n}$ for $n$ sufficiently large. This is a $\fY_{\cU}$-ideal that is independent of the choice of $\cV$. These ideals are similar to the $\fP$ ideals defined in \S \ref{ss:P}. We note a few properties in particular.

\begin{proposition} \label{prop:Pphi-prop}
We have the following:
\begin{enumerate}
\item If $e$ is reduced and $Z$ is irreducible then $\fP^{\phi}_{\cU}(\lambda,e;Z)$ is $\fY_{\cU}$-prime.
\item We have $\rad_{\fY_{\cU}}(\fP^{\phi}_{\cU}(\lambda,e;Z))=\fP^{\phi}_{\cU}(\lambda, \red_{\lambda}(e); Z)$.
\item If $Z=\bigcup_{i \in I} Z_i$ then $\fP^{\phi}_{\cU}(\lambda, e; Z)=\bigcap_{i \in I} \fP^{\phi}_{\cU}(\lambda, e; Z_i)$.
\end{enumerate}
\end{proposition}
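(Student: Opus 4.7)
The plan is to follow exactly the strategy used in the proof of Proposition~\ref{prop:Pprop}, transferring the three properties from the $\fQ$-side to the $\fP^{\phi}$-side via the formal properties of equivariant contraction. By construction, $\fP^{\phi}_{\cU}(\lambda,e;Z)$ is the $\fY_{\cU}$-contraction of $\fQ_{\cV}(\lambda,e;Z)$ along the composition $\tilde{R}_{\cU} \to \tilde{R}_{\cV} \to \tilde{S}_{\cV}/\tilde{J}_{\cV,n}$ for $n$ sufficiently large; call this composite map $\iota'$. The three claims will then follow from the corresponding $\fQ$-facts proved in Propositions~\ref{prop:Qprime}, \ref{prop:Qrad2}, and~\ref{prop:Qunion}, combined with Proposition~\ref{prop:G-contract}.

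A preliminary step is to verify that $\fY_{\cV}$ is a big subgroup of $\fY_{\cU}$, which will be needed to apply part (c) of Proposition~\ref{prop:G-contract} for statement (b). Since $\cV$ refines $\cU$, for each $\beta \in \cK$ the collection $\{V_{\alpha}\}_{\alpha \in \phi^{-1}(\beta)}$ partitions $U_{\beta}$, so $\fY_{\cV}$ sits inside $\fY_{\cU}$ as a product of Young subgroups, one inside each symmetric-group factor of $\fY_{\cU}$. The same argument as in Proposition~\ref{prop:big-young}, applied factorwise (the key input being that $U_{\beta}$ is a finitely generated smooth $\fY_{\cV} \cap \fS_{U_{\beta}}$-set), shows that $\fY_{\cV}$ is big in $\fY_{\cU}$.

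Given this, the three statements come out as follows. For (a): when $e$ is reduced and $Z$ is irreducible, Proposition~\ref{prop:Qprime} says $\fQ_{\cV}(\lambda,e;Z)$ is $\fY_{\cV}$-prime, and then Proposition~\ref{prop:G-contract}(a) says its $\fY_{\cU}$-contraction along $\iota'$ is $\fY_{\cU}$-prime. For (b): applying Proposition~\ref{prop:G-contract}(c) (which requires bigness of $\fY_{\cV}$ in $\fY_{\cU}$, now in hand) to the definition gives
\[
\rad_{\fY_{\cU}}(\fP^{\phi}_{\cU}(\lambda,e;Z)) = \rad_{\fY_{\cV}}(\fQ_{\cV}(\lambda,e;Z))^{\fY_{\cU}c},
\]
and the right-hand side equals $\fQ_{\cV}(\lambda,\red_{\lambda}(e);Z)^{\fY_{\cU}c} = \fP^{\phi}_{\cU}(\lambda,\red_{\lambda}(e);Z)$ by Proposition~\ref{prop:Qrad2} and the definition of $\fP^{\phi}_{\cU}$. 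For (c): ordinary ideal contraction along $\iota'$ commutes with arbitrary intersections (being a preimage), and $I_{\fY_{\cU}}$ commutes with arbitrary intersections as noted just before Proposition~\ref{prop:int-prime}; hence so does $\fY_{\cU}$-contraction. Combining with Proposition~\ref{prop:Qunion}, which gives $\fQ_{\cV}(\lambda,e;Z) = \bigcap_i \fQ_{\cV}(\lambda,e;Z_i)$, yields the desired decomposition.

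The main (and really only) obstacle is the preliminary bigness check, which is a small technical verification but is not explicitly recorded elsewhere in the paper. Everything else is a formal transcription of the proof of Proposition~\ref{prop:Pprop}, with $\fY_{\cU}$ playing the role of $\fS$ and $\fY_{\cV}$ playing the role of the Young subgroup used in the $\fS$-contraction there.
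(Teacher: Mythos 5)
Your proof is correct and follows exactly the route the paper intends: the paper's own proof of Proposition~\ref{prop:Pphi-prop} is simply the remark that it is "similar to the proofs of Propositions~\ref{prop:Pprime} and~\ref{prop:Pprop}," and your write-up is a faithful transcription of that argument with $\fY_{\cU}$ and $\fY_{\cV}$ in place of $\fS$ and $\fY_{\cU}$. You are also right to single out the bigness of $\fY_{\cV}$ in $\fY_{\cU}$ as the one hypothesis not explicitly recorded in the paper, and your factorwise adaptation of Proposition~\ref{prop:big-young} disposes of it correctly.
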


\begin{proof}
The proof is similar to the proofs of Propositions~\ref{prop:Pprime} and~\ref{prop:Pprop}.
\end{proof}

\begin{proposition} \label{prop:flat-bc}
Let $(\lambda, \phi)$ be a refinement of $\mu$ with index set $\cI$, let $e$ be a $\lambda$-reduced weighting on $\cI$, and let $Z$ be a closed subset of $\bU^{\cI}_A$. Let $A \to A'$ be a flat ring map, let $\tilde{R}_{\cU}'=A' \otimes_A \tilde{R}_{\cU}$, and let $Z'$ be the inverse image of $Z$ in $\bU^{\cI}_{A'}$. Put $\fp=\fP^{\phi}_{\cU}(\lambda, e; Z)$ and $\fp'=\fP^{\phi}_{\cU}(\lambda, e; Z')$. Then
\begin{enumerate}
\item We have $\rad_{\fY_{\cU}}(\fp^e)=\fp'$.
\item If $\fI(Z)^e \subset \tilde{S}'_{\cU,0}$ is radical then $\fp^e=\fp'$.
\end{enumerate}
We note that if $A \to A'$ is a localization then (b) holds.
\end{proposition}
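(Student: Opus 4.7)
The plan is to imitate the proof of Proposition~\ref{prop:P-ext} essentially verbatim, replacing $\fS$-contraction with $\fY_{\cU}$-contraction throughout. Fix a partition $\cV$ of $[\infty]$ of type $\lambda$ refining $\cU$, and write $\fq=\fQ_{\cV}(\lambda,e;Z)$ and $\fq'=\fQ_{\cV}(\lambda,e;Z')$. By the definition of $\fP^{\phi}_{\cU}$ we have $\fp=\fq^{\fY_{\cU}c}$ and $\fp'=(\fq')^{\fY_{\cU}c}$, where the contractions are formed along the composition $\tilde R_{\cU}\to \tilde R_{\cV}\to \tilde S_{\cV}/\tilde J_{\cV,n}$ for $n=e_{\max}$. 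The base change diagram provided by $A\to A'$ fits the hypotheses of Proposition~\ref{prop:Gc-bc} with $G=\fY_{\cU}$ and $H=\fY_{\cV}$; once we verify bigness we obtain the key identity
\begin{displaymath}
\fp^e=(\fq^{\fY_{\cU}c})^e=(\fq^e)^{\fY_{\cU}c}.
\end{displaymath}

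To confirm that $\fY_{\cV}$ is a big subgroup of $\fY_{\cU}$, note that the refinement $(\lambda,\phi)$ of $\mu$ realizes $\fY_{\cV}$ as the product $\prod_{\beta\in\cK} H_{\beta}$ inside $\fY_{\cU}=\prod_{\beta\in\cK}\fS_{U_{\beta}}$, where $H_{\beta}=\prod_{\alpha\in\phi^{-1}(\beta)}\fS_{V_{\alpha}}$. For each $\beta$, $H_{\beta}$ is a Young subgroup of the symmetric group on $U_{\beta}$ and hence big by the argument of Proposition~\ref{prop:big-young} (which works unchanged whether $U_{\beta}$ is finite or infinite). Since bigness is preserved under finite direct products — an open subgroup of $\fY_{\cU}$ contains a product of open subgroups in each factor, and the corresponding double coset space then splits as a finite product of finite sets — we conclude that $\fY_{\cV}$ is big in $\fY_{\cU}$.

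With the identity $\fp^e=(\fq^e)^{\fY_{\cU}c}$ in hand, both parts follow formally. For (a), applying $\rad_{\fY_{\cU}}$ to both sides and invoking Proposition~\ref{prop:G-contract}(c) (which uses precisely the bigness just established) yields $\rad_{\fY_{\cU}}(\fp^e)=(\rad_{\fY_{\cV}}(\fq^e))^{\fY_{\cU}c}$; Proposition~\ref{prop:Q-ext}(a) identifies $\rad_{\fY_{\cV}}(\fq^e)$ with $\fq'$, so $\rad_{\fY_{\cU}}(\fp^e)=\fp'$. For (b), the additional hypothesis that $\fI_0(Z)^e$ is radical lets us replace Proposition~\ref{prop:Q-ext}(a) by Proposition~\ref{prop:Q-ext}(b), giving $\fq^e=\fq'$ directly, and hence $\fp^e=\fp'$. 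The only substantive obstacle is the bookkeeping verifying bigness of $\fY_{\cV}$ in $\fY_{\cU}$; the rest is a mechanical transcription of the $\fS$-equivariant argument given for Proposition~\ref{prop:P-ext}.
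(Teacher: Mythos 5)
Your proof is correct and follows exactly the route the paper intends: the paper's proof of Proposition~\ref{prop:flat-bc} is simply the remark that it is analogous to Proposition~\ref{prop:P-ext}, and your transcription (replacing $(\fS,\fY_{\cU})$ by $(\fY_{\cU},\fY_{\cV})$ and invoking Propositions~\ref{prop:Gc-bc}, \ref{prop:G-contract}(c) and \ref{prop:Q-ext}) is the intended argument. The one detail the paper leaves implicit, the bigness of $\fY_{\cV}$ in $\fY_{\cU}$, you verify correctly via the product decomposition and the argument of Proposition~\ref{prop:big-young}.
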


\begin{proof}
The proof is similar to that of Proposition~\ref{prop:P-ext}.
\end{proof}

We will require an alternative description of $\fP^{\phi}_{\cU}(\lambda,e;Z)$. We say that a homomorphism $\psi \colon \tilde{R}_{\cU} \to T$ has \defi{class} ${}^{\phi}_{\cU}(\lambda, e; Z)$ if  there exists a partition $\cV$ of $[\infty]$ of type $\lambda$ refining $\cU$ and a ring homomorphism $\psi_0 \colon \tilde{S}_{\cV,0} \to T$ such that the following conditions hold:
\begin{enumerate}
\item We have $\psi \vert_A = \psi_0 \vert_A$.
\item We have $\fI_0(Z) \subset \ker(\psi_0)$.
\item We have $\psi(\xi_i)=\psi_0(t_{\rho(i)})+\delta_i$ where $\delta_i \in T$ satisfies $\delta_i^{e(\rho(i))}=0$ and $\rho \colon [\infty] \to \cI$ is the function associated with $\cV$.
\end{enumerate}
Then $\fP^{\phi}_{\cU}(\lambda, e; Z)$ is the intersection of $\ker(\psi)$ over all homomorphisms $\psi \colon \tilde{R}_{\cU} \to T$ of class ${}^{\phi}_{\cU}(\lambda, e; Z)$; in fact, it suffices to consider rings $T$ whose nilradical is maximal. The proof is analogous to that of Proposition~\ref{prop:ker-psi}.

\subsection{Extending to $\tilde{R}_{\cU}$} \label{ss:ext}

Let $\cU$ and $\mu$ be as in the previous section. Let $\lambda$ be an $\infty$-composition on the index set $\cI$. Fix an $\fS$-ideal $\fp=\fP(\lambda,e;Z)$ of $R$, where $e$ is an arbitrary weighting on $\cI$ and $Z$ is an arbitrary subset of $\bU^{\cI}$. Our goal is to determine the extension $\fp^e$ of $\fp$ to $\tilde{R}_{\cU}$. Let $S$ be the set of pairs $(\phi,\kappa)$ where $\kappa$ is an $\infty$-weighting on $\cI$ and $\phi \colon \cI \to \cK$ is a function such that the following three conditions hold:
\begin{enumerate}
\item $\mu_{\beta}=\sum_{\phi(\alpha)=\beta} \kappa_{\alpha}$ for all $\beta \in \cK$.
\item $\kappa_{\alpha} \le \lambda_{\alpha}$ for all $\alpha \in \cI$
\item If $\phi(\alpha) \in \cK^{\infty}$ then $\kappa_{\alpha}=\lambda_{\alpha}$.
\end{enumerate}
We note that $S$ is finite, and can be empty. The following is our main result:

\begin{proposition} \label{prop:ext}
Notation as above, we have
\begin{displaymath}
\fp^e = \bigcap_{(\phi, \kappa) \in S} \fP^{\phi}_{\cU}(\kappa, e; Z).
\end{displaymath}
If $S=\emptyset$ then this says that $\fp^e$ is the unit ideal.
\end{proposition}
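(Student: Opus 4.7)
The plan is to prove both inclusions $\fp^e \subset \bigcap_{(\phi,\kappa) \in S} \fP^{\phi}_\cU(\kappa,e;Z)$ and the reverse via the universal kernel description of both sides. The left-hand side will be analyzed using Proposition~\ref{prop:ker-psi} together with the flatness of the localization $R \to \tilde R_\cU$; the right-hand side will use the analog of Proposition~\ref{prop:ker-psi} established at the end of \S\ref{ss:tilde-primes}. Throughout, I restrict to test rings $T$ with maximal nilradical.

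For $\fp^e \subset \fP^{\phi}_\cU(\kappa,e;Z)$ with $(\phi,\kappa) \in S$ fixed, I take a class-${}^{\phi}_\cU(\kappa,e;Z)$ homomorphism $\psi : \tilde R_\cU \to T$ and aim to show $\psi|_R(\fp) = 0$. Writing $\psi|_R(\xi_i) = z_{\rho(i)} + \delta_i$ for $\cV$ of type $\kappa$ refining $\cU$ via $\phi$, the plan is to realize $\psi|_R$ as the special fiber of a flat family of class-$(\lambda,e;Z)$ homomorphisms. Concretely, I enlarge each $V_\alpha$ with $\kappa_\alpha < \lambda_\alpha$ up to size $\lambda_\alpha$ by transferring $\lambda_\alpha - \kappa_\alpha$ indices from an infinite part $V_{\alpha_0}$ of $\cV$; condition (c) of $S$ guarantees such $\alpha_0$ exists with $\phi(\alpha_0) \in \cK^\infty$ and $\kappa_{\alpha_0} = \lambda_{\alpha_0} = \infty$. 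I then construct $T'$ over a DVR-like base so that $(z'_\alpha) \in Z(T')$ at the generic fiber while $(z'_\alpha)$ collapses to $z_{\alpha_0}$ at the special fiber for the newly enlarged $\alpha$'s. Applying Proposition~\ref{prop:ker-psi} to the generic-fiber map $\psi'_t$ yields $\psi'_t(\fp) = 0$, and specializing to $t = 0$ gives $\psi|_R(\fp) = 0$.

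For the reverse inclusion, by flatness $g \in \fp^e$ iff there is a unit $s \in \tilde R_\cU$ with $sg \in \fp$; equivalently, $\psi'(sg) = 0$ for every class-$(\lambda,e;Z)$ homomorphism $\psi' : R \to T'$. Given $g$ in the RHS intersection and such a $\psi'$ with partition $\cV'$ of type $\lambda$, I split into cases. If $\cV'$ refines $\cU$ via some $\phi$, then $\psi'$ extends to a class-${}^{\phi}_\cU(\lambda,e;Z)$ homomorphism $\tilde R_\cU \to T'$ with $(\phi,\lambda) \in S$, and the hypothesis forces $\psi'(g) = 0$. Otherwise, there exist $i,j$ with $\pi(i) \ne \pi(j)$ but $\rho'(i) = \rho'(j)$, so $\psi'(\xi_i-\xi_j)$ is nilpotent; a sufficiently large power of $\xi_i-\xi_j$ then lies in $\ker(\psi')$ and can be incorporated into $s$. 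Assembling these local choices into a single unit $s \in \tilde R_\cU$ exploits $\fS$-noetherianity (Theorem~\ref{thm:cohen}) to reduce to finitely many cases, in the spirit of the discriminant arguments of \S\ref{ss:contain-a}.

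The main obstacle is the flat-deformation construction in direction~1: naively collapsing a new coordinate $z'_\alpha$ to $z_{\alpha_0}$ in the special fiber violates the condition $Z \subset \bU^\cI$. The resolution requires working with the scheme-theoretic closure $\bar Z \subset \bA^\cI$, building the deformation inside this larger scheme, and verifying that the induced nilpotent orders match the weighting $e$ through the specialization. Condition (c) of $S$ is the precise technical input that makes this compatibility hold, because it forces infinite-multiplicity parts of $\lambda$ to be fully populated in $\kappa$, leaving only finite-multiplicity parts (which carry weight $1$ under $\lambda$-reduction) to absorb the collapse.
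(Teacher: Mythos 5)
Both inclusions in your proposal have genuine gaps, and in both cases the missing idea is the same: you should only try to match a given homomorphism on the finitely many variables actually occurring in a given element, not on all of $R$.

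For the inclusion $\fp^e \subset \fP^{\phi}_{\cU}(\kappa,e;Z)$, the flat-degeneration construction is both unnecessary and broken in the way you yourself identify. Note that $\kappa$ and $\lambda$ live on the \emph{same} index set $\cI$ and both classes of homomorphisms use the \emph{same} closed set $Z \subset \bU^{\cI}$; the point $(z_{\alpha})_{\alpha \in \cI}$ of $Z$ underlying a class-${}^{\phi}_{\cU}(\kappa,e;Z)$ map already has all its coordinates, so nothing needs to be created or collapsed. The correct argument fixes $f \in \fp$ using only $\xi_1,\ldots,\xi_m$ and produces a class-$(\lambda,e;Z)$ homomorphism $\psi'$ with $\psi'(\xi_i)=\psi(\xi_i)$ for $i \le m$ only: since $\#(V_{\alpha}\cap[m]) \le \kappa_{\alpha} \le \lambda_{\alpha}$, the type-$\kappa$ partition restricted to $[m]$ extends to a type-$\lambda$ partition of $[\infty]$ by redistributing indices $>m$, keeping the same $z$ and the same $\delta_i$ for $i\le m$. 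Then $\psi(f)=\psi'(f)=0$. Your obstacle (the forced collapse $z'_{\alpha}\to z_{\alpha_0}$, which exits $\bU^{\cI}$) is an artifact of insisting that $\psi'$ agree with $\psi$ on the transferred variables as well; passing to $\ol{Z}$ would change the class of homomorphisms and hence the ideal, so it does not repair the argument.

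For the reverse inclusion your case split is on the wrong event, and the repair you sketch cannot work. A partition $\cV'$ of type $\lambda$ that fails to refine $\cU$ may have its only bad pair $(i,j)$ among indices arbitrarily far out; since the unit $s$ must be fixed before $\psi'$ is chosen, no finite product of factors $(\xi_i-\xi_j)^N$ contains a vanishing factor for every such $\cV'$, and $\fS$-noetherianity does not select one. The correct dichotomy is whether $\cV'$ refines $\cU$ \emph{on $[m]$} for a suitably large $m$ (chosen so that $f$ lives in $\xi_1,\ldots,\xi_m$, all finite parts of $\cU$ lie in $[m]$, and each infinite part of $\cU$ meets $[m]$ in a set larger than the sum of the finite parts of $\lambda$). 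If refinement fails on $[m]$, the fixed unit $s=\prod_{i,j\le m,\,\pi(i)\ne\pi(j)}(\xi_i-\xi_j)$ satisfies $\psi'(s^{2e_{\max}-1})=0$. If refinement holds on $[m]$ but not globally, one must modify $\cV'$ outside $[m]$ into a genuine refinement of $\cU$ of type $\kappa$ for some $(\phi,\kappa)\in S$ (truncating the parts lying over finite parts of $\cU$ to their intersection with $[m]$ and absorbing the leftover indices into one infinite part); this is exactly where the pairs with $\kappa_{\alpha}<\lambda_{\alpha}$, and conditions (b) and (c) defining $S$, enter. Your argument never produces these pairs, so at best it would relate $\fp^e$ to the intersection over global refinements $(\phi,\lambda)$, which is not the statement being proved.
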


\begin{proof}
Let $\fq=\bigcap_{(\phi, \kappa) \in S} \fP^{\phi}_{\cU}(\kappa, e; Z)$. We first show $\fp^e \subset \fq$. Let $f \in \fp$ be given. Suppose that $\psi \colon \tilde{R}_{\cU} \to T$ is a homomorphism of class ${}^{\phi}_{\cU}(\kappa, e; Z)$ for some $(\phi, \kappa) \in S$. Let $m$ be such that $f$ uses only $\xi_1, \ldots, \xi_m$. Then there exists a homomorphism $\psi' \colon R \to T$ of class $(\lambda, e; Z)$ such that $\psi(\xi_i)=\psi'(\xi_i)$ for $1 \le i \le m$. It follows that $\psi(f)=\psi'(f)=0$. Thus $f \in \fP^{\phi}_{\cU}(\kappa, e; Z)$. Since this holds for all $(\phi, \kappa)$, it follows that $f \in \fq$. Thus $\fp \tilde{R}_{\cU} \subset \fq$.

We now prove the reverse inclusion. Let $f \in \fq$ be given. Multiplying by a unit of $\tilde{R}_{\cU}$, we can assume that $f$ belongs to $R$. Let $m$ be such that the $f$ uses only $\xi_1, \ldots, \xi_m$, all finite parts of $\cU$ are contained in $[m]$, and $\#(U_{\alpha} \cap [m])$ exceeds the sum of the finite parts of $\lambda$ for each $\alpha \in \cK^{\infty}$. For a subset $S$ of $[\infty]$, put $\ol{S}=S \cap [m]$. Let $s$ be the product of $\xi_i-\xi_j$ over all $1 \le i,j \le m$ for which $i$ and $j$ belong to different parts of $\cU$. Note that $s$ is a unit of $\tilde{R}_{\cU}$. Let $N=2e_{\max}-1$. We claim that $g=s^N f$ belongs to $\fp$. This will complete the proof, as then $f=s^{-N} g$ belongs to $\fp^e$.

To see this, let $\psi \colon R \to T$ be a homomorphism of class $(\lambda, e;  Z)$. Write $\psi(\xi_i)=a_{\rho(i)}+\delta_i$ where $\rho \colon [\infty] \to \cI$ is associated to a partition $\cV=\{V_{\alpha}\}_{\alpha \in \cI}$ of type $\lambda$. If there exists $1 \le i,j \le m$ such that $i$ and $j$ belong to the same part of $\cV$ but different parts of $\cU$ then $\psi(\xi_i-\xi_j)=\delta_i-\delta_j$, and so $\psi(s^N)=0$, and so $\psi(g)=0$. Thus suppose this is not the case. We claim that  there exists a homomorphism $\psi' \colon \tilde{R}_{\cU} \to T$ of class ${}^{\phi}_{\cU}(\kappa,e;Z)$ for some $(\phi,\kappa) \in S$ such that $\psi'(\xi_i)=\psi(\xi_i)$ for $1 \le i \le m$. Given this, we have $\psi(f)=\psi'(f)=0$, and so $\psi(g)=0$, proving $g \in \fp$.

Fix $\beta_0 \in \cK^{\infty}$. Let $\cE$ be the set of indices $\alpha \in \cI$ for which $\ol{V}_{\alpha} \ne \emptyset$. We have shown that for $\alpha \in \cE$ there exists a unique $\beta \in \cK$ such that $\ol{V}_{\alpha} \subset U_{\beta}$. Define $\phi_0 \colon \cE \to \cK$ by taking $\alpha \in \cE$ to this unique $\beta$. Let $\phi \colon \cI \to \cK$ be the extension of $\phi_0$ taking each element of $\cI \setminus \cE$ to $\beta_0$. We note that for each $\beta \in \cK^{\infty}$ there is some $\alpha \in \cE^{\infty}$ with $\phi(\alpha)=\beta$; this follows from how we chose $m$.  Fix some $\alpha_0 \in \cE^{\infty}$ with $\phi(\alpha_0)=\beta_0$. We define a new partition $\cV'=\{V'_{\alpha}\}_{\alpha \in \cI}$ of $[\infty]$ as follows. If $\phi(\alpha) \in \cK^{\rf}$, we take $V'_{\alpha}=\ol{V}_{\alpha}$. For the remaining $\alpha \ne \alpha_0$, we take $V'_{\alpha}=V_{\alpha}$. Finally, we take $V'_{\alpha_0}$ to consist of all elements $[\infty]$ not yet used; note that $V'_{\alpha_0}$ contains $V_{\alpha_0}$. Let $\kappa_{\alpha}=\# V'_{\alpha}$. Then $(\phi, \kappa) \in S$. We now define $\psi' \colon R \to T$ by $\psi' \vert_A = \psi \vert_A$ and $\psi'(\xi_i)=a_{\rho'(i)}+\delta_i$, where $\rho' \colon [\infty] \to \cI$ is associated to $\cV'$. One easily sees that $\psi'$ is of class ${}^{\phi}_{\cU}(\kappa,e;Z)$, which completes the proof.
\end{proof}

Recall that for an $\infty$-composition $\kappa$ on the index set $\cI$ and a weight function $e$ on $\cI$, we write $\red_{\kappa}(e)$ for the weighting on $\cI$ that is $e$ on $\kappa^{-1}(\infty)$ and~1 off of this subset. 

\begin{corollary} \label{cor:ext}
Notation as above, we have
\begin{displaymath}
\rad_{\fY_{\cU}}(\fp^e) = \bigcap_{(\phi, \kappa) \in S} \fP^{\phi}_{\cU}(\kappa, \red_{\kappa}(e); Z).
\end{displaymath}
\end{corollary}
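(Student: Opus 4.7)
The plan is to deduce the corollary directly from Proposition~\ref{prop:ext} by taking $\fY_{\cU}$-radicals of both sides. Proposition~\ref{prop:ext} tells us that
\begin{displaymath}
\fp^e = \bigcap_{(\phi, \kappa) \in S} \fP^{\phi}_{\cU}(\kappa, e; Z),
\end{displaymath}
and this is a \emph{finite} intersection since $S$ is finite. Applying $\rad_{\fY_{\cU}}$ to both sides, the corollary will follow from two facts: (i) the $\fY_{\cU}$-radical commutes with finite intersections, and (ii) Proposition~\ref{prop:Pphi-prop}(b) identifies $\rad_{\fY_{\cU}}(\fP^{\phi}_{\cU}(\kappa, e; Z))$ with $\fP^{\phi}_{\cU}(\kappa, \red_{\kappa}(e); Z)$.

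The first fact is the only step that requires any work. I would establish it as follows. Let $\fa_1,\ldots,\fa_n$ be $\fY_{\cU}$-ideals of $\tilde{R}_{\cU}$. The containment $\rad_{\fY_{\cU}}(\bigcap_i \fa_i) \subset \bigcap_i \rad_{\fY_{\cU}}(\fa_i)$ is immediate: if $X$ is a finitely generated $\fY_{\cU}$-submodule with $X^N \subset \bigcap_i \fa_i$ (via Proposition~\ref{prop:fin-nilp}), then $X^N \subset \fa_i$ for each $i$. For the reverse containment, Proposition~\ref{prop:rad-primes} (applied to $\fY_{\cU}$) expresses each $\rad_{\fY_{\cU}}(\fa_i)$ as the intersection of the $\fY_{\cU}$-primes containing $\fa_i$, so $\bigcap_i \rad_{\fY_{\cU}}(\fa_i)$ is the intersection of all $\fY_{\cU}$-primes containing some $\fa_i$. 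By equivariant prime avoidance (Proposition~\ref{prop:avoid}), any $\fY_{\cU}$-prime containing $\bigcap_i \fa_i$ (equivalently, containing the product $\fa_1 \cdots \fa_n$) must contain some $\fa_i$, so these two sets of $\fY_{\cU}$-primes agree, giving the equality of intersections.

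Putting everything together, we compute
\begin{displaymath}
\rad_{\fY_{\cU}}(\fp^e) \;=\; \rad_{\fY_{\cU}}\bigg( \bigcap_{(\phi,\kappa) \in S} \fP^{\phi}_{\cU}(\kappa, e; Z) \bigg) \;=\; \bigcap_{(\phi,\kappa) \in S} \rad_{\fY_{\cU}}\big( \fP^{\phi}_{\cU}(\kappa, e; Z) \big) \;=\; \bigcap_{(\phi,\kappa) \in S} \fP^{\phi}_{\cU}(\kappa, \red_{\kappa}(e); Z),
\end{displaymath}
where the first equality is Proposition~\ref{prop:ext}, the second is the commutation of $\fY_{\cU}$-radical with finite intersections, and the third is Proposition~\ref{prop:Pphi-prop}(b). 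I do not anticipate any real obstacle here: the proof is essentially formal once Proposition~\ref{prop:ext} is in hand, and the only mildly nontrivial point is the interchange of radicals and intersections, which is handled uniformly by the equivariant analogues of standard commutative algebra already developed in \S\ref{s:eqca}.
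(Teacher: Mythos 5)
Your proof is correct and matches the paper's (very terse) argument: the corollary is deduced from Proposition~\ref{prop:ext} together with Proposition~\ref{prop:Pphi-prop}(b), and your justification that $\rad_{\fY_{\cU}}$ commutes with the finite intersection is a valid filling-in of the step the paper leaves implicit. One small citation quibble: the fact that a $\fY_{\cU}$-prime containing the product $\fa_1\cdots\fa_n$ contains some $\fa_i$ is the defining property of a $G$-prime (iterated), not equivariant prime avoidance (Proposition~\ref{prop:avoid}, which concerns ideals contained in a union of primes), though the substance of your argument is unaffected.
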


\begin{proof}
This follows from the proposition and Proposition~\ref{prop:Pphi-prop}(b).
\end{proof}

\begin{corollary} \label{cor:contract-ext}
Notation as above, assume $\lambda=\mu$. Then $\fp$ is the $\fS$-contraction of $\fp^e$.
\end{corollary}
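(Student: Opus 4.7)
My plan is to establish both inclusions of $(\fp^e)^{\fS c} = \fp$. The inclusion $\fp \subset (\fp^e)^{\fS c}$ is immediate: $\fp$ is $\fS$-stable and is contained in the ordinary contraction $(\fp^e)^c$, so applying $I_{\fS}$ to both sides gives $\fp = I_{\fS}(\fp) \subset I_{\fS}((\fp^e)^c) = (\fp^e)^{\fS c}$.

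For the reverse inclusion, my plan is to leverage the explicit intersection formula for $\fp^e$ provided by Proposition~\ref{prop:ext}. Since $\lambda = \mu$, I fix a bijection $\phi_0 \colon \cI \to \cK$ realizing the isomorphism, and set $\kappa = \lambda$. Then the conditions (a), (b), (c) defining the set $S$ all reduce to the tautology $\lambda_\alpha = \lambda_\alpha$ for each $\alpha \in \cI$, so $(\phi_0, \lambda) \in S$. Consequently the intersection is contained in the single factor $\fP^{\phi_0}_{\cU}(\lambda, e; Z)$, giving $\fp^e \subset \fP^{\phi_0}_{\cU}(\lambda, e; Z)$.

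The key observation is then that this particular factor recovers the very construction of $\fp$. When $\phi_0$ is a bijection, any partition $\cV$ of $[\infty]$ of type $\lambda$ refining $\cU$ must satisfy $V_\alpha = U_{\phi_0(\alpha)}$; so, up to the relabeling induced by $\phi_0$, we have $\cV = \cU$ and $\tilde{R}_{\cV} = \tilde{R}_{\cU}$. By the definition in \S\ref{ss:tilde-primes}, $\fP^{\phi_0}_{\cU}(\lambda, e; Z)$ is therefore the $\fY_{\cU}$-contraction of $\fQ_{\cU}(\lambda, e; Z)$ to $\tilde{R}_{\cU}$ along $\tilde{R}_{\cU} \to \tilde{S}_{\cU}/\tilde{J}_{\cU, n}$ (for $n$ sufficiently large). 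Using transitivity of ordinary contraction through the composition $R \to \tilde{R}_{\cU} \to \tilde{S}_{\cU}/\tilde{J}_{\cU, n}$, the $\fS$-contraction of $\fP^{\phi_0}_{\cU}(\lambda, e; Z)$ to $R$ agrees with the $\fS$-contraction of $\fQ_{\cU}(\lambda, e; Z)$ to $R$, which is $\fp$ by definition. Applying $\fS$-contraction to the inclusion $\fp^e \subset \fP^{\phi_0}_{\cU}(\lambda, e; Z)$ now yields $(\fp^e)^{\fS c} \subset \fp$. The argument is brief because Proposition~\ref{prop:ext} does all the heavy lifting, and I do not anticipate any substantial obstacle beyond the definitional bookkeeping needed to identify $\fP^{\phi_0}_{\cU}(\lambda, e; Z)$ with the intermediate ideal in the construction of $\fp$.
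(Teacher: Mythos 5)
Your proof is correct and follows essentially the same route as the paper: the easy inclusion by $\fS$-stability, then using Proposition~\ref{prop:ext} to extract the single factor indexed by the identity (your $(\phi_0,\lambda)$, the paper's $(\id,\mu)$) and identifying that factor with the ordinary contraction of $\fQ_{\cU}(\lambda,e;Z)$ to $\tilde{R}_{\cU}$, whose $\fS$-contraction to $R$ is $\fp$ by definition. The extra definitional bookkeeping you supply (checking $\cV=\cU$ when $\phi_0$ is a bijection) is exactly what the paper leaves implicit.
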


\begin{proof}
Clearly $\fp \subset (\fp^e)^{\fS c}$. For the reverse, note that $(\id,\mu) \in S$, and so by the proposition we have $\fp^e \subset \fP^{\id}_{\cU}(\mu,e;Z)$. The ideal $\fP^{\id}_{\cU}(\mu,e;Z)$ is simply the contraction of $\fQ_{\cU}(\mu,e;Z)$ to $\tilde{R}_{\cU}$. The $\fS$-contraction of this ideal to $R$ is $\fp$, by definition. Thus the result follows.
\end{proof}

\subsection{A reduction of Theorem~\ref{thm:contain}(b)} \label{ss:contain2}

Let $\cU$ and $\mu$ be as above. Let $\fp=\fP^{\phi}_{\cU}(\lambda, e; Z)$ and $\fq=\fP^{\id}_{\cU}(\mu, d; Y)$ be $\fY_{\cU}$-ideals of $\tilde{R}_{\cU}$, with $e$ and $d$ reduced; here the $\id$ superscript means that the $\phi$ map used to define $\fq$ is the identity $\cK \to \cK$, where $\cK$ is the index set for $\mu$. The goal of \S \ref{ss:contain2} is to formulate a statement concerning the containment $\fp \subset \fq$, and reduce Theorem~\ref{thm:contain}(b) to this statement.

Let $\cI$ be the index set of $\lambda$. We say that a subset $\cE$ of $\cI$ is \defi{good} if $(\cE, \phi \vert_{\cE})$ is good as defined in \S \ref{ss:theta}. Explicitly, this amounts to the following
\begin{itemize}
\item For $\beta \in \cK^{\rf}$ we have $\cI_{\beta} \subset \cE$.
\item For $\beta \in \cK^{\infty}$ we have $d_{\beta} \le \sum_{\alpha \in \cE^{\infty}_{\beta}} e_{\alpha}$.
\end{itemize}
To see the first condition, note that $\mu_{\beta}=\sum_{\alpha \in \cI_{\beta}} \lambda_{\alpha}$ in the present setting, and so the condition $\mu_{\beta} \le \sum_{\alpha \in \cE_{\beta}} \lambda_{\alpha}$ forces $\cE$ to contain $\cI_{\beta}$ if $\beta \in \cK^{\rf}$. We let $\Theta^{\phi}(Z)$ be the union of the $\Theta^{\phi}_{\cE}(Z)$ over all good $\cE$. We can now state the main result:

\begin{theorem} \label{thm:contain2}
If $Y \subset \Theta^{\phi}(Z)$ then $\fp \subset \fq$.
\end{theorem}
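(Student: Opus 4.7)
The plan is to proceed in two stages as indicated at the end of \S \ref{ss:contain-statement}: first establish the punctual case where $Y$ is a single point, then deduce the general case.

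For the reduction, I would first decompose $Y$ into irreducible components and replace each component by its generic point, writing $\fq$ as an intersection of the ideals $\fP^{\id}_\cU(\mu,d;\{y\})$ via Proposition~\ref{prop:Pphi-prop}(c); after a flat base change along Proposition~\ref{prop:flat-bc} to pass to the residue fields, this reduces the problem to the case of a single scheme-theoretic point $y$ over its residue field. Since $\Theta^\phi(Z) = \bigcup_{\cE \text{ good}} \Theta^\phi_\cE(Z)$ is a finite union of closed subsets of $\bU^\cK$ and each component of $Y$ is irreducible, the point $y$ lies in some $\Theta^\phi_\cE(Z)$ for a fixed good $\cE$. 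Thus it suffices to prove: for each good $\cE \subset \cI$ and each $y \in \Theta^\phi_\cE(Z)$, one has $\fp \subset \fP^{\id}_\cU(\mu,d;\{y\})$.

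For the punctual case I would argue by contradiction, following the sketch in the introduction. Characterise $\fq := \fP^{\id}_\cU(\mu,d;\{y\})$ as the intersection of $\ker(\psi)$ over homomorphisms $\psi \colon \tilde{R}_\cU \to T$ with $\psi(\xi_i) = y_{\pi(i)} + \delta_i$ and $\delta_i^{d_{\pi(i)}} = 0$, i.e.\ the analogue of Proposition~\ref{prop:ker-psi} for the $\fP^{\id}_\cU$ ideals. Assuming $\fp \not\subset \fq$, I pick $f \in \fp$ and such a $\psi$ with $\psi(f) \ne 0$. Exploiting the $\fY_\cU$-stability of $\fp$, I would multiply $f$ by discriminants $\Delta_S$ on subsets $S \subset U_\beta$ with $\beta \in \cK^\infty$, by high powers of $\xi_i - \xi_j$ for $i,j$ in different parts of $\cU$, and skew-symmetrise over subgroups of $\fY_\cU$, to manufacture from $f$ an element $g = g_1 g_2 \in \fp$ in the spirit of Construction~\ref{con:h}: $g_1$ is a disjoint product of discriminants supported on the infinite parts of $\cU$, while $g_2$ has a distinguished ``constant'' monomial in the $\delta_i$'s that survives $\psi$.

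The main obstacle is then to construct, using the hypothesis $y \in \Theta^\phi_\cE(Z)$, a specific homomorphism $\psi' \colon \tilde{R}_\cU \to T'$ of class ${}^\phi_\cU(\lambda,e;Z)$ with $\psi'(g) \ne 0$; since $g \in \fp$ forces $\psi'(g) = 0$, this yields the contradiction. The geometric input $i(y) \in \overline{p(Z)}$ supplies, via the valuative criterion, a valuation ring $V$ equipped with a $V$-point of $Z$ whose special fibre projects to $i(y)$ in $\bA^\cE$. I would take $T' = V[\epsilon_i]/\langle \epsilon_i^{e_{\rho(i)}}\rangle_{i \in [\infty]}$, where $\rho \colon [\infty] \to \cI$ is associated to a partition $\cV$ of $[\infty]$ of type $\lambda$ refining $\cU$ via $\phi$: condition (G1) provides enough room in each $U_\beta$ to place the $V_\alpha$ with $\alpha \in \cE_\beta$, while (G2) $d_\beta \le \sum_{\alpha \in \cE^\infty_\beta} e_\alpha$ guarantees that the combined nilpotent capacity of the $\epsilon_i$'s across the indices in a single infinite part $U_\beta$ is large enough to keep $\psi'(g_1)$ non-zero. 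The $V$-point of $Z$ supplies the values $\psi'_0(t_\alpha)$ for $\alpha \in \cE$; for $\alpha \in \cI \setminus \cE$, which necessarily has $\phi(\alpha) \in \cK^\infty$ by goodness, I would assign $\psi'_0(t_\alpha)$ generic values in a further extension of $V$ so as to preserve $\fI_0(Z) \subset \ker \psi'_0$ and keep the coordinates pairwise distinct in $\bU^\cI$. The verifications that $\psi'(g_1) \ne 0$ and $\psi'(g_2)$ is a unit then parallel those in Lemmas~\ref{lem:in-fp} and~\ref{lem:not-in-fq}, with the roles of $\psi$ and $\psi'$ exchanged.
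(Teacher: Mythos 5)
Your overall strategy coincides with the paper's: reduce to a punctual case where $Y$ is a single point, and prove that case by contradiction using the $(1+g)\cdot\prod\Delta$ normal form for an element of $\fp\setminus\fq$ together with a valuation-ring point of $Z$ whose projection to $\bA^{\cE}$ specializes to $i(y)$. The punctual half of your sketch matches the paper's Proposition~\ref{prop:contain2-1} essentially step for step, including the use of (G1) to build a refining partition $\cV$ and of (G2) to keep the discriminant factors nonzero. One small imprecision: the point $z$ of $Z$ produced by the valuative criterion already determines all coordinates $z_{\alpha}$, including those with $\alpha\notin\cE$, inside the fraction field of $V$; there is nothing to ``assign generic values'' to --- those coordinates simply need not lie in $V$, which is exactly why only $p(z)$ is required to extend to a $V$-point.

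The reduction, however, has a gap as stated: ``a flat base change along Proposition~\ref{prop:flat-bc} to pass to the residue fields'' is not available, because $A\to\kappa(\fc)$ (or $A\to\kappa(\eta)$ for a generic point $\eta$ of a component of $Y$) factors through a quotient and is not flat, so Proposition~\ref{prop:flat-bc} does not apply and you cannot transport the containment back to $A$ by contraction. The paper never leaves the flat world: it first localizes $A$ at $\fc=A\cap\fq$, then passes to a strict henselization (adjoining all $p$-power roots in residue characteristic $p$ so that $\rad(\fm_A^e)=\fm_{A'}$), both faithfully flat; over the resulting local ring with algebraically closed residue field it writes $\fq=\bigcap_{y\in Y(k)}\fq_y$ using density of $k$-points and Proposition~\ref{prop:Pphi-prop}(c), runs the punctual argument over the local ring itself (the coordinates $y_{\beta}\in A/\fm_A$ are lifted to $A$, and ``coefficient not in $\fm_A$'' plays the role of ``nonzero coefficient''), and finally descends via $\fp^{ec}=\fp$. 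Your device of replacing components of $Y$ by their generic points would let you avoid the density argument, but the punctual argument still needs the coordinates of $y$ to lift to the coefficient ring, which elements of $\kappa(\eta)$ generally do not; so the localization/henselization steps cannot be skipped, and you should supply them to close the argument.
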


In fact, the converse statement holds as well, but we do not prove it since it is not needed for the proof of Theorem~\ref{thm:contain}. The proof of this theorem will take the remainder of \S \ref{s:contain}. We now show how the above theorem implies the main result of this section.

\begin{proposition}
Assume Theorem~\ref{thm:contain2} holds. Then Theorem~\ref{thm:contain}(b) holds.
\end{proposition}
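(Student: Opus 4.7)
My strategy is to pull the containment $\fp \subset \fq$ back to an analogous statement inside $\tilde R_{\cU}$ where Theorem~\ref{thm:contain2} applies directly. First I reduce to the case where $Y$ is irreducible: by Proposition~\ref{prop:Pprop}(c), $\fq = \bigcap_i \fP(\mu,d;Y_i)$ where the $Y_i$ are the irreducible components of $Y$, and each $Y_i$ still satisfies $Y_i \subset \Theta(Z)$, so they may be treated individually. Once $Y$ is irreducible, the hypothesis $Y \subset \Theta(Z)$ means $Y \subset \Theta^{\phi_0}_{\cE_0}(Z)$ for some good pair $(\cE_0, \phi_0)$ between $(\mu, d)$ and $(\lambda, e)$.

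Next I fix a partition $\cU$ of $[\infty]$ of type $\mu$. By definition, $\fq$ is the $\fS$-contraction of $\fP^{\id}_{\cU}(\mu, d; Y) \subset \tilde R_{\cU}$ to $R$, so the $\fS$-stability of $\fp$ makes $\fp \subset \fq$ equivalent to $\fp^e \subset \fP^{\id}_{\cU}(\mu, d; Y)$ in $\tilde R_{\cU}$. Proposition~\ref{prop:ext} writes $\fp^e = \bigcap_{(\phi, \kappa) \in S} \fP^{\phi}_{\cU}(\kappa, e; Z)$, so it suffices to exhibit a single $(\phi, \kappa) \in S$ with $\fP^{\phi}_{\cU}(\kappa, e; Z) \subset \fP^{\id}_{\cU}(\mu, d; Y)$ and then invoke Theorem~\ref{thm:contain2}.

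The combinatorial heart of the argument is converting the good pair $(\cE_0, \phi_0)$, which only furnishes an inequality $\mu_{\beta} \leq \sum_{\alpha \in \cE_{0,\beta}} \lambda_{\alpha}$, into a refinement $(\kappa, \phi) \in S$, which demands the equality $\mu_{\beta} = \sum_{\phi(\alpha) = \beta} \kappa_{\alpha}$. I shrink $\cE_0$ to a subset $\cE$: keep all of $\cE_{0, \beta}$ when $\beta \in \cK^{\infty}$, and for $\beta \in \cK^{\rf}$ take $\cE_{\beta} \subset \cE_{0, \beta}$ minimal with $\sum_{\cE_{\beta}} \lambda_{\alpha} \geq \mu_{\beta}$. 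Minimality forces $|\cE_{\beta}| \leq \mu_{\beta}$, so one can choose integers $1 \leq \kappa_{\alpha} \leq \lambda_{\alpha}$ on $\cE_{\beta}$ summing exactly to $\mu_{\beta}$. I then extend $\phi_0 |_{\cE}$ to $\phi \colon \cI \to \cK$ by routing all leftover indices into $\cK^{\infty}$ (nonempty since $\mu$ has an infinite part) and setting $\kappa_{\alpha} = \lambda_{\alpha}$ whenever $\phi(\alpha) \in \cK^{\infty}$. A direct check gives $(\phi, \kappa) \in S$, and the same $\cE$ is a good subset of $\cI$ in the sense of \S\ref{ss:contain2} with respect to $(\mu, d)$ and $(\kappa, e)$: the weight condition for $\beta \in \cK^{\infty}$ is inherited from the original goodness of $(\cE_0, \phi_0)$, and for $\beta \in \cK^{\rf}$ we have $\sum_{\cE_{\beta}} \kappa_{\alpha} = \mu_{\beta}$ by construction.

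Finally, I verify $Y \subset \Theta^{\phi|_{\cE}}_{\cE}(Z)$ in the sense of Theorem~\ref{thm:contain2}. Let $p' \colon \bA^{\cE_0} \to \bA^{\cE}$ denote the projection; then the multi-diagonal $i \colon \bA^{\cK} \to \bA^{\cE}$ and the projection $p \colon \bA^{\cI} \to \bA^{\cE}$ factor as $i = p' \circ i_0$ and $p = p' \circ p_0$, where $i_0, p_0$ are the analogous maps for $\cE_0$. Continuity of $p'$ gives $i(Y) = p'(i_0(Y)) \subset p'(\overline{p_0(Z)}) \subset \overline{p(Z)}$, which is the required inclusion. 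Theorem~\ref{thm:contain2} then delivers $\fP^{\phi}_{\cU}(\kappa, e; Z) \subset \fP^{\id}_{\cU}(\mu, d; Y)$, completing the reduction. The only genuinely delicate point is the construction of $(\phi, \kappa) \in S$: the minimality of $\cE_{\beta}$ for $\beta \in \cK^{\rf}$ is essential to reconcile $\kappa_{\alpha} \geq 1$ with the finite sum $\mu_{\beta}$, and the existence of an infinite part of $\mu$ is what lets $\phi$ extend over all of $\cI$ without disturbing these sums.
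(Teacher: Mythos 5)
Your argument is correct and is essentially the paper's own proof: the passage to $\tilde{R}_{\cU}$ via Proposition~\ref{prop:ext}, the shrinking of the good pair so that $\# \cE_{\beta} \le \mu_{\beta}$ for $\beta \in \cK^{\rf}$ (the paper's ``very good'' condition), the construction of $(\phi,\kappa) \in S$, and the factorizations $i = p' \circ i_0$, $p = p' \circ p_0$ all match, with your preliminary reduction to irreducible $Y$ merely replacing the paper's decomposition $Y = \bigcup Y^{\phi}_{\cE}$ and appeal to Proposition~\ref{prop:Pphi-prop}(c). The one point to tighten is that Theorem~\ref{thm:contain2} is stated for reduced weightings and $e$ need not be $\kappa$-reduced (an $\alpha$ with $\lambda_{\alpha}=\infty$ and $\phi(\alpha) \in \cK^{\rf}$ acquires a finite $\kappa_{\alpha}$ but keeps $e_{\alpha}>1$); apply the theorem to $\fP^{\phi}_{\cU}(\kappa,\red_{\kappa}(e);Z)$, which contains $\fP^{\phi}_{\cU}(\kappa,e;Z)$ by Proposition~\ref{prop:Pphi-prop}(b), exactly as in Corollary~\ref{cor:ext}.
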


\begin{proof}
Let $\fp=\fP(\lambda,e;Z)$ and $\fq=\fP(\mu,d;Y)$ be $\fS$-ideals of $R$ with $e$ and $d$ reduced such that $Y \subset \Theta(Z)$. We will show that $\fp \subset \fq$.

Let $\cU$ be as above (i.e., a partition of $[\infty]$ of type $\mu$), and let $(-)^e$ denote extension to $\tilde{R}_{\cU}$. By Corollary~\ref{cor:contract-ext}, $\fq$ is the $\fS$-contraction of $\fq^e$, and so it suffices to show $\fp^e \subset \fq^e$. Recall that $\rN_{\mu} \subset \Aut(\cK)$ is the group of permutations fixing $\mu$. By Proposition~\ref{prop:ext}, we find
\begin{displaymath}
\fq^e=\bigcap_{\sigma \in \rN_{\mu}} \fP^{\sigma}_{\cU}(\mu,d;Y)
\end{displaymath}
Let $\fq_0=\fP^{\id}_{\cU}(\mu,d;Y)$. Write
\begin{displaymath}
\rad_{\fY_{\cU}}(\fp^e)=\bigcap_{(\phi,\kappa) \in S} \fP^{\phi}_{\cU}(\kappa,\red_{\kappa}(e);Z)
\end{displaymath}
per Corollary~\ref{cor:ext}. Since $\fp^e$ is an $\rN\fY_{\cU}$-ideal, it suffices to show $\fp^e \subset \fq_0$, for then $\fp^e$ will necessarily be contained in $\bigcap_{\sigma \in \rN_{\mu}} \sigma \fq_0=\fq^e$. We now establish this containment.

Recall that $(\cE, \phi)$ is good (with respect to $\lambda$, $e$, $\mu$, and $d$) if the following conditions hold:
\begin{itemize}
\item For $\beta \in \cK$ we have $\mu_{\beta} \le \sum_{\alpha \in \cE_{\beta}} \lambda_{\alpha}$
\item For $\beta \in \cK^{\infty}$ we have $d_{\beta} \le \sum_{\alpha \in \cE_{\beta}, \lambda_{\alpha}=\infty} e_{\alpha}$.
\end{itemize}
We say that $(\cE, \phi)$ is \defi{very good} if it is good and $\# \cE_{\beta} \le \mu_{\beta}$ for $\beta \in \cK^{\rf}$. We note that if $(\cE, \phi)$ is good then there is a subset $\cE'$ of $\cE$ such that $(\cE', \phi)$ is very good; necessarily, $\Theta^{\phi}_{\cE}(Z) \subset \Theta^{\phi}_{\cE'}(Z)$. Thus $\Theta(Z)$ is the union of the $\Theta^{\phi}_{\cE}(Z)$ with $(\cE, \phi)$ very good.

Let $(\cE, \phi)$ be very good, and put $Y^{\phi}_{\cE}=Y \cap \Theta^{\phi}_{\cE}(Z)$. We extend $\phi$ to a function $\cI \to \cK$ by defining $\phi(\alpha)=\beta_0$ for $\alpha \not\in \cE$, where $\beta_0 \in \cK^{\infty}$ is chosen arbitrarily. We define a weight $\kappa$ on $\cI$ as follows. For $\beta \in \cK^{\rf}$ and $\alpha \in \phi^{-1}(\beta)$, we we pick $1 \le \kappa_{\alpha} \le \lambda_{\alpha}$ arbitrarily so that $\mu_{\beta}=\sum_{\phi(\alpha)=\beta} \kappa_{\alpha}$ (this is possible since $\# \cE_{\beta} \le \mu_{\beta} \le \sum_{\alpha \in \cE_{\beta}} \lambda_{\alpha}$). For $\alpha \in \phi^{-1}(\cK^{\infty})$, we take $\kappa_{\alpha}=\lambda_{\alpha}$. Thus $(\phi,\kappa) \in S$. It is clear that $(\cE,\phi)$ is good with respect to $\kappa$, $\red_{\kappa}(e)$, $\mu$, and $d$, and so $\fP^{\phi}_{\cU}(\kappa, \red_{\kappa}(e); Z) \subset \fP^{\id}_{\cU}(\mu,d; Y^{\phi}_{\cE})$ by Theorem~\ref{thm:contain2}. We thus find $\fp^e \subset \fP^{\id}_{\cU}(\mu,d; Y^{\phi}_{\cE})$. Now intersect over all very good pairs $(\cE,\phi)$. Since $Y \subset \Theta(Z)$, we have $Y = \bigcup_{(\cE,\phi)} Y^{\phi}_{\cE}$, and so Proposition~\ref{prop:Pphi-prop}(c) gives $\fp^e \subset \fq_0$, which completes the proof.
\end{proof}

\subsection{Set-up for remainder of \S \ref{s:contain}} \label{ss:contain2-overview}

The remainder of \S \ref{s:contain} is devoted to the proof of Theorem~\ref{thm:contain2}. The following notation will be in effect from here on:
\begin{itemize}
\item $\mu$ is an $\infty$-composition with index set $\cK$, and $d$ is a weighting on $\cK$.
\item $\lambda$ is an $\infty$-composition with index set $\cI$, and $e$ is a weighting on $\cI$.
\item $\phi \colon \cI \to \cK$ is a function with $\mu_{\beta}=\sum_{\phi(\alpha)=\beta} \lambda_{\alpha}$ for all $\beta \in \cK$.
\item $\cU=\{U_{\beta}\}_{\beta \in \cK}$ is a partition of $[\infty]$ of type $\mu$, and $\pi \colon [\infty] \to \cK$ is the associated map.
\item $\fq=\fP^{\id}_{\cU}(\mu,d;Y)$ and $\fp=\fP^{\phi}_{\cU}(\lambda,e;Z)$ are $\fY_{\cU}$-ideals of $\tilde{R}_{\cU}$ such that $e$ and $d$ are reduced, $Y$ and $Z$ are closed, and $Y \subset \Theta^{\phi}(Z)$.
\item When needed, $\cV=\{V_{\alpha}\}_{\alpha \in \cI}$ will denote a partition of $[\infty]$ of type $\lambda$ that refines $\cU$, and $\rho \colon [\infty] \to \cI$ will denote the associated function.
\end{itemize}

\subsection{Proof of Theorem~\ref{thm:contain2}: the punctual case} \label{ss:punctual}

Work in the setting of \S \ref{ss:contain2-overview}. We say that we are in the \defi{punctual case} if the coefficient ring $A$ is a local ring with algebraically closed residue field $A/\fm_A$ and $Y$ is a point of $\bU^{\cK}$ lying over the closed point of $\Spec(A)$. We do not require $A$ to be noetherian. In \S \ref{ss:punctual}, we prove Theorem~\ref{thm:contain2} in the punctual case.

\subsubsection{A simple case} \label{sss:simple}

The proof of the punctual case in general involves a lot of bookkeeping, so we first illustrate the essential idea of the argument in a special case. We assume that $A=K$ is an algebraically closed field. Suppose $\lambda=(\infty,\infty)$ and $e=(e_1,e_2)$ where $e_1$ and $e_2$ are positive integers. Let $Z$ be an irreducible curve in $\bU^2$, and let $\fp=\fP(\lambda, e; Z)$. Let $\mu=(\infty)$ and $d=e_1+e_2$, and let $\fq=\fP(\mu, d;\{0\})$.  We prove:

\begin{proposition}
Suppose that the closure of $Z$ in $\bA^2$ contains the origin. Then $\fp \subset \fq$.
\end{proposition}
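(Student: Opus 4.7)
The plan is to argue by contrapositive: assuming $\fp \not\subset \fq$, I will build an element $g \in \fp$ and a map $\psi \colon R \to T$ of class $(\lambda,e;Z)$ with $\psi(g) \ne 0$, contradicting $g \in \fp$. By Example~\ref{ex:P-inf-n-zero} we have $\fq = \langle \xi_i^d \rangle_{i \ge 1}$, so any $f \in \fp \setminus \fq$ contains a monomial $m_0 = \xi_1^{a_1} \cdots \xi_n^{a_n}$ (after renaming variables) with every $a_k < d = e_1 + e_2$. This monomial is the starting point.

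The first step is to manufacture a convenient element $g \in \fp$ from $f$. Choose fresh disjoint index sets $S_1, S_2 \subset [\infty] \setminus [n]$ with $|S_j| = e_j$. Consider $\Delta_{S_1} \cdot \Delta_{S_2} \cdot f \in \fp$, then skew-symmetrize over $\fS_{S_1} \times \fS_{S_2}$ and average against further $\fS$-translates in order to cancel the contributions of every monomial of $f$ other than $m_0$. The outcome of this manipulation should be an element of the form
\[
g \;=\; f_1 \cdot f_2 \in \fp, \qquad f_1 = \Delta_{S_1} \Delta_{S_2},
\]
where $f_2$ is a polynomial in $\xi_1,\ldots,\xi_n$ whose constant term (essentially the coefficient of $m_0$ in $f$, up to a nonzero combinatorial factor) is nonzero.

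The second step uses $(0,0) \in \overline{Z}$ to build $\psi$. Since $\overline{Z}$ is a curve containing $(0,0)$, its normalization has a closed point above the origin; let $\mathcal{O}$ be the associated local ring, a DVR with residue field $K$. The composition $K[t_1,t_2]_{t_1-t_2}/\fI_0(Z) \to \mathcal{O}$ sends $t_j$ to elements $\tau_j \in \fm_{\mathcal{O}}$ with $\tau_1 \ne \tau_2$ (since $t_1-t_2$ is inverted). Choose a partition of $[\infty]$ into two infinite sets $V_1, V_2$ with $S_j \subset V_j$, and let $\rho \colon [\infty] \to \{1,2\}$ be the associated map. Set $T = \mathcal{O}[\delta_i]_{i \ge 1}/\langle \delta_i^{e_{\rho(i)}} \rangle$ and define $\psi \colon R \to T$ by $\psi(\xi_i) = \tau_{\rho(i)} + \delta_i$; this is of class $(\lambda,e;Z)$. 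Since $\rho$ is constant on each $S_j$, one has $\psi(\xi_i - \xi_{i'}) = \delta_i - \delta_{i'}$ for $i, i' \in S_j$, and so $\psi(\Delta_{S_j})$ is the discriminant of the $|S_j| = e_j$ variables $\{\delta_i\}_{i \in S_j}$ in $K[\delta_i]_{i \in S_j}/\langle \delta_i^{e_j} \rangle$ after reducing mod $\fm_{\mathcal{O}}$; this discriminant is nonzero because its expansion consists of basis monomials $\delta_{\sigma(1)}^{e_j-1} \delta_{\sigma(2)}^{e_j-2} \cdots$ with exponents strictly less than $e_j$. Meanwhile $\psi(f_2) \equiv f_2(0) \not\equiv 0 \pmod{\fm_{\mathcal{O}} + \langle \delta_i \rangle}$, so $\psi(f_2)$ is a unit of $T$. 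Therefore $\psi(g) = \psi(f_1)\psi(f_2) \ne 0$, contradicting $g \in \fp$.

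The hard part will be Step~1: while passing from $f$ to $\Delta_{S_1} \Delta_{S_2} \cdot f \in \fp$ is trivial, producing the factorization $g = f_1 \cdot f_2$ with $f_2$ having genuinely nonzero \emph{constant term} (rather than merely being nonzero at some point) requires a careful combinatorial skew-averaging argument. One must arrange that every monomial of $f$ other than $m_0$ is annihilated by the symmetrization procedure while the contribution of $m_0$ survives with a nonzero scalar; this is where the bulk of the bookkeeping lives and is the essential technical content of the proof.
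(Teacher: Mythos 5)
Your overall skeleton is the paper's: contrapositive, manufacture an element of $\fp$ of the form (disjoint product of discriminants)$\times$(something with nonzero constant term), then evaluate under a class-$(\lambda,e;Z)$ map built from a DVR point of $Z$ limiting to the origin. Your Step~2 is essentially the paper's evaluation argument and is fine, except that as written $\psi$ is not of class $(\lambda,e;Z)$: the definition requires $\psi_0$ to be defined on $\tilde{S}_{\cU,0}$, where $t_1-t_2$ is inverted, but $\tau_1-\tau_2\in\fm_{\cO}$ is not a unit of $\cO$. You must base change to $\Frac(\cO)$ (the paper takes $T=K\lpp t\rpp\otimes_{K\lbb t\rbb}T_0$) and then observe the relevant elements stay nonzero because $T_0$ is free over $\cO$.

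The real problem is Step~1, which you correctly flag as the hard part but whose setup, as proposed, cannot work. Skew-symmetrizing $\Delta_{S_1}\Delta_{S_2}\cdot f$ over $\fS_{S_1}\times\fS_{S_2}$ does nothing to $f$, since $f$ involves none of the variables in $S_1\cup S_2$; no cancellation of the unwanted monomials of $f$ can come from permuting fresh variables among themselves, and "average against further $\fS$-translates" is not an argument. Moreover the sizes are wrong: the paper attaches to \emph{each} variable $\xi_i$ of the distinguished monomial a set $I_i=\{i\}\cup I_i^{\circ}$ of size $d$ (not $e_j$), multiplies by $\Delta_{I_i^{\circ}}$, and skew-averages over $\Aut(I_i)/\Aut(I_i^{\circ})$ — i.e.\ it averages $\xi_i$ \emph{into} the discriminant. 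The identity
\begin{displaymath}
\sum_{\sigma\in\fS_d/\fS_{d-1}}\sgn(\sigma)\,\sigma\bigl(\xi_d^k\,\Delta(\xi_1,\ldots,\xi_{d-1})\bigr)
=\begin{cases} \Delta(\xi_1,\ldots,\xi_d)\, s_{k-d+1} & k\ge d-1,\\ 0 & k<d-1,\end{cases}
\end{displaymath}
is what kills the low-exponent monomials and turns the (normalized) monomial $(\xi_1\xi_2)^{d-1}$ into $\Delta_I\Delta_J\cdot 1$, giving $f'=\Delta_I\Delta_J(1+g)$ with $g$ in the ideal generated by the variables. Only afterwards are the size-$d$ discriminants split across $V_1,V_2$ into blocks of sizes $e_1$ and $e_2$ — this is the one place the hypothesis $d=e_1+e_2$ is used, and the fact that your Step~2 never invokes $d=e_1+e_2$ is a symptom that your $f_1=\Delta_{S_1}\Delta_{S_2}$ with $\lvert S_j\rvert=e_j$ is not the element the argument can actually produce.
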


Before giving the proof, we note that this does indeed fit into our set-up, by taking $\cU$ the partition of $[\infty]$ with just one part (as $\mu = (\infty)$); in this case, $\tilde{R}_{\cU}=R$ and the $\fP_{\cU}$ primes revert to our original primes $\fP$. Furthermore, in this case $\Theta^{\phi}(Z)$ consists of those points $a \in \bA^1$ such that $(a,a) \in \bA^2$ belongs to the closure of $Z$.

\begin{proof}
Assume $\fp \not\subset \fq$; we will derive a contraction. Let $f$ be an element of $\fp$ not contained in $\fq$. For notational simplicity, we suppose that $f$ uses only $\xi_1$ and $\xi_2$. As $\fq=\langle \xi_i^d \rangle$ we see that $f$ contains a monomial in which all exponents are $<d$. Multiplying by a scalar and monomial, we can assume that the coefficient of $(\xi_1 \xi_2)^{d-1}$ in $f$ is~1. Write $f=\sum_{i,j} c_{i,j} \xi_1^i \xi_2^j$, where $c_{i,j} \in K$; thus $c_{d-1,d-1}=1$.

Let $I^{\circ}$ and $J^{\circ}$ be disjoint subsets of $[\infty] \setminus \{1,2\}$ of cardinality $d-1$, and put $I=\{1\} \cup I^{\circ}$ and $J=\{2\} \cup J^{\circ}$. Let $f'$ be the polynomial obtained by multiplying $f$ by the discriminants $\Delta_I$ and $\Delta_J$ and then skew-averaging over $\Aut(I)/\Aut(I^{\circ})$ and $\Aut(J)/\Aut(J^{\circ})$. Of course, $f'$ belongs to $\fp$ since $\fp$ is an $\fS$-ideal. The skew-average of $\xi_d^k \cdot \Delta_{d-1}$ over $\fS_d/\fS_{d-1}$ vanishes if $k<d-1$, and otherwise is equal to $\Delta_d \cdot s_{k-d+1}$, where $s_k$ is a homogeneous symmetric polynomial of degree $k$. We thus find
\begin{displaymath}
f' = \Delta_I \Delta_J \sum_{i,j \ge 0} c_{d-1+i, d-1+j} s_{i,I} s_{j,J},
\end{displaymath}
where $s_{i,I}$ denotes the polynomial $s_i$ in the variables indexed by $I$. Since $s_{i,I}$ is homogeneous of degree $i$ (and $s_{0,I}=1$), we can thus write
\begin{displaymath}
f' = \Delta_I \Delta_J (1+g)
\end{displaymath}
where $g$ belongs to the ideal generated by the $\xi_i$.

Let $\cV=\{V_1,V_2\}$ be a partition of $[\infty]$ with $V_1$ and $V_2$ infinite such that for each $i \in \{1,2\}$ the sets $I \cap V_i$ and $J \cap V_i$ have cardinality $e_i$. Thus $\{I \cap V_1, I \cap V_2\}$ is a partition of the $d$-element set $I$ into subsets of size $e_1$ and $e_2$, and similarly for $J$. Such a partition $\cV$ exists since $d=e_1+e_2$. Let $\rho \colon [\infty] \to \{1,2\}$ be the projection associated to $\cV$. Since the closure of $Z$ contains the origin, we can find a $K\lpp t \rpp$-point $z$ of $Z$ that limits on the origin. Write $z=(z_1,z_2)$ where $z_i \in K\lpp t \rpp$ is the $i$th coordinate of $z$. Since $z$ limits on the origin, we have $z_i \in t K \lbb t \rbb$.

Let $T_0=K\lbb t \rbb[\epsilon_i]/\langle \epsilon_i^{e(\rho(i))} \rangle$ and let $T=K\lpp t \rpp \otimes_{K\lbb t \rbb} T_0$. Let $\psi \colon R \to T$ be the map given by $\psi(\xi_i)=z_{\rho(i)}+\epsilon_i$. This is a  map of class $(\lambda, e ;Z)$, and so $\fp \subset \ker(\psi)$. In particular, $\psi(f')=0$. We will compute $\psi(f')$ and find that it is non-zero, reaching our contradiction.

First, note that $T_0$ is a local ring and $\psi(\xi_i)$ belongs to the maximal ideal for all $i$. It follows that $\psi(g)$ belongs to the maximal ideal, and so $\psi(1+g)$ is a unit of $T_0$, and thus $T$.

Next, we examine $\psi(\Delta_I)$. We have
\begin{displaymath}
\Delta_I = \prod_{i<j} (\xi_i-\xi_j) = \Delta_{I \cap V_1} \cdot \Delta_{I \cap V_2} \cdot  \prod_{i<j, \rho(i) \ne \rho(j)} (\xi_i-\xi_j)
\end{displaymath}
where $i,j \in I$ in the two products. Call the rightmost product $\Pi$. For $(i,j)$ as in this product, we have
\begin{displaymath}
\psi(\xi_i-\xi_j)=(z_{\rho(i)}-z_{\rho(j)})+(\epsilon_i-\epsilon_j).
\end{displaymath}
Since $z_1-z_2 \ne 0$ is a unit of $K\lpp t \rpp$ and $\epsilon_i-\epsilon_j$ is nilpotent in $T$, we see that $\psi(\xi_i-\xi_j)$ is a unit of $T$. Thus $\psi(\Pi)$ is a unit of $T$. Now, $\psi(\Delta_{I \cap V_1})$ is the discriminant of $\{z_1+\epsilon_i\}_{i \in I \cap V_1}$. Since the discriminant only depends on the differences of the quantities, this coincides with the discriminant of $\{\epsilon_i\}_{i \in I \cap V_1}$. This discriminant is a sum of monomials in the $\epsilon_i$'s where all exponents are $<e_1$ (see \S \ref{ss:disc}). It is thus a non-zero element of $T$, since the index of nilpotency of $\epsilon_i$ is $e_1$ for $i \in V_1$. Similarly, $\psi(\Delta_{I \cap V_2})$ is non-zero. Since $I \cap V_1$ and $I \cap V_2$ are disjoint, it follows that the $\psi(\Delta_{I \cap V_1})$ and $\psi(\Delta_{I \cap V_2})$ have no $\epsilon_i$ in common, and so their product is non-zero. We thus see that $\psi(\Delta_I) \ne 0$.

Similarly, we see that $\psi(\Delta_J) \ne 0$. Again, since $I$ and $J$ are disjoint, it follows that $\psi(\Delta_I) \psi(\Delta_J) \ne 0$. We thus find that $\psi(f') \ne 0$, which completes the proof.
\end{proof}

\subsubsection{The general case}

We now prove Theorem~\ref{thm:contain2} in the punctual case.

\begin{proposition} \label{prop:contain2-1}
Assume we are in the punctual case. If $Y \subset \Theta^{\phi}(Z)$ then $\fp \subset \fq$.
\end{proposition}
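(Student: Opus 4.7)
The plan is to generalize the strategy of \S\ref{sss:simple}: assuming $f\in\fp\setminus\fq$, we produce $f'\in\fp$ and a homomorphism $\psi_{\cV}\colon\tilde R_{\cU}\to T$ of class ${}^{\phi}_{\cU}(\lambda,e;Z)$ with $\psi_{\cV}(f')\neq 0$, contradicting $f'\in\fp$. A preliminary reduction to $y=0$ is available: since the residue field $K$ of $A$ is algebraically closed, we lift each $y_{\beta}\in K$ to $A$ and apply the $\fY_{\cU}$-equivariant automorphism $\xi_i\mapsto\xi_i+\tilde y_{\pi(i)}$ of $\tilde R_{\cU}$, which replaces $\{y\}$ by $\{0\}$ and $Z$ by a translate while preserving the hypothesis $Y\subset\Theta^{\phi}(Z)$.

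Fix a good $\cE\subset\cI$ with $0=i(y)\in\overline{p(Z)}$. Applying the valuative criterion to a curve in $\overline{p(Z)}\subset\bA^{\cE}$ through $0$ meeting $p(Z)$, then lifting through the surjection $Z\twoheadrightarrow p(Z)$, yields a DVR $W$ with fraction field $L$ and residue field extending $K$, together with an $L$-point $z^*=(z^*_{\alpha})_{\alpha\in\cI}$ of $Z$ such that $z^*_{\alpha}\in W$ and reduces to $0\pmod{\fm_W}$ for every $\alpha\in\cE$. Choose a partition $\cV=\{V_{\alpha}\}_{\alpha\in\cI}$ of $[\infty]$ of type $\lambda$ refining $\cU$, with associated map $\rho\colon[\infty]\to\cI$. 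Setting $T_0=W[\epsilon_i]/\langle\epsilon_i^{e(\rho(i))}\rangle$ and $T=L\otimes_W T_0$, the assignment $\psi_{\cV}(\xi_i)=z^*_{\rho(i)}+\epsilon_i$ defines a homomorphism $\psi_{\cV}\colon\tilde R_{\cU}\to T$ of class ${}^{\phi}_{\cU}(\lambda,e;Z)$: for $\pi(i)\neq\pi(j)$ we have $\rho(i)\neq\rho(j)$, and since $z^*\in\bU^{\cI}(L)$, the difference $z^*_{\rho(i)}-z^*_{\rho(j)}$ is a unit of $L$, so $\psi_{\cV}(\xi_i-\xi_j)$ is a unit of $T$.

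To construct $f'$, take $f\in\fp\setminus\fq$. Since $\psi_{\cU}(f)\neq 0$ in the universal target $K[\epsilon_i]/\langle\epsilon_i^{d(\pi(i))}\rangle$, some monomial $\prod_{i\in S_0}\epsilon_i^{a_i}$ with $0<a_i<d_{\pi(i)}$ appears with nonzero coefficient $c_0\in K^{\times}$; as $d_{\beta}=1$ for $\beta\in\cK^{\rf}$, necessarily $\pi(S_0)\subset\cK^{\infty}$. After multiplying $f$ by $\prod_{i\in S_0}\xi_i^{d_{\pi(i)}-1-a_i}$ and applying a $\fY_{\cU}$-translation moving all indices in $\mathrm{supp}(f)$ into $\bigsqcup_{\alpha\in\cE}V_{\alpha}$ and $S_0$ specifically into $\bigsqcup_{\alpha\in\cE^{\infty}}V_{\alpha}$ (possible because $\cE^{\infty}_{\beta}$ is nonempty for $\beta\in\cK^{\infty}$, while $\bigsqcup_{\alpha\in\cE_{\beta}}V_{\alpha}=U_{\beta}$ for $\beta\in\cK^{\rf}$ by goodness (G1)), we may assume the top monomial $m_0:=\prod_{i\in S_0}\xi_i^{d_{\pi(i)}-1}$ appears in $f$ with coefficient $c_0$. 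For each $i\in S_0$, choose $I(i)\subset\bigsqcup_{\alpha\in\cE^{\infty}_{\pi(i)}}V_{\alpha}$ of cardinality $d_{\pi(i)}$ containing $i$, with $|I(i)\cap V_{\alpha}|\le e_{\alpha}$ for each $\alpha$ (possible by goodness (G2)), making the $I(i)$'s pairwise disjoint and disjoint from $\mathrm{supp}(f)\setminus S_0$. Multiplying $f$ by $\prod_{i\in S_0}\Delta(\xi_{I(i)\setminus\{i\}})$ and skew-averaging over $\prod_{i\in S_0}\Aut(I(i))/\Aut(I(i)\setminus\{i\})$ (one single-index skew-average per initial, commuting because the $I(i)$'s are disjoint) produces $f'\in\fp$. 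The Laplace-expansion identity $\mathrm{SkewAvg}(\xi_i^{k}\cdot\Delta(\xi_{I(i)\setminus\{i\}}))=\Delta_{I(i)}\cdot s_{k-d_{\pi(i)}+1,I(i)}$ (zero for $k<d_{\pi(i)}-1$) then yields
\[
f' \;=\; \Big(\prod_{i\in S_0}\Delta_{I(i)}\Big)\cdot h,
\]
where $h$ is a polynomial whose constant term equals $c_0$ (the only monomial of $f$ whose shifted exponents match the pattern $d_{\pi(i)}-1$ on $S_0$ and $0$ elsewhere is $m_0$) and whose variables all satisfy $\rho(\cdot)\in\cE$.

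Finally we verify $\psi_{\cV}(f')\neq 0$. Factor $\Delta_{I(i)}=\prod_{\alpha\in\cI}\Delta(\xi_{I(i)\cap V_{\alpha}})\cdot\prod_{\substack{j<k\in I(i)\\ \rho(j)\neq\rho(k)}}(\xi_j-\xi_k)$. The rightmost factors map to units of $T$ as explained above, while each $\psi_{\cV}(\Delta(\xi_{I(i)\cap V_{\alpha}}))$ equals the discriminant of $\{\epsilon_j\}_{j\in I(i)\cap V_{\alpha}}$ in $T_0$, which is nonzero because $|I(i)\cap V_{\alpha}|\le e_{\alpha}$ (the Vandermonde matrix has vanishing rows $\epsilon^s$ with $s\ge e_{\alpha}$, so its determinant is nonzero modulo $\epsilon^{e_{\alpha}}$ precisely when the matrix has $\le e_{\alpha}$ rows). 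Disjointness of the $I(i)$'s means $\prod_i\psi_{\cV}(\Delta_{I(i)})$ uses pairwise disjoint sets of $\epsilon$'s and is hence nonzero in $T_0$. Meanwhile $\psi_{\cV}(h)\in T_0$ (every variable of $h$ has $\rho(\cdot)\in\cE$, so $\psi_{\cV}(\xi_\bullet)\in\fm_W+\langle\epsilon_\bullet\rangle$), and $\psi_{\cV}(h)\equiv c_0\not\equiv 0\pmod{\fm_W+\langle\epsilon_i\rangle}$, so $\psi_{\cV}(h)$ is a unit of the local ring $T_0$. Hence $\psi_{\cV}(f')\neq 0$ in $T$, contradicting $f'\in\fp$. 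The main obstacle is coordinating (i) the discriminant--skew-averaging manipulation producing $f'$ in the desired factored form with constant term $c_0$, and (ii) the choices of $\cV$ and the $I(i)$'s so that every variable of $h$ has $\rho$-image in $\cE$; the goodness conditions (G1) and (G2) supply exactly this flexibility.
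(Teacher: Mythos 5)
Your argument is correct and is essentially the paper's own proof: the paper likewise normalizes $y$ by working in the coordinates $\eta_i=\xi_i-\tilde y_{\pi(i)}$, extracts from $f\in\fp\setminus\fq$ an element of the form $(1+g)\prod\Delta_I$ by the same discriminant/skew-averaging device, and then evaluates under a class-${}^{\phi}_{\cU}(\lambda,e;Z)$ homomorphism built from a valuation-ring point of $Z$ specializing to $i(y)$, showing the image is a unit times a nonzero disjoint product of $\epsilon$-discriminants. The only (harmless) quibble is that your translation $\xi_i\mapsto\xi_i+\tilde y_{\pi(i)}$ is not literally an automorphism of the localization $\tilde R_{\cU}$ (it does not carry the inverted set $\{\xi_i-\xi_j\}_{\pi(i)\neq\pi(j)}$ to units), so this step is cleaner phrased, as in the paper, as a change of variables to the $\eta_i$ rather than as a reduction to $y=0$; your remaining choices (fixing $\cV$ first and moving $f$ by a $\fY_{\cU}$-translation, and skew-averaging only over the support of the distinguished monomial) differ from the paper's only cosmetically.
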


Let $y=(y_{\beta})_{\beta \in \cK}$ be the unique point of $Y$, where $y_{\beta} \in A/\fm_A$. Let $\tilde{y}_{\beta} \in A$ be a lift of $y_{\beta}$. We let $\eta_i=\xi_i-\tilde{y}_{\pi(i)}$. The ideal $\fq$ is then generated by $\fm_A$ and $\eta_i^{d(\pi(i))}$ for $i \ge 1$. Recall that for $I \subset [\infty]$ we let $\Delta_I$ be the discriminant of the $\{\xi_i\}_{i \in I}$. We note that if $I$ is contained in some $U_{\beta}$ then $\Delta_I$ coincides with the discriminant of the $\{\eta_i\}_{i \in I}$, since $\eta_i-\eta_j=\xi_i-\xi_j$ for $i,j \in U_{\beta}$.

\begin{lemma}
Suppose $\fa$ is an $\fY_{\cU}$-ideal of $\tilde{R}_{\cU}$ that is not contained in $\fq$. Then $\fa$ contains an element of the form
\begin{displaymath}
(1+g) \cdot \prod_{\beta \in \cK^{\infty}} \prod_{1 \le \gamma \le n(\beta)} \Delta_{I_{\beta,\gamma}}
\end{displaymath}
where $I_{\beta,1}, \ldots, I_{\beta,n(\beta)}$ are disjoint subsets of $U_{\beta}$ of cardinality $d(\beta)$ and $g$ belongs to the ideal generated by the $\eta_i$.
\end{lemma}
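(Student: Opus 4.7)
The plan is to select a distinguished monomial of $f$ outside $\fq$, normalize it by multiplying $f$ by a suitable monomial, and then apply a Vandermonde-style skew-symmetrization across well-chosen index sets to convert the normalized monomial into a product of discriminants while killing all lower-order contributions.

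First, rewrite $f$ as a polynomial in the variables $\eta_i = \xi_i - \tilde{y}_{\pi(i)}$. Since $A$ is local with maximal ideal $\fm_A$ and $\fq = \langle \fm_A,\, \eta_i^{d(\pi(i))} \rangle_{i \ge 1}$, the hypothesis $f \notin \fq$ yields a finitely supported exponent vector $\alpha = (\alpha_i)$ with $\alpha_i < d(\pi(i))$ for all $i$ and with coefficient $c_\alpha \in A^{\times}$ of $\eta^{\alpha}$ in $f$. As $d$ is reduced, $\alpha_i = 0$ whenever $\pi(i) \in \cK^{\rf}$, so $\supp(\alpha) \subset \bigcup_{\beta \in \cK^{\infty}} U_\beta$.

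For each $\beta \in \cK^{\infty}$ set $n(\beta) = |\supp(\alpha) \cap U_\beta|$ and enumerate $\supp(\alpha) \cap U_\beta = \{i_{\beta,1}, \ldots, i_{\beta,n(\beta)}\}$. Choose pairwise disjoint subsets $I^{\circ}_{\beta,\gamma} \subset U_\beta$ of cardinality $d(\beta)-1$, each disjoint from $\supp(f)$; this is possible since $U_\beta$ is infinite and $\supp(f)$ is finite. Put $I_{\beta,\gamma} = \{i_{\beta,\gamma}\} \cup I^{\circ}_{\beta,\gamma}$, so $|I_{\beta,\gamma}| = d(\beta)$. Define $h$ to be the skew-symmetrization over $\prod_{\beta,\gamma} \Aut(I_{\beta,\gamma})/\Aut(I^{\circ}_{\beta,\gamma})$ of the product
\[
f \cdot \prod_{i \in \supp(\alpha)} \eta_i^{d(\pi(i))-1-\alpha_i} \cdot \prod_{\beta \in \cK^{\infty},\, \gamma} \Delta_{I^{\circ}_{\beta,\gamma}}.
\]
Since each $\Aut(I_{\beta,\gamma}) \subset \fY_{\cU}$ and $\fa$ is $\fY_{\cU}$-stable, $h \in \fa$.

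To evaluate $h$, I would apply the standard determinantal identity: skew-symmetrizing $\eta_{i_{\beta,\gamma}}^k \Delta_{I^{\circ}_{\beta,\gamma}}$ over $\Aut(I_{\beta,\gamma})/\Aut(I^{\circ}_{\beta,\gamma})$ vanishes for $k < d(\beta)-1$ and equals $\pm \Delta_{I_{\beta,\gamma}} \cdot s_{k-d(\beta)+1}$ for $k \geq d(\beta)-1$, where $s_j$ is a homogeneous symmetric polynomial of degree $j$ in the $\eta_i$ for $i \in I_{\beta,\gamma}$, with $s_0 = 1$. Since the index sets $I_{\beta,\gamma}$ are pairwise disjoint, this identity applies independently across $(\beta,\gamma)$. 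Expanding $f = \sum_{\alpha'} c_{\alpha'} \eta^{\alpha'}$, the distinguished monomial $\alpha' = \alpha$ contributes exactly $\pm c_\alpha \prod_{\beta,\gamma} \Delta_{I_{\beta,\gamma}}$ (all $s$-factors equal $s_0 = 1$). Any other $\alpha'$ with nonvanishing contribution forces either a positive-degree $s_j$-factor (if $\alpha'$ strictly exceeds $\alpha$ at some $i_{\beta,\gamma}$) or a free $\eta_j$ with $j \notin \bigcup_{\beta,\gamma} I_{\beta,\gamma}$ (if $\alpha' = \alpha$ on $\supp(\alpha)$ while $\alpha'$ has additional support elsewhere); crucially, the disjointness $I^{\circ}_{\beta,\gamma} \cap \supp(f) = \emptyset$ prevents spurious contributions from the fresh indices. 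In either case the extra factor lies in $\langle \eta_i \rangle_{i \ge 1}$, so $h = \pm c_\alpha \prod_{\beta,\gamma} \Delta_{I_{\beta,\gamma}} (1+g)$ with $g \in \langle \eta_i \rangle_{i \ge 1}$; multiplying by $\pm c_\alpha^{-1} \in A^{\times}$ yields the desired element.

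The main obstacle is controlling the contributions from monomials $\alpha' \neq \alpha$ and verifying rigorously that every surviving extra factor lies in $\langle \eta_i \rangle_{i \ge 1}$; this requires careful exponent bookkeeping and hinges on the choice of fresh $I^{\circ}_{\beta,\gamma}$ outside $\supp(f)$.
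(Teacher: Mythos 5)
Your proposal is correct and follows essentially the same route as the paper: extract a monomial outside $\fq$ with unit coefficient, pad its exponents up to $d(\pi(i))-1$, attach fresh index sets $I^{\circ}_{\beta,\gamma}$ disjoint from the support of $f$, and skew-symmetrize using the Vandermonde identity so that only the distinguished monomial survives with a constant (unit) contribution. The only difference is cosmetic: the paper normalizes and symmetrizes over \emph{all} variables $\eta_1,\dots,\eta_m$ occurring in $f$, whereas you do so only over $\supp(\alpha)$ and then separately observe that leftover factors $\eta_j$ with $j\in\supp(f)\setminus\supp(\alpha)$ land in $\langle \eta_i\rangle$ — both versions work.
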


\begin{proof}
Let $f$ be an element of $\fa$ that does not belong to $\fq$. Express $f$ as a polynomial in the $\eta_i$'s, and let $m$ be such that $f$ only uses $\eta_1, \ldots, \eta_m$. Since $f \not\in \fa$, there must be some monomial in which each $\eta_i$ appears with exponent $<d(\pi(i))$ and with coefficient not in $\fm$. Scaling by a unit of $A$, we can assume that the coefficient is~1. Multiplying by a monomial if necessary, we can assume that $\prod_{i=1}^m \eta_i^{d(\pi(i))-1}$ appears with coefficient~1 in $f$. Write $f=\sum_k c_k \eta^k$ where the sum is over multi-indices $k \in \bN^m$. Thus $c_{\ell}=1$ where $\ell_i=d(\pi(i))-1$.

For each $1 \le i \le m$ let $I_i^{\circ}$ be a subset of $U_{\pi(i)}$ of size $d(\pi(i))-1$ such that the $I_i^{\circ}$ are disjoint from each other and from $[m]$. Let $I_i=\{i\} \cup I_i^{\circ}$. Let $f'$ be the result of skew-averaging $\Delta_{I_1^{\circ}} \cdots \Delta_{I_m^{\circ}} \cdot f$ over $\Aut(I_i)/\Aut(I_i^{\circ})$ for each $1 \le i \le m$. Since $\fa$ is an ideal and stable by $\fY_{\cU}$, it follows that $f'$ belongs to $\fa$. (Note that $\Aut(I_i)$ is a subgroup of $\fY_{\cU}$.)

We have
\begin{align*}
& \sum_{\sigma \in \fS_n/\fS_{n-1}} \sgn(\sigma) \cdot \sigma (X_n^k \cdot \Delta(X_1, \ldots, X_{n-1})) \\
=& \Delta(X_1, \ldots, X_n) \times \begin{cases}
s_{k-n+1}(X_1, \ldots, X_n) & \text{if $k \ge n-1$} \\
0 & \text{if $k<n-1$} \end{cases}
\end{align*}
where $s_k$ is a homogeneous symmetric polynomial of degree $k$. We thus see that any monomial in $f$ in which some $\eta_i$ has exponent $<d(\pi(i))-1$ will be killed by the averaging procedure. We find
\begin{displaymath}
f'=\Delta_{I_1} \cdots \Delta_{I_m} \sum_{k \in \bN^m} c_{k+\ell} s_{k_1,I_1} \cdots s_{k_m,I_m}
\end{displaymath}
where $s_{k,I}$ denotes the polynomial $s_k$ in the $\eta$ variables indexed by $I$. Since $c_{\ell}=1$ and $s_k$ is homogeneous of degree $k$ (and $s_0=1$), we see that the sum has the form $1+g$ where $g$ belongs to the ideal generated by the $\eta_i$'s. The product of discriminants also has the required form. The number $n(\beta)$ is the cardinality of $\{1,\ldots,m\} \cap U_{\beta}$. We also note that if $\pi(i)$ is in a finite part of $\cU$ then $d(\pi(i))=1$, and so $\Delta_{I_i}=1$ and can be omitted. This completes the proof.
\end{proof}

\begin{proof}[Proof of Proposition~\ref{prop:contain2-1}]
Suppose that $\fp \not\subset \fq$. We will derive a contradiction. Let
\begin{displaymath}
f= (1+g) \cdot \prod_{\beta \in \cK^{\infty}} \prod_{1 \le \gamma \le n(\beta)} \Delta_{I_{\beta,\gamma}}
\end{displaymath}
be an element as in the lemma that belongs to $\fp$. Let $\cE \subset \cI$ be a good subset such that $Y \subset \Theta^{\phi}_{\cE}(Z)$. Let $p \colon \bA^{\cI} \to \bA^{\cE}$ and $i \colon \bA^{\cK} \to \bA^{\cE}$ be the usual maps, so that $\Theta^{\phi}_{\cE}(Z)=i^{-1}(\ol{p(Z)})$. Since $i(y)$ belongs to the closure of $p(Z)$, we can find a valuation ring $B$ with fraction field $K$ and a $K$-point $z=(z_{\alpha})_{\alpha \in \cI}$ of $Z$ such that $p(z)$ extends to a $B$-point of $\bA^{\cE}$ that limits on $i(y)$. Explicitly, this means the following:
\begin{itemize}
\item The induced ring homomorphism $\theta \colon A \to B$ is local, i.e., $\theta(\fm_A) \subset \fm_B$.
\item For $\alpha \in \cE$ we have $z_{\alpha} \in B$.
\item For $\alpha \in \cE$ we have $\ol{z}_{\alpha}=\ol{\theta}(y_{\phi(\alpha)})$, where $\ol{\theta} \colon A/\fm_A \to B/\fm_B$ is the induced homomorphism and $\ol{z}_{\alpha}$ is the image of $z_{\alpha}$ in $B/\fm_B$.
\end{itemize}
For $\beta \in \cK^{\infty}$ let $\{V_{\alpha}\}_{\alpha \in \cI_{\beta}}$ be a partition of $U_{\beta}$ with the following properties:
\begin{itemize}
\item $\# V_{\alpha}=\lambda_{\alpha}$.
\item If $\eta_i$ appears in $g$, where $i \in U_{\beta}$, then $i \in \bigcup_{\alpha \in \cE_{\beta}} V_{\alpha}$.
\item We have $I_{\beta,\gamma} \subset \bigcup_{\alpha \in \cE_{\beta}} V_{\alpha}$ for each $1 \le \gamma \le n(\beta)$.
\item We have $\#(V_{\alpha} \cap I_{\beta,\gamma}) \le e_{\alpha}$ for each $1 \le \gamma \le n(\beta)$ and $\alpha \in \cE_{\beta}$.
\end{itemize}
We can pick such a partition since $\# I_{\beta,i}=d(\beta)$ and $d(\beta) \le \sum_{\alpha \in \cE_{\beta}} e_{\alpha}$ since $\cE$ is good. For $\beta \in \cK \setminus \cK^{\infty}$, let $\{V_{\alpha}\}_{\alpha \in \cE_{\beta}}$ be a partition of $U_{\beta}$ with $\# V_{\alpha}=\lambda_{\alpha}$. Thus $\cV=\{\cV_{\alpha}\}_{\alpha \in \cI}$ is a partition of $[\infty]$ of type $\lambda$ that refines $\cU$. We let $\rho \colon [\infty] \to \cI$ be associated to $\cV$. We note that if $\eta_i$ appears in $g$ then $\rho(i) \in \cE$.

Let $T_0=B[\delta_i]_{i \ge 1}/\langle \delta_i^{e(\rho(i))} \rangle$ and let $T=K \otimes_B T_0$. Let $\psi \colon R \to T$ be the ring homomorphism defined by $\psi(\xi_i)=z_{\rho(i)}+\delta_i$ and $\psi \vert_A=\theta$. This is a map of class  ${}^{\phi}_{\cU}(\lambda, e; Z)$, and so $\fp \subset \ker(\psi)$. We must therefore have $\psi(f)=0$.

The ring $T_0$ is local, with maximal ideal generated by $\fm_B$ and the $\delta_i$'s. If $\rho(i) \in \cE$ then $\psi(\eta_i)=z_{\rho(i)}-\theta(\tilde{y}_{\pi(i)})+\delta_i$. We have $\ol{z}_{\rho(i)}=\ol{\theta}(y_{\phi(\rho(i))})=\ol{\theta}(y_{\pi(i)})$ and $\ol{\theta(\tilde{y}_{\pi(i)})}=\ol{\theta}(y_{\pi(i)})$. It follows that $z_{\rho(i)}-\theta(\tilde{y}_{\pi(i)}) \in \fm_B$. Thus $\psi(\eta_i)$ belongs to the maximal ideal of $T_0$. Since every $\eta_i$ appearing in $g$ has $\rho(i) \in \cE$, it follows that $\psi(g)$ belongs to the maximal ideal of $T_0$. Thus $\psi(1+g)$ is a unit of $T_0$, and thus of $T$ as well.

Suppose $i,j \in I_{\beta,\gamma}$ belong to different parts of $\cV$. Then $\rho(i) \ne \rho(j)$ and so $z_{\rho(i)} \ne z_{\rho(j)}$ (this is where it is important that $Z$ is a subset of $\bU^{\cI}$, i.e., every point of $Z$ has distinct coordinates). We thus see that $\psi(\xi_i-\xi_j)$ is a unit of $T$. We therefore have
\begin{displaymath}
\psi(\Delta_{I_{\beta,\gamma}})=u \cdot \prod_{\alpha \in \cE_{\beta}} \psi(\Delta_{V_{\alpha} \cap I_{\beta,\gamma}})
\end{displaymath}
for some unit $u$ of $T$, and so (using the previous paragraph)
\begin{displaymath}
\psi(f) = v \cdot \prod_{\beta \in \cK^{\infty}} \prod_{1 \le \gamma \le n(\beta)} \prod_{\alpha \in \cE_{\beta}} \psi(\Delta_{V_{\alpha} \cap I_{\beta,\gamma}})
\end{displaymath}
for some unit $v$ of $T$.
Now, for $i,j \in V_{\alpha} \cap I_{\beta,\gamma}$, we have $z_{\rho(i)}=z_{\rho(j)}$, and so $\psi(\xi_i-\xi_j)=\delta_i-\delta_j$. We thus see that $\psi(\Delta_{V_{\alpha} \cap I_{\beta,\gamma}})$ is the discriminant of $\delta_i$ with $i \in V_{\alpha} \cap I_{\beta,\gamma}$. Since $\#(V_{\alpha} \cap I_{\beta,\gamma})\le e_{\alpha}$ by construction, this discriminant is a sum of monomials in the $\delta_i$'s in which every exponent is $<e_{\alpha}$. It is therefore non-zero, since these $\delta_i$ have nilpotency index $e_{\alpha}$. Since the sets $V_{\alpha} \cap I_{\beta,\gamma}$ are pairwise disjoint, it follows that the product of the $\psi(\Delta_{V_{\alpha} \cap I_{\beta,\gamma}})$ is non-zero. (Here we are simply using the fact that two non-zero elements of $T$ that use no common $\delta_i$ have non-zero product.) We thus see that $\psi(f) \ne 0$, which is a contradiction.
\end{proof}

\subsection{Proof of Theorem~\ref{thm:contain2}: the general case} \label{ss:contain-gen}

We now deduce Theorem~\ref{thm:contain2} in general from the punctual case. We use notation as in \S \ref{ss:contain2-overview}.

\begin{lemma}
Consider a cartesian diagram of affine schemes
\begin{displaymath}
\xymatrix@C=4em{ X' \ar[r]^{g'} \ar[d]_{f'} & X \ar[d]^f \\
Y' \ar[r]^g & Y }
\end{displaymath}
with $g$ flat. Let $Z$ be a closed subset of $X$. Then $g^{-1}(\ol{f(Z)})=\ol{f'((g')^{-1}(Z))}$.
\end{lemma}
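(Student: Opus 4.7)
The plan is to reduce the statement to a purely algebraic identity of ideals using the flatness hypothesis, and then extract the set-theoretic conclusion.

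First I would translate to commutative algebra. Write $Y = \Spec(B)$, $X = \Spec(A)$, $Y' = \Spec(B')$, so that $f$ corresponds to a ring map $B \to A$, $g$ corresponds to a flat ring map $B \to B'$, and $X' = \Spec(A')$ with $A' = A \otimes_B B'$. The closed subset $Z \subset X$ has the form $Z = V(I)$ for some ideal $I \subset A$. Then $\overline{f(Z)} = V(J)$, where $J = \ker(B \to A/I)$ is the usual contraction, and so $g^{-1}(\overline{f(Z)}) = V(JB')$. On the other hand, $(g')^{-1}(Z) = V(IA')$, and its scheme-theoretic image closure is $V(K)$, where $K = \ker(B' \to A'/IA')$.

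The key step is to show $JB' = K$ as ideals of $B'$, which is where flatness enters. Starting from the exact sequence
\begin{displaymath}
0 \to J \to B \to A/I
\end{displaymath}
and applying the exact functor $B' \otimes_B -$ (exact because $g$ is flat), we obtain
\begin{displaymath}
0 \to JB' \to B' \to B' \otimes_B A/I.
\end{displaymath}
Since $B' \otimes_B A/I = A'/IA'$, the rightmost map is precisely $B' \to A'/IA'$, so its kernel is $K$. Hence $JB' = K$ as desired, and taking zero loci yields $g^{-1}(\overline{f(Z)}) = \overline{f'((g')^{-1}(Z))}$.

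The argument is essentially routine once one translates everything to ideals; the only substantive point is the use of flatness to preserve the exactness of $0 \to J \to B \to A/I$ under tensoring. No obstacle is anticipated, as this is a standard base-change fact; I include it simply to nail down the identity needed in the preceding discussion of $\Theta(Z)$.
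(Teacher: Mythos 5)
Your proof is correct and is essentially the same as the paper's: the paper also reduces to the ideal identity $(I^c)^e = (I^e)^c$ under flat base change, which it isolates as a separate lemma and proves by exactly your argument of tensoring the exact sequence $0 \to I^c \to B \to A/I$ with the flat ring $B'$. The only difference is organizational — you inline the key lemma rather than citing it.
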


\begin{proof}
Consider the corresponding diagram of rings
\begin{displaymath}
\xymatrix@C=4em{
R' & R \ar[l]_-{(g')^*} \\
S' \ar[u]^{(f')^*} & S \ar[u]_{f^*} \ar[l]^-{g^*} }
\end{displaymath}
Write $Z=V(I)$, where $I$ is an ideal of $R$. Then $g^{-1}(\ol{f(Z)})=V((I^c)^e)$ while $\ol{f'((g')^{-1}(Z))}=V((I^e)^c)$. Since $S \to S'$ is flat, we have $(I^c)^e=(I^e)^c$ (Lemma~\ref{lemma:flat-ec}), and so the result follows.
\end{proof}

\begin{proposition}
Formation of $\Theta^{\phi}$ commutes with flat base change. Precisely, let $A \to A'$ be a flat ring map, put $R'=A' \otimes_A R$, and let $Z'$ be the inverse image of $Z$ in $\bU^{\cI}_{A'}$. Then $\Theta^{\phi}(Z')$ is the inverse image of $\Theta^{\phi}(Z)$ in $\bA^{\cK}_{A'}$. (We assume that $Z$ is closed in $\bU^{\cI}_A$.)
\end{proposition}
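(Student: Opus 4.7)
The plan is to reduce to the case of a single good $\cE$ and apply the lemma just proved. First observe that the notion of $(\cE,\phi|_{\cE})$ being good depends only on the combinatorial data $(\lambda,e,\mu,d)$ and not on the coefficient ring, so the indexing set of the union defining $\Theta^{\phi}$ is the same over $A$ and over $A'$. Write $r \colon \bA^{\cK}_{A'} \to \bA^{\cK}_A$ for the base change map. Since preimage under $r$ commutes with finite unions, and since $\bU^{\cK}_{A'}=r^{-1}(\bU^{\cK}_A)$, it suffices to show that for each good $\cE$,
\begin{displaymath}
\Theta^{\phi}_{\cE}(Z') = r^{-1}(\Theta^{\phi}_{\cE}(Z)).
\end{displaymath}

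Fix such a good $\cE$. Let $i \colon \bA^{\cK}_A \to \bA^{\cE}_A$ be the multi-diagonal closed immersion attached to $\phi|_{\cE}$ and $p \colon \bA^{\cI}_A \to \bA^{\cE}_A$ the projection associated to the inclusion $\cE \subset \cI$; write $i', p'$ and $q \colon \bA^{\cE}_{A'} \to \bA^{\cE}_A$ for the corresponding base changes along $A \to A'$. Then we have a cartesian square
\begin{displaymath}
\xymatrix@C=4em{
\bA^{\cI}_{A'} \ar[r] \ar[d]_{p'} & \bA^{\cI}_A \ar[d]^p \\
\bA^{\cE}_{A'} \ar[r]^q & \bA^{\cE}_A }
\end{displaymath}
in which $q$ is flat, since it is the base change of the flat map $A \to A'$ to the polynomial ring $A[\bA^{\cE}]$. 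Applying the preceding lemma with $f=p$, $g=q$, and the closed subset $Z$, we obtain
\begin{displaymath}
q^{-1}(\ol{p(Z)}) = \ol{p'(Z')}.
\end{displaymath}
Pulling back along $i'$ and intersecting with $\bU^{\cK}_{A'}$ gives
\begin{displaymath}
\Theta^{\phi}_{\cE}(Z') = (i')^{-1}(\ol{p'(Z')}) \cap \bU^{\cK}_{A'} = (i')^{-1}(q^{-1}(\ol{p(Z)})) \cap \bU^{\cK}_{A'} = r^{-1}(\Theta^{\phi}_{\cE}(Z)),
\end{displaymath}
where the last equality uses that $q \circ i' = i \circ r$ and that $r^{-1}(\bU^{\cK}_A)=\bU^{\cK}_{A'}$.

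Taking the union over all good $\cE$ (the same set on both sides) yields the desired identification $\Theta^{\phi}(Z')=r^{-1}(\Theta^{\phi}(Z))$. There is no substantive obstacle here: the statement is a formal consequence of the base-change lemma for Zariski closures under flat pullback, together with the observation that goodness is a combinatorial condition independent of the coefficient ring.
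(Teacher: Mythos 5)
Your argument is correct and follows the same route as the paper: reduce to a single good $\cE$, apply the flat base-change lemma for Zariski closures to the projection $p$, and then use that pullback along the multi-diagonal immersion commutes with base change. The extra details you supply (goodness being coefficient-independent, the cartesian square, the identity $q \circ i' = i \circ r$) are all correct and merely spell out what the paper leaves implicit.
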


\begin{proof}
It suffices to prove the analogous statement for $\Theta^{\phi}_{\cE}$ for each good subset $\cE$ of $\cI$. Thus let $\cE$ be given, and let $p \colon \bU_A^{\cI} \to \bA^{\cE}_A$ and $i \colon \bA^{\cK}_A \to \bA^{\cE}_A$ be the usual maps. Write $(-)'$ for base change to $A$. By the previous lemma, we have $(\ol{p(Z)})'=\ol{p'(Z')}$. Since formation of $i^{-1}$ commutes with base change, the result now follows.
\end{proof}

\begin{proposition} \label{prop:contain2-9-1}
Suppose that $A$ is a local ring with $k=A/\fm_A$ algebraically closed and $Y$ lies above the closed point of $\Spec(A)$. Then Theorem~\ref{thm:contain2} holds.
\end{proposition}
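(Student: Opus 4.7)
The plan is to reduce directly to the punctual case (Proposition~\ref{prop:contain2-1}) by decomposing $Y$ into its closed points. Since $k = A/\fm_A$ is algebraically closed and $\bU^{\cK}_k$ is an open subscheme of an affine space of finite type over $k$, it is a Jacobson scheme, so the set $\Sigma$ of closed points of $Y$ is Zariski dense in $Y$, and each $y \in \Sigma$ is a $k$-rational point $(y_\beta)_{\beta \in \cK}$ of $\bU^{\cK}$ lying above the closed point of $\Spec(A)$.

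For each such $y$, the hypothesis $Y \subset \Theta^{\phi}(Z)$ gives $\{y\} \subset \Theta^{\phi}(Z)$, and the punctual case (Proposition~\ref{prop:contain2-1}) therefore applies to the data $(\lambda,e;Z)$ and $(\mu,d;\{y\})$ and yields $\fp \subset \fP^{\id}_{\cU}(\mu,d;\{y\})$ for every $y \in \Sigma$.

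To conclude, I would use that $\fP^{\id}_{\cU}(\mu,d;-)$ depends only on the Zariski closure of its last argument (immediate from the definition of $\fQ$ via the radical ideal $\fI_0$), together with Proposition~\ref{prop:Pphi-prop}(c), which commutes $\fP$ with unions. Writing $Y$ as the Zariski closure of $\bigcup_{y \in \Sigma} \{y\}$, these two facts give
\[
\fq \;=\; \fP^{\id}_{\cU}(\mu,d;Y) \;=\; \bigcap_{y \in \Sigma} \fP^{\id}_{\cU}(\mu,d;\{y\}) \;\supset\; \fp,
\]
which is the desired containment. The reduction is essentially formal once the punctual case is in hand; the only delicate ingredient is the density of closed points in $Y$, which is precisely what the algebraic closedness of $k$ provides. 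There is no substantial new obstacle beyond this bookkeeping.
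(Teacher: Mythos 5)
Your proof is correct and follows essentially the same route as the paper: the paper also observes that the $k$-rational points of $Y$ are dense (which, since $k$ is algebraically closed, is the same as your density of closed points), writes $\fq=\bigcap_{y\in Y(k)}\fP^{\id}_{\cU}(\mu,d;\{y\})$ via Proposition~\ref{prop:Pphi-prop}(c), and invokes the punctual case for each $y$. No gap.
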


\begin{proof}
For $y \in Y(k)$, put $\fq_y= \fP^{\id}_{\cU}(\mu,d;\{y\})$. Since $Y$ is an algebraic variety over an algebraically closed field, the set $Y(k)$ is dense in $Y$. Thus, by Proposition~\ref{prop:Pphi-prop}(c), we have $\fq=\bigcap_{y \in Y(k)} \fq_y$. Hence it suffices to show that $\fp \subset \fq_y$ for all $y \in Y(k)$. This follows from the punctual case.
\end{proof}

\begin{proposition} \label{prop:contain2-9-2}
Suppose that $A$ is a local ring and $Y$ lies above the closed point of $\Spec(A)$. Then Theorem~\ref{thm:contain2} holds.
\end{proposition}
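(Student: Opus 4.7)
The plan is to deduce the general case from Proposition~\ref{prop:contain2-9-1} by a faithfully flat local base change of $A$ that algebraically closes the residue field, combined with a $\fY_{\cU}$-radical manipulation that sidesteps the fact that $\fq \tilde{R}_{\cU}'$ need not coincide with the ``genuine'' ideal $\fP^{\id}_{\cU}(\mu, d; Y')$ after base change.

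First I will construct a faithfully flat local ring homomorphism $A \to A'$ with $A'/\fm_{A'}$ algebraically closed. This is a standard transfinite construction: enumerate an algebraic closure of $k=A/\fm_A$, and at each stage adjoin a root of a monic lift of the minimal polynomial of the next element, localizing at the unique maximal ideal lying above the previous one; each elementary step is free over the previous ring, so the resulting colimit $A'$ is local, faithfully flat over $A$, and has algebraically closed residue field. Let $Y' \subset \bU^{\cK}_{A'}$ and $Z' \subset \bU^{\cI}_{A'}$ be the inverse images of $Y$ and $Z$. Since $A \to A'$ is local, $Y'$ lies above the closed point of $\Spec(A')$; and by the preceding proposition on flat base change for $\Theta^{\phi}$, the hypothesis $Y \subset \Theta^{\phi}(Z)$ implies $Y' \subset \Theta^{\phi}(Z')$. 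Thus the setup of Proposition~\ref{prop:contain2-9-1} is in force for $A'$.

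Now put $\fp' = \fP^{\phi}_{\cU}(\lambda, e; Z')$ and $\fq' = \fP^{\id}_{\cU}(\mu, d; Y')$. Proposition~\ref{prop:contain2-9-1} applied to $A'$ gives $\fp' \subset \fq'$, while Proposition~\ref{prop:flat-bc}(a) yields both $\fp \tilde{R}_{\cU}' \subset \fp'$ and $\fq' = \rad_{\fY_{\cU}}(\fq \tilde{R}_{\cU}')$. Chaining,
\[
\fp \tilde{R}_{\cU}' \;\subset\; \rad_{\fY_{\cU}}(\fq \tilde{R}_{\cU}').
\]
Contracting along the faithfully flat extension $\tilde{R}_{\cU} \to \tilde{R}_{\cU}'$, using that $\rad_{\fY_{\cU}}$ commutes with contraction (Proposition~\ref{prop:rad-contract}) and that $\fq \tilde{R}_{\cU}' \cap \tilde{R}_{\cU} = \fq$ by faithful flatness, one obtains $\fp \subset \rad_{\fY_{\cU}}(\fq) = \fq$; the last equality uses the reducedness of $d$ together with Proposition~\ref{prop:Pphi-prop}(b). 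The main obstacle is cosmetic: the construction of $A'$ is routine, and the one conceptually delicate point is that $\fq \tilde{R}_{\cU}'$ can differ from $\fq'$ because $\fI_0(Y)^e$ need not remain radical after base change, but inserting $\rad_{\fY_{\cU}}$ and commuting it with contraction resolves this cleanly.
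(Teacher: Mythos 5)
Your proof is essentially the paper's: a faithfully flat local base change $A \to A'$ with algebraically closed residue field, followed by Proposition~\ref{prop:contain2-9-1} over $A'$ and contraction using Proposition~\ref{prop:rad-contract}, Proposition~\ref{prop:flat-bc}(a), and the $\fY_{\cU}$-radicality of $\fq$; the paper obtains $A'$ via the strict henselization (adjoining $p$-power roots in positive characteristic) rather than your transfinite tower of finite free extensions, but that is an interchangeable detail. One small imprecision: the fact that $A \to A'$ is local does not by itself imply that $Y'$ lies over the closed point of $\Spec(A')$ — what is needed is that the closed fiber of $\Spec(A') \to \Spec(A)$ is a single point, i.e.\ $\rad(\fm_A A') = \fm_{A'}$ (the paper builds this requirement into its choice of $A'$ explicitly); your tower does deliver this, since each step $A_\beta[x]/(\tilde f)$ has $A_{\beta+1}/\fm_{A_\beta}A_{\beta+1}$ a field, so you should cite that property rather than locality alone.
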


\begin{proof}
Let $A \to A'$ be a flat ring homomorphism where $A'$ is a local ring with algebraically closed residue field such that $\rad(\fm_A^e)=\fm_{A'}$. If $A$ has residue characteristic~0, one can take $A'$ to be the strict henselization of $A$ (see \stacks{0BSK}). If $A$ has residue characteristic $p>0$, one can obtain $A'$ by first taking the strict henselization and then adjoining all $p$-power roots of all elements.

Let $Y' \subset \bU^{\cK}_{A'}$ and $Z' \subset \bU^{\cI}_{A'}$ be the inverse images of $Y$ and $Z$. By Proposition~\ref{prop:flat-bc}, we have $\rad_{\fY_{\cU}}(\fp^e)=\fP^{\phi}_{\cU}(\lambda, e; Z')$ and $\rad_{\fY_{\cU}}(\fq^e)=\fP^{\id}_{\cU}(\mu, d; Y')$. Since formation of $\Theta^{\phi}$ commutes with flat base change, we have $Y' \subset \Theta^{\phi}(Z')$. Thus, by Proposition~\ref{prop:contain2-9-1}, we have $\rad_{\fY_{\cU}}(\fp^e) \subset \rad_{\fY_{\cU}}(\fq^e)$. Since contractions commute with $\fY_{\cU}$-radicals (Proposition~\ref{prop:rad-contract}), we have $\rad_{\fY_{\cU}}(\fp^{ec}) \subset \rad_{\fY_{\cU}}(\fq^{ec})$. Since $A \to A'$ is faithfully flat, we have $\fp^{ec}=\fp$ and $\fq^{ec}=\fq$. Since $\fp$ and $\fq$ are $\fY_{\cU}$-radical, we find $\fp \subset \fq$, as required.
\end{proof}

\begin{proposition}
Theorem~\ref{thm:contain2} holds for all $A$.
\end{proposition}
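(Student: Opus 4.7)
The plan is to reduce Theorem~\ref{thm:contain2} to Proposition~\ref{prop:contain2-9-2} by decomposing $Y$ into its scheme-theoretic points and localizing $A$ at the image of each such point. By Proposition~\ref{prop:Pphi-prop}(c) applied to the decomposition $Y=\bigcup_{y\in Y}\overline{\{y\}}$, we have $\fq=\bigcap_{y\in Y}\fq_y$ with $\fq_y:=\fP^{\id}_{\cU}(\mu,d;\{y\})$, so it suffices to prove $\fp\subset\fq_y$ for each scheme-theoretic point $y\in Y$.

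Fix such a $y$, let $\fm\subset A$ be its image in $\Spec(A)$ under the structure morphism, and set $A':=A_{\fm}$. Write $(-)^e$ for extension along the flat map $\tilde R_{\cU}\to\tilde R_{\cU}':=A'\otimes_A\tilde R_{\cU}$ and $(-)^c$ for the corresponding contraction. Let $Z'\subset\bU^{\cI}_{A'}$ and $\{y\}'\subset\bU^{\cK}_{A'}$ be the preimages; since $A\to A'$ is a localization, Proposition~\ref{prop:flat-bc}(b) gives
\[ \fp^e=\fP^{\phi}_{\cU}(\lambda,e;Z'),\qquad \fq_y^e=\fP^{\id}_{\cU}(\mu,d;\{y\}'). \]
The set $\{y\}'$ is the closure of the unique lift of $y$ to $\bU^{\cK}_{A'}$, which sits over the closed point $\fm A'$, so $\{y\}'$ lies over the closed point of $\Spec(A')$. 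Moreover, since formation of $\Theta^{\phi}$ commutes with flat base change, $\{y\}'\subset\Theta^{\phi}(Z')$. Proposition~\ref{prop:contain2-9-2} now applies to give $\fp^e\subset\fq_y^e$, and contracting yields $\fp\subset\fp^{ec}\subset\fq_y^{ec}$.

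It remains to verify $\fq_y^{ec}=\fq_y$, which is the crux of the argument, since $A\to A_{\fm}$ is only flat and not faithfully flat, so the descent trick used in Proposition~\ref{prop:contain2-9-2} is unavailable. I will argue this directly. First, the contraction $\fq_y\cap A$ equals $\fm$: because $\fY_{\cU}$ acts trivially on $A$ we have $\fq_y\cap A=\fQ_{\cU}(\mu,d;\{y\})\cap A$, and the latter equals $\fm$ because the quotient $\tilde S_{\cU}/\fQ_{\cU}(\mu,d;\{y\})$ is naturally an $A/\fm$-algebra. Consequently, any $s\in A\setminus\fm$ lies outside $\fq_y$, and since $\fq_y$ is $\fY_{\cU}$-prime with $A$ carrying trivial $\fY_{\cU}$-action, the relation $sf\in\fq_y$ forces $f\in\fq_y$. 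Hence $\fq_y^{ec}=\{f\in\tilde R_{\cU}:sf\in\fq_y\text{ for some }s\in A\setminus\fm\}=\fq_y$, so $\fp\subset\fq_y$. Intersecting over $y\in Y$ gives $\fp\subset\fq$.
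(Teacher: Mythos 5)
Your proof is correct and takes essentially the same route as the paper: localize $A$ at the contraction of the target prime, use Proposition~\ref{prop:flat-bc} together with flat base change for $\Theta^{\phi}$ to reduce to Proposition~\ref{prop:contain2-9-2}, then contract back. The only differences are cosmetic: the paper localizes once at $\fc=A\cap\fq$ (which already places all of $Y$ over the closed point, so the pointwise decomposition of $Y$ is unnecessary), and your hand-rolled verification that $\fq_y^{ec}=\fq_y$ is precisely the content of Proposition~\ref{prop:prime-loc}, which could be cited directly.
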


\begin{proof}
Let $\fc=A \cap \fq$, which is a prime ideal of $A$ by Proposition~\ref{prop:prime-contract} (since $\fS$ acts trivially on $A$). Let $A'$ be the localization of $A$ at $\fc$, let $R'=A' \otimes_A R$, and write $(-)'$ for base change to $R'$. By Proposition~\ref{prop:flat-bc}, we have $\fp'=\fP^{\phi}_{\cU}(\lambda,e;Z')$ and $\fq'=\fP^{\id}_{\cU}(\mu,d;Y')$. Since formation of $\Theta^{\phi}$ commutes with flat base change, we have $Y' \subset \Theta^{\phi}(Z')$, and so by Proposition~\ref{prop:contain2-9-2}, we have $\fp' \subset \fq'$. Proposition~\ref{prop:prime-loc} now implies that $\fp \subset \fq$. (Note that $\fq'$ is obviously not the unit ideal, and so the containment $\fp' \subset \fq'$ shows that $\fp'$ is not either; this is why Proposition~\ref{prop:prime-loc} applies.)
\end{proof}

\section{Generators} \label{s:gen}

In this section, we give finite generating sets for the ideals $\fP(\lambda,e;Z)$ up to $\fS$-radical, as discussed in \S \ref{ss:intro-gen}. After some preperatory material, we first treat the ideals $\fP(\lambda,e)$ in \S \ref{ss:gen1} before proceeding to the general case in \S \ref{ss:gen2}. Finally, in \S \ref{ss:circle}, we give a detailed example.

\subsection{The poset $\Xi$}

Let $\Xi$ be the set of isomorphism classes of pairs $(\lambda, e)$ where $\lambda$ is an $\infty$-composition (on some index set) and $e$ is a reduced weighting. Define a partial order $\preceq$ on $\Xi$ by $(\mu, d) \preceq (\lambda, e)$ if there exists a good pair $(\cE, \phi)$ between $(\mu, d)$ and $(\lambda, e)$ as in \S \ref{ss:theta}. We note that $(\mu,d) \preceq (\lambda,e)$ implies that $\mu \preceq \lambda$ as defined in \S \ref{ss:radical}. The motivation for this definition comes from the $\Theta$ construction. In particular, we have the following:

\begin{proposition} \label{prop:order-contain}
We have $(\mu,d) \preceq (\lambda,e)$ if and only if $\fP(\lambda,e) \subset \fP(\mu,d)$.
\end{proposition}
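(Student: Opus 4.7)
The plan is to deduce Proposition~\ref{prop:order-contain} directly from the containment theorem (Theorem~\ref{thm:contain}) by computing $\Theta^{\lambda,e}_{\mu,d}(Z)$ in the special case $Z = \bU^{\cI}$ and checking whether it contains $Y = \bU^{\cK}$. By definition, $\fP(\lambda,e) = \fP(\lambda,e;\bU^{\cI})$ and $\fP(\mu,d) = \fP(\mu,d;\bU^{\cK})$, so Theorem~\ref{thm:contain} reduces the question to whether $\bU^{\cK} \subset \Theta^{\lambda,e}_{\mu,d}(\bU^{\cI})$.

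The key observation is that for any good pair $(\cE,\phi)$ between $(\mu,d)$ and $(\lambda,e)$, the closed subset $\Theta^{\phi}_{\cE}(\bU^{\cI})$ of $\bU^{\cK}$ is actually all of $\bU^{\cK}$. Indeed, with $p \colon \bA^{\cI} \to \bA^{\cE}$ the coordinate projection and $i \colon \bA^{\cK} \to \bA^{\cE}$ the multi-diagonal associated to $\phi$, the set $\bU^{\cI}$ is Zariski dense in $\bA^{\cI}$ (it is the complement of a finite union of hyperplanes), and $p$ is an open morphism that is clearly surjective on geometric points, so $\overline{p(\bU^{\cI})} = \bA^{\cE}$. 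Consequently $\Theta^{\phi}_{\cE}(\bU^{\cI}) = i^{-1}(\bA^{\cE}) \cap \bU^{\cK} = \bU^{\cK}$.

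It follows that $\Theta^{\lambda,e}_{\mu,d}(\bU^{\cI})$, being the union of the sets $\Theta^{\phi}_{\cE}(\bU^{\cI})$ over all good pairs $(\cE,\phi)$, equals $\bU^{\cK}$ if at least one good pair exists, and equals $\emptyset$ otherwise. Therefore $\bU^{\cK} \subset \Theta^{\lambda,e}_{\mu,d}(\bU^{\cI})$ is equivalent to the existence of a good pair between $(\mu,d)$ and $(\lambda,e)$, which is precisely the definition of $(\mu,d) \preceq (\lambda,e)$. Combined with Theorem~\ref{thm:contain}, this gives the proposition.

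There is no real obstacle here: the work has all been done in Theorem~\ref{thm:contain}. The only thing to verify carefully is the density computation $\overline{p(\bU^{\cI})} = \bA^{\cE}$, which is immediate since $\bU^{\cI}$ is a nonempty open subscheme of the irreducible scheme $\bA^{\cI}$ and $p$ is a dominant (in fact, projection) morphism.
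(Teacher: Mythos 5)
Your proof is correct and follows essentially the same route as the paper: both directions reduce via Theorem~\ref{thm:contain} to the observation that $\Theta^{\phi}_{\cE}(\bU^{\cI})=\bU^{\cK}$ for any good pair, so that $\Theta^{\lambda,e}_{\mu,d}(\bU^{\cI})$ is either all of $\bU^{\cK}$ or empty according to whether a good pair exists. Your extra justification of $\ol{p(\bU^{\cI})}=\bA^{\cE}$ is a welcome detail the paper leaves implicit (though note that over a general coefficient ring $A$ the scheme $\bA^{\cI}$ need not be irreducible; the density of $\bU^{\cI}$ follows instead from $\prod_{\alpha\ne\beta}(t_\alpha-t_\beta)$ being a nonzerodivisor).
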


\begin{proof}
Suppose $(\mu,d) \preceq (\lambda,e)$ and let $(\cE,\phi)$ be a good pair between them. Then $\Theta^{\phi}_{\cE}(\bU^{\cI})=\bU^{\cK}$, where $\cI$ is the index set of $\lambda$ and $\cK$ of $\mu$. We thus see that $\bU^{\cK} \subset \Theta^{\lambda,e}_{\mu,d}(\bU^{\cI})$, and so $\fP(\lambda,e) \subset \fP(\mu,d)$ by Theorem~\ref{thm:contain}.

Now suppose that $\fP(\lambda,e) \subset \fP(\mu,d)$. Then by Theorem~\ref{thm:contain} we have $\bU^{\cK} \subset \Theta^{\lambda,e}_{\mu,d}(\bU^{\cI})$. In particular, $\Theta^{\lambda,e}_{\mu,d}(\bU^{\cI})$ is non-empty, which implies there is a good pair between $(\mu,d)$ and $(\lambda,e)$, and so $(\mu,d) \preceq (\lambda,e)$.
\end{proof}

Recall that a partially ordered set is a \defi{well-quasi-order} (wqo) if in any sequence of elements $x_1, x_2, \ldots$ there exists $i<j$ such that $x_i \le x_j$.

\begin{proposition} \label{prop:wqo}
$\Xi$ is a wqo.
\end{proposition}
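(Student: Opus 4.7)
The plan is to reduce to Higman's lemma by introducing a stronger partial order $\preceq'$ on $\Xi$. Define $(\mu,d) \preceq' (\lambda,e)$ to hold whenever there is an injection $\iota \colon \cK \to \cI$ of index sets such that $\mu_\beta \le \lambda_{\iota(\beta)}$ for every $\beta \in \cK$, and $d_\beta \le e_{\iota(\beta)}$ for every $\beta \in \cK$ with $\mu_\beta = \infty$ (in which case $\lambda_{\iota(\beta)}=\infty$ is automatic). Setting $\cE = \iota(\cK)$ and $\phi = \iota^{-1} \colon \cE \to \cK$ yields a good pair in the sense of \S\ref{ss:theta}, so $\preceq'$ implies $\preceq$. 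Since wqo-ness passes to any coarser relation, it suffices to show that $\preceq'$ is a wqo.

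To that end, identify each $(\lambda,e) \in \Xi$ with the finite multiset over the poset
\[
X \;=\; \{(n,1) : n \in \bZ_{\ge 1}\} \;\sqcup\; \{(\infty,w) : w \in \bZ_{\ge 1}\}
\]
that records, for each $\alpha$ in the index set of $\lambda$, the pair $(\lambda_\alpha, e_\alpha)$. Order $X$ by declaring $(n,1) \le_X (m,1)$ iff $n \le m$, $(\infty,w) \le_X (\infty,v)$ iff $w \le v$, and $(n,1) \le_X (\infty,v)$ for all $n,v$, with no other comparisons. Then $X$ is the disjoint union of two chains, each order-isomorphic to $\bZ_{\ge 1}$, and is therefore a wqo. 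Under this identification, $(\mu,d) \preceq' (\lambda,e)$ is exactly the assertion that the multiset attached to $(\mu,d)$ admits an order-preserving injection into the one attached to $(\lambda,e)$; in other words, $\preceq'$ is precisely the standard multiset embedding order on $X$.

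Higman's lemma -- the set of finite multisets over a wqo, equipped with the embedding order, is itself a wqo -- then completes the argument. I do not expect any serious obstacle. The essential observation is that replacing $\preceq$ by the stricter $\preceq'$, which forbids merging several parts of $\lambda$ to cover one part of $\mu$, is harmless for the wqo property and converts the problem into a transparent multiset embedding to which the classical theory applies verbatim.
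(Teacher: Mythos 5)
Your argument is correct and is essentially the paper's own proof: the paper encodes each $(\lambda,e)$ as a word over the same alphabet of pairs $(x,y)$ with $y=1$ for $x$ finite (ordered lexicographically, which agrees with your $\le_X$), applies Higman's lemma, and notes that a Higman embedding gives $\preceq$ — exactly your observation that the singleton-fiber good pair $(\iota(\cK),\iota^{-1})$ witnesses $\preceq'\Rightarrow\preceq$. Your version merely phrases words as multisets and makes the intermediate order $\preceq'$ explicit; there is no substantive difference.
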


\begin{proof} 
Let $\Sigma$ be the set of pairs $(x,y)$ where $x \in \bZ_{\ge 1} \cup \{\infty\}$ and $y \in \bZ_{\ge 1}$ such that $y=1$ if $x \ne \infty$, and order $\Sigma$ lexicographically (comparing the first coordinate first). One easily sees that $\Sigma$ is a wqo. Let $\Sigma^{\star}$ be the set of words in the alphabet $\Sigma$. Give $\Sigma^{\star}$ the Higman order: $a_1 \cdots a_n \le b_1 \cdots b_m$ if there exists an order-preserving injection $f \colon [n] \to [m]$ such that $a_i \le b_{f(i)}$ for all $i \in [n]$. It follows from Higman's lemma \cite[Theorem~4.3]{higman} that the set $\Sigma^{\star}$ is a wqo. Now, suppose that $(\lambda^i, e^i)$ is a sequence in $\Xi$, for $i \ge 1$. We may assume that the index set of $\lambda^i$ has the form $[n_i]$ for some $i$, and thereby regard $(\lambda^i, e^i)$ as a word of length $n_i$ on $\Sigma$. Since $\Sigma^{\star}$ is a wqo, there exists $i<j$ such that $(\lambda^i,e^i) \le (\lambda^j,e^j)$ in the Higman order. One easily sees that this implies $(\lambda^i,e^i) \preceq (\lambda^j,e^j)$, which completes the proof.
\end{proof}	

\subsection{A preliminary result}
Fix an $\infty$-composition $\lambda$ on index set $\cI$, a weighting $e$ on $\cI$, and an irreducible closed subset $Z$ of $\bU^{\cI}$. We will require the following result below.

\begin{proposition} \label{prop:h-factors}
Let $h \in R$ be a product of factors of the form $\xi_i-\xi_j$, and suppose that $h$ belongs to $\fP(\lambda,e;Z)$. Let $p$ be the residue characteristic of the generic point of $Z$.
\begin{enumerate}
\item If $p=0$ then $h$ belongs to $\fP(\lambda,e)$.
\item If $p>0$ then $h$ belongs to $\fP(\lambda,e;\bU^{\cI}_{A \otimes \bF_p})$.
\end{enumerate}
\end{proposition}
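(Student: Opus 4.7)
The plan is to use Proposition~\ref{prop:ker-psi} to characterize $\fP(\lambda,e;Z')$ via homomorphisms of class $(\lambda,e;Z')$, and then exploit the multiplicative structure of $h$. Fix a partition $\cU$ of $[\infty]$ of type $\lambda$ with associated map $\pi \colon [\infty] \to \cI$, and write $h = \prod_k (\xi_{i_k} - \xi_{j_k})$. For any $\psi \colon R \to T$ of class $(\lambda,e;Z')$ built from $\cU$, each factor expands as
\[
\psi(\xi_{i_k}-\xi_{j_k}) = \psi_0(t_{\pi(i_k)}-t_{\pi(j_k)}) + (\delta_{i_k}-\delta_{j_k}).
\]
By Proposition~\ref{prop:ker-psi} we may restrict to $T$ whose nilradical $\fn$ is maximal; then $T$ is local and every element is a unit or nilpotent. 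Since $t_{\alpha}-t_{\beta}$ is a unit of $\tilde{S}_{\cU,0}$ for $\alpha \ne \beta$, its image in $T/\fn$ is nonzero, so $\psi_0(t_{\pi(i_k)}-t_{\pi(j_k)})$ is a unit of $T$ whenever $\pi(i_k)\ne \pi(j_k)$. Consequently
\[
\psi(h) = u_\psi \cdot P_{\cU}(\delta), \qquad P_{\cU}(\delta) := \prod_{k \colon \pi(i_k) = \pi(j_k)} (\delta_{i_k}-\delta_{j_k}) \in \bZ[\delta_i]/\langle \delta_i^{e(\pi(i))}\rangle,
\]
where $u_\psi \in T^{\times}$ and the polynomial $P_{\cU}(\delta)$ is independent of $Z'$, of $\psi_0$, and of $T$.

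The crucial observation is that $P_{\cU}(\delta)$ has integer coefficients in the monomial basis of $\bZ[\delta_i]/\langle \delta_i^{e(\pi(i))}\rangle$, so its vanishing in any $B[\delta_i]/\langle \delta_i^{e(\pi(i))}\rangle$ is equivalent to each of those integer coefficients vanishing in $B$. Now specialize $\psi$ to the generic point of $Z$: take $T = K[\delta_i]/\langle \delta_i^{e(\pi(i))}\rangle$ with $K$ the residue field of the generic point of $Z$ and $\psi_0$ the corresponding map. Since $h \in \fP(\lambda,e;Z)$, we must have $\psi(h)=0$, and since $u_\psi$ is a unit, $P_{\cU}(\delta)=0$ in $K[\delta_i]/\langle\delta_i^{e(\pi(i))}\rangle$. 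Thus every integer coefficient of $P_{\cU}(\delta)$ vanishes in the characteristic-$p$ field $K$.

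For part (a), $p=0$ forces each such integer coefficient to be zero in $\bZ$; hence $P_{\cU}(\delta) = 0$ in $B[\delta_i]/\langle \delta_i^{e(\pi(i))}\rangle$ for \emph{every} ring $B$. Running the factorization in reverse, $\psi'(h) = u_{\psi'} \cdot P_{\cU}(\delta) = 0$ for every $\psi'$ of class $(\lambda,e;\bU^{\cI})$ built from $\cU$. Since this holds for every partition $\cU$ of type $\lambda$ (the entire argument above applies to each $\cU$ using the corresponding $P_{\cU}$), Proposition~\ref{prop:ker-psi} yields $h \in \fP(\lambda,e)$. For part (b), $p>0$ makes every integer coefficient of $P_{\cU}(\delta)$ divisible by $p$, so $P_{\cU}(\delta)=0$ in $B[\delta_i]/\langle \delta_i^{e(\pi(i))}\rangle$ for every $\bF_p$-algebra $B$; and any $\psi'$ of class $(\lambda,e;\bU^{\cI}_{A \otimes \bF_p})$ has target an $\bF_p$-algebra since $\fI_0(\bU^{\cI}_{A \otimes \bF_p})$ contains $p$. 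Therefore $\psi'(h)=0$ for all such $\psi'$, giving $h \in \fP(\lambda,e;\bU^{\cI}_{A \otimes \bF_p})$.

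There is no serious obstacle here; the whole argument hinges on the clean factorization $\psi(h) = u_\psi \cdot P_{\cU}(\delta)$, which isolates a $Z$-independent ``nilpotent part'' $P_{\cU}(\delta)$ with integer coefficients whose vanishing in target rings is governed purely by the characteristic. This factorization is special to $h$ being a product of coordinate differences.
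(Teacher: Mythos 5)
Your proof is correct and follows essentially the same route as the paper's: both split $h$ into the factors with $\pi(i)=\pi(j)$ (whose image is an integer-coefficient element of $\bZ[\delta_i]/\langle\delta_i^{e(\pi(i))}\rangle$, your $P_{\cU}(\delta)$, the paper's $\iota(h_1)$) and the remaining factors (which become units), and then use freeness of $\bZ[\delta_i]/\langle\delta_i^{e(\pi(i))}\rangle$ over $\bZ$ together with the residue characteristic of the generic point of $Z$ to force the integer coefficients to vanish (resp.\ be divisible by $p$). Your use of Proposition~\ref{prop:ker-psi} with evaluation at the generic point is just a repackaging of the paper's injectivity of $B \to \tilde{S}_{\cU,0}/\fI_0(Z)\otimes B$.
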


\begin{proof}
(a) Let $\cU$ be partition of $[\infty]$ of type $\lambda$. Let $\fe$ be the ideal of $\bZ[\epsilon_i]$ generated by $\epsilon_i^{e(\alpha)}$ for $i \in U_{\alpha}$ and $\alpha \in \cI$, and let $B=\bZ[\epsilon_i]/\fe$, which is free as a $\bZ$-module. Then, as in the proof of Proposition~\ref{prop:Qunion}, we have
\begin{displaymath}
\tilde{S}_{\cU}/\fQ_{\cU}(\lambda,e) = \tilde{S}_{\cU,0} \otimes_{\bZ} B, \qquad
\tilde{S}_{\cU}/\fQ_{\cU}(\lambda,e;Z) = \tilde{S}_{\cU,0}/\fI_0(Z) \otimes_{\bZ} B.
\end{displaymath}
Since $\fI_0(Z)$ has residue characteristic~0, it follows that the natural map $B \to \tilde{S}_{\cU,0}/\fI_0(Z) \otimes B$ is injective. Now, write $h=h_1 h_2$, where $h_1$ (resp.\ $h_2$) is a product of factors $\xi_i-\xi_j$ where $i$ and $j$ belong to the same (resp.\ different) parts. Then $\iota(h_1)$ is a product of factors of the form $\epsilon_i-\epsilon_j$, and can thus be regarded as an element of the ring $\bZ[\epsilon_i]$, while $\iota(h_2)$ is a unit modulo $\tilde{J}_{\cU,n}$ for any $n \ge 0$. Since $h$ belongs to $\fP(\lambda,e;Z)$, it follows that the image of $\iota(h)$ in $\tilde{S}_{\cU,0}/\fI_0(Z) \otimes B$ vanishes. Since $\iota(h_2)$ is a unit of this ring, it follows that $\iota(h_1)$ maps to~0 in $B$, and so $\iota(h_1)$ maps to~0 in $\tilde{S}_{\cU}/\fQ_{\cU}(\lambda,e)$. We thus see that $h$ belongs to the contraction of $\fQ_{\cU}(\lambda,e)$. Since this holds for all $\cU$, it follows that $h$ belongs to the $\fS$-contraction of $\fQ_{\cU}(\lambda,e)$, which is $\fP(\lambda,e)$.

(b) The argument is similar: the main difference is that we can only conclude that $\iota(h_1)$ maps to~0 in $B/pB$.
\end{proof}

\subsection{The ideals $\fP(\lambda,e)$} \label{ss:gen1}

We now describe generators for the ideals $\fP(\lambda,e)$ up to $\fS$-radical. The following is the key construction:

\begin{construction} \label{con:gen1}
Fix $(\lambda,e) \in \Xi$.
\begin{itemize}
\item Let $\Psi \subset \Xi$ be the set of elements $(\mu,d) \in \Xi$ satisfying $(\mu,d) \npreceq (\lambda,e)$, and let $\Psi_0$ be the set of minimal elements in $\Psi$. The set $\Psi_0$ is finite since $\Xi$ is a wqo (Proposition~\ref{prop:wqo}).
\item For each $(\mu,d) \in \Psi_0$, let $h_{\mu,d}$ be the element $h$ produced by Construction~\ref{con:h} with $\fp=\fP(\lambda,e)$ and $\fq=\fP(\mu,d)$. We note that the result of the construction is independent of the point $y$, since $h_3=1$ as there are no good pairs $(\cE,\phi)$.
\item Let $\sG = \{h_{\mu, d} \colon (\mu, d) \in \Psi_0 \}$.
\end{itemize}
The set $\sG$ is the result of the construction. It is a finite subset of $R$.
\end{construction}

\begin{theorem} 
\label{thm:generators1}
Let $\sG$ be the set produced by Construction~\ref{con:gen1}. Then the $\fS$-radical of the $\fS$-ideal generated by $\sG$ is $\fP(\lambda,e)$. 
\end{theorem}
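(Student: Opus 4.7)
The plan is to prove $\rad_\fS(\langle \sG \rangle_\fS) = \fP(\lambda,e)$ via two inclusions. For $\rad_\fS(\langle \sG \rangle_\fS) \subseteq \fP(\lambda,e)$, Lemma~\ref{lem:in-fp} (applied with $\fp=\fP(\lambda,e)$ and $\fq=\fP(\mu,d)$ for each $(\mu,d) \in \Psi_0$) shows that each $h_{\mu,d}$ lies in $\fP(\lambda,e)$. Since $e$ is reduced, $\fP(\lambda,e)$ is $\fS$-prime (Proposition~\ref{prop:Pprime}), hence $\fS$-radical, and the inclusion follows.

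For the reverse inclusion, by Proposition~\ref{prop:rad-primes} it suffices to show that every $\fS$-prime $\fq \supseteq \sG$ of $R$ contains $\fP(\lambda,e)$. A non-finitary $\fq$ equals $\fp_0 R$ for some proper prime $\fp_0 \subsetneq A$ by Proposition~\ref{prop:non-finitary}; but each $h_{\mu,d}$ is a product of factors $\xi_i - \xi_j$, so its lexicographic leading monomial has coefficient $\pm 1$, forcing $1 \in \fp_0$, a contradiction. Hence $\fq$ is finitary and, by Theorem~\ref{mainthm}, has the form $\fP(\mu,d;Y)$, with $\mu$ having index set $\cK$. Theorem~\ref{thm:contain} rephrases $\fq \supseteq \fP(\lambda,e)$ as $Y \subseteq \Theta^{\lambda,e}_{\mu,d}(\bU^{\cI})$; with $Z=\bU^{\cI}$ this $\Theta$-set equals all of $\bU^{\cK}$ precisely when a good pair between $(\mu,d)$ and $(\lambda,e)$ exists, equivalently when $(\mu,d) \preceq (\lambda,e)$, disposing of that case. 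It remains to handle $(\mu,d) \in \Psi$.

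Using the wqo property of $\Xi$ (Proposition~\ref{prop:wqo}), pick $(\mu',d') \in \Psi_0$ with $(\mu',d') \preceq (\mu,d)$, witnessed by a good pair $(\cE,\phi)$ with $\cE \subseteq \cK$ and $\phi \colon \cE \to \cK'$, where $\cK'$ is the index set of $\mu'$. I aim to show $h_{\mu',d'} \notin \fq$, contradicting $\sG \subseteq \fq$. By Proposition~\ref{prop:Pprop}(c), $\fq = \bigcap_{y \in Y} \fP(\mu,d;\{y\})$, so it suffices (by Proposition~\ref{prop:ker-psi}) to find some $y \in Y$ and a homomorphism $\psi \colon R \to T$ of class $(\mu,d;\{y\})$ with $\psi(h_{\mu',d'}) \ne 0$. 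I would choose the partition $\cU$ of $[\infty]$ of type $\mu$ underlying $\psi$ so that the associated $\pi \colon [\infty] \to \cK$ sends each $\ol{U}'_\beta$ (from the partition $\cU'$ of type $\mu'$ underlying $h_{\mu',d'}$) into $\cE_\beta = \phi^{-1}(\beta)$, and within each $\ol{U}'_{\beta,k}$ (for $\beta \in (\cK')^\infty$) distributes its $d'_\beta$ elements among $\cE^\infty_\beta$ with at most $d_\alpha$ placed in $\pi^{-1}(\alpha)$ for each $\alpha$. Feasibility of the overall placement reduces to a Hall-type condition that holds because the $\cE_\beta$ are disjoint fibers of $\phi$, while the per-piece cap is afforded by (G2), $d'_\beta \le \sum_{\alpha \in \cE^\infty_\beta} d_\alpha$. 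Under this choice, each factor of $h_2$ maps to a unit of $T$ (indices from different $\cU'$-parts land in different $\cU$-parts), and each discriminant $\Delta_{\ol{U}'_{\beta,k}}$ appearing in $h_1$ maps to a nonzero product of nondegenerate discriminants of nilpotents, yielding $\psi(h_{\mu',d'}) \ne 0$.

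The main obstacle is the combinatorial bookkeeping in constructing $\cU$: ensuring the partition has type $\mu$ while simultaneously meeting both distribution constraints above. Hall's theorem combined with (G1), (G2), and the unlimited room afforded by the infinite parts of $\mu$ should make the construction feasible, in a manner analogous to the reductions already carried out in \S\ref{ss:contain-gen}.
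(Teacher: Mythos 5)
Your proposal is correct in its overall skeleton, which matches the paper's: both inclusions, reduction via Proposition~\ref{prop:rad-primes} to showing every $\fS$-prime $\fq \supseteq \sG$ contains $\fP(\lambda,e)$, disposal of the non-finitary case, and then using a minimal element $(\mu',d') \in \Psi_0$ below the type of $\fq$ to produce a contradiction. (Your non-finitary argument via the unit leading coefficient of a product of $\xi_i-\xi_j$'s is a legitimate substitute for the paper's Lemma~\ref{lem:G-disc}, though it tacitly uses $\Psi_0 \neq \emptyset$, which holds since $((\infty),(c+1)) \in \Psi_0$ for $c=\sum_{\alpha\in\cI^{\infty}}e_{\alpha}$.) Where you genuinely diverge is the crucial step $h_{\mu',d'} \notin \fP(\mu,d;Y)$ when $(\mu',d') \preceq (\mu,d)$. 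The paper handles this indirectly (Lemma~\ref{lem:nonzero-minimal}): since $h_{\mu',d'}$ is a product of factors $\xi_i-\xi_j$, Proposition~\ref{prop:h-factors} upgrades membership in $\fP(\mu,d;Y)$ to membership in $\fP(\mu,d)$ (or its mod-$p$ analogue), whence the containment theorem forces $h_{\mu',d'} \in \fP(\mu',d')$, contradicting Lemma~\ref{lem:not-in-fq}. You instead construct a witnessing class homomorphism directly, re-running the computation of Lemma~\ref{lem:not-in-fq} with a partition of type $\mu$ adapted to the good pair $(\cE,\phi)$ between $(\mu',d')$ and $(\mu,d)$. This buys you a characteristic-free argument (no need for the rigidity result or its char-$p$ case split), at the cost of the extra combinatorics. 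That combinatorics, which you flag as the main obstacle, does go through: no Hall-type argument is actually needed, since one can greedily place each $\ol{U}'_{\beta,k}$ (for $\beta$ infinite) entirely inside $\bigcup_{\alpha\in\cE^{\infty}_{\beta}}U_{\alpha}$ with at most $d_{\alpha}$ elements per part — feasible by (G2), with no capacity issue since those parts are infinite — and place each finite $\ol{U}'_{\beta}$ inside $\bigcup_{\alpha\in\cE_{\beta}}U_{\alpha}$ respecting $\#U_{\alpha}=\mu_{\alpha}$, feasible by (G1); disjointness of the fibers $\cE_{\beta}$ then makes $\psi(h_2)$ a unit, and the per-part bound makes each sub-discriminant of $\psi(h_1)$ a nonzero polynomial in disjoint nilpotents. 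So the proposal is sound, but this step needs to be written out rather than deferred to a "Hall-type condition."
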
	

Before proving the theorem, we need a few lemmas.

\begin{lemma}
Let $(\mu,d) \in \Psi_0$. Then $h_{\mu,d} \not\in \fP(\mu,d;Y)$ for any non-empty $Y$.
\end{lemma}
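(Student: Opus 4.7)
The plan is to reduce to Lemma~\ref{lem:not-in-fq} by observing that its argument goes through verbatim for any point of $Y$, not only for the distinguished point used in Construction~\ref{con:h}.

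First I would note that, by hypothesis, $(\mu,d) \in \Psi_0 \subset \Psi$ means $(\mu,d) \npreceq (\lambda,e)$, so there is no good pair $(\cE,\phi)$ between $(\mu,d)$ and $(\lambda,e)$. Consulting Construction~\ref{con:h}, this forces $h_3=1$, so $h_{\mu,d}=h_1 h_2$ where $h_1$ is a product of discriminants $\Delta_{\ol{U}_{\beta,k}}$ on disjoint index sets and $h_2$ is a product of factors $(\xi_i-\xi_j)^N$ with $i,j$ in different parts of $\cU$. In particular $h_{\mu,d}$ is independent of any point data.

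Next, given a nonempty $Y \subset \bU^{\cK}$, I would fix any point $y \in Y$ with residue field $\Omega$, and form $T=\Omega[\delta_i]/\langle \delta_i^{d(\pi(i))}\rangle$ together with the $A$-algebra homomorphism $\psi \colon R \to T$ defined by $\psi(\xi_i)=y_{\pi(i)}+\delta_i$. This $\psi$ factors as $\psi_0 \colon \tilde S_{\cU,0}\to \Omega$, $t_\beta \mapsto y_\beta$, followed by the obvious extension, and $\fI_0(Y) \subset \ker(\psi_0)$ since $y \in Y$. Hence $\psi$ is of class $(\mu,d;Y)$, and Proposition~\ref{prop:ker-psi} gives $\fP(\mu,d;Y) \subset \ker(\psi)$.

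The remaining step is to check $\psi(h_{\mu,d})\ne 0$, which is exactly the computation in the proof of Lemma~\ref{lem:not-in-fq}: each factor $\psi(\Delta_{\ol{U}_{\beta,k}})$ is the discriminant of the $d_\beta$ elements $\{\delta_i\}_{i \in \ol{U}_{\beta,k}}$, which involves only monomials with exponents strictly less than $d_\beta$, hence is nonzero in $T$; since the factors use disjoint variables, $\psi(h_1)\ne 0$. For $h_2$, working modulo the nilpotent ideal generated by the $\delta_i$, one has $\psi(h_2) \equiv \prod (y_\beta-y_\gamma)^N$, which is a unit of $\Omega$ because $y \in \bU^{\cK}$ forces $y_\beta \ne y_\gamma$ whenever $\beta \ne \gamma$; thus $\psi(h_2)$ is a unit of $T$. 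The product $\psi(h_{\mu,d})=\psi(h_1)\psi(h_2)$ is therefore nonzero, so $h_{\mu,d}\notin \fP(\mu,d;Y)$. There is no serious obstacle here; the point is simply that the nonvanishing argument of Lemma~\ref{lem:not-in-fq} only used $y \in \bU^{\cK}$, not any further constraint.
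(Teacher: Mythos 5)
Your proposal is correct and takes essentially the same route as the paper: the paper simply observes that, since there are no good pairs, $h_3=1$ and so $h_{\mu,d}$ is a valid output of Construction~\ref{con:h} for the pair $\fP(\lambda,e)$, $\fP(\mu,d;Y)$ (any $y\in Y$ works because $\Theta(Z)=\emptyset$), and then invokes Lemma~\ref{lem:not-in-fq} directly rather than re-running its computation as you do.
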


\begin{proof}
Since $(\mu,d) \npreceq (\lambda,e)$, there are no good pairs $(\cE,\phi)$ between $(\mu,d)$ and $(\lambda,e)$, and so the $h_3=1$ in Construction~\ref{con:h}. We thus see that $h_{\mu,d}$ is a possible output of Construction~\ref{con:h} with $\fp=\fP(\lambda,e)$ and $\fq=\fP(\mu,d;Y)$, and so $h_{\mu,d} \not\in \fP(\mu,d;Y)$ by Lemma~\ref{lem:not-in-fq}.
\end{proof}

\begin{lemma} \label{lem:nonzero-minimal}
Let $(\mu,d) \in \Psi_0$ and let $(\nu,f) \in \Xi$ satisfy $(\mu,d) \preceq (\nu,f)$. Let $\cL$ be the index set of $\nu$ and let $W \subset \bU^{\cL}$ be non-empty. Then $h_{\mu,d} \not\in \fP(\nu,f;W)$.
\end{lemma}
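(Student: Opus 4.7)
The plan is to exhibit an explicit homomorphism $\psi\colon R\to T$ of class $(\nu,f;\{w\})$ for a suitably chosen point $w\in W$, and verify directly that $\psi(h_{\mu,d})\neq 0$. By Proposition~\ref{prop:ker-psi}, this gives $h_{\mu,d}\notin\fP(\nu,f;\{w\})$, and since $\fP(\nu,f;W)\subset\fP(\nu,f;\{w\})$ (because $\{w\}\subset \bar{W}$ and $\fP(\nu,f;-)$ is contravariant in its argument by Proposition~\ref{prop:Pprop}(c)), the lemma follows.

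To construct $\psi$, fix any point $w\in W$ with residue field $\Omega=\kappa(w)$, and use the hypothesis $(\mu,d)\preceq(\nu,f)$ to fix a good pair $(\cE,\phi)$ between $(\mu,d)$ and $(\nu,f)$: so $\cE\subset\cL$, $\phi\colon\cE\to\cK$, and $\mu_\beta\le\sum_{\ell\in\cE_\beta}\nu_\ell$ for all $\beta\in\cK$, while $d_\beta\le\sum_{\ell\in\cE_\beta^{\infty}}f_\ell$ for all $\beta\in\cK^{\infty}$. The capacity bounds from goodness permit choosing a partition $\cV=\{V_\ell\}_{\ell\in\cL}$ of $[\infty]$ of type $\nu$ compatible with $\cU$ in the following sense: for each $\beta\in\cK^{\rf}$ the set $U_\beta$ is distributed among $\{V_\ell\}_{\ell\in\cE_\beta}$ with $|V_\ell\cap U_\beta|\le\nu_\ell$; and for each $\beta\in\cK^{\infty}$ and each $1\le k\le n$ the block $\ol{U}_{\beta,k}$ is distributed among $\{V_\ell\}_{\ell\in\cE_\beta^{\infty}}$ with $|V_\ell\cap\ol{U}_{\beta,k}|\le f_\ell$. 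Setting $T=\Omega[\delta_i]_{i\ge 1}/\langle\delta_i^{f_{\sigma(i)}}\rangle$, where $\sigma\colon[\infty]\to\cL$ is associated to $\cV$, and $\psi(\xi_i)=w_{\sigma(i)}+\delta_i$ with $\psi\vert_A$ factoring through $\Omega$, yields a map of class $(\nu,f;\{w\})$.

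To verify $\psi(h_{\mu,d})\neq 0$, recall that $h_{\mu,d}=h_1 h_2$ (since $h_3=1$ when $(\mu,d)\in\Psi_0$). For any factor $(\xi_i-\xi_j)^N$ of $h_2$ with $i\in\ol{U}_\beta$, $j\in\ol{U}_\gamma$, $\beta\neq\gamma$, the compatibility conditions give $\sigma(i)\in\cE_\beta$ and $\sigma(j)\in\cE_\gamma$, which are disjoint subsets of $\cE$, so $\sigma(i)\neq\sigma(j)$ and $\psi(\xi_i-\xi_j)=(w_{\sigma(i)}-w_{\sigma(j)})+(\delta_i-\delta_j)$ is a unit of $T$ (unit plus nilpotent); hence $\psi(h_2)$ is a unit. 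Each factor $\psi(\Delta_{\ol{U}_{\beta,k}})$ of $\psi(h_1)$ splits as a product of units (from pairs with $\sigma(i)\neq\sigma(j)$) times the within-fiber discriminants $\Delta(\{\delta_i:i\in V_\ell\cap\ol{U}_{\beta,k}\})$ for $\ell\in\cE_\beta^{\infty}$; each such discriminant involves at most $f_\ell$ variables each of nilpotency index $f_\ell$, so its monomials have exponents $<f_\ell$ and it is nonzero. Since the blocks $\ol{U}_{\beta,k}$ are pairwise disjoint and $T$ is free over $\Omega$ in the $\delta_i$'s, the product $\psi(h_1)=\prod_{\beta,k}\psi(\Delta_{\ol{U}_{\beta,k}})$ is nonzero, completing the verification.

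The main obstacle is the bookkeeping in the second step: one must carefully confirm that the inequalities imposed by goodness of $(\cE,\phi)$ permit the simultaneous distributions of $\{U_\beta\}_{\beta\in\cK^{\rf}}$ and $\{\ol{U}_{\beta,k}\}_{\beta\in\cK^{\infty},\,k}$ as stated, and that the remaining (infinite) elements of $\bigsqcup_{\beta\in\cK^{\infty}}(U_\beta\setminus\ol{U}_\beta)$ can fill the $V_\ell$'s to their required sizes to yield a valid partition of $[\infty]$ of type $\nu$. Once $\cV$ is constructed, the remaining verifications---the unit computation for $\psi(h_2)$, the nonvanishing of each $\psi(\Delta_{\ol{U}_{\beta,k}})$, and their combination via $\Omega$-freeness of $T$---are routine.
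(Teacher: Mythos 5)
Your proposal is correct, but it takes a genuinely different route from the paper. The paper's proof exploits the fact that $h_{\mu,d}=h_1h_2$ is a product of factors of the form $\xi_i-\xi_j$ (as $h_3=1$ for $(\mu,d)\in\Psi_0$): by Proposition~\ref{prop:h-factors}, membership of such an element in $\fP(\nu,f;W)$ forces membership in $\fP(\nu,f)$ (residue characteristic~$0$) or in $\fP(\nu,f;\bU^{\cL}_{A\otimes\bF_p})$ (residue characteristic~$p$), and then the containment theorem together with $(\mu,d)\preceq(\nu,f)$ pushes the element into $\fP(\mu,d)$ resp.\ $\fP(\mu,d;\bU^{\cK}_{A\otimes\bF_p})$, contradicting the preceding lemma. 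You instead produce an explicit witness: a homomorphism of class $(\nu,f;\{w\})$ killing $\fP(\nu,f;\{w\})\supset\fP(\nu,f;W)$ but not $h_{\mu,d}$, built by using the good pair $(\cE,\phi)$ to distribute the blocks $\ol U_{\beta,k}$ (of size $d_\beta\le\sum_{\ell\in\cE_\beta^\infty}f_\ell$) among the fibers of $\phi$ so that each within-fiber discriminant survives. This is essentially the witness-construction technique of Lemma~\ref{lem:not-in-fq} combined with the partition bookkeeping from the punctual case of the containment theorem (Proposition~\ref{prop:contain2-1}); your version is longer but self-contained, avoids Proposition~\ref{prop:h-factors} entirely, and needs no case split on residue characteristic, whereas the paper's is a two-line deduction given the machinery already established. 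The bookkeeping you flag does go through: goodness guarantees $\cE_\beta^\infty\neq\emptyset$ and the stated capacity bounds, the elements of $[\infty]\setminus[m]$ are infinite in number and invisible to $h_{\mu,d}$, so they can top up the finite $V_\ell$'s and be absorbed by the infinite ones.
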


\begin{proof}
Suppose by way of contradiction that $h$ belongs to $\fP(\nu,f;W)$; replacing $W$ with an irreducible component of its closure, we can assume that $W$ is irreducible and closed. First suppose that the generic point of $W$ has residue characteristic~0. Since $h_{\mu,d}$ is a product of factors of the form $\xi_i-\xi_j$, Proposition~\ref{prop:h-factors} shows that $h_{\mu,d}$ belongs to $\fP(\nu,f)$. Since $(\mu,d) \preceq (\nu,f)$, Proposition~\ref{prop:order-contain} now shows that $h_{\mu,d}$ belongs to $\fP(\mu,d)$, contradicting the previous lemma.

Now suppose the generic point of $W$ has characteristic $p>0$. The argument is similar. Proposition~\ref{prop:h-factors} shows that $h_{\mu,d}$ belongs to $\fP(\nu,f;\bU^{\cL}_{A \otimes \bF_p})$. Since $(\mu,d) \preceq (\nu,f)$, we see that $\Theta^{\nu,f}_{\mu,d}(\bU^{\cL}_{A \otimes \bF_p})=\bU^{\cK}_{A \otimes \bF_p}$, where $\cK$ is the index set of $\mu$. Theorem~\ref{thm:contain} thus shows $\fP(\nu,f;\bU^{\cL}_{A \otimes \bF_p}) \subset \fP(\mu,d;\bU^{\cK}_{A \otimes \bF_p})$, again contradicting the previous lemma. (Note that $A \otimes \bF_p$ is not the zero ring due to the existence of $W$.)
\end{proof}

\begin{lemma} \label{lem:G-disc}
The set $\sG$ contains a product of discriminants in disjoint variables. In particular, if $\fq$ is an $\fS$-prime containing $\sG$ then $\fq$ is finitary.
\end{lemma}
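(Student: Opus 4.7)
The plan is to identify an explicit minimal element of $\Psi$ whose associated $h$ has very simple structure, and then use that structure to force finitariness via Proposition~\ref{prop:non-finitary}.

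First I would single out the candidate $(\mu^\star, d^\star) = ((\infty), (D^\star))$ in $\Xi$, where $D^\star = 1 + \sum_{\alpha \in \cI^\infty} e_\alpha$, and argue that it lies in $\Psi_0$. For membership in $\Psi$: a hypothetical good pair $(\cE, \phi)$ between $(\mu^\star,d^\star)$ and $(\lambda, e)$ has $\phi \colon \cE \to \{1\}$, and (G2) would require $D^\star \le \sum_{\alpha \in \cE^\infty} e_\alpha \le \sum_{\alpha \in \cI^\infty} e_\alpha = D^\star - 1$, which is impossible, so $(\mu^\star,d^\star) \in \Psi$. For minimality, any $(\mu', d') \preceq (\mu^\star, d^\star)$ admits a good pair with target $\{1\}$, and a short analysis (the only nonempty choice of $\cE$ is $\{1\}$, which forces $\cK'$ to be a singleton) shows that $\mu'$ must be single-part equal to $(\infty)$, with $d' = (D')$ for some $D' \le D^\star$; membership of such a pair in $\Psi$ then forces $D' \ge D^\star$, hence $D' = D^\star$.

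Next I would evaluate $h_{\mu^\star,d^\star}$ by reading off Construction~\ref{con:h}. Setting $n = 1 + \sum_{\alpha \in \cI^{\rf}} \lambda_\alpha$, we get $\tau_1 = nD^\star$, so $\ol{U}_1 = [nD^\star]$ and is partitioned into $n$ blocks $\ol{U}_{1,1}, \ldots, \ol{U}_{1,n}$ each of size $D^\star$. The factor $h_2$ is the empty product because $\cK = \{1\}$ is a singleton, and $h_3$ is the empty product because there are no good pairs between $(\mu^\star,d^\star)$ and $(\lambda,e)$ (which is exactly why $(\mu^\star,d^\star) \in \Psi$). Thus
\begin{displaymath}
h_{\mu^\star,d^\star} = \prod_{k=1}^n \Delta_{\ol{U}_{1,k}},
\end{displaymath}
a product of $n$ discriminants in pairwise disjoint variables, establishing the first half of the lemma.

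For the finitariness statement, suppose $\fq$ is an $\fS$-prime containing $\sG$; in particular $h_{\mu^\star,d^\star} \in \fq$. If $\fq$ were not finitary, Proposition~\ref{prop:non-finitary} would give $\fq = \fp_0 R$ for a prime $\fp_0 \subset A$, and then $R/\fq \cong (A/\fp_0)[\xi_i]_{i \ge 1}$ would be a polynomial ring over a domain. Each $\Delta_{\ol{U}_{1,k}}$ has coefficients $\pm 1$ (see the additive expression in \S \ref{ss:disc}), so its image in $R/\fq$ is nonzero, and since the blocks $\ol{U}_{1,k}$ are pairwise disjoint the product $h_{\mu^\star,d^\star}$ is itself nonzero in the integral domain $R/\fq$; this contradicts $h_{\mu^\star,d^\star} \in \fq$. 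The principal obstacle in this plan is simply identifying the correct element of $\Psi_0$: one needs an $(\mu,d) \in \Psi$ that is simultaneously minimal (so as to belong to $\sG$), single-part (so that $h_2 = 1$), and outside the preorder below $(\lambda,e)$ (so that $h_3 = 1$); once $(\mu^\star, d^\star)$ is identified, the computation and the finitariness argument are immediate.
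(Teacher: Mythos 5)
Your proof is correct and follows essentially the same route as the paper: both single out $\bigl((\infty),(1+\sum_{\alpha\in\cI^{\infty}}e_{\alpha})\bigr)$ as the relevant element of $\Psi_0$ and observe that $h_2=h_3=1$ for it, so that the corresponding $h$ is a disjoint product of discriminants. The only (immaterial) difference is in the second half, where the paper notes that a disjoint product of discriminants divides some $\Delta_N$ and invokes Proposition~\ref{prop:finitary-char}, while you argue by contradiction via Proposition~\ref{prop:non-finitary}; both are valid.
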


\begin{proof}
Let $c = \sum_{\alpha \in \cI^{\infty}} e_{\alpha} $. Then, by the definition of a good pair, we have $((\infty), (c)) \preceq (\lambda, e)$ but $((\infty),(c+1))  \npreceq (\lambda, e)$. It follows that $((\infty),(c+1))  \in \Psi_0$. Examining Construction~\ref{con:h}, we see that $h_{(\infty),(c+1)}$ is a disjoint product of discriminants (we have $h_2 = h_3 = 1$).

Suppose $\fq$ contains $\sG$. Then $\fq$, by the above,  contains a disjoint product of discriminants. Since a disjoint product of discriminants divides some larger discriminant, we see that $\fq$ contains $\Delta_n$ for some $n$. Thus $V(\fq)$ consists of finitary points, and so $\fq$ is finitary by Proposition~\ref{prop:finitary-char}.
\end{proof}

\begin{proof}[Proof of Theorem~\ref{thm:generators1}]
Let $\fa$ be the $\fS$-radical of the $\fS$-ideal generated by $\sG$ and let $\fp=\fP(\lambda,e)$. We must show $\fa=\fp$. By Lemma~\ref{lem:in-fp}, we have $h_{\mu,d} \in \fp$ for all $(\mu,d) \in \Psi_0$, and so $\sG \subset \fp$, and so $\fa \subset \fp$. We now prove the reverse inclusion. By Proposition~\ref{prop:rad-primes}, $\fa$ is the intersection of all $\fS$-primes containing $\fa$. Let $\fq$ be an $\fS$-prime containing $\sG$. It suffices to show that $\fp \subset \fq$; suppose by way of contradiction that this does not hold. By Lemma~\ref{lem:G-disc}, $\fq$ is a finitary prime, say $\fq = \fP(\nu, f; W)$. Since $\fp \not\subset \fq$ and $\fP(\nu,f) \subset \fq$, we have $\fp \not\subset \fP(\nu,f)$, and so $(\nu, f) \npreceq (\lambda, e)$ by Proposition~\ref{prop:order-contain}; that is $(\nu,f) \in \Psi$. Let $(\mu, d) \in \Psi_0$ be such that $(\mu, d) \preceq (\nu, f)$. By Lemma~\ref{lem:nonzero-minimal}, $h_{\mu, d} \notin \fq$. This is a contradiction as $h_{\mu,d} \in \sG \subset \fp$.
\end{proof}	

\begin{example} \label{ex:e2} 
Let $\fp=\fP((\infty),(2))$. Then $\Psi_0$ consists of the two pairs $((\infty), (3))$ and $((\infty,1),(1,1))$. We thus find that $\sG$ consists of $\Delta_3$ and $h=(\xi_1-\xi_2)^3$. Let $\fa$ be the $\fS$-ideal generated by these elements. Thus Theorem~\ref{thm:generators1} asserts that $\fp=\rad_{\fS}(\fa)$. In characteristic~0, we know from Theorem~\ref{thm:contract} that $\fp$ is in fact generated by the $\fS$-orbit of $h$, and so $\fp=\fa$ in this case.

Suppose now that $A$ is a field of characteristic~2. It follows from Theorem~\ref{thm:contract} that $\fp$ is generated by the $\fS$-orbit of $g=(\xi_1-\xi_2)^2$. Thus Theorem~\ref{thm:generators1} implies that $g$ belongs to the $\fS$-radical of $\fa$. In fact, we claim $g \cdot \sigma g \in \fa$ any $\sigma \in \fS$. There are three cases to check:
\begin{itemize}
\item \textit{Case 1: $\sigma$ preserves $\{1,2\}$.} We find that $g \cdot \sigma g$ is a multiple of $h$.
\item \textit{Case 2: $\sigma( \{1,2\}) = \{i , j\}$ where $i \in \{1,2\}$ and $j \notin \{1,2\}$.} We see
\begin{displaymath}
g \cdot \sigma g = (\xi_1 + \xi_2)\Delta_{\{1,2, j\}} + (\xi_i + \xi_j)h.
\end{displaymath}
\item \textit{Case 3: $\sigma( \{1,2\})$ does not intersect $\{1,2\}$.} In this case, we have
\begin{displaymath}
g \cdot \sigma g = (\xi_1 + \xi_2)(\Delta_{\{1,2, \sigma(1)\}} + \Delta_{\{1,2, \sigma(2)\}}) + (\xi_{\sigma(1)} + \xi_{\sigma(2)})h.
\end{displaymath}
\end{itemize}
This establishes the claim.
\end{example}

\begin{example} \label{ex:2-2} 
Let $\fp=\fP((\infty, \infty),(2,2))$. Then $\Psi_0$ consists of the three pairs
\begin{displaymath}
((\infty),(5)), \quad ((\infty,1), (3,1)), \quad ((\infty,1,1),(1,1,1)).
\end{displaymath}
In this case, $\sG$ consists of the following three elements:
\begin{displaymath}
h_1 = \Delta_5, \qquad
h_2 = \Delta_3 \prod_{i=1}^3(\xi_i - \xi_4)^3, \qquad
h_3 = (\Delta_3)^3.
\end{displaymath}
We thus see that $\fp$ is the $\fS$-radical of the $\fS$-ideal generated by $h_1$, $h_2$, and $h_3$.
\end{example}

\subsection{The ideals $\fP(\lambda,e;Z)$} \label{ss:gen2}

Assume $A$ is noetherian. Let $\lambda$ be an $\infty$-composition on the index set $\cI$, let $e$ be a reduced weighting on $\cI$, and let $Z$ be a closed subset of $\bU^{\cI}$. Our goal is to give a generating set of $\fp=\fP(\lambda,e;Z)$ up to $\fS$-radical.

\begin{construction} \label{con:gen2-1}
Let $(\mu,d) \in \Xi$ satisfy $(\mu,d) \preceq (\lambda,e)$.
\begin{itemize}
\item For each subset $\cE$ of $\cI$, fix a finite generating set $\sD_{\cE} \subset A[t_{\alpha}]_{\alpha \in \cE}$ of the ideal of $\ol{p(Z)} \subset \bA^{\cE}$. This is possible since $A$ is noetherian.
\item Let $\Sigma$ be the finite set of all functions $\ul{u}$ that assign to a good pair $(\cE,\phi)$ between $(\mu,d)$ and $(\lambda,e)$ an element $\ul{u}_{\cE,\phi}$ of $\sD_{\cE}$.	
\item For a good pair $(\cE, \phi)$ between $(\mu, d)$ and $(\lambda, e)$ and $u_{\cE,\phi} \in \sD_{\cE}$, we let $g_{\cE,\phi}(u_{\cE,\phi})$ be the result of Construction~\ref{con:g} (we ignore the condition in that construction about non-vanishing at $i(y)$).
\item For $\ul{u} \in \Sigma$, we let $h_{\ul{u}}$ be the result of Construction~\ref{con:h}, where the choices of $u$'s in the construction are specified by $\ul{u}$; precisely, we take $h_3=\prod g_{\cE,\phi}(\ul{u}_{\cE,\phi})$ where the product is over all good pairs $(\cE, \phi)$.
\item Finally, we let $\sH^{\mu,d}=\{ h_{\ul{u}} : \ul{u} \in \Sigma \}$.
\end{itemize}
The set $\sH^{\mu,d}$ is the result of the construction. It is a finite subset of $R$.
\end{construction}

\begin{construction} \label{con:gen2}
Define the following quantities:
\begin{itemize}
\item Set $n=1+\sum_{\alpha \in \cI^{\rf}} \lambda_{\alpha}$.
\item Let $\Phi$ be the finite subset of $\Xi$ consisting of pairs $(\mu, d)$ such that
\begin{itemize}
\item $(\mu,d) \preceq (\lambda,e)$
\item any finite part of $\mu$ is at most $n$, and 
\item $\Theta^{\lambda, e}_{\mu, d}(Z)$ is a proper closed subset of $\bU^{\cK}$ where $\cK$ is the index set of $\mu$.
\end{itemize}
\item Let $\sH = \bigcup_{(\mu, d) \in \Phi} \sH^{\mu, d}$.
\end{itemize}
The set $\sH$ is the result of the construction. It is a finite subset of $R$.
\end{construction}
	
\begin{theorem} \label{thm:generators2}
Let $\sG$ be as in Construction~\ref{con:gen1} and let $\sH$ be as in Construction~\ref{con:gen2}. Then the $\fS$-radical of the $\fS$-ideal generated by $\sG \cup \sH$ is $\fp$. 
\end{theorem}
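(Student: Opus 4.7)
The proof has two parts. For the inclusion $\rad_{\fS}(\langle \sG \cup \sH \rangle) \subseteq \fp$, note that $\fp$ is $\fS$-prime by Proposition~\ref{prop:Pprime} and thus $\fS$-radical, so it suffices to show $\sG \cup \sH \subseteq \fp$. Theorem~\ref{thm:generators1} gives $\sG \subseteq \fP(\lambda,e) = \fP(\lambda,e;\bU^{\cI})$, and the direct description of $\fP(\lambda,e;Z)$ as an $\fS$-contraction (together with $Z \subseteq \bU^{\cI}$) yields $\fP(\lambda,e) \subseteq \fP(\lambda,e;Z) = \fp$. For $\sH$, every element $h_{\ul{u}}$ is built by Construction~\ref{con:h} with choices $\ul{u}_{\cE,\phi} \in \sD_{\cE}$ lying in the ideal of $\ol{p(Z)}$; the proof of Lemma~\ref{lem:in-fp} used only this vanishing of $u_{\cE,\phi}$ on $\ol{p(Z)}$, and not the genericity at $i(y)$ imposed in Construction~\ref{con:g}, so $h_{\ul{u}} \in \fp$.

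For the reverse containment $\fp \subseteq \rad_{\fS}(\langle \sG \cup \sH \rangle)$, by Proposition~\ref{prop:rad-primes} it suffices to prove $\fp \subseteq \fq$ for every $\fS$-prime $\fq \supseteq \sG \cup \sH$. Fix such a $\fq$. Since $\sG \subseteq \fq$ and $\fq$ is $\fS$-radical, Theorem~\ref{thm:generators1} gives $\fP(\lambda,e) \subseteq \fq$, and Lemma~\ref{lem:G-disc} shows $\fq$ is finitary; by Theorem~\ref{thm:class} we may write $\fq = \fP(\nu,f;W)$ with $W$ irreducible closed. The containment $\fP(\lambda,e) = \fP(\lambda,e;\bU^{\cI}) \subseteq \fP(\nu,f;W)$ together with Theorem~\ref{thm:contain} and $W \ne \emptyset$ forces $(\nu,f) \preceq (\lambda,e)$. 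Suppose toward a contradiction that $\fp \not\subseteq \fq$; a further application of Theorem~\ref{thm:contain} gives $W \not\subseteq \Theta^{\lambda,e}_{\nu,f}(Z)$, and we may choose a scheme-theoretic point $y \in W$ with $y \notin \Theta^{\lambda,e}_{\nu,f}(Z)$.

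The key technical obstacle is that $(\nu,f)$ need not lie in $\Phi$, since finite parts of $\nu$ may exceed $n = 1 + \sum_{\alpha \in \cI^{\rf}} \lambda_\alpha$. I would handle this by a reduction: define $\nu'$ to agree with $\nu$ except that every finite part $\nu_\beta > n$ is replaced by $n$, and keep $d = f$ (still $\nu'$-reduced, since only finite parts are altered). The crucial observation is that, because $n-1$ is the total of the finite parts of $\lambda$, for each such $\beta$ either bound $\nu_\beta \le \sum_{\alpha \in \cE_\beta} \lambda_\alpha$ or $\nu'_\beta \le \sum_{\alpha \in \cE_\beta} \lambda_\alpha$ forces the fiber $\cE_\beta$ to contain at least one $\alpha$ with $\lambda_\alpha=\infty$; conversely, any such fiber trivially satisfies both bounds. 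Thus the good pairs for $(\nu,f)$ and for $(\nu',f)$ relative to $(\lambda,e)$ coincide, yielding $\Theta^{\lambda,e}_{\nu,f}(Z) = \Theta^{\lambda,e}_{\nu',f}(Z)$. It follows that $(\nu',f) \in \Phi$: it is $\preceq(\lambda,e)$, its finite parts are at most $n$ by construction, and $y$ witnesses the proper containment $\Theta^{\lambda,e}_{\nu',f}(Z) \subsetneq \bU^{\cK}$. Moreover the identity refinement is a good pair from $(\nu',f)$ to $(\nu,f)$, so Theorem~\ref{thm:contain} gives $\fq = \fP(\nu,f;W) \subseteq \fP(\nu',f;W)$.

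Finally, for each good pair $(\cE,\phi)$ between $(\nu',f)$ and $(\lambda,e)$, the condition $y \notin \Theta^{\phi}_{\cE}(Z) = i^{-1}(\ol{p(Z)})$ gives $i(y) \notin \ol{p(Z)}$, so some element of $\sD_{\cE}$ does not vanish at $i(y)$; choose one such as $\ul{u}_{\cE,\phi}$ for every good pair to assemble an $\ul{u} \in \Sigma$. Running the argument of Lemma~\ref{lem:not-in-fq} verbatim with $(\mu,d) = (\nu',f)$, $Y = W$, this $y$, and this $\ul{u}$, one produces a homomorphism $\psi \colon R \to T$ of class $(\nu',f;W)$ with $\psi(h_{\ul{u}}) \ne 0$; hence $h_{\ul{u}} \notin \fP(\nu',f;W) \supseteq \fq$, which contradicts $\sH \subseteq \fq$. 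The principal difficulty is precisely this reduction from $(\nu,f)$ to $(\nu',f) \in \Phi$: the bound $n$ cutting off the finite parts in the definition of $\Phi$ is calibrated to keep $\Phi$ finite while leaving the $\Theta$-locus unchanged under the collapse, which is exactly what makes $\sH$ simultaneously finite and sufficient.
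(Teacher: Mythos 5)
Your proposal is correct and follows essentially the same route as the paper: the easy inclusion via Theorem~\ref{thm:generators1} and Lemma~\ref{lem:in-fp} (noting the non-vanishing at $i(y)$ is unused there), and the reverse inclusion by reducing to an $\fS$-prime $\fq=\fP(\nu,f;W)$, truncating the finite parts of $\nu$ exceeding $n$ to land in $\Phi$ without changing the good pairs or the $\Theta$-locus, and then invoking Lemma~\ref{lem:not-in-fq} to contradict $\sH\subset\fq$. Your explicit justification that the truncation preserves good pairs (via $n>\sum_{\alpha\in\cI^{\rf}}\lambda_\alpha$) is a detail the paper leaves implicit, but the argument is the same.
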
	

\begin{proof}
Let $\fa$ be the $\fS$-radical of the $\fS$-ideal generated by $\sG \cup \sH$. We must show $\fa=\fp$. We have shown in Theorem~\ref{thm:generators1} that $\sG \subset \fP(\lambda,e)$, and so $\sG \subset \fp$. It follows from Lemma~\ref{lem:in-fp} that $\sH \subset \fp$ (note that in that lemma, the non-vanishing of $u_{\cE,\phi}$ at $i(y)$ is not used). We thus see that $\fa \subset \fp$. We now prove the reverse inclusion. By Proposition~\ref{prop:rad-primes}, $\fa$ is the intersection of all $\fS$-primes containing $\fa$. Let $\fq$ be an $\fS$-prime containing $\sG \cup \sH$. It suffices to show that $\fp \subset \fq$.

Since $\sG \subset \fq$, we have $\fP(\lambda,e) \subset \fq$ by Theorem~\ref{thm:contain}. In particular, $\fq$ is finitary, say $\fq = \fP(\nu, d; Y)$. By Theorem~\ref{thm:contain}, it suffices to show that $Y \subset \Theta^{\lambda,e}_{\nu,d}(Z)$. From the containment $\fP(\lambda,e) \subset \fq$, we find that $(\mu,d) \preceq (\lambda,e)$. Let $\mu$ be the $\infty$-composition obtained from $\nu$ by reducing each finite part greater than $n$ to $n$. Clearly, $(\mu,d) \preceq (\nu,d)$. By construction, if $(\cE, \phi)$ defines a good pair between $(\nu, d)$ and $(\lambda, e)$ then it also defines a good pair between $(\mu, d)$ and $(\lambda, e)$, and vice versa. Therefore, we have $\Theta^{\lambda, e}_{\nu, d}(Z)  = \Theta^{\lambda, e}_{\mu, d}(Z)$, and so it is enough to show $Y \subset \Theta^{\lambda,e}_{\mu,d}(Z)$.
 
Suppose by way of contradiction that $Y \not\subset \Theta^{\lambda,e}_{\mu,d}(Z)$. Pick $y \in Y$ with $y \not\in \Theta^{\lambda,e}_{\mu,d}(Z)$, and use notation as in Construction~\ref{con:gen2-1}. For each good pair $(\cE, \phi)$ between $(\mu, d)$ and $(\lambda, e)$, pick $\ul{u}_{\cE, \phi} \in \sD_{\cE}$ that does not vanish on $i(y) \in \bA^{\cE}$. By Lemma~\ref{lem:not-in-fq}, we see that $h_{\ul{u}} \notin \fP(\mu,d;Y)$.  As $(\mu, d) \in \Phi$, we have $h_{\ul{u}} \in \sH$. But this contradicts $\fP(\mu,d;Y) \supset \fq \supset \sH$, finishing the proof.
\end{proof}	

\subsection{An example} \label{ss:circle}

Let $\lambda=(\infty,\infty)$ and $e=(2,2)$. Identify the index set of $\lambda$ with $\{1,2\}$, and  let $Z \subset \bU^{2}$ be the variety defined by $t_1^2 + t_2^2=1$. We now construct an explicit finite set that generates $\fp = \fP(\lambda, e; Z)$ up to $\fS$-radical. For this, it suffices to find $\sG$ and $\sH$ as in Theorem~\ref{thm:generators2}. By Example~\ref{ex:2-2}, we may take $\sG$ to be the set consisting of:
\begin{displaymath}
\Delta_5, \qquad
\Delta_3 \prod_{i=1}^3(\xi_i - \xi_4)^3, \qquad
\Delta_3^3. 
\end{displaymath}
Now we find $\sH$. For this, note that $n=1$, and so $(\mu, d) \in \Phi$ if and only if $(\mu, d)$ is of one of the following forms:
\begin{enumerate}
\item $((\infty),(a))$ with $3 \le a \le 4$. (Note that if $a \le 2$ then $\Theta^{\lambda, e}_{\mu, d}(Z)$ is not proper.)
\item $((\infty, 1),(a,1))$ with $ 1 \le a \le 2$.
\item $((\infty, \infty), (a,b))$ with $1 \le a,b \le 2$.
\end{enumerate}
Now suppose $(\mu, d) \in \Phi$ is as above.  If $(\cE, \phi)$ is a good pair between $(\mu, d)$ and $(\lambda, e)$, then we must have $\cE = \cI$. We take $\sD_{\cE}$ to be the singleton consisting of $t_1^2 + t_2^2 - 1$. Also, we have  $e_{\max} = 2$ and $N = 2e_{\max} - 1 = 3$. We now construct $\sH^{\mu, d}$ in each of the three cases above as follows:

\textit{Case (a).}  In this case, we have $\tau = (a)$, and so $m = a$. Set $\ol{U}_1 = [a]$. There is a unique good pair given by $(\cE, \phi) = (\{1,2\}, \phi \colon \{1,2\} \to \{1\})$,  and $\Pi'_{\cE, \phi}$ can be identified with partitions $\ol{U}_1 = \ol{V}_1 \cup \ol{V}_2$ where each of $\ol{V}_1$ and $\ol{V}_2$ has at most two elements. In particular, there are six such partition for each $a \in \{3,4\}$, and one can easily construct a bijection \[ \zeta \colon \{(i,j) \colon 1 \le i, j \le 3, i \ne j  \} \to \Pi'_{\cE, \phi}  \] such that $\zeta(i,j) = (\ol{V}_1, \ol{V}_2)$ implies $i \in \ol{V}_1$ and $j \in \ol{V}_2$.  We have $u_{\cE, \phi}= t_1^2 + t_2^2 - 1$. And for $\cV = (V_1, V_2) \in \Pi'_{\cE, \phi}$, we take $u_{\cE, \phi, \cV}$ to be $\xi_i^2 + \xi_j^2 - 1$ where $\zeta(i,j) = (\ol{V}_1, \ol{V}_2)$. By Remark~\ref{rem:optimize},  we may take $\sH^{(\infty),(a)}$ to be the set consisting of the one element
\begin{displaymath}
\Delta_a \prod_{1 \le i < j \le 3} (\xi_i^2 + \xi_j^2 - 1 )^{4}. 
\end{displaymath}
	
\textit{Case (b).}  In this case, we have $\tau = (a, 1)$, and so $m = a+1$. Set $\ol{U}_1 = [a]$ and $\ol{U}_2 = \{a+1\}$. A pair $(\cE, \phi)
	= (\{1,2\}, \phi \colon \{1,2\} \to \{1, 2\})$  is good if $\phi$ is surjective. So $\phi$ could be either the identity map $\id$ or the the map $\sigma$ that switches $1$ and $2$. In each case, we have $u_{\cE, \phi} = t_1^2 + t_2^2 - 1$. There is a unique element in $\Pi'_{\cE, \id}$ which corresponds to $(\ol{V}_1, \ol{V}_2) = (\ol{U}_1, \ol{U}_2)$, and so $g_{\cE, \id}$ can be taken to be $(\xi_1^2 + \xi_{a+1}^2 - 1)^{4}$. There is a unique element in $\Pi'_{\cE, \id}$ as well which corresponds to $(\ol{V}_1, \ol{V}_2) = (\ol{U}_2, \ol{U}_1)$, and so $g_{\cE, \sigma}$ can again be taken to be $(\xi_1^2 + \xi_{a+1}^2 - 1)^{4}$.  By Remark~\ref{rem:optimize},  we may take $\sH^{(\infty, 1),(a,1)}$ to be the set consisting of the one element
\begin{displaymath}
\Delta_a \left(\prod_{i \in [a]} (\xi_i - \xi_{a+1})^3\right) (\xi_1^2 + \xi_{a+1}^2 - 1)^{4}.
\end{displaymath}

\textit{Case (c).} In this case, we have $\tau = (a, b)$, and so $m = a+b$. Set $\ol{U}_1 = \{1, \ldots, a\}$, and  $\ol{U}_2 = \{a+1, \ldots, b\}$. A pair $(\cE, \phi)
	= (\{1,2\}, \phi \colon \{1,2\} \to \{1, 2\})$  is good if $\phi$ is surjective. So $\phi$ could be either the identity map $\id$ or the the map $\sigma$ that switches $1$ and $2$. We see that $\Pi'_{\cE, \phi}$ are exactly as in Case (b) above, and we may take $g_{\cE, \phi}$ to be $(\xi_1^2 + \xi_{a+b}^2 - 1)^{4}$ for each $\phi$. By Remark~\ref{rem:optimize}, we may take $\sH^{(\infty, \infty),(a,b)}$ to be the set  consisting of the one element
\begin{displaymath}
\Delta_a \Delta_{\{a+1, \ldots, a+b \}} \left(\prod_{1 \le i \le a < j \le a+b} (\xi_i - \xi_j)^3\right) (\xi_1^2 + \xi_{a+b}^2 - 1)^4.
\end{displaymath}

By Theorem~\ref{thm:generators2}, the collection of all the elements constructed above generate $\fp$ up to $\fS$-radicals. Ignoring redundancies, we conclude that the following five elements
\begin{displaymath}
\Delta_5, \quad
\Delta_3 \prod_{i=1}^3(\xi_i - \xi_4)^3, \quad
\Delta_3^3, \quad
\Delta_3 \prod_{1 \le i < j \le 3} (\xi_i^2 + \xi_j^2 - 1 )^{4}, \quad
(\xi_1 - \xi_{2})^3 (\xi_1^2 + \xi_{2}^2 - 1)^{4}
\end{displaymath}
generate $\fp$ up to $\fS$-radicals.

\section{The equivariant spectrum} \label{s:spec}

In this section, we examine the $\fS$-spectrum of $R$. We assume $A$ is noetherian throughout.

\subsection{The $\fS$-spectrum}

Let $\Spec_{\fS}(R)$ be the $\fS$-spectrum of $R$, as defined in \S \ref{ss:Gspec}. Recall that this is the set of all $\fS$-prime ideals of $R$, endowed with the Zariski topology. The goal of \S \ref{s:spec} is to give a direct description of $\Spec_{\fS}(R)$ in terms of finite dimensional algebraic varieties.

Our description of $\Spec_{\fS}(R)$ is essentially a repackaging of the main theorems of this paper. Thus the results here are not substantially new. For this reason, we omit some technical details from the proofs, especially when similar technicalities have already been treated thoroughly in \cite{svar}.

\subsection{The set $\sX$}

Let $\lambda$ be an $\infty$-composition on an index set $\cI$ and let $e$ be a reduced weighting on $\cI$. We put
\begin{displaymath}
\sX_{\lambda,e}=\bU^{\cI}, \qquad
\sX_{[\lambda,e]}=\sX_{\lambda,e}/\Aut(\lambda,e).
\end{displaymath}
We simply regard $\sX_{[\lambda,e]}$ as a set: it is the quotient of the set of scheme-theoretic points of $\bU^{\cI}$ by the finite group $\Aut(\lambda,e)$. We note that if $(\lambda,e)$ and $(\mu,d)$ are isomorphic then $\sX_{[\lambda,e]}$ is canonically identified with $\sX_{[\mu,d]}$. We put
\begin{displaymath}
\sX_{\fin} = \coprod_{(\lambda,e)} \sX_{[\lambda,e]}, \qquad
\sX_{\infty} = \Spec(A), \qquad
\sX = \sX_{\fin} \amalg \sX_{\infty},
\end{displaymath}
where the first disjoint union is taken over all isomorphism classes of pairs $(\lambda,e)$. We let $\pi_{\lambda,e} \colon \sX_{\lambda,e} \to \sX_{[\lambda,e]} \subset \sX$ be the quotient map, and we let $\pi_{\infty} \colon \sX_{\infty} \to \sX$ be the inclusion. For a subset $\sZ$ of $\sX$, we put $\sZ_{\lambda,e}=\pi_{\lambda,e}^{-1}(\sZ)$ and $\sZ_{\infty}=\pi_{\infty}^{-1}(\sZ)$.

\begin{remark}
As discussed in the introduction \S \ref{ss:intro-spec}, one should picture $\sX_{\lambda,e}$ as the configuration space of $\# \cI$ distinct points on the affine line $\bA^1$, where the points have been labeled by $\cI$ and the point labeled by $\alpha \in \cI$ has been assigned multiplicity $\lambda_{\alpha}$ and weight $e_{\alpha}$. The space $\sX_{[\lambda,e]}$ is similar except the labeling has been discarded.
\end{remark}

\subsection{The map $\rho$}

We now define a natural map
\begin{displaymath}
\rho \colon \sX \to \Spec_{\fS}(R).
\end{displaymath}
Let $x \in \sX_{[\lambda,e]}$ be given. Let $y \in \sX_{\lambda,e}$ be a lift of $x$ let $Z \subset \bU^{\cI}$ be its Zariski closure. If $y'$ is a second lift with closure $Z'$ then $y'=\sigma y$ for some $\sigma \in \Aut(\lambda,e)$, and so $Z'=\sigma Z$. It follows that the data $(\lambda,e;Z)$ is isomorphic to the data $(\lambda,e;Z')$ via $\sigma$, and so $\fP(\lambda,e;Z)=\fP(\lambda,e;Z')$. Hence this $\fS$-prime depends only on $x$. We define $\rho(x)=\fP(\lambda,e;Z)$. This defines $\rho$ on $\sX_{\fin}$. For $x \in \sX_{\infty}$ corresponding to a prime $\fc \subset A$, we let $\rho(x)$ be the $\fS$-prime $\fc R$. This completes the definition of $\rho$.

\begin{proposition}
The function $\rho$ is a bijection.
\end{proposition}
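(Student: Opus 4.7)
The plan is to separate $\Spec_{\fS}(R)$ into its finitary and non-finitary parts and check that $\rho$ is a bijection on each piece. The image of $\sX_{\fin}$ consists of ideals $\fP(\lambda, e; Z)$, which are finitary by construction, while the image of $\sX_{\infty}$ consists of ideals $\fc R$ with $\fc$ a prime of $A$, which are already radical and whose radical is not of the form $\fP(\lambda; Z)$ (since $V(\fc R)$ contains non-finitary points, e.g., its generic fibre). Hence the two images are disjoint, and by Proposition~\ref{prop:non-finitary} together with the definition of ``finitary'' every $\fS$-prime of $R$ lies in exactly one of these two classes.

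For the non-finitary part, Proposition~\ref{prop:non-finitary} gives the bijection $\sX_{\infty} \leftrightarrow \{\fc R : \fc \in \Spec(A)\}$: surjectivity is the content of the proposition, and injectivity follows by contracting to $A$. For the finitary part, surjectivity is immediate from Theorem~\ref{thm:class}: any finitary $\fS$-prime $\fp$ equals $\fP(\lambda, e; Z)$ for some data with $e$ reduced and $Z$ an irreducible closed subset of $\bU^{\cI}$; taking $y \in \bU^{\cI} = \sX_{\lambda, e}$ to be the generic point of $Z$ and $x = \pi_{\lambda, e}(y)$ yields $\rho(x) = \fp$.

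Injectivity on $\sX_{\fin}$ is the uniqueness clause of Theorem~\ref{thm:class}. Suppose $\rho(x) = \rho(x')$ with $x \in \sX_{[\lambda, e]}$ and $x' \in \sX_{[\lambda', e']}$. The corresponding data $(\lambda, e; Z)$ and $(\lambda', e'; Z')$ are isomorphic as triples, so in particular $(\lambda, e) \cong (\lambda', e')$; a choice of such isomorphism identifies $\sX_{[\lambda, e]}$ with $\sX_{[\lambda', e']}$ and carries $Z$ to $Z'$. The residual ambiguity in this identification is exactly an element of $\Aut(\lambda, e)$, so the generic points of $Z$ and $Z'$ lie in the same $\Aut(\lambda, e)$-orbit in $\sX_{\lambda, e}$, and hence $x = x'$. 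I do not expect any significant obstacle: the statement is essentially a clean repackaging of Theorem~\ref{thm:class}, Theorem~\ref{thm:rad-prime}, and Proposition~\ref{prop:non-finitary}, and the definition of $\sX_{[\lambda, e]}$ as the quotient by $\Aut(\lambda, e)$ has been engineered to match the uniqueness clause of Theorem~\ref{thm:class} exactly.
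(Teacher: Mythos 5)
Your proof is correct and follows exactly the route the paper takes: the paper's own proof is a one-line citation of Theorem~\ref{thm:class} for the finitary case and Proposition~\ref{prop:non-finitary} for the non-finitary case, and your write-up simply fills in the (routine) details of disjointness, surjectivity via generic points, and injectivity via the uniqueness clause and the $\Aut(\lambda,e)$-quotient. No issues.
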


\begin{proof}
This is exactly the classification theorem of $\fS$-primes (Theorem~\ref{thm:class} in the finitary case and Proposition~\ref{prop:non-finitary} in the non-finitary case).
\end{proof}

\subsection{The $\Theta$-topology} \label{ss:Theta-top}

Our goal is to now describe an explicit topology on $\sX$ that will make $\rho$ a homeomorphism. Let $\sZ$ be a subset of $\sX$. We say that $\sZ$ is \defi{$\Theta$-closed} if the following conditions hold:
\begin{enumerate}
\item[(D1)] $\sZ_{\lambda,e}$ is Zariski closed in $\sX_{\lambda,e}$ for all $(\lambda,e)$, and $\sZ_{\infty}$ is Zariski closed in $\sX_{\infty}$.
\item[(D2)] We have $\Theta^{\lambda,e}_{\mu,d}(\sZ_{\lambda,e}) \subset \sZ_{\mu,d}$ for all $(\lambda,e)$ and $(\mu,d)$.
\item[(D3)] Let $(\mu,d)$ be given on the index set $\cK$. Suppose that $\cK^{\infty}$ has more than one element, and let $\alpha \in \cK^{\infty}$ satisfy $d_{\alpha}=1$. Let $\mu(n)$ be the $\infty$-composition on $\cK$ with $\mu(n)_{\alpha}=n$ and $\mu(n)_{\beta}=\mu_{\beta}$ for $\beta \ne \alpha$. Then $\sZ_{\mu,d}=\sZ_{\mu(n),d}$ for $n \gg 0$.
\item[(D4)] Let $Y_n \subset \Spec(A)$ be the image of $\sZ_{(\infty^n,1^n)}$ under the structure map $\bU^n_A \to \Spec(A)$. Then $\sZ_{\infty}=\bigcap_{n \ge 1} Y_n$.
\end{enumerate}
We make two comments concerning these conditions. First, suppose $\sZ$ satisfies (D1) and (D2). With notation as in (D3), we have $\sZ_{\mu,d} \subset \sZ_{\mu(n+1),d} \subset \sZ_{\mu(n),d}$ for all $n$; this follows from (D2). Thus the $\sZ_{\mu(n),d}$ form a descending chain in $\bU^{\cK}$, which must stabilize since $\bU^{\cK}$ is noetherian. The (D3) condition says that $\sZ_{\mu,d}$ is the stable value. We could equivalently phrase (D3) as $\sZ_{\mu,d}=\bigcap_{n \ge 1} \sZ_{\mu(n),d}$. Second, one can show that if $\sZ$ is $\Theta$-closed then $\sZ_{\lambda,e}$ contains the inverse image of $\sZ_{\infty}$ under the structure map $\sX_{\lambda,e} \to \sX_{\infty}=\Spec(A)$ for all $(\lambda, e)$.

\begin{proposition} \label{prop:Theta-top}
We have the following:
\begin{enumerate}
\item A finite union of $\Theta$-closed sets is $\Theta$-closed.
\item An arbitrary intersection of $\Theta$-closed sets is $\Theta$-closed.
\item The sets $\emptyset$ and $\sX$ are $\Theta$-closed.
\end{enumerate}
\end{proposition}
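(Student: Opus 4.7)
The plan is to verify conditions (D1)--(D4) separately for $\emptyset$ and $\sX$ (immediate, since every condition is vacuous or trivial), for arbitrary intersections, and for finite unions. Condition (D1) is standard for the Zariski topology. Condition (D2) follows from two observations about $\Theta$ that are immediate from the formula $\Theta^{\phi}_{\cE}(W) = i^{-1}(\ol{p(W)}) \cap \bU^{\cK}$: it is monotone in $W$, and it commutes with finite unions (since image, Zariski closure, and preimage all do). Therefore, for $\sZ = \bigcap_i \sZ^i$, monotonicity alone gives $\Theta(\sZ_{\lambda,e}) \subset \Theta(\sZ^j_{\lambda,e}) \subset \sZ^j_{\mu,d}$ for every $j$; and for a finite union, union-compatibility of $\Theta$ together with (D2) for the summands yields the desired containment.

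The key auxiliary fact behind the remaining conditions is that for any $\Theta$-closed $\sZ$, both sequences $\sZ_{\mu(n),d}$ (in $\bU^{\cK}$) and $Y_n \subset \Spec(A)$ are descending in $n$. The first holds by applying (D2) to the good pair $(\cK, \id)$ between $\mu(n+1)$ and $\mu(n)$, which realises $\Theta$ as the identity operation so that $\sZ_{\mu(n+1),d} \subset \sZ_{\mu(n),d}$. The second follows from the same observation together with the fact that the projection $\bU^{n+1} \to \bU^n$ preserves distinctness of coordinates, so that $Y_{n+1}$ is the image of $p(\sZ_{(\infty^{n+1},1^{n+1})}) \subset \sZ_{(\infty^n,1^n)}$ in $\Spec(A)$, hence $Y_{n+1} \subset Y_n$. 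Given this, condition (D3) for an arbitrary intersection follows from the noetherianity of $\bU^{\cK}$: the descending chain $\sZ_{\mu(n),d} = \bigcap_i \sZ^i_{\mu(n),d}$ stabilizes, and interchanging intersections identifies the stable value with $\bigcap_i \sZ^i_{\mu,d} = \sZ_{\mu,d}$. For a finite union, one simply takes the maximum of the stabilization indices coming from each summand.

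Condition (D4) for arbitrary intersections is comparatively direct. The inclusion $\sZ_{\infty} \subset \bigcap_n Y_n$ is automatic once one observes (as noted after the definition) that $\sZ_{(\infty^n,1^n)}$ contains the inverse image of $\sZ_{\infty}$ under the structure map, so that $Y_n \supset \sZ_{\infty}$ for every $n$. Conversely, $Y_n \subset Y^i_n$ for each $i$, giving $\bigcap_n Y_n \subset \bigcap_n Y^i_n = \sZ^i_{\infty}$ for all $i$, whence $\bigcap_n Y_n \subset \bigcap_i \sZ^i_{\infty} = \sZ_{\infty}$.

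The main obstacle is (D4) for finite unions, which demands the set-theoretic identity $\bigcap_n (Y^1_n \cup Y^2_n) = \bigcap_n Y^1_n \cup \bigcap_n Y^2_n$. This identity fails in general but holds precisely because each $Y^i_n$ is descending: if $x$ lies in the left-hand side but not in $\bigcap_n Y^1_n$, choose $n_0$ with $x \notin Y^1_{n_0}$; descent forces $x \notin Y^1_n$ for all $n \ge n_0$, so $x \in Y^2_n$ for every such $n$, and descent of $Y^2_n$ in the other direction ($Y^2_n \supset Y^2_{n_0}$ for $n \le n_0$) then gives $x \in Y^2_n$ for every $n$. The monotonicity of the $Y^i_n$ is thus the linchpin of the entire argument.
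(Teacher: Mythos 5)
Your proof is correct and amounts to a careful write-up of the paper's one-line sketch: the paper declares (a) and (c) clear and, for infinite intersections, points to exactly the descending-chain-plus-noetherianity observation you use for (D3); you are also right that (D4) for finite unions genuinely requires the monotonicity of the $Y_n$, a point the paper passes over, and your derivation of that monotonicity from (D2) via the identity/projection good pairs is sound. One small repair: in (D4) for intersections you invoke the remark that $\sZ_{(\infty^n,1^n)}$ contains the inverse image of $\sZ_\infty$ for $\sZ=\bigcap_i \sZ^i$ itself, which is circular since that remark is only available for sets already known to be $\Theta$-closed; instead apply it to each $\sZ^i$ and intersect, giving $\sZ_{(\infty^n,1^n)}=\bigcap_i \sZ^i_{(\infty^n,1^n)} \supset q^{-1}\big(\bigcap_i \sZ^i_\infty\big)=q^{-1}(\sZ_\infty)$ where $q \colon \bU^n_A \to \Spec(A)$ is the structure map, and then conclude $\sZ_\infty \subset Y_n$ from the surjectivity of $q$.
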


\begin{proof}
(a) and (c) are clear, as is (b) for finite intersections. For infinite intersections, use the comments preceding the proposition and the fact that $\bU^{\cK}$ is noetherian.
\end{proof}

We thus see that the $\Theta$-closed sets of $\sX$ form the closed sets in a topology, which we call the \defi{$\Theta$-topology} on $\sX$.

\begin{remark}
The $\Theta$-topology admits a geometric interpretation in terms of configuration spaces, as discussed in the \S \ref{ss:intro-spec}.
\end{remark}

\begin{remark}
In \cite{svar}, we described the radical $\fS$-ideals of $R$ in terms of ``C3 subvarieties'' of a space $\cX$. This is very closely related to the idea of $\Theta$-closed subsets of $\sX$. The biggest difference between the two notions is that the pieces of $\cX$ are indexed by just $\infty$-compositions $\lambda$: there are no weightings $e$ in the picture. There are a number of minor differences too, e.g., we defined $\cX_{\lambda}$ to be $\bA^{\cI}$ whereas here we define $\sX_{\lambda,e}$ to be $\bU^{\cI}$. These changes were made here to make the description of $\Spec_{\fS}(R)$ as clear as possible.
\end{remark}

\subsection{The main theorem}

We can now state our main result on the $\fS$-spectrum of $R$:

\begin{theorem} \label{thm:rho-homeo}
The map $\rho \colon \sX \to \Spec_{\fS}(R)$ is a homeomorphism when the domain is equipped with the $\Theta$-topology and the target with the Zariski topology.
\end{theorem}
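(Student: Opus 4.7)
Since $\rho$ is a bijection by Theorem~\ref{thm:class} and Proposition~\ref{prop:non-finitary}, the task reduces to matching the closed sets on each side. I would split this into showing $\rho$ is continuous and $\rho$ is closed. Because $\Spec_{\fS}(R)$ is noetherian by Cohen's theorem and finite unions of $\Theta$-closed sets are $\Theta$-closed (Proposition~\ref{prop:Theta-top}), continuity reduces to showing $\rho^{-1}(V_{\fS}(\fp))$ is $\Theta$-closed for each $\fS$-prime $\fp$. Using the containment theorem (Theorem~\ref{thm:contain}), one computes this preimage explicitly: for finitary $\fp = \fP(\lambda,e;Z)$, the $\sX_{\mu,d}$-piece is $\Theta^{\lambda,e}_{\mu,d}(Z)$ and the $\sX_{\infty}$-piece is empty (since $V(\fp)$ sits in the finitary locus while $V(\fc R)$ does not); for non-finitary $\fp = \fc R$, the $\sX_{\mu,d}$-piece is the preimage of $V(\fc)$ under the structure map $\bU^{\cK}_A \to \Spec(A)$, and the $\sX_{\infty}$-piece is $V(\fc)$. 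The conditions (D1)--(D4) then follow routinely from monotonicity and base-change of $\Theta$.

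For closedness, given a $\Theta$-closed $\sZ$ I would set $\fa(\sZ) := \bigcap_{x \in \sZ}\rho(x)$ and show $\rho(\sZ) = V_{\fS}(\fa(\sZ))$. This ideal is $\fS$-radical (an intersection of $\fS$-primes), so by Cohen's theorem it admits a finite irredundant decomposition $\fa(\sZ) = \fq_1 \cap \cdots \cap \fq_n$ into $\fS$-primes. Identifying the minimal primes of an intersection of primes with the minimal elements of the collection (a standard commutative algebra argument, valid here once the collection of primes $\{\rho(x) : x \in \sZ\}$ is considered), each $\fq_i$ equals $\rho(x_i)$ for some $x_i \in \sZ$. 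For any $\fS$-prime $\fp \supset \fa(\sZ)$, $\fS$-primality combined with $\fq_1 \cdots \fq_n \subset \fa(\sZ)$ forces $\fp \supset \fq_i = \rho(x_i)$ for some $i$. Thus it suffices to prove the degeneration lemma: if $x \in \sZ$ and $\rho(y) \supset \rho(x)$ then $y \in \sZ$.

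The degeneration lemma is immediate from the containment theorem and (D2) when $x$ and $y$ are both finitary, and from the Zariski closedness of $\sZ_{\infty}$ (part of (D1)) when both are non-finitary. The main obstacle is the mixed case, where $\rho(x) = \fc R$ is non-finitary and $y$ is finitary of type $(\mu,d;Y)$. Unwinding the containment $\fP(\mu,d;Y) \supset \fc R$ shows that $Y$ lies in the preimage of $V(\fc)$ under $\bU^{\cK}_A \to \Spec(A)$, so the conclusion $y \in \sZ$ reduces to the claim that $\sZ_{\mu,d}$ contains the entire fiber of this structure map over $V(\fc)$. I expect to establish this by combining (D4) with iterated applications of (D2): the condition $\fc \in \bigcap_n Y_n$ forces each $\sZ_{(\infty^n,1^n)}$ to meet the fiber of $\bU^n_A$ above $V(\fc)$, and the $\Theta$-construction then propagates this abundance of points into the piece $\sZ_{\mu,d}$; Zariski closedness of $\sZ_{\mu,d}$ in $\bU^{\cK}_A$ absorbs the whole fiber, completing the argument.
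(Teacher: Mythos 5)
Your continuity half is the same as the paper's: reduce to a single $\fS$-prime via Cohen noetherianity, compute $\rho^{-1}(V_{\fS}(\fp))$ stratum by stratum using Theorem~\ref{thm:contain}, and check (D1)--(D4). This is exactly the content of the paper's Lemmas~\ref{lem:rho-1} and~\ref{lem:rho-2}. Your closedness half, however, takes a genuinely different route. The paper proves a structural result (Lemma~\ref{lem:rho-3}): every $\Theta$-closed set is a finite union of sets $\Gamma_{\lambda,e}(Z)$ and $\Gamma_\infty(Z)$, which requires showing $\sX$ is noetherian and classifying its irreducible closed subsets by importing arguments from \cite{svar}; closedness of $\rho(\sZ)$ then follows from the two lemmas. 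You instead show directly that $\rho(\sZ)=V_{\fS}(\fa(\sZ))$ by combining Cohen noetherianity on the target with a specialization-stability ("degeneration") lemma for $\rho(\sZ)$. This avoids the noetherianity and irreducibility analysis of $\sX$ entirely, at the cost of having to handle the mixed finitary/non-finitary specializations by hand; it is arguably the more economical argument.

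Two steps need more care than you give them. First, the claim that the minimal primes of $\fa(\sZ)=\bigcap_{x\in\sZ}\rho(x)$ are themselves of the form $\rho(x_i)$ with $x_i\in\sZ$ is \emph{not} a formal fact about infinite intersections of primes (the intersection of all primes strictly containing a given prime can equal that prime). What saves you is that each $\sZ_{\lambda,e}$ is Zariski closed in the scheme-theoretic sense, so its finitely many generic points lie in $\sZ$ and, by Proposition~\ref{prop:Pprop}(c), the intersection over the stratum $\sZ_{[\lambda,e]}$ is already the finite intersection of the $\rho$'s of those generic points; Cohen noetherianity then lets you discard all but finitely many strata. You should say this. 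Second, the mixed case of your degeneration lemma rests on the assertion that $\sZ_{\mu,d}$ contains the full preimage of $\sZ_\infty$ under the structure map $\bU^{\cK}_A\to\Spec(A)$. This is exactly the unproved remark the paper makes before Proposition~\ref{prop:Theta-top} and uses in Lemma~\ref{lem:rho-2}, so you are on equal footing with the paper, but your phrase about the $\Theta$-construction "propagating abundance" is where the real content sits: one must show that nonemptiness of the fibers of $\sZ_{(\infty^n,1^n)}$ over a point $\fd$ for \emph{all} $n$ forces each fiber of $\sZ_{\mu,d}$ over $V(\fd)$ to be everything, which ultimately requires a discriminant/dimension argument in the style of \cite{svar} (a proper $\fS_n$-stable closed subset of $\bU^n$ cannot remain nonempty under the projection and merging maps from all higher levels). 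Neither issue is fatal, but both require an actual argument rather than an appeal to routine.
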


We require a few lemmas before proving the theorem. For a pair $(\lambda,e)$ on index set $\cI$ and a subset $Z$ of $\bU^{\cI}$, we let $\Gamma_{\lambda,e}(Z)$ be the $\Theta$-closure of $\pi_{\lambda,e}(Z)$. For a subset $Z$ of $\Spec(A)$, we let $\Gamma_{\infty}(Z)$ be the $\Theta$-closure of $\pi_{\infty}(Z)$. The following lemma is essentially a reformulation of the containment theorem (Theorem~\ref{thm:contain}), and is the most substantial part of the proof of Theorem~\ref{thm:rho-homeo}.

\begin{lemma} \label{lem:rho-1}
Let $Z$ be a closed subset of $\bU^{\cI}$ and let $\fp=\fP(\lambda,e;Z)$. Then $\Gamma_{\lambda,e}(Z)=\rho^{-1}(V_{\fS}(\fp))$.
\end{lemma}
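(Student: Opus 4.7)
The plan is to prove both inclusions between $\Gamma_{\lambda,e}(Z)$ and $\sW := \rho^{-1}(V_{\fS}(\fp))$, leveraging the minimality of $\Gamma_{\lambda,e}(Z)$ as the $\Theta$-closure of $\pi_{\lambda,e}(Z)$ together with the containment theorem.

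First I would verify $\pi_{\lambda,e}(Z) \subset \sW$: for $y \in Z$ with Zariski closure $Y \subset Z$, Proposition~\ref{prop:Pprop}(c) gives $\fp = \fP(\lambda,e;Z) \subset \fP(\lambda,e;Y) = \rho(\pi_{\lambda,e}(y))$, placing $\pi_{\lambda,e}(y)$ in $\sW$. Next I would prove $\sW$ is $\Theta$-closed. The key identification is $\sW_{\mu,d} = \Theta^{\lambda,e}_{\mu,d}(Z)$: for $y \in \bU^{\cK}$ with closure $Y$, Theorem~\ref{thm:contain} says $\fp \subset \fP(\mu,d;Y)$ iff $Y \subset \Theta^{\lambda,e}_{\mu,d}(Z)$, and since $\Theta^{\lambda,e}_{\mu,d}(Z)$ is Zariski closed this is equivalent to $y \in \Theta^{\lambda,e}_{\mu,d}(Z)$. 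This gives (D1) at each finitary stratum. For the infinitary stratum I would argue $\sW_\infty = \emptyset$: if $Z \ne \emptyset$ then $\fp$ is finitary so $V(\fp)$ consists of finitary points (Proposition~\ref{prop:finitary-char}), yet $V(\fc R)$ always contains non-finitary points, ruling out $\fp \subset \fc R$; if $Z = \emptyset$ then $\fp = R$ and $V_{\fS}(\fp) = \emptyset$. Hence (D1) and (D4) on $\sX_\infty$ hold trivially.

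Then I would verify the remaining closure conditions using combinatorial properties of $\Theta$: (D2) by showing that, given good pairs $(\cE,\phi)$ between $(\mu,d)$ and $(\lambda,e)$ and $(\cF,\psi)$ between $(\nu,f)$ and $(\mu,d)$, the composite $(\cG,\chi) = (\phi^{-1}(\cF),\, \psi\circ\phi|_{\phi^{-1}(\cF)})$ is a good pair between $(\nu,f)$ and $(\lambda,e)$ (summing the bounds (G1), (G2) along the composition) and that the relation $i_\chi = i_{\phi|_\cG} \circ i_\psi$ together with $p_\cG = p_\cG \circ p_\cE$ yields $\Theta^\psi_\cF(\Theta^\phi_\cE(Z)) \subset \Theta^\chi_\cG(Z)$; (D3) by observing that the $\Theta^\phi_\cE(Z)$'s depend only on $(\cE,\phi,Z)$ and that for $n$ exceeding the finite parts of $\lambda$ the good pairs for $(\mu(n),d)$ and $(\mu,d)$ coincide; and (D4) by noting that once $n$ exceeds the number of infinite parts of $\lambda$ no good pair between $((\infty^n,1^n),1)$ and $(\lambda,e)$ exists, forcing $Y_n = \emptyset$ and hence $\bigcap_n Y_n = \emptyset = \sW_\infty$. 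Minimality of $\Gamma_{\lambda,e}(Z)$ then yields $\Gamma_{\lambda,e}(Z) \subset \sW$.

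For the reverse inclusion $\sW \subset \Gamma_{\lambda,e}(Z)$, I would use the $\Theta$-closedness of $\Gamma_{\lambda,e}(Z)$ and its defining property: $\Gamma_{\lambda,e}(Z)_{\lambda,e}$ is Zariski closed and contains $Z$, so (D2) applied to $\Gamma_{\lambda,e}(Z)$ gives $\Gamma_{\lambda,e}(Z)_{\mu,d} \supset \Theta^{\lambda,e}_{\mu,d}(Z) = \sW_{\mu,d}$ for every $(\mu,d)$; the containment on $\sX_\infty$ is vacuous since $\sW_\infty = \emptyset$. I expect the main obstacle to be verifying (D2) rigorously, as the bookkeeping linking pullbacks along multi-diagonal maps with projections onto different index sets must be handled carefully; the remaining verifications are routine combinatorics of good pairs.
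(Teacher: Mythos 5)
Your proposal is correct and follows essentially the same route as the paper: identify $\rho^{-1}(V_{\fS}(\fp))_{\mu,d}$ with $\Theta^{\lambda,e}_{\mu,d}(Z)$ via Theorem~\ref{thm:contain}, check (D1)--(D4), and then use minimality of the $\Theta$-closure in both directions. The only (immaterial) divergence is that you verify (D2) by directly composing good pairs, whereas the paper observes that the containment $\Theta^{\mu,d}_{\nu,f}(\Theta^{\lambda,e}_{\mu,d}(Z)) \subset \Theta^{\lambda,e}_{\nu,f}(Z)$ also follows at once from transitivity of prime containments together with Theorem~\ref{thm:contain} — a route the paper explicitly notes is interchangeable with yours.
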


\begin{proof}
Let $\sZ=\rho^{-1}(V_{\fS}(\fp))$. Then Theorem~\ref{thm:contain} shows that $\sZ_{\mu,e}=\Theta^{\lambda,e}_{\mu,d}(Z)$. We now verify that $\sZ$ is $\Theta$-closed:
\begin{itemize}
\item[(D1)] This follows since $\Theta$ always produces a closed subset of $\bU$.
\item[(D2)] We have
\begin{displaymath}
\Theta^{\mu,d}_{\nu,f}(\sZ_{\mu,d}) = \Theta^{\mu,d}_{\nu,f}(\Theta^{\lambda,e}_{\mu,d}(Z)) \subset \Theta^{\lambda,e}_{\nu,f}(Z) = \sZ_{\nu,f}.
\end{displaymath}
The containment above can  be proved directly, but is also a consequence of Theorem~\ref{thm:contain}.
\item[(D3)] Use notation as in the formulation of (D3). Let $N$ be an integer larger than the sum of finite parts of $\lambda$ and all finite parts of $\mu$, and let $n \ge N$. We claim that $\sZ_{\mu(n),d}=\sZ_{\mu,d}$. We have already explained the containment $\sZ_{\mu,d} \subset \sZ_{\mu(n),d}$, so we prove the reverse. Let $x \in \sZ_{\mu(n),d}$ be given. Returning to the definition of $\Theta$, we see that there exists a good pair $(\cE,\phi)$ relative to $\lambda$, $e$, $\mu(n)$, and $d$ such that $x \in i^{-1}(\ol{p(Z)})$, where $i \colon \bA^{\cK} \to \bA^{\cE}$ is the multi-diagonal map associated to $\phi$ and $p \colon \bA^{\cI} \to \bA^{\cE}$ is the projection associated to $\cE \subset \cI$. By the choice of $n$, we see that $(\cE, \phi)$ is also good relative to $\lambda$, $e$, $\mu$, and $d$, and so $x \in \sZ_{\mu, d}$, as required.
\item[(D4)] We have $\sZ_{(\infty^n,1^n)}=\emptyset$ for $n \gg 0$ (this is clear from the definiton of $\Theta$) and $\sZ_{\infty}=\emptyset$.
\end{itemize}
This completes the proof that $\sZ$ is closed.

Now, let $\sW$ be a $\Theta$-closed subset of $\sX$ containing $\pi_{\lambda,e}(Z)$. We must show that $\sZ \subset \sW$. We have $Z \subset \sW_{\lambda,e}$ by assumption. We therefore have
\begin{displaymath}
\sZ_{\mu,d}=\Theta^{\lambda,e}_{\mu,d}(Z) \subset \Theta^{\lambda,e}_{\mu,d}(\sW_{\lambda,e}) \subset \sW_{\mu,d},
\end{displaymath}
where in the last step we used (D2) for $\sW$. Since $\sZ_{\infty}=\emptyset$ we also have $\sZ_{\infty} \subset \sW_{\infty}$. Thus $\sZ \subset \sW$, and the result follows.
\end{proof}

\begin{lemma} \label{lem:rho-2}
Let $Z$ be a closed subset of $\Spec(A)$ corresponding to $\fc \subset A$. Then $\Gamma_{\infty}(Z)=\rho^{-1}(V_{\fS}(\fc R))$.
\end{lemma}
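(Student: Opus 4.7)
The plan is to unravel what $\sZ := \rho^{-1}(V_{\fS}(\fc R))$ looks like piece by piece, verify it is $\Theta$-closed and contains $\pi_{\infty}(Z)$, and then establish minimality. First I would compute $\sZ$ concretely. For a point $x \in \sX_{\infty}$ corresponding to a prime $\fc' \subset A$, we have $\rho(x) = \fc' R$, and $\fc R \subset \fc' R$ iff $\fc \subset \fc'$ iff $\fc' \in Z$; so $\sZ_{\infty} = Z$. For $x \in \sX_{[\lambda,e]}$ with a lift $y \in \bU^{\cI}_A$ and closure $Z_y = \overline{\{y\}}$, we have $\rho(x) = \fP(\lambda,e;Z_y)$. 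I would argue $\fP(\lambda,e;Z_y) \cap A$ equals the image $\fc_y$ of $y$ under the structure map $\bU^{\cI}_A \to \Spec(A)$: indeed $\fQ_{\cU}(\lambda,e;Z_y) \cap \tilde S_{\cU,0}$ is $\fI_0(Z_y)$, which meets $A$ in $\fc_y$, and taking $\fS$-contractions does not change the intersection with the invariant subring $A$. Thus $\fc R \subset \rho(x)$ iff $y$ lies over $Z$, so $\sZ_{\lambda,e}$ is the preimage of $Z$ under the structure map.

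Next I would check $\sZ$ is $\Theta$-closed. (D1) and (D3) are immediate because "lying over $Z$" is a closed condition that only depends on the structure map to $\Spec(A)$ and is indifferent to the combinatorics of $(\mu,d)$. For (D2), I observe that for any good pair $(\cE,\phi)$, both the projection $p\colon\bA^{\cI}\to\bA^{\cE}$ and the multi-diagonal $i\colon\bA^{\cK}\to\bA^{\cE}$ commute with the structure map to $\Spec(A)$, so the operation $i^{-1}(\overline{p(-)})$ carries preimages of $Z$ to preimages of $Z$. For (D4), the structure map $\bU^{n}_A \to \Spec(A)$ is set-theoretically surjective onto every point (the generic point of $\bU^{n}_{\kappa(\fc')}$ always provides a preimage), so $Y_n = Z$ for all $n$, and $\bigcap Y_n = Z = \sZ_{\infty}$. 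That $\sZ \supset \pi_{\infty}(Z)$ is then immediate.

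For minimality, suppose $\sW$ is $\Theta$-closed with $\sW \supset \pi_{\infty}(Z)$, equivalently $\sW_{\infty}\supset Z$. I would invoke the observation recorded after (D4) in \S\ref{ss:Theta-top}, namely that any $\Theta$-closed set $\sW$ satisfies $\sW_{\lambda,e}\supset(\text{structure map})^{-1}(\sW_{\infty})$ for every $(\lambda,e)$. Applied to $\sW$ this gives $\sW_{\lambda,e}\supset (\text{structure map})^{-1}(Z)=\sZ_{\lambda,e}$, and combined with $\sW_{\infty}\supset Z=\sZ_{\infty}$ yields $\sW\supset\sZ$. This forces $\sZ$ to be the $\Theta$-closure of $\pi_{\infty}(Z)$, i.e.\ $\sZ = \Gamma_{\infty}(Z)$, completing the proof.

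The main obstacle is justifying the cited observation from \S\ref{ss:Theta-top} that a $\Theta$-closed $\sW$ contains the full preimage of $\sW_{\infty}$: given $\fc' \in \sW_{\infty}$, one uses (D4) to produce points in each $\sW_{(\infty^n,1^n),1}$ over $\fc'$, then uses (D2) with good pairs $(\cE,\phi)$ whose projection is dominant (taking $n$ sufficiently large so that $\cE$ is large enough to satisfy both (G1) and (G2) even with $\phi$ required to hit each $\alpha \in \cI^{\infty}$ at least $e_{\alpha}$ times via $\infty$-indices) to transfer these points and cover the generic point of $\bU^{\cI}_{\kappa(\fc')}$; closure under (D1) then fills in all specializations over $\overline{\{\fc'\}}$.
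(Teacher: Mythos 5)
Your proof is correct and follows essentially the same route as the paper's: identify $\sZ_{\lambda,e}$ as the preimage of $Z$ under the structure map, check (D1)--(D4), and use the observation from \S\ref{ss:Theta-top} that a $\Theta$-closed set contains the preimage of its $\infty$-part to get minimality. You supply more detail than the paper (which leaves the (D1)--(D4) check and the cited observation unproved), though your closing sketch of that observation is still loose, since applying $\Theta$ to a single point of $\sW_{(\infty^n,1^n)}$ over $\fc'$ does not by itself reach the generic point of the fiber.
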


\begin{proof}
Let $\sZ=\rho^{-1}(V_{\fS}(\fc R))$. We first show that $\sZ$ is $\Theta$-closed. We have $\fc \subset \fP(\lambda,e;Y)$ if and only if $Y \subset \bU^{\cI}_A$ lies above $Z \subset \Spec(A)$. We thus see that $\sZ_{\lambda,e}$ is the inverse image of $Z$ in $\bU^{\cI}$. One easily verifies conditions (D1)--(D4) from this description.

Now suppose that $\sW$ is a $\Theta$-closed subset of $\sX$ containing $\pi_{\infty}(Z)$. Thus $Z \subset \sW_{\infty}$. As mentioned before Proposition~\ref{prop:Theta-top}, $\sW_{\lambda,e}$ contains the inverse image of $\sW_{\infty}$, which is exactly $\sZ_{\lambda,e}$. Thus $\sZ \subset \sW$, which completes the proof.
\end{proof}

\begin{lemma} \label{lem:rho-3}
Every $\Theta$-closed subset of $\sX$ is a finite union of sets of the form $\Gamma_{\lambda,e}(Z)$ and $\Gamma_{\infty}(Z)$ with $Z$ irreducible and closed.
\end{lemma}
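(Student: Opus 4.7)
The plan is to exhibit $\sZ$ as a finite union of $\Gamma$-sets by combining the noetherianity of $A$ with Cohen's theorem applied to $\Spec_{\fS}(R)$. First, I would decompose each $\sZ_{\lambda,e}$ and $\sZ_{\infty}$ into irreducible components. By condition (D1) and noetherianity of $A$, each $\sZ_{\lambda,e}$ has a finite decomposition $\sZ_{\lambda,e}=\bigcup_{k}Z_k^{(\lambda,e)}$ into irreducible closed subsets of $\bU^{\cI}$, and similarly $\sZ_{\infty}=\bigcup_{j}Y_j$. Each component produces an $\fS$-prime, namely $\fP(\lambda,e;Z_k^{(\lambda,e)})$ or $\fc_j R$ (where $\fc_j$ is the ideal of $Y_j$); let $\mathcal{P}$ denote the resulting (a priori infinite) family. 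Since $\sZ$ is $\Theta$-closed, each $\Gamma_{\lambda,e}(Z_k^{(\lambda,e)})$ and $\Gamma_{\infty}(Y_j)$ lies inside $\sZ$, and combining this with the elementary covering $\sZ=\bigcup_k\pi_{\lambda,e}(Z_k^{(\lambda,e)})\cup\bigcup_j\pi_{\infty}(Y_j)$ and Lemmas~\ref{lem:rho-1} and~\ref{lem:rho-2} gives $\sZ=\bigcup_{\fp\in\mathcal{P}}\rho^{-1}(V_{\fS}(\fp))$.

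Next, I would apply Cohen's theorem to collapse $\mathcal{P}$ to a finite set. Let $\fa=\bigcap_{\fp\in\mathcal{P}}\fp$, an $\fS$-radical ideal. By the corollary to Cohen's theorem, $\fa=\fq_1\cap\cdots\cap\fq_n$ for finitely many minimal $\fS$-primes $\fq_j$. Each $\fp\in\mathcal{P}$ contains $\fa$ and hence, by $\fS$-primality applied to the finite intersection $\fq_1\cdots\fq_n\subseteq\fa\subseteq\fp$, contains some $\fq_j$; Lemmas~\ref{lem:rho-1} and~\ref{lem:rho-2} then yield $\sZ\subseteq\bigcup_{j=1}^{n}\rho^{-1}(V_{\fS}(\fq_j))$. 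By the classification theorem (Theorem~\ref{thm:class}), each $\fq_j$ has the form $\fP(\mu_j,d_j;W_j)$ (with $W_j$ irreducible) or $\fc_j R$ (with $V(\fc_j)$ irreducible), so each set $\rho^{-1}(V_{\fS}(\fq_j))$ is already a $\Gamma$-set of the required form.

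The hard part will be the reverse inclusion $\rho^{-1}(V_{\fS}(\fq_j))\subseteq\sZ$ for each $j$: by $\Theta$-closedness of $\sZ$ this reduces to placing $\rho^{-1}(\fq_j)$ in $\sZ$, i.e., showing $W_j\subseteq\sZ_{\mu_j,d_j}$ in the finitary case (and $V(\fc_j)\subseteq\sZ_{\infty}$ in the non-finitary case, handled analogously using (D4)). The strategy is to produce a single prime $\fp\in\mathcal{P}$ with $\fp\subseteq\fq_j$: Proposition~\ref{prop:zd} shows that every element of the minimal prime $\fq_j/\fa$ of $R/\fa$ is an $\fS$-zerodivisor modulo $\fa$, and applying this to $\fS$-generators $g_1,\ldots,g_M$ of $\fq_j$ (from Cohen's theorem) in combination with the $\fS$-primality of every $\fp\in\mathcal{P}$ and the identity $\fa=\bigcap_{\fp\in\mathcal{P}}\fp$ should produce the desired $\fp$. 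The containment theorem (Theorem~\ref{thm:contain}) then converts $\fp=\fP(\lambda,e;Z)\subseteq\fq_j$ into the geometric inclusion $W_j\subseteq\Theta^{(\lambda,e)}_{(\mu_j,d_j)}(Z)$, and condition (D2) gives $\Theta^{(\lambda,e)}_{(\mu_j,d_j)}(Z)\subseteq\sZ_{\mu_j,d_j}$. Once $W_j\subseteq\sZ_{\mu_j,d_j}$ is in hand, minimality of $\fq_j$ over $\fa$ forces $W_j$ to coincide with one of the components $Z_l^{(\mu_j,d_j)}$: otherwise the strictly smaller prime $\fP(\mu_j,d_j;Z_l^{(\mu_j,d_j)})\in\mathcal{P}$ would lie properly between $\fa$ and $\fq_j$, contradicting minimality. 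Thus $\fq_j\in\mathcal{P}$, giving $\rho^{-1}(V_{\fS}(\fq_j))\subseteq\sZ$ and completing the finite decomposition.
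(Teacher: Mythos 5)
Your architecture is genuinely different from the paper's: the paper first splits off the unbounded part, then shows $\sX$ with the $\Theta$-topology is a \emph{noetherian topological space} (via the well-quasi-order $\Xi$, Proposition~\ref{prop:wqo}, following \cite[\S 5]{svar}), and then identifies the irreducible bounded closed sets as the $\Gamma_{\lambda,e}(Z)$; you instead push everything into $\Spec_{\fS}(R)$ and invoke Cohen's theorem. Your Steps 1 and 2 are fine: the identity $\sZ=\bigcup_{\fp\in\mathcal{P}}\rho^{-1}(V_{\fS}(\fp))$ and the inclusion $\sZ\subseteq\bigcup_{j=1}^{n}\rho^{-1}(V_{\fS}(\fq_j))$ both check out, and each $\rho^{-1}(V_{\fS}(\fq_j))$ is indeed a $\Gamma$-set. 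The proof breaks in Step 3, which you rightly flag as the hard part.

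There you need, for each minimal $\fS$-prime $\fq_j$ over $\fa=\bigcap_{\fp\in\mathcal{P}}\fp$, some $\fp\in\mathcal{P}$ with $\fp\subseteq\fq_j$. The argument you sketch (Proposition~\ref{prop:zd} in $R/\fa$ plus $\fS$-primality of the members of $\mathcal{P}$) is purely ideal-theoretic: it uses nothing about $\mathcal{P}$ beyond its being a family of $\fS$-primes with intersection $\fa$. At that level of generality the claim is false. Take $A=\bC$ and $\mathcal{P}=\{\langle\xi_i^n\rangle_{i\ge 1}\}_{n\ge 1}$, a strictly descending chain of $\fS$-primes with $\fa=\bigcap_{n}\langle\xi_i^n\rangle=0$; the unique minimal $\fS$-prime over $\fa$ is $\fq_1=0$, and no member of $\mathcal{P}$ is contained in it. (Note also that Proposition~\ref{prop:zd} pushes in the wrong direction: from $g\cdot\sigma(h)\in\fa$ with $h\notin\fa$, choosing $\fp\in\mathcal{P}$ with $h\notin\fp$ yields $g\in\fp$ — it places elements of $\fq_j$ inside various members of $\mathcal{P}$, heading toward $\fq_j\subseteq\fp$ rather than $\fp\subseteq\fq_j$.) So the statement you need can only hold because of special features of the families $\mathcal{P}$ that arise from $\Theta$-closed sets; ruling out configurations like the one above is exactly the job of conditions (D2)--(D4) — the paper explicitly remarks that (D3) and (D4) are crucial at this point — and your argument never invokes them. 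Put differently, your Step 3 amounts to showing that $\rho(\sZ)$ is Zariski closed in $\Spec_{\fS}(R)$, which is essentially the content of Theorem~\ref{thm:rho-homeo} that this lemma feeds into; the substance the paper supplies here (noetherianity of the $\Theta$-topology and the identification of its irreducible closed sets) is precisely what is missing from your write-up.
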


\begin{proof}
Say that a $\Theta$-closed subset $\sZ$ is \defi{bounded} if $\sZ_{\infty}=\emptyset$. Using an argument similar to the proof of \cite[Proposition~5.1]{svar}, one can show that every $\Theta$-closed set $\sZ$ has the form $\sZ_1 \cup \sZ_2$ where $\sZ_1$ is $\Theta$-closed and bounded, and $\sZ_2=\Gamma_{\infty}(Z)$ for some $Z \subset \Spec(A)$. It thus suffices to prove the lemma for bounded $\Theta$-closed sets.

Using an argument similar to the proof of \cite[Proposition~5.6]{svar} (but with $\Xi$ in place of $\Lambda$), one sees that $\sX$ is a noetherian topological space. Arguing as in \cite[Theorem~5.8]{svar}, one sees that the irreducible bounded $\Theta$-closed subsets of $\sX$ have the form $\Gamma_{\lambda,e}(Z)$. The result now follows. (We note that (D3) and (D4), which have not really been used yet, are crucial in these arguments.)
\end{proof}	

\begin{proof}[Proof of Theorem~\ref{thm:rho-homeo}]
Every closed subset of $\Spec_{\fS}(R)$ is a finite union of sets of the form $V_{\fS}(\fp)$ where $\fp$ is an $\fS$-prime of $R$. By Lemmas~\ref{lem:rho-2} and~\ref{lem:rho-3}, we see that the inverse image of such sets under $\rho$ is closed. Thus $\rho$ is continuous.

By Lemma~\ref{lem:rho-3}, every closed subset of $\sX$ is a finite union of sets of the form $\Gamma_{\lambda,e}(Z)$ and $\Gamma_{\infty}(Z)$. By Lemmas~\ref{lem:rho-2} and~\ref{lem:rho-3}, we see that the images of such sets under $\rho$ are closed. Thus $\rho^{-1}$ is continuous.
\end{proof}

\end{document}